\newtheorem{thrm}{Theorem}[section]
\newtheorem{lmm}[thrm]{Lemma}
\newtheorem{crllr}[thrm]{Corollary}
\newtheorem{prpstn}[thrm]{Proposition}
\theoremstyle{definition}
\newtheorem{dfntn}[thrm]{Definition}
\newtheorem{rmrk}[thrm]{Remark}
\newtheorem{xmpl}[thrm]{Example}
\newtheorem{fact}[thrm]{Fact}
\newtheorem*{notation}{Notation}
\theoremstyle{remark}
\newtheorem*{note}{Note}
\newcommand{\cB}{{\mathcal B}}
\newcommand{\cA}{{\mathcal A}}
\newcommand{\cQ}{{\mathcal Q}}
\newcommand{\ZZ}{{\mathbb Z}}
\newcommand{\NN}{{\mathbb N}}
\newcommand{\cAstar}{\mathcal{A}^*}
\newcommand{\cAplus}{\mathcal{A}^+}
\newcommand{\empt}{\varepsilon}
\newcommand{\cAw}{\mathcal{A}^{\omega}}
\newcommand{\cAinf}{\mathcal{A}^{\infty}}
\newcommand{\rev}{\widetilde}
\newcommand{\bu}{\mathbf{u}}
\newcommand{\bw}{\mathbf{w}}
\newcommand{\bx}{\mathbf{x}}
\newcommand{\by}{\mathbf{y}}
\newcommand{\bz}{\mathbf{z}}
\newcommand{\bs}{\mathbf{s}}
\newcommand{\bt}{\mathbf{t}}
\newcommand{\cL}{\mathcal{L}}
\newcommand{\Ult}{\mbox{Ult}}
\newcommand{\eUlt}{\mbox{\emph{Ult}}}
\newcommand{\Alphit}{\mbox{{\em Alph}}}
\def\Alph{\textrm{Alph}}
\newcommand{\cR}{\mathcal{R}}
\newcommand{\br}{\mathbf{r}}
\def\TT{\mathrm{T}}
\numberwithin{equation}{section}
\author{Amy Glen\footnotemark[2] \and
Florence Lev\'e\footnotemark[3] \and Gw\'ena\"el Richomme\footnotemark[3]
}
\title{Quasiperiodic and Lyndon episturmian words\footnote{This work combines and extends two conference papers, one by the first author \cite{aG07orde} and the other by the second two authors \cite{fLgR07quasB}, presented at the {Sixth International Conference on Words}, Marseille, France, September 17--21, 2007.}}
\date{Submitted: May 7, 2008; Revised: September 14, 2008}
\begin{document}

\normalsize

\maketitle 

\footnotetext[2]{
\small Amy Glen\\
\small LaCIM, Universit\'e du Qu\'ebec \`a Montr\'eal, C.P. 8888, succursale Centre-ville,
Montr\'eal, Qu\'ebec, H3C 3P8, CANADA ~$\backslash$~ The Mathematics Institute, Reykjavik University, Kringlan 1, IS-103 Reykjavik, ICELAND \\
E-mail: \texttt{amy.glen@gmail.com} 
}
\footnotetext[3]{
\small F. Levé, G. Richomme \\ 
Universit\'e de Picardie Jules Verne, Laboratoire MIS (Modélisation, Information, Systèmes), 33 Rue Saint Leu, F-80039 Amiens cedex 1, FRANCE \\
E-mail: \texttt{\{florence.leve, gwenael.richomme\}@u-picardie.fr}
}

\hrule
\begin{abstract}
Recently the second two authors characterized {\em quasiperiodic} Sturmian words, proving that a Sturmian word is non-quasiperiodic if and only if it is an {\em infinite Lyndon word}. Here we extend this study to {\em episturmian words} (a natural generalization of Sturmian words) by describing all the quasiperiods of an episturmian word, which yields a characterization of quasiperiodic episturmian words in terms of their  {\em directive words}.  
Even further, we establish a complete characterization of all episturmian words that are Lyndon words. 
Our main results show that, unlike the Sturmian case, there is a much wider class of episturmian words that are non-quasiperiodic, besides those that are infinite Lyndon words. Our key tools are morphisms and directive words,  in particular {\em normalized} directive words, which we introduced in an earlier paper. Also of importance is the use of {\em return words} to characterize quasiperiodic episturmian words, since such a method could be useful in other contexts.
\medskip

\noindent {\bf Keywords}: episturmian word; Sturmian word; Arnoux-Rauzy sequence; episturmian morphism; lexicographic order; infinite Lyndon word; quasiperiodicity.
\vspace{0.1cm} \\
MSC (2000): 68R15.
\end{abstract}
\hrule

\section{Introduction}

{\em Sturmian words} are a fascinating family of infinite words defined on a 2-letter alphabet which have been extensively studied since the pioneering work of Morse and Hedlund in 1940 (see~\cite{MH1940}). Over the years, these infinite words have been shown to have numerous equivalent definitions and characterizations, and their beautiful properties are related to many fields like Number Theory, Geometry, Dynamical Systems, and Combinatorics on Words (see \cite{AS2003,mL02alge,nP02subs,jB07stur} for recent surveys). 

Many recent works have been devoted to generalizations of Sturmian words to arbitrary finite alphabets. An especially  interesting generalization is the  family of {\em episturmian words}, introduced by Droubay, Justin, and Pirillo in 2001 \cite{xDjJgP01epis} (see also \cite{jJgP02epis, jJgP04epis} for example). Episturmian words include not only the Sturmian words, but also the well-known {\em Arnoux-Rauzy sequences} (e.g., see \cite{pAgR91repr, jJgP02onac, nP02subs, rRlZ00agen}). More precisely, the family of episturmian words is composed of  the Arnoux-Rauzy sequences, images of the Arnoux-Rauzy sequences by {\em episturmian morphisms}, and certain periodic infinite words. In the binary case, Arnoux-Rauzy sequences are exactly the Sturmian words whereas episturmian words include all recurrent {\em balanced} words, that is, periodic balanced words and Sturmian words (see \cite{aGjJgP06char,PV2006,gR07aloc} for recent results relating episturmian words to the balanced property). See also \cite{aGjJ07epis} for a recent survey on episturmian theory.

Episturmian morphisms play a central role in the study of episturmian words (Section \ref{SS:EpiMorphisms} recalls the definition of these morphisms). Introduced first as a generalization of Sturmian morphisms, Justin and Pirillo \cite{jJgP02epis} showed that they are exactly the morphisms that preserve the aperiodic episturmian words. They also proved that any episturmian word is the image of another episturmian word by some so-called {\em pure episturmian morphism}. Even more, any episturmian word can be infinitely decomposed over the set of pure episturmian morphisms.  This last property allows  an episturmian word to be defined by one of its morphic decompositions or, equivalently, by a certain {\em directive word}, which is an infinite sequence of rules  for decomposing the given episturmian word by morphisms. In consequence, many properties of episturmian words can be deduced from properties of episturmian morphisms. This approach is used for instance in \cite{vBcHlZ06init, aG06acha, fLgR04quas,gR07conj,gR07aloc,rRlZ00agen} and of course in the papers of Justin {\em et al.}.

Morphic decompositions of Sturmian words have been used in \cite{fLgR07quas} to characterize quasiperiodic Sturmian words. Quasiperiodicity of finite words was first introduced by Apostolico and Ehrenfeucht \cite{aAaE93effi} in the following way: ``a word $w$ is quasiperiodic if there exists a second word $u \neq w$ such that every position of $w$ falls within some occurrence of $u$ in $w$''. The word $u$ is then called a quasiperiod of $w$. In the last fifteen years, quasiperiodicity and covering of finite words has been extensively studied (see \cite{aAmC01stri, IM1999} for some surveys). In \cite{sM04quas}, Marcus extended this notion to infinite words and opened some questions, particularly concerning quasiperiodicity of Sturmian words. After a brief answer to some of these questions in \cite{fLgR04quas}, the Sturmian case was fully studied in \cite{fLgR07quas} where it was proved that a Sturmian word is non-quasiperiodic if and only if it is an infinite Lyndon word. Here we extend this study to episturmian words.

In Sections~\ref{Ss:episturmian}--\ref{S:directiveWords}, we recall useful results on episturmian words and their directive words. A particularly important tool is the {\em normalization} of words directing the same episturmian word, which we recently introduced in \cite{fLgR07quasB, aGfLgR08dire}. This idea allows an episturmian word to be defined uniquely by its so-called {\em normalized directive word}, defined by some factor avoidance.
It can be seen as a generalization of a previous result by Berth\'e, Holton, and Zamboni \cite{vBcHlZ06init}, which was used  in \cite{fLgR07quas} to show that the directive word of a non-quasiperiodic Sturmian word can take only two possible (similar) forms. For non-binary  episturmian words, even those defined on a ternary alphabet, this simplicity does not hold since a combinatorial explosion of the number of cases occurs.  In particular there exist ternary non-quasiperiodic episturmian words that have infinitely many directive words. As such, the method we use to characterize quasiperiodic episturmian words greatly differs from the one in the Sturmian case.

In Section~\ref{S:quasiperiodicity}, we characterize quasiperiodic episturmian words. To prove it, we introduce a new way to tackle quasiperiodicity by stating an equivalent definition that is related to the notion of {\em return words}. From this, we show that any {\em standard} episturmian word (or {\em epistandard word}) is quasiperiodic; in particular, sufficiently long palindromic prefixes of an epistandard word are {\em quasiperiods} of it. We then extend this result by describing the quasiperiods of any (quasiperiodic) episturmian word (see Theorem~\ref{T:episturmianQuasiperiods}). This yields a characterization of  quasiperiodic episturmian words in terms of their directive words (Theorem~\ref{T:quasi-characterization}). Note that the set of quasiperiods of an episturmian word was previously described only for the Fibonacci word \cite{fLgR04quas}. (In \cite{fLgR07quas} this set was not described for quasiperiodic Sturmian words). At the end of Section~\ref{S:quasiperiodicity}, using the normalization aspect, we give a second characterization of quasiperiodic episturmian words which itself provides an effective way to decide whether or not a given episturmian word is quasiperiodic (see Theorem~\ref{T:normalized-quasi-characterization}).

Section~\ref{S:QuasiperiodicEpisturmianMorphisms} is concerned with the study of the action of episturmian morphisms in relation to quasiperiodicity. This study leads to non-trivial extensions of results in \cite{fLgR07quas}. Using this approach, we provide a completely different proof of our main characterization of quasiperiodic episturmian words. We also characterize episturmian morphisms that map any word onto a quasiperiodic one (see Section~\ref{SS:stongly-quasi-morphisms}). This result naturally allows us  to consider quasiperiodicity of words defined using episturmian morphisms.

Lastly, in Section~\ref{S:episturmianLyndon}, we characterize episturmian Lyndon words in terms of their directive words. This result shows that, unlike the Sturmian case, there exist non-quasiperiodic episturmian words that are not infinite Lyndon words.

\section{Episturmian words and morphisms\label{Ss:episturmian}}

We assume the reader is familiar with combinatorics on words and morphisms (e.g., see \cite{mL83comb,mL02alge}). In this section, we recall some basic definitions and properties relating to episturmian words which are  needed throughout the paper.  For the most part, we follow the notation and terminology of \cite{xDjJgP01epis, jJgP02epis, jJgP04epis, aGjJgP06char}.

\subsection{Notation and terminology}

Let $\cA$ denote a finite non-empty {\em alphabet}. A finite \emph{word} over $\cA$ is a finite sequence of letters from $\cA$. The {\em empty word} $\empt$ is the empty sequence. Under the operation of concatenation, the set $\cA^*$ of all finite words over $\cA$ is a {\em free monoid} with identity element $\empt$ and set of generators $\cA$. The set of {\em non-empty} words over $\cA$ is the {\em free semigroup} $\cA^+ = \cAstar \setminus \{\empt\}$.  

Given a finite word $w = x_{1}x_{2}\cdots x_{m} \in \cAplus$ with each $x_{i} \in \cA$, the \emph{length} of $w$, denoted by $|w|$, is equal to $m$. By convention, the empty word is the unique word of length $0$. We denote by $|w|_a$ the number of occurrences of the letter $a$ in the word $w$. If $|w|_a = 0$, then $w$ is said to be {\em $a$-free}.   
The \emph{reversal} of $w$, denoted by $\rev{w}$, is its mirror image: $\rev{w} = x_{m}x_{m-1}\cdots x_{1}$, and if $w = \rev{w}$, then $w$ is called a \emph{palindrome}. 

A (right) \emph{infinite word} (or simply \emph{sequence}) $\bx$ is a sequence indexed by $\NN^+$ with values in $\cA$, i.e., $\bx = x_1x_2x_3\cdots$ with each $x_i \in \cA$. The set of 
all infinite words over $\cA$ is denoted by $\cAw$.   An {\em ultimately periodic} infinite word can be written as $uv^\omega = uvvv\cdots$, for some $u$, $v \in \cAstar$, $v\ne \empt$. If $u = \empt$, then such a word is (purely) {\em periodic}. An infinite word that is not ultimately periodic is said to be {\em aperiodic}. For easier reading, infinite words are hereafter typically typed in boldface to distinguish them from finite words.

Given a set $X$ of words, $X^*$ (resp.~$X^\omega$) is the set of all finite (resp.~infinite) words that can be obtained by 
concatenating words of $X$. The empty word $\varepsilon$ belongs to $X^*$.

A finite word $w$ is a \emph{factor} of a finite or infinite word $z$ if $z = uwv$ for some words $u$, $v$ (where $v$ is infinite iff $z$ is infinite). In the special case $u = \empt$ (resp.~$v =\empt$),  we call $w$ a \emph{prefix} (resp.~\emph{suffix}) of $z$.  We use the notation $p^{-1}w$ (resp.~$ws^{-1}$) to indicate the removal of a prefix $p$ (resp.~suffix $s$) of the word $w$. An infinite word $\bx \in \cAw$ is called a \emph{suffix} of $\bz \in \cAw$ if there exists a word $w \in \cA^*$ such that $\bz = w\bx$. That is, $\bx$ is a shift of $\bz$, given by $\bx = \TT^{|w|}(\bz) = w^{-1}\bz$, where $\TT$ denotes the {\em shift map}: $\TT((x_n)_{n\geq1}) = (x_{n+1})_{n\geq1}$.  Note that a prefix or suffix $u$ of a finite or infinite word $w$ is said to be {\em proper} if $u \ne w$. For finite words $w \in \cA^*$, the shift map $\TT$ acts circularly, i.e., if $w = xv$ where $x \in \cA$, then $\TT(w) = vx$.

The \emph{alphabet} of a finite or infinite word $w$, denoted by $\Alph(w)$ is the set of letters occurring in $w$, and if $w$ is infinite, we denote by Ult$(w)$ the set of
all letters occurring infinitely often in $w$.  

A factor of an infinite word $\bx$ is \emph{recurrent} in $\bx$ if it occurs infinitely often in $\bx$, and $\bx$ itself is said to be \emph{recurrent} if all of its factors are recurrent in it. Furthermore, $\bx$ is \emph{uniformly recurrent} if for each $n$ there exists a positive integer $K(n)$ such that any factor of $\bx$ of length at least $K(n)$ contains all factors of $\bx$ of length $n$. Equivalently, $\bx$ is uniformly recurrent if any factor of $\bx$ occurs infinitely many times in $\bx$ with bounded gaps \cite{eCgH73sequ}.

\subsection{Episturmian words} \label{SS:EpiWords}

In this paper, our vision of episturmian words will be the characteristic property stated in Theorem~\ref{T:episturmian} (below). However, we first give one of their equivalent definitions to aid in understanding. For this, we recall that a factor $u$ of a finite or infinite word $w \in \cAinf$ is \emph{right} (resp.~\emph{left}) \emph{special} if $ua$, $ub$ (resp.~$au$, $bu$) are factors of $w$ for some letters $a$, $b \in \cA$, $a \ne b$.

An infinite word $\bt \in \cAw$ is \emph{episturmian} if its set of factors is closed under reversal and $\bt$ has at most one right (or equivalently left) special factor of each length. Moreover, an episturmian word is \emph{standard} if all of its left special factors are prefixes of it.

In the initiating paper \cite{xDjJgP01epis}, episturmian words were defined as an extension of standard episturmian words, which were themselves first introduced and studied as a generalization of standard Sturmian words using {\em palindromic closure} (see Theorem~\ref{T:epistandard} later). Specifically, an infinite word was said to be episturmian if it has exactly the same set of factors as some standard episturmian word \cite{xDjJgP01epis}. This definition is equivalent to the aforementioned one by Theorem 5 in~\cite{xDjJgP01epis}. Moreover, it was proved in \cite{xDjJgP01epis} that episturmian words are uniformly recurrent. Hence ultimately periodic episturmian words are (purely) periodic.

\begin{note} Hereafter, we refer to a standard episturmian word as an {\em epistandard word}, for simplicity.
\end{note}

To study episturmian words, Justin and Pirillo \cite{jJgP02epis} introduced {\em episturmian morphisms}. 
In particular they proved that these morphisms, which we recall below, are precisely the morphisms that preserve the set of aperiodic episturmian words. 

\subsection{Episturmian morphisms} \label{SS:EpiMorphisms}

\medskip

Let us recall that given an alphabet $\cA$, a \textit{morphism} $f$ on $\cA$ is a map from $\cA^*$ to $\cA^*$ such that $f(uv) = f(u) f(v)$ for any words $u$, $v$ over $\cA$.  A morphism on $\cA$ is entirely defined by the images of letters in $\cA$. All morphisms considered in this paper will be non-erasing: the image of any non-empty word is never empty. Hence the action of a morphism $f$ on $\cA^*$ can be naturally extended to infinite words; that is, if $\bx = x_1x_2x_3 \cdots \in \cAw$, then $f(\bx) = f(x_1)f(x_2)f(x_3)\cdots$. 

In what follows, we will denote the composition of morphisms by juxtaposition as for concatenation of words. 

\medskip
 
Episturmian morphisms are the compositions of the permutation morphisms (i.e., the morphisms $f$ such that $f(\cA) = \cA$) and the morphisms $L_a$ and $R_a$ where, for all $a \in \cA$:
\[
  L_a: \left\{\begin{array}{lll}
               a &\mapsto &a \\
               b &\mapsto &ab 
               \end{array}\right. , 
               \quad R_a: \left\{\begin{array}{lll}
               a &\mapsto &a \\
               b &\mapsto &ba    
               \end{array}\right. \quad \mbox{for all $b \ne a$ in $\cA$}.
\] 
Here we will work only on {\it pure} episturmian morphisms, i.e., morphisms obtained by composition of elements of  the sets:
\[
\mathcal{L}_\cA=\{L_a \mid a \in \cA\} \quad \mbox{and} \quad \mathcal{R}_\cA =\{R_a \mid a \in \cA\}.
\]

\begin{note}
In \cite{jJgP02epis}, the morphism $L_a$ (resp.~$R_a$) is denoted by $\psi_a$ (resp.~$\bar\psi_a$). We adopt the current notation to emphasize the action of $L_a$ (resp.~$R_a$) when applied to a word, which consists of placing an occurrence of the letter $a$ on the left (resp.~right) of each occurrence of any letter different from~$a$.
\end{note}

{\em Epistandard morphisms} (resp.~{\em pure episturmian morphisms}, {\em pure epistandard morphisms}) are the morphisms obtained by concatenation of morphisms in $\mathcal{L}_\cA$ and permutations on $\cA$ (resp.~in
$\mathcal{L}_\cA \cup \mathcal{R}_\cA$, in $\mathcal{L}_\cA$).
Note that the episturmian morphisms are exactly the {\em Sturmian morphisms} when $\cA$ is a $2$-letter alphabet.

\subsection{Morphic decomposition of episturmian words} \label{SS:morphicDecomp}

Justin and Pirillo \cite{jJgP02epis} proved the following insightful characterizations of epistandard and episturmian words (see Theorem \ref{T:episturmian} below), which show that any episturmian word can be {\em infinitely decomposed} over the set of pure episturmian morphisms.

The statement of Theorem~\ref{T:episturmian} needs some extra definitions and notation. First we define the following new alphabet, $\bar{\cA} = \{\bar{x} \mid x \in \cA \}$. A letter $\bar x$ is considered to be $x$ with {\em spin} $R$,  whilst $x$ itself has spin $L$. 
A finite or infinite word over 
$\cA \cup \bar{\cA}$ is called a \textit{spinned} word.
To ease the reading, we sometimes call a letter with spin $L$ (resp.~spin $R$) an $L$-spinned (resp.~$R$-spinned) letter. By extension, an $L$-spinned (resp.~$R$-spinned) word is a word having only letters with spin $L$ (resp.~spin~$R$).

The {\em opposite} $\bar{w}$ of a finite or infinite spinned word $w$ is obtained from $w$ by exchanging all spins in $w$. For instance, if $v=ab \bar a$, then $\bar v=\bar a \bar b a$. When $v \in \cA^+$, then its opposite $\bar v \in \bar\cA^+$ is an $R$-spinned word and we set $\bar\empt = \empt$. Note that, given a finite or infinite word $w = w_1w_2\cdots$ over $\cA$, we sometimes denote $\breve w = {\breve w}_1{\breve w}_2\cdots$ any spinned word such that $\breve w_i = w_i$ if $\breve w_i$ has spin $L$ and $\breve w_i = \bar w_i$ if $\breve w_i$ has spin $R$. Such a word $\breve w$ is called a {\em spinned version} of $w$.

\medskip
\begin{note} In Justin and Pirillo's original papers, spins are 0 and 1 instead of $L$ and $R$. It is convenient here to change this vision of the spins because of the relationship with episturmian morphisms, which we now recall. 
\end{note}

For  $a \in \cA$, let $\mu_a = L_a$ and $\mu_{\bar a} = R_a$. This operator $\mu$ can be naturally extended (as done in~\cite{jJgP02epis}) to a morphism mapping any word over $(\cA \cup \bar\cA)$ into a pure episturmian morphism: for a spinned finite word $\breve w = \breve w_1\cdots \breve w_n$ over $\cA \cup \bar\cA$, $\mu_{\breve w} = \mu_{\breve w_1}\cdots \mu_{\breve w_n}$ ($\mu_\varepsilon$ is the identity morphism). We will say that the word $w$ {\em directs} or is a {\em directive word} of the morphism $\mu_w$. The following result extends the notion of directive words to infinite episturmian words.

\begin{thrm} {\rm \cite{jJgP02epis}} 
\label{T:episturmian}

\begin{enumerate} 
\item[$i)$] An infinite word $\bs \in \cAw$ is epistandard  if and only if there exists an infinite word  $\Delta = x_1x_2x_3\cdots$ over $\cA$ and an infinite sequence $(\bs^{(n)})_{n \geq 0}$ of infinite words such that $\bs^{(0)} = \bs$ and for all $n \geq 1$, $\bs^{(n-1)} = L_{x_n}(\bs^{(n)})$.

\item[$ii)$] An infinite word $\bt \in \cAw$ is episturmian if and only if there exists a spinned infinite word $\breve\Delta = \breve x_1 \breve x_2 \breve x_3 \cdots$ over $\cA \cup \bar\cA$ and an infinite sequence $(\bt^{(n)})_{n \geq 0}$ of recurrent infinite words such that    
$\bt^{(0)} = \bt$ and for all $n \geq 1$, $\bt^{(n-1)} = \mu_{\breve x_n}(\bt^{(n)})$. 
\end{enumerate}
\end{thrm}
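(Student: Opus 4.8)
The plan is to reduce part~$(ii)$ to part~$(i)$ via a standard ``decomposition over pure episturmian morphisms'' argument, so let me first address $(i)$. For the forward direction of $(i)$, suppose $\bs$ is epistandard. The key structural fact (due to Droubay--Justin--Pirillo, which I would cite) is that every epistandard word $\bs$ has a first letter, say $x_1$, and when one removes the ``initial $x_1$'' in the appropriate sense, what remains is again an epistandard word $\bs^{(1)}$ with $\bs = L_{x_1}(\bs^{(1)})$; concretely, $\bs^{(1)}$ is obtained from $\bs$ by deleting, for each non-$x_1$ letter, the $x_1$ immediately preceding it, and this is well-defined because $\bs$ begins with $x_1$ and (being balanced/episturmian with left special factors prefixes) never has two consecutive non-$x_1$ letters after factoring. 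Iterating produces the sequence $(\bs^{(n)})_{n\geq 0}$ and the directive word $\Delta = x_1x_2\cdots$. The converse direction of $(i)$ is the easier half: if such a $\Delta$ and sequence exist, I would show by an explicit computation (or by invoking that each $L_a$ sends epistandard words with directive word $w$ to epistandard words with directive word $aw$, together with the palindromic-closure description recalled in Theorem~\ref{T:epistandard}) that $\bs$ is the limit of the finite standard words obtained from the prefixes of $\Delta$ via iterated palindromic closure, hence epistandard.

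For part $(ii)$, the bridge is the theorem of Justin and Pirillo that every episturmian word is the image of an episturmian word under a pure episturmian morphism, and more precisely: if $\bt$ is episturmian with first letter $a$, then either $\bt = a^\omega$ (the trivial periodic case, directed by $a^\omega$), or $\bt = L_a(\bt^{(1)})$ or $\bt = R_a(\bt^{(1)})$ for some episturmian $\bt^{(1)}$, where the choice of spin ($L$ versus $R$) is detected by whether the letter $a$ appears immediately before or after each occurrence of the ``other'' letters in $\bt$ — i.e.\ by whether $\bt \in aA^* \cup (A\setminus\{a\})^*$ read blockwise has $a$ on the left or the right of each maximal $a$-free block. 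One must check each such $\bt^{(1)}$ is recurrent: this follows since episturmian words are uniformly recurrent and the preimage under an injective pure episturmian morphism of a uniformly recurrent word is uniformly recurrent (the morphism is marked, so desubstitution preserves the bounded-gap property). Iterating this desubstitution step gives the spinned directive word $\breve\Delta = \breve x_1\breve x_2\cdots$ and the sequence $(\bt^{(n)})_{n\geq 0}$. For the converse, given $\breve\Delta$ and the sequence of recurrent words, I would argue that $\bt = \mu_{\breve x_1\cdots \breve x_n}(\bt^{(n)})$ for every $n$, that the prefixes $\mu_{\breve x_1\cdots \breve x_n}(x_{n+1})$ stabilize to longer and longer prefixes of $\bt$ (since $\mu_{\breve x_1\cdots\breve x_n}$ is non-erasing and each composition strictly lengthens images of letters once a new letter is consumed, provided $\breve\Delta$ has the mild ``no eventual constant spin on a single letter'' genuineness that recurrence of the $\bt^{(n)}$ forces), and that the resulting infinite word has the same set of factors as $L_{x_1}(\cdots)$ applied to the epistandard word directed by the unspinned $\Delta$ — hence is episturmian by the factor-set definition, using that $\mu_{\breve w}$ and $\mu_w$ (all spins $L$) produce words with the same factor set up to the reversal-closure built into the definition.

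The main obstacle I anticipate is the converse direction of $(ii)$: one must show that an arbitrary spinned directive word $\breve\Delta$, together with \emph{recurrent} intermediate words, actually yields a genuine episturmian word, and in particular that $\bt$ is well-defined as a limit. The subtlety is the ``recurrence'' hypothesis on the $\bt^{(n)}$ — this rules out degenerate directive words (e.g.\ one ending in $\bar a\,\bar a\,\bar a\cdots$ for a letter $a$ that occurs, which would force non-recurrence) and is exactly what guarantees that every letter of $\cA$ that appears at all appears infinitely often in $\breve\Delta$, so that the morphisms $\mu_{\breve x_1\cdots\breve x_n}$ have images of letters growing without bound and the partial images converge. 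I would handle this by first proving the clean lemma: \emph{$\bt^{(n)}$ is recurrent for all $n$ iff $\bt$ is recurrent iff every letter of $\Alph(\breve\Delta)$ occurs infinitely often in $\breve\Delta$}, and then the limit argument goes through routinely. The forward directions of both $(i)$ and $(ii)$, by contrast, are essentially bookkeeping once the single-step desubstitution lemma is in hand.
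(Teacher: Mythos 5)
First, a point of reference: the paper offers no proof of Theorem~\ref{T:episturmian} --- it is quoted from Justin and Pirillo \cite{jJgP02epis} as background --- so there is no in-paper argument to compare yours against, and I can only assess the proposal on its own terms. On those terms the key step of part $ii)$ is wrong as stated. You claim that an episturmian word $\bt$ with first letter $a$ satisfies $\bt = L_a(\bt^{(1)})$ or $\bt = R_a(\bt^{(1)})$ (or is $a^\omega$). The desubstitution letter is \emph{not} in general the first letter of $\bt$: it is a \emph{separating letter} of $\bt$, i.e.\ a letter occurring in every factor of length $2$ (equivalently, the first letter of an epistandard word in the subshift of $\bt$). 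Concretely, if $\bs$ is the Fibonacci word, directed by $(ab)^\omega$, then $b\bs$ is episturmian (Theorem~\ref{T:3.17}) and begins with $b$; but $aa$ is a factor of $b\bs$, so $b\bs$ lies in the image of neither $L_b$ nor $R_b$ (in $L_b(\bw)$ and $R_b(\bw)$ every letter other than $b$ is adjacent to a $b$ on a prescribed side). Its one-step desubstitution is $b\bs = R_a(\,\cdot\,)$, with directive word $(\bar a b)^\omega$. Since the whole forward direction of $ii)$, and the induction producing $\breve\Delta$, rests on this single-step lemma, this is a genuine gap, not a cosmetic one. The repair is to first establish the existence of a separating letter for any episturmian word and desubstitute with respect to \emph{that} letter, choosing spin $L$ or $R$ according to which side of each maximal block of other letters it occupies --- which is the mechanism you describe, but anchored to the wrong letter.

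Second, the ``clean lemma'' you propose for the converse of $ii)$ --- that all $\bt^{(n)}$ are recurrent iff every letter of $\Alph(\breve\Delta)$ occurs infinitely often in $\breve\Delta$ --- is false in both directions. For $\breve\Delta = ab^\omega$ one has $\bt = L_a(b^\omega) = (ab)^\omega$ and all $\bt^{(n)}$ are (periodic, hence) recurrent, although $a$ occurs only once in $\breve\Delta$. For $\breve\Delta = \bar a^{\,\omega}$ the non-recurrent words $\bt^{(n)} = ba^\omega$ satisfy $R_a(ba^\omega) = ba^\omega$ for all $n$, even though the single letter of $\Alph(\breve\Delta)$ occurs infinitely often. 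The latter example is exactly what the recurrence hypothesis in the statement exists to exclude: $ba^\omega$ solves the recursion but is not episturmian, since episturmian words are uniformly recurrent. So the recurrence assumption cannot be traded for a condition on $\breve\Delta$ alone, and your planned limit argument for the converse does not go through as described. Part $i)$ is reasonable in outline, with the caveat that the ``structural fact'' you propose to cite for the forward direction is, for epistandard words, essentially the content of the theorem itself; a self-contained proof would have to derive the desubstitution step from the definition via special factors or via the palindromic-closure description of Theorem~\ref{T:epistandard}.
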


For any epistandard word (resp.~episturmian word) $\bt$ and $L$-spinned (resp.~spinned) infinite word $\Delta$ (resp.~$\breve\Delta$) satisfying the conditions of the above theorem, we say that $\Delta$ (resp.~$\breve\Delta$) is a {\em (spinned) directive word} for $\bt$ or that $\bt$ is {\em directed by} $\Delta$ (resp.~$\breve\Delta$). 

\begin{rmrk} \label{R:directive} It follows immediately from Theorem~\ref{T:episturmian} that if $\bt$ is an episturmian word directed by a spinned infinite word $\breve\Delta$, then each $\bt^{(n)}$ (as defined in part $ii)$) is an episturmian word directed by $\TT^n(\breve\Delta) = \breve x_{n+1}\breve x_{n+2} \breve x_{n+3} \cdots$.
\end{rmrk}

By Theorem~1 in \cite{xDjJgP01epis} (see also Theorem~\ref{T:epistandard} later), any epistandard word has a unique $L$-spinned directive word, but also  has infinitely many other directive words (see \cite{jJgP02epis,jJgP04epis, aGfLgR08dire}). For example, the {\em Tribonacci word} (or {\em Rauzy word}~\cite{gR82nomb}) is  directed by $(abc)^\omega$ and also by $(abc)^n\bar{a}\bar{b}\bar{c}(a\bar{b}\bar{c})^\omega$ for each $n\geq 0$, as well as infinitely many other spinned words. More generally, by Proposition~3.11 in \cite{jJgP02epis}, any spinned infinite word $\breve\Delta$  having infinitely many $L$-spinned letters directs a unique episturmian word $\bt$ beginning with the left-most $L$-spinned letter in $\breve\Delta$. Moreover, by one of the main results in \cite{aGfLgR08dire} (see Theorem~\ref{T:directSame} later), $\bt$ has infinitely many other directive words.

The following important fact links the two parts of Theorem~\ref{T:episturmian}.

\begin{fact} \label{F:episturmian} {\rm \cite{jJgP02epis}} If $\bt$ is an episturmian word directed by a spinned version $\breve \Delta$ of an $L$-spinned infinite word $\Delta$, then $\bt$ has exactly the same set of factors as the (unique) epistandard word $\bs$ directed by $\Delta$.
\end{fact}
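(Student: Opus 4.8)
The plan is to prove that $\bt$ and $\bs$ have exactly the same set of factors, which I will write $F(\bt)$ and $F(\bs)$. Two elementary observations about the letter-morphisms $L_a$, $R_a$ do almost all the work. First, for any $a\in\cA$ and any infinite word $\bx\in\cAw$ there is the block-by-block identity $L_a(\bx)=a\cdot R_a(\bx)$, that is, $R_a(\bx)=\TT(L_a(\bx))$; since each $L_a$, $R_a$ sends recurrent words to recurrent words (a factor of $L_a(\bx)$ sits inside $L_a(u)$ for some factor $u$ of $\bx$, and such an image recurs as $u$ does), and since a shift of a recurrent word has the same factors as the word itself, I get
\[
F(L_a(\bx))=F(R_a(\bx))\qquad\text{for every recurrent }\bx .
\]
Second, if $F(\by)=F(\by')$ then $F(g(\by))=F(g(\by'))$ for every $g\in\mathcal{L}_\cA\cup\mathcal{R}_\cA$, because any factor of $g(\by)$ lies in $g(u)$ for some factor $u$ of $\by$, hence of $\by'$, hence in $g(\by')$ (and symmetrically).

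Next I would use these to flip the spins of $\breve\Delta$ one at a time without changing the factor set of the directed word. Fix the decomposition $\breve\Delta=\breve x_1\breve x_2\cdots$ and the recurrent tower $(\bt^{(n)})_{n\ge0}$ from Theorem~\ref{T:episturmian}$(ii)$. If $\breve x_p$ is $R$-spinned and $\breve x_1,\dots,\breve x_{p-1}$ are $L$-spinned, then $\bt=L_{x_1}\cdots L_{x_{p-1}}\bigl(R_{x_p}(\bt^{(p)})\bigr)$, while replacing $\breve x_p$ by $x_p$ and keeping the tail (hence the same $\bt^{(p)}$, and rebuilding the tower below level $p$ by Theorem~\ref{T:episturmian}$(ii)$) yields an episturmian word $\bt'=L_{x_1}\cdots L_{x_{p-1}}\bigl(L_{x_p}(\bt^{(p)})\bigr)$; the two observations above, applied with $\bx=\bt^{(p)}$ and then propagated through $L_{x_{p-1}},\dots,L_{x_1}$, give $F(\bt')=F(\bt)$. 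Iterating left-to-right over the $R$-positions $p_1<p_2<\cdots$ of $\breve\Delta$ produces, for each $j$, an episturmian word $\bt_j$ with $F(\bt_j)=F(\bt)$, directed by the spinned word agreeing with $\Delta$ on positions $1,\dots,p_{j+1}-1$ and with $\breve\Delta$ afterwards. In particular, if $\breve\Delta$ has only finitely many $R$-spinned letters we are already done: then $\bt_j$ is directed by $\Delta$ itself and so equals $\bs$.

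It remains to pass to the limit when $\breve\Delta$ has infinitely many $R$'s. I would first dispose of the case where $\Delta$ is eventually constant, say $\TT^N(\Delta)=a^\omega$: then $\bs^{(N)}=a^\omega$ and $\bt^{(N)}$, being a recurrent word directed by a spinned version of $a^\omega$, is $a^\omega$ as well, so $\bt=\mu_{\breve x_1\cdots\breve x_N}(a^\omega)$ depends on only finitely many spins and the previous paragraph finishes it. Otherwise $x_m\ne x_{m-1}$ for infinitely many $m$; setting $v_m=L_{x_1}\cdots L_{x_{m-1}}(x_m)$, one checks that $v_{m-1}$ is a prefix of $v_m$, with $|v_m|>|v_{m-1}|$ exactly when $x_m\ne x_{m-1}$, so $|v_m|\to\infty$. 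Since $L_{x_1}\cdots L_{x_m}(\bz)$ begins with $v_m$ for every $\bz$, both $\bs$ and $\bt_j$ begin with $v_{p_{j+1}-1}$, whose length tends to infinity with $j$; thus the $\bt_j$ share arbitrarily long prefixes with $\bs$. Finally, $\bt$ is uniformly recurrent (episturmian words are), and since $F(\bt_j)=F(\bt)$, any factor $w$ of $\bt$ occurs inside every factor of length $K(|w|)$; choosing $j$ with $|v_{p_{j+1}-1}|\ge K(|w|)$, the length-$K(|w|)$ prefix of $\bs$ coincides with that of $\bt_j$, hence lies in $F(\bt_j)=F(\bt)$ and therefore contains $w$, so $w\in F(\bs)$; conversely every factor of $\bs$ lies in a long enough prefix of $\bs$, which is a prefix of some $\bt_j$ and so belongs to $F(\bt_j)=F(\bt)$. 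Hence $F(\bt)=F(\bs)$.

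I expect the main obstacle to be precisely this last step. The two observations settle all \emph{finite} alterations of $\breve\Delta$ cleanly, but transferring the equality $F(\bt_j)=F(\bt)$ to the genuinely infinite word $\bs$ requires combining the convergence $\bt_j\to\bs$ with a uniformity property of $\bt$; uniform recurrence is the right tool here because it is a property of the factor set $F(\bt)=F(\bt_j)$ alone.
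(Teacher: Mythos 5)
Your argument is correct, but it is worth noting that the paper does not prove this statement at all: Fact~\ref{F:episturmian} is imported verbatim from Justin and Pirillo \cite{jJgP02epis} with a citation and no proof. What you have produced is a self-contained derivation from Theorem~\ref{T:episturmian} together with uniform recurrence, and its skeleton is sound. The two observations ($L_a(\bx)=aR_a(\bx)$, hence $F(L_a(\bx))=F(R_a(\bx))$ for recurrent $\bx$, and the stability of factor-set equality under $\mathcal{L}_\cA\cup\mathcal{R}_\cA$) correctly handle each single spin flip, the recurrence of the tower $(\bt^{(n)})$ guaranteed by Theorem~\ref{T:episturmian}$(ii)$ is exactly what makes the flip at position $p$ legitimate, and your diagnosis of the limit step is right: equality of factor sets survives the passage from the approximants $\bt_j$ to $\bs$ only because $K(n)$ depends on $F(\bt)$ alone, so the common prefixes $v_{p_{j+1}-1}$ of $\bs$ and $\bt_j$ eventually capture every factor of $\bt$. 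Two points are asserted rather than proved and deserve a line each if this were written out in full: (i) that $L_a$ and $R_a$ send recurrent words to recurrent words (your parenthetical sketch is adequate); and (ii) that a recurrent word sitting at the top of an infinite tower over a spinned version of $a^\omega$ must be $a^\omega$ --- this is true (after $k$ applications of $\mu_{\hat a}$ with $\hat a\in\{a,\bar a\}$, consecutive non-$a$ letters are separated by at least $k$ occurrences of $a$, which is incompatible with recurrence of a non-$a$ letter), but it is not immediate from what you wrote. Compared with simply citing \cite{jJgP02epis}, your route buys transparency at the cost of invoking uniform recurrence of episturmian words, which is itself an external input from \cite{xDjJgP01epis}; there is no circularity, since that property is established there from the combinatorial definition rather than from directive words.
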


Moreover, with the same notation as in the above remark, the episturmian word $\bt$ is periodic if and only if the epistandard word $\bs$ is periodic, and this holds if and only if $|\Ult(\Delta)| = 1$  (see \cite[Prop.~2.9]{jJgP02epis}). More precisely, a periodic episturmian word takes the form $(\mu_{\breve w}(x))^\omega$ for some finite spinned word $\breve w$ and letter $x$.

\begin{note} Sturmian words are precisely the aperiodic episturmian words on a 2-letter alphabet.
\end{note}

When an episturmian word is aperiodic, we have the following fundamental link between the words $(\bt^{(n)})_{n \geq 0}$ and the spinned infinite word $\breve\Delta$ occurring in Theorem~\ref{T:episturmian}: if $a_n$ is the first letter of $\bt^{(n)}$, then $\mu_{\breve x_1 \cdots \breve x_n}(a_n)$ is a prefix of $\bt$ and the sequence $(\mu_{\breve x_1 \cdots \breve x_n}(a_n))_{n \geq 1}$ is not ultimately constant (since $\breve\Delta$ is not ultimately constant), then $\bt = \lim_{n \rightarrow \infty} \mu_{\breve x_1 \cdots \breve x_n}(a_n)$. This fact is a slight generalization of a result of Risley and Zamboni \cite[Prop.~III.7]{rRlZ00agen} on {\em S-adic representations} for characteristic Arnoux-Rauzy sequences.  See also the recent paper \cite{vBcHlZ06init} for S-adic representations of Sturmian words. Note that {\em $S$-adic dynamical systems} were introduced by Ferenczi \cite{sF99comp} as {\em minimal dynamical systems}  (e.g., see \cite{nP02subs})  generated by a finite number of substitutions. In the case of episturmian words, the notion itself is actually a reformulation of the well-known {\em Rauzy rules}, as studied in \cite{gR85mots}. In fact, it is well-known that the {\em subshift} of an aperiodic episturmian word $\bt$ (i.e., the topological closure of the shift orbit of $\bt$)  is a {minimal dynamical system}, i.e., it  consists of all  the episturmian words with the same set of factors as $\bt$.

\section{Useful results on directive words} \label{S:directiveWords}

Notions concerning directive words of episturmian words and morphisms have been recalled in the previous section. Two natural questions concerning these words are:  When do two distinct finite spinned words direct the same episturmian morphism?  When do two distinct spinned infinite words direct the same unique episturmian word? In this section we recall existing answers to these questions.  We also present a way to uniquely define any episturmian word through a {\em normalization} of its directive words. This powerful tool was recently introduced in our papers \cite{aGfLgR08dire, fLgR07quasB}. 

\subsection{Presentation versus block-equivalence}

Generalizing a study of the monoid of Sturmian morphisms by Séébold \cite{See1991}, the third author \cite{gR03conj} answered the question: ``When do two distinct finite spinned words direct the same episturmian morphism?'' by giving a presentation of the monoid of episturmian morphisms. This result was reformulated in \cite{gR03lynd} using another set of generators and it was independently and differently treated in \cite{jJgP04epis}. As a direct consequence, one can see that the monoid of pure epistandard morphisms is a free monoid and one can obtain the following presentation of the monoid of pure episturmian morphisms:

\begin{thrm}\label{T:presentation}{\rm (direct consequence of \cite[Prop.~6.5]{gR03lynd}; reformulation of \cite[Th.~2.2]{jJgP04epis})}

The monoid of pure episturmian morphisms with $\{L_{\alpha}, R_{\alpha} \mid \alpha\in \cA\}$ as a set of generators has the following presentation:
$$R_{a_1} R_{a_2} \cdots R_{a_k} L_{a_1} = L_{a_1} L_{a_2} \cdots L_{a_k} R_{a_1}$$
where $k \geq 1$ is an integer and $a_1, \ldots ,a_k \in \cA$ with $a_1 \neq a_i$ for all $i$, $2 \leq i \leq k$. 
\end{thrm}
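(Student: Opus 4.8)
The plan is to obtain Theorem~\ref{T:presentation} as a reformulation of an already known presentation rather than to prove it from scratch. The monoid of pure episturmian morphisms was presented by the third author in \cite[Prop.~6.5]{gR03lynd} (building on \cite{gR03conj}) and, independently and with essentially the generators $L_\alpha,R_\alpha$, in \cite[Th.~2.2]{jJgP04epis}. So the first step is simply to recall that presentation; the genuine work is then a Tietze transformation. If \cite{gR03lynd} uses a different generating set, one expresses each of its generators as a word over $\{L_\alpha,R_\alpha\mid\alpha\in\cA\}$ and conversely, transports the defining relations across this change of generators, and checks that, modulo relations already available, the transported family is exactly $R_{a_1}R_{a_2}\cdots R_{a_k}L_{a_1}=L_{a_1}L_{a_2}\cdots L_{a_k}R_{a_1}$ for $k\geq1$ and $a_1\neq a_i$ ($2\leq i\leq k$). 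Starting from \cite[Th.~2.2]{jJgP04epis} instead, where the generators are already $L_\alpha$ and $R_\alpha$, this step reduces to a purely formal rewriting of the relations into the displayed shape.

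If a self-contained argument is preferred, it splits as usual into soundness and completeness. For soundness --- that each displayed identity actually holds between episturmian morphisms --- I would verify the equality on each letter $x\in\cA$, using two elementary identities verified directly on letters: the conjugacy identity $L_a(w)a=aR_a(w)$, and the reversal identity $\rev{\mu_v(w)}=\mu_{\bar v}(\rev w)$ (with $\bar v$ the opposite spinned word). These reduce the identity to the statement that $L_{a_1}\cdots L_{a_k}(xa_1)$ --- equivalently $L_{a_1}\cdots L_{a_k}(a_1)$ when $x=a_1$ --- is a palindrome. The hypothesis $a_1\neq a_i$ enters precisely here: it controls how an occurrence of $a_1$ propagates through the blocks $L_{a_i}$, and without it the identity genuinely fails, as one already sees comparing $R_aR_aL_a$ with $L_aL_aR_a$.

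For completeness --- that every equality $\mu_u=\mu_v$ between pure episturmian morphisms, with $u,v$ finite spinned words, follows from these relations --- I would turn them into a rewriting system by orienting each relation so that $R$-spins migrate to the right of $L$-spins, i.e.\ $R_{a_1}\cdots R_{a_k}L_{a_1}\to L_{a_1}\cdots L_{a_k}R_{a_1}$. This rewriting preserves the length of the spinned word, never increases the number of $R$-spins, and, when that number is unchanged, strictly pushes $R$-spins rightward, so it terminates; a critical-pair analysis then yields confluence and hence unique normal forms. Since the submonoid of pure \emph{epistandard} morphisms (those directed by $L$-spinned words) is free, two normal forms representing the same morphism must share the same underlying $L$-spinned word, after which the forced positions of the surviving $R$-spins make the normal forms themselves coincide.

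The step I expect to be the main obstacle is this completeness argument, and within it the bookkeeping around the rewriting: applying the rule produces a fresh trailing $R_{a_1}$ that may trigger further rewriting (including the case $k=1$, namely $R_aL_a\to L_aR_a$), so the termination measure and the critical-pair analysis must be set up with care --- this is exactly what is established in \cite{gR03conj} and \cite{jJgP04epis}. A secondary point to dispatch is that the statement concerns the \emph{submonoid} of pure morphisms, with permutations excluded, and submonoids need not inherit presentations; but every morphism, relation and rewriting step above lives inside the submonoid generated by $\{L_\alpha,R_\alpha\}$, so no relation valid only after adjoining permutations can intervene.
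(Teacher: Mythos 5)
The paper offers no proof of this theorem: it is stated as a direct consequence of \cite[Prop.~6.5]{gR03lynd} and a reformulation of \cite[Th.~2.2]{jJgP04epis}, which is exactly the primary route you take, so your proposal matches the paper's approach. Your additional self-contained sketch (soundness via the conjugacy/reversal identities reducing to the palindromicity of $\mu_{a_1\cdots a_k}(xa_1)=Pal(a_1\cdots a_kx)$, completeness via a terminating rewriting system) is a reasonable outline of what the cited proofs actually do, and you correctly flag the confluence/uniqueness-of-normal-forms step as the part that must be imported from \cite{gR03conj,jJgP04epis}.
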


This result means that two different compositions of morphisms in ${\cal L}_{\cal A} \cup  {\cal R}_{\cal A}$ yield the same pure episturmian morphism if and only if one composition can be deduced from the other in a rewriting system, called the {\em block-equivalence} in \cite{jJgP04epis}. 
Although Theorem~\ref{T:presentation} allows us to show that many properties of episturmian words are linked to properties of episturmian morphisms, it will be convenient for us to have in mind the block-equivalence that we now recall.

A word of the form $xvx$, where $x \in \cA$ and $v \in (\cA\setminus\{x\})^*$, is called a ($x$-based) {\em block}. A ($x$-based) {\em block-transformation} is the replacement in a spinned word of an occurrence of $xv\bar x$ (where $xvx$ is a block) by $\bar x \bar v x$ or vice-versa. Two finite spinned words $w$, $w'$ are said to be {\em block-equivalent} if  we can pass from one to the other by a (possibly empty) chain of block-transformations, in which case we write $w \equiv w'$. For example, $\bar b\bar a b \bar c b \bar a \bar c$ and  $b a bc\bar b\bar a \bar c$ are block-equivalent because $\bar b\bar a b \bar c b \bar a \bar c \rightarrow ba\bar b \bar c b\bar a \bar c  \rightarrow b a bc\bar b\bar a \bar c$ and vice-versa. 
The block-equivalence is an equivalence relation over spinned words, and moreover one can observe that if $w \equiv w'$ then $w$ and $w'$ are spinned versions of the same word over $\cA$.

\medskip

Theorem~\ref{T:presentation} can be reformulated in terms of block-equivalence:

\medskip
\noindent
\textbf{Theorem~\ref{T:presentation}.} 
\textit{Let $w$, $w'$ be two spinned words over $\cA \cup \bar\cA$. Then $\mu_{w} = \mu_{w'}$ if and only if $w \equiv w'$. 
}

\subsection{Words directing the same episturmian word} \label{SS:directSame}

 Using the block-equivalence notion, the question: ``When do two distinct spinned infinite words direct the same unique episturmian word?'' was almost completely solved by Justin and Pirillo in \cite{jJgP04epis} for {\em bi-infinite episturmian words}, i.e., episturmian words with letters indexed by $\ZZ$ (and not by $\NN$ as we consider here). More recently, in \cite{aGfLgR08dire}, we showed that Justin and Pirillo's results on directive words of  bi-infinite episturmian words are still valid for words directing (right-infinite) episturmian words. We also established the following complete characterization of pairs of spinned infinite words directing the same unique episturmian word. Not only does our characterization provide the relative forms of two spinned infinite words directing the same episturmian word, but it also fully solves the periodic case, which was only partially solved in \cite{jJgP04epis}.

 \begin{thrm}\label{T:directSame} {\em  \cite{aGfLgR08dire}}
Given two spinned infinite words $\Delta_1$ and $\Delta_2$, the following assertions are equivalent.
\begin{description}
\item{i)} $\Delta_1$ and $\Delta_2$ direct the same right-infinite episturmian word;
\item{ii)} $\Delta_1$ and $\Delta_2$ direct the same bi-infinite episturmian word;
\item{iii)} One of the following cases holds for some $i, j$ such that $\{i, j\} = \{1, 2\}$:
\begin{enumerate}
\item \label{Ti:1} $\Delta_i = \prod_{n \geq 1} v_n$, $\Delta_j = \prod_{n \geq 1} z_n$ where $(v_n)_{n \geq 1}, (z_n)_{n \geq 1}$ are spinned words such that $\mu_{v_n} = \mu_{z_n}$ for all $n \geq 1$;

\item \label{Ti:2} $\Delta_i = {w} x \prod_{n \geq 1} v_n {\breve x}_n$, $\Delta_j = {w'} {\bar x} \prod_{n \geq 1} {\bar v}_n {\hat x}_n$ where ${w}$, ${w'}$ are spinned words such that $\mu_{w} = \mu_{w'}$, $x$ is an $L$-spinned letter, $(v_n)_{n \geq 1}$ is a sequence of non-empty $x$-free $L$-spinned words, and $({\breve x}_n)_{n \geq 1}$, $({\hat x}_n)_{n \geq 1}$ are sequences of non-empty spinned words over $\{x, \bar x\}$ such that, for all $n \geq 1$, $|{\breve x}_n| = |{\hat x}_n|$ and  $|{\breve x}_n|_x = |{\hat x}_n|_x$;

\item \label{Ti:4} $\Delta_1 = w \bx$ and $\Delta_2 = w'\by$ where $w$, $w'$ are spinned words and  $\bx \in \{x, \bar x\}^\omega$, $\by \in \{y, \bar y\}^\omega$ are spinned infinite words for some letters $x$, $y$ such that $\mu_{w}(x) = \mu_{w'}(y)$. 
\end{enumerate} 
\end{description}
 \end{thrm}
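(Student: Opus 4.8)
The plan is to prove Theorem~\ref{T:directSame} by first leveraging the already-established equivalence $\mu_w = \mu_{w'} \iff w \equiv w'$ (the reformulated Theorem~\ref{T:presentation}), and then analysing the ``limit'' behaviour of directive words. First I would dispense with the implication iii)$\,\Rightarrow\,$i): in each of the three cases the two directive words are obtained from one another by a (possibly transfinite) sequence of block-transformations together with ``tail'' modifications that do not change the limit word, so one checks directly from Theorem~\ref{T:episturmian} and Fact~\ref{F:episturmian} that $\Delta_1$ and $\Delta_2$ direct the same right-infinite episturmian word. Case~\ref{Ti:1} is immediate since each finite factor $\mu_{v_1\cdots v_n} = \mu_{z_1\cdots z_n}$, so the two S-adic limits coincide. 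Case~\ref{Ti:2} requires observing that the relation $R_{a_1}\cdots R_{a_k}L_{a_1} = L_{a_1}\cdots L_{a_k}R_{a_1}$ can be iterated along the $x$-free blocks to slide the spin pattern, and that the hypotheses $|\breve x_n| = |\hat x_n|$, $|\breve x_n|_x = |\hat x_n|_x$ are exactly what is needed for the finite prefixes to agree in the limit. Case~\ref{Ti:4} is the periodic situation: $\mu_w(x) = \mu_{w'}(y)$ forces $\mu_w(x^\omega)$ and $\mu_{w'}(y^\omega)$ to be the same periodic word, and any spinned version over $\{x,\bar x\}$ (resp.\ $\{y,\bar y\}$) directs the same periodic word by Proposition~2.9 of \cite{jJgP02epis}.

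The harder direction is i)$\,\Rightarrow\,$iii). Here the strategy is to peel off common prefixes. Suppose $\Delta_1 = \breve x_1\breve x_2\cdots$ and $\Delta_2 = \breve y_1\breve y_2\cdots$ both direct $\bt$. By Fact~\ref{F:episturmian}, the underlying $L$-spinned words $\Delta$, $\Delta'$ obtained by erasing spins direct the same epistandard word, hence (by uniqueness of the $L$-spinned directive word, Theorem~1 of \cite{xDjJgP01epis}) $\Delta = \Delta'$; so $\Delta_1$ and $\Delta_2$ are spinned versions of one and the same word $x_1x_2x_3\cdots$ over $\cA$. Now I distinguish two cases according to whether this word is ultimately constant. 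If it is ultimately constant, say equal to $ux^\omega$, then $\bt$ is periodic and one is in Case~\ref{Ti:4}: writing $\Delta_1 = w\bx$, $\Delta_2 = w'\by$ with $\bx,\by$ the ultimately-$x$ tails, the equality of the two periodic words yields $\mu_w(x) = \mu_{w'}(y)$ after absorbing finitely many letters. If it is \emph{not} ultimately constant, then $\bt$ is aperiodic, and I use the S-adic convergence $\bt = \lim_n \mu_{\breve x_1\cdots\breve x_n}(a_n)$ together with the analogous expression from $\Delta_2$. The goal is to build, step by step, matching factorizations witnessing Case~\ref{Ti:1} or Case~\ref{Ti:2}. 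I would proceed by comparing the morphisms $\mu_{\breve x_1\cdots\breve x_n}$ and $\mu_{\breve y_1\cdots\breve y_n}$: either they are eventually block-equivalent after reading blocks of letters (giving Case~\ref{Ti:1}), or a genuine discrepancy in spins persists, which forces — via Theorem~\ref{T:presentation} — the local structure $xv\bar x$ versus $\bar x\bar v x$ along $x$-free segments, i.e.\ Case~\ref{Ti:2}.

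The crux, and the step I expect to be the main obstacle, is this last dichotomy: showing that any two spinned versions of the same aperiodic $L$-spinned word $\Delta$ that happen to direct the same $\bt$ must, after a common block-equivalent prefix, \emph{either} continue agreeing block-by-block under $\mu$, \emph{or} settle into the rigid ``$x$ on the left vs.\ $x$ on the right'' pattern of Case~\ref{Ti:2}. This requires a careful invocation of the presentation in Theorem~\ref{T:presentation}: the only defining relation is the one exchanging $R_{a_1}\cdots R_{a_k}L_{a_1}$ with $L_{a_1}\cdots L_{a_k}R_{a_1}$, so every discrepancy between $\Delta_1$ and $\Delta_2$ that survives in the limit must be ``explainable'' by (possibly infinitely many) applications of this relation that cohere into a global pattern. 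Making this precise means proving a stability/propagation lemma: if at stage $n$ the prefixes $\breve x_1\cdots\breve x_n$ and $\breve y_1\cdots\breve y_n$ are \emph{not} block-equivalent but $\mu_{\breve x_1\cdots\breve x_n}(a_n)$ and $\mu_{\breve y_1\cdots\breve y_n}(b_n)$ are consistent prefixes of $\bt$, then the spins to come are heavily constrained, and the pattern can only terminate (returning to block-equivalence, Case~\ref{Ti:1}) or persist forever in the Case~\ref{Ti:2} shape. I would isolate this as a separate lemma and reduce it, via Theorem~\ref{T:presentation}, to a finitary statement about when $\mu_{u_1} = \mu_{u_2}$ with $u_1$, $u_2$ differing only by spin on a prefix; the bi-infinite results of \cite{jJgP04epis} recalled above handle essentially this situation, and the new input of \cite{aGfLgR08dire} is precisely that these extend to the right-infinite (and periodic) case, so I would import that machinery rather than rebuild it.
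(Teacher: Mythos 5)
First, note that the paper you were given does not prove Theorem~\ref{T:directSame} at all: it is recalled verbatim from \cite{aGfLgR08dire}, so there is no in-paper proof to compare against. Your proposal therefore has to stand on its own, and as written it does not.

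The decisive problem is in your treatment of the hard direction i)$\,\Rightarrow\,$iii). You explicitly identify the crux --- the dichotomy forcing two spinned versions of the same $L$-spinned word that direct the same $\bt$ either to agree block-by-block (item~\ref{Ti:1}) or to lock into the rigid pattern of item~\ref{Ti:2} --- and then propose to ``import that machinery rather than rebuild it'' from \cite{jJgP04epis} and \cite{aGfLgR08dire}. But the extension of the bi-infinite analysis of \cite{jJgP04epis} to right-infinite words, together with the full resolution of the periodic case, \emph{is} the content of the theorem and the contribution of \cite{aGfLgR08dire}; invoking it is circular. The propagation lemma you gesture at (that a surviving spin discrepancy between prefixes forces the $xv\bar x$ versus $\bar x\bar v x$ structure forever) is precisely what must be proved, and your proposal contains no argument for it.

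There is a second, more localized error. In the reduction step you claim that Fact~\ref{F:episturmian} plus uniqueness of the $L$-spinned directive word of an epistandard word forces $\Delta_1$ and $\Delta_2$ to be spinned versions of the same word over $\cA$. This is Fact~\ref{F:aperiodic-spinned-versions}, and it holds \emph{only} in the aperiodic case: by Fact~\ref{F:subshift} the subshift of a periodic episturmian word contains two distinct epistandard words, and the paper's own example $(bcba)^\omega$, directed by both $bca^\omega$ and $b\bar a c^\omega$, shows the underlying $L$-spinned words can differ. This is exactly why item~\ref{Ti:4} allows $\bx\in\{x,\bar x\}^\omega$ and $\by\in\{y,\bar y\}^\omega$ with $x\neq y$ and only requires $\mu_w(x)=\mu_{w'}(y)$. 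Your argument applies the uniqueness claim \emph{before} splitting into the periodic/aperiodic cases, so the periodic branch of your case analysis never sees the configurations it is supposed to cover. (You also never address the equivalence with assertion ii).) The sketch of iii)$\,\Rightarrow\,$i) via Theorem~\ref{T:presentation} and the observation that $\mu_{\breve x_n}=\mu_{\hat x_n}$ exactly when $|\breve x_n|=|\hat x_n|$ and $|\breve x_n|_x=|\hat x_n|_x$ is sound in spirit, but that is the easy half.
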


In items \ref{Ti:1} and \ref{Ti:2} of Theorem~\ref{T:directSame}, the two considered directive words are spinned versions of the same $L$-spinned word. This does not hold in item \ref{Ti:4}, which concerns only periodic episturmian words. In particular, we make the following observation:

\begin{fact} 
\label{F:aperiodic-spinned-versions}  If an {\em aperiodic} episturmian word is directed by two spinned words $\Delta_1$ and $\Delta_2$, then $\Delta_1$ and $\Delta_2$ are spinned versions of the same $L$-spinned word $\Delta$. 
\end{fact}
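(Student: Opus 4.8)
The statement to prove is Fact~\ref{F:aperiodic-spinned-versions}: if an aperiodic episturmian word is directed by two spinned words $\Delta_1$ and $\Delta_2$, then $\Delta_1$ and $\Delta_2$ are spinned versions of the same $L$-spinned word $\Delta$.

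\textit{Proof plan.} The plan is to deduce this directly from Theorem~\ref{T:directSame}, which characterizes exactly when two spinned infinite words direct the same episturmian word. Since $\Delta_1$ and $\Delta_2$ direct the same (right-infinite) episturmian word, assertion (i) of Theorem~\ref{T:directSame} holds, hence assertion (iii) holds: one of the three cases \ref{Ti:1}, \ref{Ti:2}, \ref{Ti:4} applies (for some assignment $\{i,j\}=\{1,2\}$). The observation already made in the text immediately before the Fact is that in cases \ref{Ti:1} and \ref{Ti:2} the two directive words are spinned versions of the same $L$-spinned word, so in those two cases we are done. Thus the whole content of the argument is to rule out case~\ref{Ti:4} under the aperiodicity hypothesis.

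So the key step is: show that if $\Delta_1 = w\bx$ and $\Delta_2 = w'\by$ as in case~\ref{Ti:4}, with $\bx \in \{x,\bar x\}^\omega$ and $\by \in \{y,\bar y\}^\omega$ and $\mu_w(x) = \mu_{w'}(y)$, then the episturmian word they direct is periodic — contradicting aperiodicity. For this I would invoke the criterion recalled in the excerpt (from \cite[Prop.~2.9]{jJgP02epis} and Fact~\ref{F:episturmian}): an episturmian word directed by a spinned version $\breve\Delta$ of an $L$-spinned word $\Delta$ is periodic if and only if $|\Ult(\Delta)| = 1$. In case~\ref{Ti:4}, $\Delta_1$ is a spinned version of the $L$-spinned word $\Delta = \Delta^{(w)} x^\omega$ (where $\Delta^{(w)}$ is the underlying $L$-spinned word of $w$), which has $\Ult$ equal to $\{x\}$, a singleton. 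Hence the directed episturmian word is periodic. This is the contradiction, so case~\ref{Ti:4} cannot occur, and we conclude that $\Delta_1$, $\Delta_2$ are spinned versions of a common $L$-spinned word $\Delta$.

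\textit{Main obstacle.} There is essentially no obstacle: the statement is an almost immediate corollary of Theorem~\ref{T:directSame} together with the standard periodicity criterion. The only point requiring a word of care is the bookkeeping in case~\ref{Ti:4}, namely verifying that $w\bx$ being a directive word of an episturmian word forces that word to be periodic — one should note $\bx$ has infinitely many letters equal to $x$ (up to spin), and if $\bx$ itself had only finitely many $L$-spinned letters one appeals to Proposition~3.11 of \cite{jJgP02epis} (recalled in the excerpt) to see it still directs a well-defined episturmian word, which agrees in factor set with the epistandard word directed by $\Delta^{(w)}x^\omega$; either way $|\Ult| = 1$ and periodicity follows. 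It is also worth remarking that cases \ref{Ti:1}, \ref{Ti:2}, \ref{Ti:4} of Theorem~\ref{T:directSame}(iii) need not be mutually exclusive, but that does not matter: we only need that whenever the two words direct the same aperiodic episturmian word, the applicable case is \ref{Ti:1} or \ref{Ti:2}, which is exactly what the periodicity argument against \ref{Ti:4} gives.
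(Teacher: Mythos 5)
Your proposal is correct and follows exactly the route the paper intends: the Fact is stated as an immediate observation after Theorem~\ref{T:directSame}, where the text notes that items \ref{Ti:1} and \ref{Ti:2} yield spinned versions of the same $L$-spinned word while item \ref{Ti:4} concerns only periodic words (via the criterion $|\Ult(\Delta)|=1$). Your write-up merely makes this implicit argument explicit, so nothing further is needed.
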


As an example of item \ref{Ti:4}, one can consider the periodic episturmian word $(bcba)^\omega$ which is directed by both $bca^\omega$ and $b \bar ac^\omega$. Note also that $(bcba)^\omega$ is epistandard and has the same set of factors as the epistandard word $(babc)^\omega$ directed by $bac^\omega$. Actually, in view of Fact~\ref{F:episturmian}, we observe the following:

\begin{fact} \label{F:subshift}  The subshift of any aperiodic episturmian word contains a unique (aperiodic) epistandard word, whereas the subshift of a periodic episturmian word contains exactly two (periodic) epistandard words, except if this word is $a^\omega$ with $a$ a letter. 
\end{fact}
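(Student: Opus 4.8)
\textbf{Proof plan for Fact~\ref{F:subshift}.}

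The plan is to derive everything from Fact~\ref{F:episturmian} together with the periodicity criterion recalled just before it (an episturmian word directed by a spinned version $\breve\Delta$ of an $L$-spinned word $\Delta$ is periodic iff $|\Ult(\Delta)| = 1$, i.e.\ $\Delta$ is ultimately constant), and from Theorem~\ref{T:directSame}. First I would fix an episturmian word $\bt$ with directive word $\breve\Delta$, and observe that its subshift (the closure of the shift orbit) consists exactly of all episturmian words with the same set of factors as $\bt$; this is the minimal dynamical system statement recalled at the end of Section~\ref{SS:morphicDecomp}. By Fact~\ref{F:episturmian} these are precisely the episturmian words directed by some spinned version of the $L$-spinned word $\Delta$ underlying $\breve\Delta$. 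Among these we want to count the \emph{epistandard} ones, i.e.\ those directed by an $L$-spinned word with the same factor set.

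For the aperiodic case, the epistandard word $\bs$ directed by $\Delta$ lies in the subshift and is epistandard, so there is at least one; for uniqueness, suppose $\bs'$ is an epistandard word in the subshift. Then $\bs'$ is directed by an $L$-spinned word $\Delta'$, and by Fact~\ref{F:episturmian} (applied to $\bs'$ playing the role of $\bt$) the epistandard word directed by $\Delta'$ has the same factors as $\bs'$, hence as $\bs$; since $\bs'$ is itself epistandard and an epistandard word is determined by its factor set (by Theorem~1 of \cite{xDjJgP01epis}, the uniqueness of the $L$-spinned directive word together with the fact that epistandard words with equal factor sets coincide), we get $\bs' = \bs$. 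Alternatively, and more in the spirit of the tools at hand: $\Delta$ and $\Delta'$ both direct words in the same subshift, so they direct the same set of episturmian words; in the aperiodic case Fact~\ref{F:aperiodic-spinned-versions} forces any two directive words to be spinned versions of one $L$-spinned word, so $\Delta = \Delta'$ and thus $\bs' = \bs$.

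For the periodic case (with $\bt \neq a^\omega$), we have $|\Ult(\Delta)| = 1$, say $\Ult(\Delta) = \{a\}$, and $\Delta = w a^\omega$ for some finite $L$-spinned word $w$ containing at least one letter $\neq a$ (otherwise $\bt = a^\omega$). The epistandard words in the subshift are the periodic epistandard words with the same factor set as $\bt$; I would use item~\ref{Ti:4} of Theorem~\ref{T:directSame}, which governs exactly the periodic case, to enumerate the $L$-spinned words directing words cofactor-equivalent to $\bt$. Concretely, writing $\bs = (\mu_{\breve w}(a))^\omega$ where $\breve w$ is the prefix of $\Delta$ before the final $a^\omega$ block, one checks that the periodic word $\mu_{\breve w}(a)^\omega$ admits exactly two ``rotation classes'' of periodic pattern that are themselves images of a single letter under a pure epistandard morphism — corresponding to the two cyclic factorizations $xv$ and $vx$ where $\mu_{\breve w}(a) = xv$ with $x$ a letter — and these give rise to precisely two $L$-spinned directive words (as in the worked example $(bcba)^\omega$ directed by $bac^\omega$, with the other epistandard word in the subshift being $(babc)^\omega$). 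The main obstacle is this last count: showing that a periodic episturmian word's subshift contains \emph{exactly} two epistandard words, neither more nor fewer, which requires carefully translating the ``$\mu_w(x) = \mu_{w'}(y)$'' condition of item~\ref{Ti:4} into a statement about the finitely many ways to write the periodic pattern as $\mu_{\text{(epistandard)}}(\text{letter})$, and ruling out coincidences when the pattern has extra internal symmetry (the degenerate endpoint being $a^\omega$, where the two classes collapse to one, which is why that case is excluded).
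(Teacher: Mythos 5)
The paper states Fact~\ref{F:subshift} as an observation following Fact~\ref{F:episturmian} and the worked example $(bcba)^\omega$/$(babc)^\omega$, with no written proof, so the question is whether your argument stands on its own. It does not, in two places. In the aperiodic case, your uniqueness argument rests on ``an epistandard word is determined by its factor set'', which is essentially the assertion being proved (and is false without the aperiodicity hypothesis --- the periodic half of the Fact is exactly the counterexample); your fallback via Fact~\ref{F:aperiodic-spinned-versions} does not close the gap either, because that fact concerns two directive words of the \emph{same} episturmian word, whereas $\Delta$ and $\Delta'$ a priori direct the two possibly distinct words $\bs$ and $\bs'$, and ``lying in the same subshift'' is not the same as ``directing the same word''. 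The standard repair is via left special factors: aperiodicity forces exactly one left special factor of each length, these are nested, and any epistandard word in the subshift must be their limit, which gives uniqueness directly.

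More seriously, the periodic count is the entire content of the second half of the statement, and you explicitly leave it as ``the main obstacle''. Writing ``one checks that $\mu_{\breve w}(a)^\omega$ admits exactly two rotation classes\ldots'' names the claim rather than proving it. Moreover, item~\ref{Ti:4} of Theorem~\ref{T:directSame} is not the right tool: it characterizes when two spinned words direct the \emph{same} periodic episturmian word, whereas here you must count \emph{distinct} epistandard words sharing one factor set. A workable route is: the subshift of a periodic episturmian word with primitive period $u$, $|u|\geq 2$, is the finite set of rotations of $u^\omega$; there is no left special factor of length $\geq |u|$ (all occurrences of a conjugate of $u$ lie at the same position modulo $|u|$, by primitivity), so there is a longest left special factor $v$, the epistandard words in the subshift are exactly the rotations beginning with $v$, and one must then show that $v$ occurs exactly twice per period (collapsing to once only when $u$ is a single letter). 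None of that counting appears in your proposal, so the second half of the Fact remains unproved.
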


\subsection{Normalized directive word of an episturmian word} \label{SS:normalized}

 Items \ref{Ti:2} and \ref{Ti:4} of Theorem~\ref{T:directSame} show that any episturmian word is directed by a spinned word having infinitely many $L$-spinned letters, but also by a spinned infinite word having both infinitely many $L$-spinned letters and infinitely many $R$-spinned letters. To emphasize the importance of these facts, let us recall from Proposition~3.11 in \cite{jJgP02epis} that if $\breve\Delta$ is a spinned infinite word over $\cA \cup \bar\cA$ with infinitely many $L$-spinned letters, then there exists a unique episturmian word $\bt$ directed by $\breve\Delta$. Unicity comes from the fact that the first letter of $\bt$ is fixed by the first $L$-spinned letter in $\breve\Delta$.

To work on Sturmian words, Berthé, Holton and Zamboni~\cite{vBcHlZ06init} proved that 
any Sturmian word has a unique directive word over $\{a,b,\bar a, \bar b\}$ containing infinitely many $L$-spinned letters but no factor of the form ${\bar a}{\bar b}^na$ or ${\bar b}{\bar a}^nb$ with $n$ an integer. 
Using Theorems~\ref{T:presentation} and \ref{T:directSame}, we recently generalized this result  to episturmian words:

\begin{thrm}\label{T:normalisation} {\em \cite{aGfLgR08dire, fLgR07quasB}}
Any episturmian word $\bt$ has a spinned directive word containing infinitely many $L$-spinned letters, but no factor in ${\bigcup}_{a\in\cA} \bar a \bar\cA^*a$. Such a directive word is unique if $\bt$ is aperiodic.
\end{thrm}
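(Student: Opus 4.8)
The plan is to obtain the normalized directive word by repeatedly ``pushing $R$-spins to the right'' with block-transformations until no forbidden factor survives, and to get uniqueness (in the aperiodic case) from the classification of directive words in Theorem~\ref{T:directSame}. It is cleanest to isolate the combinatorial core first on finite spinned words.

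\emph{A reduction, and the finite case.} Call a factor $\bar x\bar v x$ with $x\in\cA$ and $v\in(\cA\setminus\{x\})^*$ (so that $xvx$ is a block) a \emph{minimal forbidden factor}. A spinned word contains a factor in $\bigcup_{a\in\cA}\bar a\bar\cA^* a$ if and only if it contains a minimal forbidden factor: given a factor in $\bar a\bar\cA^* a$, take its shortest suffix still lying in $\bar a\bar\cA^* a$; that suffix has $a$-free middle, hence is a minimal forbidden factor. Now a minimal forbidden factor is precisely the left-hand side of a block-transformation, so by Theorem~\ref{T:presentation} replacing $\bar x\bar v x$ by $x v\bar x$ preserves the directed morphism. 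For a finite spinned word $w$, iterating such replacements terminates, since the nonnegative integer $W(w)=\sum\{2^{\,n}:\text{the $n$-th letter of }w\text{ is }R\text{-spinned}\}$ strictly increases at each step; and it is confluent, because two \emph{distinct} minimal forbidden factors are always position-disjoint (each carries its unique $L$-spinned letter at its right end and is $x$-free in between, which a short case analysis shows forbids any overlap), so the rewriting is locally confluent, hence — by termination and Newman's lemma — has a unique normal form. Thus every finite spinned word is block-equivalent to a unique spinned word with no factor in $\bigcup_a\bar a\bar\cA^* a$.

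\emph{Existence for infinite words.} By the basic theory of directive words (cf.\ Proposition~3.11 of~\cite{jJgP02epis}), $\bt$ has some directive word $\breve\Delta^{(0)}$ with infinitely many $L$-spinned letters. Define $\breve\Delta^{(k+1)}$ from $\breve\Delta^{(k)}$ by performing the block-transformation at the \emph{left-most} minimal forbidden factor of $\breve\Delta^{(k)}$ (stop if there is none). Each $\breve\Delta^{(k)}$ still directs $\bt$: the step replaces one finite factor by a block-equivalent one, so Theorem~\ref{T:presentation} together with item~\ref{Ti:1} of Theorem~\ref{T:directSame} applies. With the weight $w(\breve\Delta)=\sum\{2^{-n}:\text{position }n\text{ is }R\text{-spinned}\}\le1$, a direct estimate shows that a step whose transformed factor begins at position $i$ decreases $w$ by at least $2^{-i-1}$; hence $\sum_k 2^{-i_k}<\infty$, so the transformation sites satisfy $i_k\to\infty$ and every finite prefix eventually freezes, defining a limit $\breve\Delta^{*}$. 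Then $\breve\Delta^{*}$ has no forbidden factor (any would lie in a frozen prefix, which would still contain a minimal forbidden factor, contradicting fairness of the left-most rule); $\breve\Delta^{*}$ still has infinitely many $L$-spinned letters, because the number of $L$-spinned positions inside any fixed prefix is nondecreasing along the process (a step turns $|v|+1$ $R$-spins into $L$-spins and at most one $L$-spin into an $R$-spin, and this stays true after intersecting with a prefix), so the $L$-spin count of the length-$n$ prefix of $\breve\Delta^{*}$ is at least that of $\breve\Delta^{(0)}$, which tends to $\infty$; and finally $\breve\Delta^{*}$ directs $\bt$, by the standard limiting argument (episturmian morphisms are injective and a spinned word with infinitely many $L$-spinned letters determines its episturmian word).

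\emph{Uniqueness for aperiodic $\bt$, and the main obstacle.} Suppose $\breve\Delta,\breve\Delta'$ both direct the aperiodic word $\bt$, each with infinitely many $L$-spinned letters and no forbidden factor. By Fact~\ref{F:aperiodic-spinned-versions} they are spinned versions of one $L$-spinned word, so Theorem~\ref{T:directSame}(iii) applies in case~\ref{Ti:1} or~\ref{Ti:2}, case~\ref{Ti:4} being periodic. Case~\ref{Ti:2} is impossible: reading one of the two words from the marked position gives $\bar x$ followed only by $R$-spinned letters up to the first later $L$-spinned occurrence of $x$ (which exists as that word has infinitely many $L$-spins), hence a factor in $\bar x\bar\cA^* x$ — contradiction. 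In case~\ref{Ti:1}, $\breve\Delta=\prod_{n\ge1} v_n$ and $\breve\Delta'=\prod_{n\ge1} z_n$ with $v_n\equiv z_n$; if $v_n\ne z_n$ for some $n$, then (by the finite uniqueness above) one of $v_n,z_n$ is not its finite normal form and so contains a forbidden factor, which is then a factor of $\breve\Delta$ or $\breve\Delta'$ — contradiction. Hence $v_n=z_n$ for all $n$ and $\breve\Delta=\breve\Delta'$ (the degenerate case $a^\omega$, directed only by $a^\omega$, is covered too). I expect the genuinely delicate point to be the convergence of the infinite normalization: a block-transformation inserts a fresh $R$-spin at the right end of the transformed block, which can spawn a new forbidden factor further right (and can even lengthen a forbidden factor starting to its left), so forbidden factors are not removed monotonically position by position; making this work requires combining the potential estimate (forcing $i_k\to\infty$, so prefixes freeze), the prefix-wise monotonicity of the $L$-spin count, and the fairness of the left-most rule.
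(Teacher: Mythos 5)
The paper does not actually prove Theorem~\ref{T:normalisation} here --- it is quoted from \cite{aGfLgR08dire, fLgR07quasB} --- but the ``main idea'' it records (using the relations of Theorem~\ref{T:presentation} to trade decompositions in $\bar a\bar\cA^*a$ for decompositions in $a\cA^*\bar a$) is exactly the rewriting you implement, and your argument checks out: the weight $W$ does increase by $2^i$ per finite step, distinct minimal forbidden factors are indeed position-disjoint (so termination plus Newman's lemma gives unique finite normal forms), the potential $w$ drops by at least $2^{-i-1}$ per step so the rewriting sites escape to infinity and prefixes freeze, and cases~\ref{Ti:2} and~\ref{Ti:4} of Theorem~\ref{T:directSame} are correctly excluded in the uniqueness part. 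The one step you leave as ``standard'' --- that the limit $\breve\Delta^{*}$ still directs $\bt$ --- deserves a line: for each $N$ the length-$N$ prefix $\breve u_N$ of $\breve\Delta^{*}$ is eventually a prefix of a directive word of $\bt$, so by Fact~\ref{F:from_referee} and formula~\eqref{eq:f-shift-infinite} both $\bt$ and the word directed by $\breve\Delta^{*}$ begin with $S_{\breve u_N}^{-1}Pal(u_N)$, whose length is at least the number of $L$-spinned letters in $\breve u_N$ and hence tends to infinity, forcing the two words to coincide.
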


Note that unicity does not necessarily hold for periodic episturmian words. For example, the periodic episturmian word $(ab)^\omega = L_a(b^\omega) = R_b(a^\omega)$ is directed both by $a{b}^\omega$ and by $\bar{b}a^\omega$ ($L_a(b) = ab = R_b(a)$).

A directive word of an aperiodic episturmian word $\bt$ with the above property is called the {\em normalized directive word} of $\bt$. We extend this definition to morphisms: a finite spinned word $w$ is said to be a {\em normalized directive word} of the morphism $\mu_w$ if $w$ has no factor in ${\bigcup}_{a\in\cA} \bar a \bar\cA^*a$.

\medskip

One can observe from Theorem~\ref{T:presentation} that for any morphism in $L_a \mathcal{L}_\cA^\ast R_a$, we can find another decomposition of the morphism in the set $R_a \mathcal{R}_\cA^\ast L_a$. Equivalently, for any spinned word in $a\cA^*{\bar a}$, there exists a word $w'$ in ${\bar a}{\bar \cA}^*a$ such that $\mu_w = \mu_{w'}$. This was the main idea used in the proof of Theorem~\ref{T:normalisation}. 

\medskip

\begin{xmpl}
Let $f$ be the pure episturmian morphism with directive word
${\bar a}{\bar b}c{\bar b}a{\bar b}{\bar a}{\bar c}{\bar b} {\bar a}{\bar c}a$.  
By Theorem~\ref{T:presentation},  
$\mu_{{\bar a}{\bar c}{\bar b}{\bar a}{\bar c}a} = 
 \mu_{{\bar a}{\bar c}{\bar b}ac{\bar a}} = 
 \mu_{acb{\bar a}c{\bar a}}$ and hence 
$f= \mu_{{\bar a}{\bar b}c{\bar b}a{\bar b}{\bar a}{\bar c}{\bar b} {\bar a}{\bar c}a} = 
\mu_{{\bar a}{\bar b}c{\bar b} a {\bar b}acb{\bar a}c{\bar a}}$
and ${\bar a}{\bar b}c{\bar b}a{\bar b}acb{\bar a}c{\bar a}$ is the normalized directive word of $f$.
\end{xmpl}

\subsection{\label{SS:uniqueDirective}Episturmian words having a unique directive word}

Using our characterization of pairs of words directing the same episturmian word (Theorem~\ref{T:directSame}) together with normalization (Theorem~\ref{T:normalisation}), we recently characterized episturmian words having a unique directive word.

\begin{thrm} \label{T:uniqueDirective} {\em \cite{aGfLgR08dire}}
An episturmian word has a unique directive word if and only if its (normalized) directive word contains 
1) infinitely many $L$-spinned letters,
2) infinitely many $R$-spinned letters, 
3) no factor in ${\bigcup}_{a\in\cA} \bar a \bar\cA^*a$, 
4) no factor in ${\bigcup}_{a\in\cA} a \cA^*\bar a$. Such an episturmian word is necessarily aperiodic.
\end{thrm}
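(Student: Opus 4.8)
\textbf{Proof plan for Theorem~\ref{T:uniqueDirective}.}

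The plan is to prove the two implications separately, leaning heavily on Theorems~\ref{T:directSame} and~\ref{T:normalisation}. For the ``only if'' direction, I would argue by contraposition: assume the normalized directive word $\breve\Delta$ of $\bt$ fails one of the four conditions and exhibit a second directive word. Condition 3 is automatic from normalization (Theorem~\ref{T:normalisation}), so it serves only to pin down which directive word we are testing; the real work is with conditions 1, 2, and 4. If $\breve\Delta$ has only finitely many $L$-spinned letters, it must then (by normalization, having no factor $\bar a\bar\cA^*a$) be eventually $R$-spinned, forcing $\bt$ to be periodic, and a periodic episturmian word is directed by at least two spinned words (apply item~\ref{Ti:4} of Theorem~\ref{T:directSame}, or recall the example $(ab)^\omega$); so condition 1 is necessary. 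If $\breve\Delta$ has only finitely many $R$-spinned letters, then from some point on it is $L$-spinned, and item~\ref{Ti:2} of Theorem~\ref{T:directSame} (taking the tail $x\prod v_n x\cdots$ with all spins $L$ and flipping to the ``barred'' form) produces a genuinely different directive word; so condition 2 is necessary. Finally, if $\breve\Delta$ contains a factor in $\bigcup_{a\in\cA} a\cA^*\bar a$, say $av\bar a$ with $avb$ a block, then Theorem~\ref{T:presentation} (the relation $R_{a_1}\cdots R_{a_k}L_{a_1}=L_{a_1}\cdots L_{a_k}R_{a_1}$, read backwards as the passage from $a\cA^*\bar a$ to $\bar a\bar\cA^*a$ noted just after Theorem~\ref{T:normalisation}) lets us rewrite that finite block, yielding a spinned word $\breve\Delta'\ne\breve\Delta$ with $\mu_{\breve\Delta'}=\mu_{\breve\Delta}$ on the relevant prefix and hence directing the same $\bt$; so condition 4 is necessary.

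For the ``if'' direction, suppose $\breve\Delta$ satisfies 1)--4) and let $\breve\Delta'$ be any spinned word directing $\bt$. By condition 1, $\bt$ is aperiodic (if it were periodic, $\Ult(\Delta)$ would be a single letter and normalization would force the tail to be $R$-spinned, contradicting~1), so Fact~\ref{F:aperiodic-spinned-versions} tells us $\breve\Delta$ and $\breve\Delta'$ are spinned versions of the same underlying $L$-spinned word $\Delta$. Thus we are exactly in the situation governed by items~\ref{Ti:1} and~\ref{Ti:2} of Theorem~\ref{T:directSame} (item~\ref{Ti:4} is excluded since $\bt$ is aperiodic). I would then show that conditions 1--4 force the decompositions in items~\ref{Ti:1} and~\ref{Ti:2} to be trivial: in item~\ref{Ti:1} the local relations $\mu_{v_n}=\mu_{z_n}$ between finite blocks, combined with the absence of factors in $\bigcup_a \bar a\bar\cA^*a$ (condition 3) and $\bigcup_a a\cA^*\bar a$ (condition 4) in $\breve\Delta$, force $v_n=z_n$ letter-by-letter, since any nontrivial block-transformation (Theorem~\ref{T:presentation}) would create or destroy a forbidden factor; and in item~\ref{Ti:2} the prefix $wx\prod v_n\breve x_n$ versus $w'\bar x\prod\bar v_n\hat x_n$ construction needs an $L$-spinned letter $x$ followed by a run that ultimately gets flipped, which is incompatible with $\breve\Delta$ having both infinitely many $L$- and infinitely many $R$-spinned letters (conditions 1 and 2) interleaved without the requisite monochromatic tail structure — more precisely, one checks that the $x$ isolating the tail cannot exist because condition 4 forbids the factor $x\cA^*\bar x$ that would be needed to synchronize $\breve x_n$ with $\hat x_n$. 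Hence $\breve\Delta'=\breve\Delta$.

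The main obstacle I anticipate is the ``if'' direction, specifically ruling out item~\ref{Ti:2} of Theorem~\ref{T:directSame} cleanly. Items~\ref{Ti:1} is a straightforward induction on the block decomposition using the presentation, but item~\ref{Ti:2} has a genuinely infinite, asymmetric shape ($x$-free $L$-spinned blocks $v_n$ separated by spinned words $\breve x_n,\hat x_n$ over $\{x,\bar x\}$ with matching lengths and $x$-counts), and one must carefully trace how conditions 1--4, imposed on the \emph{single} word $\breve\Delta$, rule out both $\Delta_i=\breve\Delta$ and $\Delta_j=\breve\Delta$ being of that form. The key finite observation that unlocks this is again Theorem~\ref{T:presentation}: every block-transformation sends a factor $xv\bar x$ (with $xvx$ a block, so $v$ is $x$-free) to $\bar x\bar v x$, i.e.\ it moves a forbidden-type factor from the $\bigcup_a a\cA^*\bar a$ family to the $\bigcup_a \bar a\bar\cA^*a$ family or vice versa; so the simultaneous prohibition of \emph{both} families (conditions 3 and 4) means no block-transformation applies at all, and combined with the two-sidedness of the spin distribution (conditions 1 and 2) this is exactly the rigidity needed. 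Once this is isolated, the rest is bookkeeping, and the final sentence of the theorem (aperiodicity) follows as already noted from condition 1 via Proposition~2.9 of~\cite{jJgP02epis}.
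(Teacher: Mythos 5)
The paper itself gives no proof of Theorem~\ref{T:uniqueDirective} --- it is imported from \cite{aGfLgR08dire} with the remark that it follows from Theorems~\ref{T:directSame} and~\ref{T:normalisation} --- and that is precisely the route you take; your plan is sound, including the key rigidity observation that forbidding both families $\bigcup_a a\cA^*\bar a$ and $\bigcup_a \bar a\bar\cA^*a$ blocks every block-transformation of Theorem~\ref{T:presentation} (which settles item~\ref{Ti:1}) while conditions 3) and 4) force the tails $\breve x_n$, $\hat x_n$ in item~\ref{Ti:2} to be monochromatic, contradicting 1) or 2). Two small corrections: in the ``only if'' direction, conditions 1) and 3) hold automatically for the normalized directive word by Theorem~\ref{T:normalisation}, so nothing need be proved there (and an ultimately $R$-spinned directive word does not by itself force periodicity, so that part of your argument for condition~1) is both unnecessary and not quite right); and in the condition~4) step the factor must be $av\bar a$ with $ava$ a block, i.e.\ $v$ an $a$-free $L$-spinned word --- which you can always extract from an arbitrary factor in $a\cA^*\bar a$ by passing to the last $L$-spinned occurrence of $a$ before the final $\bar a$ --- rather than ``$avb$ a block'' as written.
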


As an example, a particular family of episturmian words having unique directive words consists of those directed by {\em regular wavy words}, i.e., spinned infinite words having both infinitely many $L$-spinned letters and infinitely many $R$-spinned letters such that each letter occurs with the same spin everywhere in the directive word. More formally, a spinned version $\breve w$ of a finite or infinite word $w$ is said to be {\em regular} if, for each letter $x \in \Alph(w)$, all occurrences of $\breve x$ in $\breve w$ have the same spin $(L$ or $R)$. For example, $a\bar b a a \bar c \bar b$ and $(a\bar b c)^\omega$ are regular, whereas $a \bar b a \bar a \bar c b$ and $(a \bar b \bar a)^\omega$ are not regular. In \cite{jJgP04epis}, a spinned infinite word is said to be {\em wavy} if it contains infinitely many $L$-spinned letters and infinitely many $R$-spinned letters. For example, the two preceding infinite words are wavy.

In the Sturmian case, we have:

\begin{prpstn} \label{P:uniqueDirective-Sturmian} {\em \cite{aGfLgR08dire}}
Any Sturmian word has a unique spinned directive word or infinitely many spinned directive words. Moreover, a Sturmian word has a unique directive word if and only if its (normalized) directive word is regular wavy.
\end{prpstn}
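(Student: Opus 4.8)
\textbf{Proof plan for Proposition~\ref{P:uniqueDirective-Sturmian}.}

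The plan is to deduce both assertions from the general episturmian results, specializing everything to a $2$-letter alphabet $\cA = \{a, b\}$. First I would reduce the periodic case: by Fact~\ref{F:episturmian} and the remarks following Theorem~\ref{T:episturmian}, a periodic episturmian word over $\{a,b\}$ is not a Sturmian word (Sturmian words are precisely the \emph{aperiodic} episturmian words on a $2$-letter alphabet), so every Sturmian word is aperiodic and, by Fact~\ref{F:aperiodic-spinned-versions}, any two of its directive words are spinned versions of a common $L$-spinned word $\Delta$. Since $|\Ult(\Delta)| = 2$ (aperiodicity), $\Delta$ itself has infinitely many occurrences of $a$ and infinitely many of $b$.

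For the dichotomy ``unique or infinitely many'', I would invoke Theorem~\ref{T:directSame}: if a Sturmian word $\bt$ has two distinct directive words, then (case~\ref{Ti:4} being excluded by aperiodicity) either case~\ref{Ti:1} or case~\ref{Ti:2} of item iii) applies. In either case one produces, from a single block-transformation or a single morphism-equivalence $\mu_{v_n} = \mu_{z_n}$ witnessing the difference, a nontrivial replacement that can be performed independently and repeatedly at infinitely many positions of $\Delta$ (using that both letters occur infinitely often, so the relevant block pattern $xv\bar x$ with $v$ $x$-free recurs arbitrarily far out). Each independent choice yields a genuinely different directive word, producing a $2^{\aleph_0}$-sized family — in particular infinitely many. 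Hence a Sturmian word either has exactly one directive word or infinitely many. I would spell this out concretely on the binary alphabet: the only block-transformations available replace $a\bar b^n a \leftrightarrow \bar a b^n \bar a$ (up to exchanging $a$ and $b$), and such a pattern, once it occurs, recurs infinitely often.

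For the ``if and only if'', I would apply Theorem~\ref{T:uniqueDirective} directly. Conditions (1) and (3) there always hold for (the normalized directive word of) an aperiodic episturmian word by Theorem~\ref{T:normalisation} together with aperiodicity. So for a Sturmian word, uniqueness is equivalent to conditions (2) and (4): infinitely many $R$-spinned letters, and no factor in $a\cA^*\bar a \cup b\cA^*\bar a$ — wait, in $\bigcup_{x\in\{a,b\}} x\{a,b\}^*\bar x$. It remains to check that, for a binary normalized directive word already having infinitely many $L$-spinned letters and no factor in $\bar x\bar\cA^* x$, the conjunction of (2) and (4) is \emph{equivalent} to being regular wavy. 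One direction is immediate: a regular wavy word has infinitely many letters of each spin (wavy) and, being regular, cannot contain $xv\bar x$ or $\bar x\bar v x$ for a common letter $x$, giving (2) and (4). Conversely, if conditions (1)--(4) hold, I would argue that each of $a, b$ must occur with a single fixed spin throughout: a spin-change for the letter $x$, say from $L$ to $R$ reading left to right, would — after possibly contracting using that the other letter has fixed behaviour locally — create a factor $xv\bar x$ with $v \in \{a,b\}^*$ actually $x$-free (here binarity is essential: $v$ involves only the other letter), violating (4); a change from $R$ to $L$ would similarly create a factor in $\bar x\bar\cA^* x$, violating (3). Hence every letter is uniformly spinned, i.e.\ the word is regular, and it is wavy by (1)+(2); so it is regular wavy.

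The main obstacle I anticipate is the converse direction of the last equivalence: carefully justifying that a spin-change for one letter forces a forbidden factor. The subtlety is that between the two differently-spinned occurrences of $x$ there could be other occurrences of $x$; one must pick the \emph{closest} pair of consecutive $x$-occurrences across which the spin changes, so that the block $x v \bar x$ (or $\bar x v x$) genuinely has $x$-free interior $v$ — and this is exactly where the $2$-letter hypothesis does the work, since then $v$ is a (possibly empty) power of the other letter and the forbidden-factor conditions (3)/(4) bite directly. Everything else is routine specialization of the cited theorems.
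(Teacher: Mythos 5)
First, a remark on the comparison itself: the paper does not prove Proposition~\ref{P:uniqueDirective-Sturmian} — it is quoted from \cite{aGfLgR08dire} — so there is no in-paper argument to measure yours against; I can only assess your proposal on its own terms. Your reduction is sound up to a point: Sturmian words are aperiodic, Fact~\ref{F:aperiodic-spinned-versions} applies, uniqueness is equivalent to conditions (2) and (4) of Theorem~\ref{T:uniqueDirective} since (1) and (3) hold automatically for the normalized directive word by Theorem~\ref{T:normalisation}, and ``regular wavy $\Rightarrow$ (2) and (4)'' is immediate. The two remaining steps, however, each contain a genuine gap. For the dichotomy ``one or infinitely many'', your justification is that the block pattern $xv\bar x$ ``recurs arbitrarily far out'' because both letters occur infinitely often in $\Delta$. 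That conflates recurrence of the letter pattern in the $L$-spinned word $\Delta$ with recurrence of the \emph{spinned} pattern in $\breve\Delta$, and it is false: $\breve\Delta = ab(\bar a b)^\omega$ is the normalized directive word of a Sturmian word and admits exactly one transformable block, the prefix $ab\bar a$ (there is only one $L$-spinned $a$ and no $\bar b$). Infinitely many directive words do exist, but only because each transformation $ab\bar a\to\bar a\bar b a$ creates a fresh block one step further to the right, or alternatively because item~\ref{Ti:2} of Theorem~\ref{T:directSame} can be applied with longer and longer prefixes $w$; you give neither argument, and you would also have to explain why the iteration cannot stop after finitely many steps on two letters, since on three letters it does (the paper exhibits $ba(\bar bc\bar a)^\omega$ with exactly two directive words).

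The converse direction ``(1)--(4) $\Rightarrow$ regular'' has a similar problem. Choosing the closest pair of occurrences of $x$ across which the spin changes makes the interior $v$ $x$-free, but not $L$-spinned (resp.\ $R$-spinned), and membership in $x\cA^*\bar x$ (resp.\ $\bar x\bar\cA^* x$) requires a uniformly spinned interior. Worse, your claim that an $R\to L$ spin change for $x$ produces a factor in $\bar x\bar\cA^* x$, violating (3), is simply wrong: $\bar a b a(\bar b a)^\omega$ is normalized, the letter $a$ changes spin from $R$ to $L$ across the $L$-spinned letter $b$, and no factor of $\bar a\bar\cA^* a$ arises; non-uniqueness there is witnessed instead by the factor $ba\bar b$, i.e.\ by the \emph{other} letter and condition (4). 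The missing ingredient is a structural lemma: in a normalized binary directive word with infinitely many $L$-spinned letters, every maximal run of $R$-spinned letters is a power of a single letter (a run containing both $\bar a$ and $\bar b$ cannot be followed by any $L$-spinned letter without creating a factor in $\bar a\bar\cA^* a\cup\bar b\bar\cA^* b$). With that in hand, an $L$-spinned $x$ preceding an $R$-spinned $x$ does force a factor in $x\cA^*\bar x$, and the remaining non-regular configurations compatible with (1)--(3) can be ruled out or shown to violate (4) via the other letter. That lemma, and the ensuing case analysis, is where the actual work lies, and it is absent from your proposal.
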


In the next section, we shall see that any episturmian word having a unique directive word is necessarily non-quasiperiodic. This will follow from Theorem~\ref{T:uniqueDirective} and our characterizations of quasiperiodic episturmian words (Theorems~\ref{T:episturmianQuasiperiods}, \ref{T:quasi-characterization}, and \ref{T:normalized-quasi-characterization}).

\section{Quasiperiodicity of episturmian words} \label{S:quasiperiodicity}

\subsection{Quasiperiodicity}

Recall (from \cite{aAmFcI91opti, IM1999, sM04quas}) that a finite or infinite word $w$ is quasiperiodic if it can be constructed by concatenations and superpositions of one of its proper factors $u$, which is called a \textit{quasiperiod} of $w$ (or the \textit{smallest quasiperiod} of $w$ when it is of minimal length). We also say that $u$ covers $w$ or that $w$ is $u$-quasiperiodic. For example, the word $w=abaababaabaababaaba$ has $aba$, $abaaba$, $abaababaaba$ as quasiperiods, and the smallest quasiperiod of $w$ is $aba$. Words that are not quasiperiodic are naturally called {\em non-quasiperiodic} words.

When defining infinite quasiperiodic words, for convenience, we consider the words preceding the occurrences of a quasiperiod: an infinite word $\bw$ is
\textit{quasiperiodic} if and only if there exist a finite word $u$ and words
$(p_n)_{n\geq 0}$ such that $p_0 = \varepsilon$, 
$|p_n| < |p_{n+1}| \leq |p_nu|$, and $p_nu$ is a prefix of
$\bw$ for all $n\geq 0$. Then $u$ is a \textit{quasiperiod} of $\bw$ and we say that the sequence
$(p_nu)_{n\geq 0}$ is \textit{a covering sequence of prefixes of the
word $\bw$}. Necessarily, any quasiperiod of a quasiperiodic word must be a prefix of it.

Readers will find several examples of infinite quasiperiodic words in \cite{fLgR04quas,Mar2002,sM04quas}. Let us mention for instance that the {\em Fibonacci word}, directed by $(ab)^\omega$, is $aba$-quasiperiodic (see \cite{fLgR04quas}).

Let us now recall a simple, yet important, fact about quasiperiodic words.

\begin{fact}\label{rnew4}\label{fLgR07quas}
If $w$ is a (finite or infinite) $u$-quasiperiodic word and $f$ is a non-erasing morphism, then $f(w)$ is $f(u)$-quasiperiodic.
\end{fact}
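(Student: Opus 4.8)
The plan is to unwind the definition of quasiperiodicity from Section~\ref{S:quasiperiodicity} and push it through $f$ directly. Suppose first that $w$ is finite and $u$-quasiperiodic, so every position of $w$ lies within an occurrence of $u$. Since $f$ is a (non-erasing) morphism, $f(w)$ is obtained by concatenating the blocks $f(x)$ for $x$ ranging over the letters of $w$, and an occurrence of $u$ starting at position $i$ in $w$ gives, after applying $f$, an occurrence of $f(u)$ at a corresponding position in $f(w)$. Every letter $x$ of $w$ is covered by some occurrence of $u$, hence every block $f(x)$ — and so every position of $f(w)$ — is covered by the corresponding occurrence of $f(u)$. One also needs $f(u) \ne f(w)$, which holds because $u$ is a proper prefix of $w$ (any quasiperiod is a prefix) and $f$ is non-erasing, so $f(u)$ is a proper prefix of $f(w)$. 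This handles the finite case.

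For the infinite case I would work with the explicit covering-sequence formulation given just before the statement. Let $\bw$ be $u$-quasiperiodic with covering sequence of prefixes $(p_nu)_{n\geq 0}$, so $p_0=\empt$, $|p_n|<|p_{n+1}|\le |p_nu|$, and each $p_nu$ is a prefix of $\bw$. Applying $f$, each $f(p_n)f(u)=f(p_nu)$ is a prefix of $f(\bw)$, and $f(p_0)=\empt$. It remains to check the two length inequalities with $f(u)$ in the role of the quasiperiod and $(f(p_n))_{n\ge0}$ in the role of the preceding words. The inequality $|f(p_n)|<|f(p_{n+1})|$ is immediate since $p_n$ is a proper prefix of $p_{n+1}$ and $f$ is non-erasing. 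The inequality $|f(p_{n+1})|\le |f(p_nu)|$ follows from $|p_{n+1}|\le|p_nu|$ together with the fact that $p_{n+1}$ is a prefix of $p_nu$ (both being prefixes of $\bw$, with $p_{n+1}$ the shorter), so $f(p_{n+1})$ is a prefix of $f(p_nu)$ and hence no longer. Thus $(f(p_n)f(u))_{n\ge0}$ is a covering sequence of prefixes of $f(\bw)$, witnessing that $f(\bw)$ is $f(u)$-quasiperiodic; note $f(u)$ is a proper prefix of $f(\bw)$ as in the finite case, so it is a genuine quasiperiod.

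The only subtle point — and the one I would be most careful about — is the prefix-nesting: the covering-sequence definition literally only asserts the length bounds $|p_n|<|p_{n+1}|\le|p_nu|$ and that $p_nu$ is a prefix of $\bw$, and to transport the second inequality through $f$ one really wants $p_{n+1}$ to be a prefix of $p_nu$, not merely shorter than it. But this is automatic: two prefixes of the same (infinite) word $\bw$ are comparable, so the shorter is a prefix of the longer, and $|p_{n+1}|\le|p_nu|$ with both prefixes of $\bw$ gives exactly $p_{n+1}\pref p_nu$. With that observation the whole argument is routine; there is no real obstacle, which is why the excerpt records it as a Fact rather than a Lemma with a long proof.
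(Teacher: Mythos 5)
Your proof is correct: the paper states this as a Fact without proof (it is imported from \cite{fLgR07quas}), and your direct unwinding of the definitions is exactly the intended routine argument. In particular you correctly identify the one point that needs care --- that $|p_{n+1}|\le|p_nu|$ alone does not transport through a general non-erasing morphism, but the prefix-comparability of $p_{n+1}$ and $p_nu$ (both prefixes of $\bw$) does --- so the covering sequence $(f(p_n)f(u))_{n\ge 0}$ is a valid witness.
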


Note that the converse of this fact is not true. For example, let $f=R_a$ and $\bw=ab^\omega$, then $R_a(\bw)=a(ba)^\omega$. The word $R_a(\bw)$ is covered by $R_a(ab)$, but $\bw$ is not covered by $ab$. 

\subsection{Return words and quasiperiodicity} \label{SS:return-words}

We now use the notion of a `return word' to give an equivalent definition of quasiperiodicity (see Lemma~\ref{L:quasi}), which proves to be a useful tool for studying quasiperiodicity in episturmian words.

Return words were introduced independently by Durand \cite{fD98acha} and by Holton and Zamboni \cite{cHlZ99desc} when studying primitive substitutive sequences. Such words can be defined in the following way.

\begin{dfntn} \label{D:return-word}
Let $v$ be a recurrent factor of an infinite word $\bw = w_1w_2w_3\cdots$, starting at positions $n_1 <n_2< n_3 \cdots$ in $\bw$. Then each word $r_i = w_{n_i}w_{n_i+1}\cdots w_{n_{i+1} -1}$ is called a {\em return to $v$} in $\bw$. 
\end{dfntn}

That is, a return to $v$ in $\bw$ is a non-empty factor of $\bw$ beginning at an occurrence of $v$ and ending exactly before the next occurrence of $v$ in $\bw$. Thus, if $r$ is a return to $v$ in $\bw$, then $rv$ is a factor of $\bw$ that contains exactly two occurrences of $v$, one as a prefix and one as a suffix.
As any episturmian word $\bt$ is uniformly recurrent \cite{xDjJgP01epis}, each factor of $\bt$ has only a finite number of different returns (for more details see Theorem~\ref{T:return-words}).

\begin{note} A return to $v$ in $\bw$ always has $v$ as a prefix or is a prefix of $v$. In particular, we observe that a return to $v$  is not necessarily longer than $v$, in which case $v$ has {\em overlapping} occurrences in $\bw$ (i.e., $vz^{-1}v$ is a factor of $\bw$ for some non-empty word $z$). 
We say that $v$ has {\em adjacent} occurrences in $\bw$ if $vv$ is a factor of $\bw$. In this case, if $v$ is {\em primitive} (i.e., not an integer power of a shorter word), then $v$ is a return to itself; otherwise, the corresponding return to $v$ is the {\em primitive root} of~$v$. 
\end{note}

In terms of return words, we have the following equivalent definition of a quasiperiodic infinite word.

\begin{lmm} \label{L:quasi}
A finite word $v$ is a quasiperiod of an infinite word $\bw$ if and only if $v$ is a recurrent prefix of $\bw$ such that any return to $v$ in $\bw$ has length at most $|v|$.
\end{lmm}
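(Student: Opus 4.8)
The plan is to prove the two implications separately, unwinding the definition of quasiperiodicity in terms of the covering sequence $(p_nu)_{n\geq 0}$ and relating it to the occurrences of $v$ in $\bw$.

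First suppose $v$ is a quasiperiod of $\bw$, so there is a covering sequence $(p_nv)_{n\geq 0}$ with $p_0=\empt$, $|p_n|<|p_{n+1}|\leq |p_nv|$, and each $p_nv$ a prefix of $\bw$. Since $p_0=\empt$, $v$ is a prefix of $\bw$, and since $|p_{n+1}|\leq|p_nv|=|p_n|+|v|$ with $|p_{n+1}|>|p_n|$, the word $v$ occurs in $\bw$ at each position $|p_n|$ (using $0$-indexed positions, or position $|p_n|+1$ in the paper's convention), and these occurrences are recurrent — in fact every position of $\bw$ lies within one of them, so in particular $v$ recurs infinitely often. Now fix an arbitrary occurrence of $v$ in $\bw$ starting at position $j$; I must show the return to $v$ at this occurrence has length at most $|v|$. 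The set $\{|p_n| : n\geq 0\}$ is unbounded and $0\in$ it, and by the hypothesis $|p_{n+1}|-|p_n|\leq|v|$ consecutive elements differ by at most $|v|$; so for the position $j$ there is some $n$ with $|p_n|\leq j$ and $|p_{n+1}|>j$, whence $|p_{n+1}|\leq|p_n|+|v|\leq j+|v|$, i.e.\ there is an occurrence of $v$ starting strictly after $j$ but at most $|v|$ positions later. The \emph{next} occurrence of $v$ after position $j$ starts no later than that, so the return at $j$ has length $\leq|v|$. The one subtlety here is that the covering-sequence occurrences are only a subset of all occurrences of $v$; but since $v$ is a prefix of $\bw$, \emph{every} occurrence of $v$ at a position $j$ is equally an occurrence, and the argument above bounds the distance to the next occurrence among the $p_n$'s, which a fortiori bounds the distance to the genuinely next occurrence. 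This direction is essentially bookkeeping.

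For the converse, suppose $v$ is a recurrent prefix of $\bw$ such that every return to $v$ in $\bw$ has length at most $|v|$. Let $n_1<n_2<n_3<\cdots$ be all the starting positions of occurrences of $v$ in $\bw$ (infinitely many, by recurrence), with $n_1=1$ since $v$ is a prefix. The return at $n_i$ has length $n_{i+1}-n_i\leq|v|$. Define $p_i$ to be the prefix of $\bw$ of length $n_{i+1}-1$ (in $1$-indexed terms), so that $p_iv$ is the prefix of $\bw$ ending at position $n_{i+1}-1+|v|$; more carefully, I would set $p_0=\empt$ and, for $i\geq 0$, take $p_i$ to be the prefix of length $n_{i+1}-n_1 = n_{i+1}-1$, so $p_iv$ is a prefix of $\bw$ because $v$ occurs at position $n_{i+1}$. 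Then $|p_i| = n_{i+1}-1$ is strictly increasing in $i$, $|p_0|=n_1-1=0$, and $|p_{i+1}|-|p_i| = n_{i+2}-n_{i+1}\leq|v|$, which is exactly $|p_{i+1}|\leq|p_iv|$. Thus $(p_iv)_{i\geq 0}$ is a covering sequence and $v$ is a quasiperiod (note $v\neq\bw$ automatically since $v$ is finite and recurrent, hence a proper prefix).

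I expect no serious obstacle; the only care needed is the indexing convention (positions counted from $1$ versus lengths of prefixes counted from $0$) and the observation in the forward direction that bounding the gap between consecutive covering occurrences automatically bounds the return length at \emph{every} occurrence, since all occurrences of a prefix $v$ are genuine occurrences and the next occurrence can only be closer than the next covering occurrence. One should also remark explicitly that $v$ being a quasiperiod forces $v$ to be a (proper) prefix of $\bw$, which is consistent with the last sentence of the definition of quasiperiodicity recalled just before the lemma, so recurrence of $v$ is exactly the extra content on the right-hand side beyond "prefix".
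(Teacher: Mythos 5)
Your proof is correct and follows essentially the same route as the paper's: both directions reduce quasiperiodicity to the statement that successive occurrences of $v$ are adjacent or overlapping, which is exactly the condition that all returns to $v$ have length at most $|v|$. The paper states this in two sentences; your version just makes the bookkeeping with the covering sequence $(p_nv)_{n\geq 0}$ and the occurrence positions explicit, including the (correct) observation that bounding gaps between the covering occurrences a fortiori bounds the genuine return lengths.
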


\begin{proof} If $v$ is a quasiperiod of $\bw$, then $v$ is a prefix of $\bw$ and its occurrences entirely cover $\bw$. That is, $v$ is recurrent in $\bw$ and successive occurrences of $v$ in $\bw$ are either adjacent or overlap, and hence any return to $v$ has length at most $|v|$.  Conversely, if $v$ is a recurrent prefix of $\bw$ such that any return to $v$ has length at most $|v|$, then successive occurrences of $v$ in $\bw$ are either adjacent or overlap, and hence entirely cover $\bw$. Thus $\bw$ is $v$-quasiperiodic. 
\end{proof}

Immediately:

\begin{crllr} \label{Cor:quasi} An infinite word $\bw$ is quasiperiodic if and only if there exists a recurrent prefix $v$ of $\bw$ such that any return to $v$ in $\bw$ has length at most $|v|$, in which case $v$ is a quasiperiod of $\bw$. Moreover, the shortest such prefix $v$ is the smallest quasiperiod of $\bw$.
\end{crllr}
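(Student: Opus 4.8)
The plan is to prove Corollary~\ref{Cor:quasi} as a direct consequence of Lemma~\ref{L:quasi} together with the definition of quasiperiodicity. There are really two things to establish: first, the equivalence between ``$\bw$ is quasiperiodic'' and ``there exists a recurrent prefix $v$ of $\bw$ with all returns to $v$ of length at most $|v|$'', and second, the assertion that the shortest such prefix is the smallest quasiperiod.

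For the first part, I would simply unwind Lemma~\ref{L:quasi}. By definition, $\bw$ is quasiperiodic if and only if it has a quasiperiod, i.e., if and only if there is some finite word $v$ that is a quasiperiod of $\bw$. By Lemma~\ref{L:quasi}, ``$v$ is a quasiperiod of $\bw$'' is equivalent to ``$v$ is a recurrent prefix of $\bw$ such that every return to $v$ in $\bw$ has length at most $|v|$''. Substituting this characterization into the existential statement immediately gives: $\bw$ is quasiperiodic $\iff$ there exists a recurrent prefix $v$ of $\bw$ all of whose returns have length at most $|v|$, and in that case such a $v$ is (by Lemma~\ref{L:quasi} again) a quasiperiod. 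So this direction is essentially a one-line rephrasing.

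For the ``moreover'' part, suppose $\bw$ is quasiperiodic and let $v_0$ be the shortest recurrent prefix of $\bw$ whose returns all have length at most $|v_0|$ (this exists since the set of lengths of such prefixes is a nonempty subset of $\NN$). By the equivalence just proved, $v_0$ is a quasiperiod of $\bw$. I then need to check $v_0$ has minimal length among all quasiperiods of $\bw$: but if $u$ is any quasiperiod of $\bw$, then by Lemma~\ref{L:quasi} $u$ is itself a recurrent prefix of $\bw$ with all returns of length at most $|u|$, hence $|v_0| \le |u|$ by minimality of $v_0$. Thus $v_0$ is the smallest quasiperiod of $\bw$, as claimed. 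One small point to mention is why the minimum defining $v_0$ is well defined: since $\bw$ is quasiperiodic it has at least one quasiperiod, and by Lemma~\ref{L:quasi} that quasiperiod is a prefix of the required kind, so the set of candidate lengths is nonempty and bounded below by $1$ (a quasiperiod is a proper factor, hence nonempty), giving a least element.

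I do not expect any real obstacle here — the corollary is flagged as ``Immediately'' in the text, and indeed the only content beyond Lemma~\ref{L:quasi} is the trivial observation that ``$\bw$ quasiperiodic'' means ``$\bw$ has a quasiperiod'' and a minimality argument. The one place to be slightly careful is making sure that in the ``smallest quasiperiod'' claim one compares against \emph{all} quasiperiods and not just prefixes of a particular covering sequence; this is handled by invoking Lemma~\ref{L:quasi} in the reverse direction (every quasiperiod is a recurrent prefix with short returns). So the proof is short, and I would write it in two or three sentences.
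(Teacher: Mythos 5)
Your proposal is correct and matches the paper exactly: the paper derives this corollary immediately from Lemma~\ref{L:quasi} (it is introduced with the single word ``Immediately''), and your unwinding of the existential statement plus the minimality argument for the smallest quasiperiod is precisely the intended reasoning, just written out in full.
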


A noteworthy fact is that a quasiperiodic infinite word is not necessarily recurrent \cite{sM04quas}, although it must have a prefix that is recurrent in it.

\subsection{\label{S:returnWordsEpisturmian}Return words and palindromic closure in episturmian words}

In this section we recall a characterization of return words in episturmian words, given by Justin and Vuillon in \cite{jJlV00retu}. For this, we first recall the construction of epistandard words using {\em palindromic right-closure} as well as some related properties from \cite{xDjJgP01epis, jJgP02epis} that will be used throughout Sections~\ref{SS:epistandard-quasiperiods}~to~\ref{SS:stongly-quasi-morphisms}.

The \emph{palindromic right-closure} $w^{(+)}$ of a finite word $w$ is the (unique) shortest palindrome having $w$ as a prefix (see \cite{aD97stur}). That is, $w^{(+)} = wv^{-1}\rev{w}$ where $v$ is the longest palindromic suffix of $w$. 
The {\em iterated palindromic closure} function \cite{jJ05epis}, denoted by $Pal$, is defined recursively as follows. Set $Pal(\empt) = \empt$ and, for any word $w$ and letter $x$, define $Pal(wx)~=~(Pal(w)x)^{(+)}$. For instance, $Pal(abc)=(Pal(ab)c)^{(+)} = (abac)^{(+)} = abacaba$.

Generalizing a construction given in \cite{aD97stur} for {\em standard Sturmian words}, Droubay, Justin and Pirillo established the following characterization of epistandard words.

\begin{thrm} \label{T:epistandard} {\em \cite{xDjJgP01epis}}
An infinite word $\bs \in \cAw$ is epistandard if and only if there exists an infinite word $\Delta = x_1x_2x_3\cdots$ ($x_i \in \cA$) such that $\bs = \lim_{n\rightarrow\infty} Pal(x_1 \cdots x_n)$.
\end{thrm}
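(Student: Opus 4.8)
The plan is to prove both directions by exploiting the inductive structure of the iterated palindromic closure together with part $i)$ of Theorem~\ref{T:episturmian}. For the forward direction, suppose $\bs$ is epistandard with $L$-spinned directive word $\Delta = x_1x_2x_3\cdots$, so there is a sequence $(\bs^{(n)})_{n\geq 0}$ with $\bs^{(0)} = \bs$ and $\bs^{(n-1)} = L_{x_n}(\bs^{(n)})$ for all $n \geq 1$. The key observation is that for each $n$, the word $\mu_{x_1\cdots x_n}(a)$ is a prefix of $\bs$ whenever $a = \Alph(\TT^n(\Delta))$ contains the first letter of $\bs^{(n)}$; more usefully, one shows by induction on $n$ that $Pal(x_1\cdots x_n)$ is exactly the prefix of $\bs$ obtained as $\mu_{x_1\cdots x_n}(\text{first letter of }\bs^{(n)})$, or more precisely that $Pal(x_1\cdots x_n) = L_{x_1}(Pal(x_2\cdots x_n))\,x_1$ — this recursion is the combinatorial heart of the matter and is essentially Justin's formula $Pal(xw) = L_x(Pal(w))x$ for the iterated palindromic closure. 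Granting that recursion, the prefixes $Pal(x_1\cdots x_n)$ are nested, their lengths tend to infinity (since each palindromic closure step strictly lengthens the word once a new letter appears, and $\Delta$ must contain at least two distinct letters for $\bs$ to be aperiodic — the periodic case $\Delta$ eventually constant being handled directly), and so $\bs = \lim_{n\to\infty} Pal(x_1\cdots x_n)$.

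For the converse, suppose $\bs = \lim_{n\to\infty} Pal(x_1\cdots x_n)$ for some $\Delta = x_1x_2\cdots$ over $\cA$. Here I would again invoke the recursion $Pal(x_1\cdots x_n) = L_{x_1}(Pal(x_2\cdots x_n))\,x_1$ to build, by hand, the sequence $(\bs^{(n)})_{n\geq 0}$ required by Theorem~\ref{T:episturmian}$i)$: define $\bs^{(n)} = \lim_{m\to\infty} Pal(x_{n+1}\cdots x_m)$, check these limits exist (same nesting-and-growth argument), and verify $\bs^{(n-1)} = L_{x_n}(\bs^{(n)})$ by passing the recursion to the limit (using that $L_{x_n}$ is continuous for the prefix topology on $\cA^\omega$, being non-erasing). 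Then Theorem~\ref{T:episturmian}$i)$ immediately gives that $\bs$ is epistandard.

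The main obstacle is establishing the palindromic-closure recursion $Pal(xw) = L_x(Pal(w))\,x$ cleanly, since everything else reduces to routine limit/continuity bookkeeping once that identity is in hand. One proves it by induction on $|w|$: writing $w = w'y$ with $y$ a letter, one has $Pal(xw'y) = (Pal(xw')y)^{(+)}$ and by the inductive hypothesis $Pal(xw') = L_x(Pal(w'))x$; the point is then to identify the longest palindromic suffix of $L_x(Pal(w'))xy$ and show the palindromic right-closure of this word coincides with $L_x\big((Pal(w')y)^{(+)}\big)x = L_x(Pal(w'y))x$. This requires a small lemma about how $L_x$ interacts with palindromes — namely that $L_x(p)x$ is a palindrome whenever $p$ is — which follows from the explicit action of $L_x$ (it inserts $x$ before every non-$x$ letter, so $L_x(p)x$ reads the same forwards and backwards when $p$ does). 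Once this is verified, the two directions above go through, and one should note in passing that the $\Delta$ exhibited need not be unique in the periodic case but is forced uniquely in the aperiodic case by the first-letter argument, consistent with the remarks following Theorem~\ref{T:episturmian}.
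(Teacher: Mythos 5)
The paper does not prove this statement at all: Theorem~\ref{T:epistandard} is recalled from \cite{xDjJgP01epis} as a known characterization (indeed, in that reference it is essentially the \emph{definition} of epistandard words, and the nontrivial content is its equivalence with the special-factor definition). So there is no in-paper proof to compare against, and your derivation --- reducing the statement to Theorem~\ref{T:episturmian}$\,i)$ plus the recursion $Pal(xw)=L_x(Pal(w))x$ --- is a genuinely different (and logically reversed) route from the original one. As a reduction it is sound: granting the recursion, the forward direction follows because $Pal(x_1\cdots x_n)$ is a prefix of $\bs=\mu_{x_1\cdots x_n}(\bs^{(n)})$ (this is exactly Fact~\ref{F:from_referee}, or your induction on $n$ using that $L_{x_1}$ of any letter begins with $x_1$) and $|Pal(x_1\cdots x_n)|$ is strictly increasing in $n$ \emph{unconditionally} --- your parenthetical worry about aperiodicity is unnecessary, since $Pal(w)x$ is always a proper prefix of $Pal(wx)$. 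The converse via $\bs^{(n)}=\lim_m Pal(x_{n+1}\cdots x_m)$ and continuity of $L_{x_n}$ is also fine.

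Two caveats. First, your parenthetical claim that $Pal(x_1\cdots x_n)$ equals $\mu_{x_1\cdots x_n}(a_n)$ with $a_n$ the first letter of $\bs^{(n)}$ is false (for $n=1$, $x_1=a$, $a_1=b$ one gets $\mu_a(b)=ab\neq a=Pal(a)$); the correct relation is equation~\eqref{eq:u_n2}, and you rightly fall back on the recursion, so this does not damage the argument --- but delete it. Second, and more substantively, the entire weight of your proof rests on $Pal(xw)=L_x(Pal(w))x$, and your inductive proof of it stops exactly at the hard point: you must identify the longest palindromic suffix of $L_x(Pal(w'))xy$ and show its closure is $L_x\bigl((Pal(w')y)^{(+)}\bigr)x$, which requires knowing that the palindromic suffixes of $L_x(p)x$ are precisely the words $L_x(q)x$ for $q$ a palindromic suffix of $p$, together with $x$ itself; the observation that $L_x(p)x$ is a palindrome is necessary but far from sufficient. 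Note that the paper already supplies this identity as equation~\eqref{eq:formula(3)} with references to \cite{jJ05epis,jJgP02epis}, so within the paper's framework you could simply cite it; if you insist on a self-contained proof, that suffix lemma must actually be proved.
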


Note that the palindromes $Pal(x_1 \cdots x_n)$ are very often denoted by $u_{n+1}$ in the literature.

In \cite{jJgP02epis}, Justin and Pirillo showed that the word $\Delta$ is exactly the directive word  of $\bs$ as it occurs in Theorem~\ref{T:episturmian}. Moreover, by construction, $\Delta$ {\em uniquely} determines the epistandard word~$\bs$.
Notice also that by construction, the words $(Pal(x_1 \cdots x_i))_{i \geq 0}$ are exactly the palindromic prefixes of $\bs$.

\medskip
There exist many relations between palindromes and episturmian morphisms. The following ones will be useful in the next few sections. 
First recall from \cite{jJ05epis, jJgP02epis} that we have
\begin{equation} \label{eq:formula(3)}
Pal(wv) = \mu_w(Pal (v))Pal(w) \quad \mbox{for any words $w$, $v$}.
\end{equation}
In particular, for any $x$ a letter, $Pal(xv) = L_x(Pal(v))x$ and $Pal(wx)=\mu_w(x)Pal(w)$.

For letters $(x_j)_{1 \leq j \leq i}$, formula~\eqref{eq:formula(3)} inductively leads to:

\begin{equation}  \label{eq:u_n2}
Pal(x_1 \cdots x_i) = \mu_{x_1 \cdots x_{i-1}}(x_i) \cdots \mu_{x_1}(x_2) x_1 = \prod_{1 \leq j \leq i}\mu_{x_1 \cdots x_{j-1}}(x_j).
\end{equation}

Note that by convention, $x_1 \cdots x_0=\varepsilon$ in the above product.

\medskip
Now let $\breve w = \breve x_1 \breve x_2 \cdots \breve x_n$ be a spinned version of $w = x_1x_2\cdots x_n$ (viewed as a prefix of a spinned version $\breve \Delta$ of $\Delta$). Then, for any finite word $v$, we have
\begin{equation} \label{eq:f-shift}
  \mu_{\breve w}(v) = S_{\breve w}^{-1}\mu_w(v)S_{\breve w}  \quad \mbox{where $S_{\breve w} = \underset{\underset{\mid \breve x_i=\bar x_i}{i=n, \ldots, 1}}{\prod} \mu_{x_1 \cdots x_{i-1}}(x_i)$.}
\end{equation}

The word $S_{\breve w}$ is called the {\em shifting factor} of $\mu_{\breve w}$ \cite{jJgP04epis}.
Observe that $S_{\breve w}$ is a prefix of $Pal(w)$; in particular $S_{\bar w} = Pal(w)$ by equation~\eqref{eq:u_n2}. Note also that $\mu_{\breve w}(v)= \TT^{|S_{\breve w}|}(\mu_w(v))$.

\medskip

For example, for $\breve w=a \bar b c \bar a$, we have $S_{\breve w}=\mu_{abc}(a) \mu_a(b)=abacabaab$. Thus since $\mu_{abca}(ca) = abacabaab.acabacaba$, $\mu_{a\bar b c \bar a}(ca) = \TT^9(\mu_{abca}(ca)) = acabacaba.abacabaab$.   

\medskip
Likewise, for any infinite word $\by \in \cAw$,
\begin{equation} \label{eq:f-shift-infinite}  
\mu_{\breve w}(\by) = S_{\breve w}^{-1}\mu_w(\by).
\end{equation}

\medskip
This formula used with $\breve w = \bar w$ shows that:

\begin{fact}\label{F:from_referee}
Any word of the form $\mu_w(\by)$ with $\by$ infinite begins with $Pal(w)$.
\end{fact}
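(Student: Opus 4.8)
The plan is to deduce Fact~\ref{F:from_referee} directly from equation~\eqref{eq:f-shift-infinite} by specializing the spinned version $\breve w$ to the fully $R$-spinned word $\bar w$. First I would recall that equation~\eqref{eq:f-shift-infinite} states $\mu_{\breve w}(\by) = S_{\breve w}^{-1}\mu_w(\by)$ for any spinned version $\breve w$ of $w$ and any infinite word $\by$; this is the infinite-word analogue of equation~\eqref{eq:f-shift}, and the shifting factor $S_{\breve w}$ is always a prefix of $Pal(w)$, with the extreme case $S_{\bar w} = Pal(w)$ recorded just after equation~\eqref{eq:f-shift} (it follows from equation~\eqref{eq:u_n2}, since $\breve x_i = \bar x_i$ for every $i$ so the product defining $S_{\bar w}$ is exactly the product in equation~\eqref{eq:u_n2} giving $Pal(x_1 \cdots x_n)$).

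Next I would put these two facts together: applying equation~\eqref{eq:f-shift-infinite} with $\breve w = \bar w$ yields $\mu_{\bar w}(\by) = Pal(w)^{-1}\mu_w(\by)$, which rearranges to $\mu_w(\by) = Pal(w)\,\mu_{\bar w}(\by)$. Since $\mu_{\bar w}(\by)$ is an infinite word, this exhibits $Pal(w)$ as a prefix of $\mu_w(\by)$, which is exactly the assertion. One should briefly note why $Pal(w)^{-1}\mu_w(\by)$ is well-defined, i.e.\ why $Pal(w)$ really is a prefix of $\mu_w(\by)$ in the first place: this is part of the content of equation~\eqref{eq:f-shift-infinite}, or alternatively follows because $\mu_w(\by)$ begins with $\mu_w(y_1)$ where $y_1 = \Alph(\by)$'s first letter, and $\mu_w(y_1) = \mu_{w y_1}(y_1)$ has $Pal(w) = Pal(w y_1)\big(\mu_w(y_1)\big)^{-1}$\ldots actually the cleanest route is simply to invoke equation~\eqref{eq:f-shift-infinite} itself, which presupposes this divisibility.

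The argument is essentially a one-line specialization, so there is no real obstacle; the only point requiring a modicum of care is the identification $S_{\bar w} = Pal(w)$, and this has already been spelled out in the excerpt (it is the remark "in particular $S_{\bar w} = Pal(w)$ by equation~\eqref{eq:u_n2}" immediately following the definition of the shifting factor). Thus I expect the proof to read: by equation~\eqref{eq:f-shift-infinite} applied with $\breve w = \bar w$, and since $S_{\bar w} = Pal(w)$, we have $\mu_w(\by) = Pal(w)\,\mu_{\bar w}(\by)$; as $\mu_{\bar w}(\by)$ is infinite, $Pal(w)$ is a prefix of $\mu_w(\by)$. An alternative, entirely elementary proof would use equation~\eqref{eq:u_n2} to write $Pal(w x) = \mu_w(x) Pal(w)$ for the first letter $x$ of $\by$, hence $\mu_w(x)$ — and therefore $\mu_w(\by)$ — begins with $Pal(w)$ precisely when $Pal(w)$ is a prefix of $\mu_w(x)$, which is the formula $Pal(wx) = \mu_w(x)Pal(w)$ combined with the fact that $Pal(w)$ is a proper prefix of $\mu_w(x)Pal(w)$ of length $|\mu_w(x)Pal(w)| - |\mu_w(x)| \le |\mu_w(x)|$ when $|\mu_w(x)| \ge |Pal(w)|$; I would prefer the first, shorter argument in the final text.
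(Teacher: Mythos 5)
Your proposal is correct and matches the paper's own (one-line) justification exactly: the paper derives Fact~\ref{F:from_referee} by applying formula~\eqref{eq:f-shift-infinite} with $\breve w = \bar w$ and using $S_{\bar w} = Pal(w)$, which is precisely your main argument. The side remarks about well-definedness and the alternative elementary route via $Pal(wx) = \mu_w(x)Pal(w)$ are harmless but unnecessary.
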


\medskip
Amongst the numerous interests of the palindromes $(Pal(x_1 \cdots x_n))_{n \geq 0}$, we have the following explicit characterization of the returns to any factor of an epistandard word.

\begin{thrm} \label{T:return-words} {\em \cite{jJlV00retu}}
Suppose $\bs$ is an epistandard word directed by $\Delta = x_1x_2x_3\cdots$ with $x_i \in \cA$, and consider any factor $v$ of $\bs$. If the word $u_{n+1}=Pal(x_1 \cdots x_n)$ is the shortest palindromic prefix of $\bs$ containing $v$ with $u_{n+1} = fvg$, then the returns to $v$ are given by  $f^{-1}\mu_{x_1\cdots x_n}(x)f$ where $x \in \Alphit(x_{n+1}x_{n+2}\cdots)$. 

\end{thrm}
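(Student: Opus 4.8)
\textbf{Proof plan for Theorem~\ref{T:return-words}.}
The plan is to prove this by induction on $n$, using the palindromic closure machinery from equations~\eqref{eq:formula(3)}--\eqref{eq:u_n2} together with Theorem~\ref{T:epistandard}. Fix the factor $v$ of $\bs$ and let $u_{n+1} = Pal(x_1\cdots x_n) = fvg$ be the shortest palindromic prefix of $\bs$ containing $v$. Since $\bs$ is uniformly recurrent and the palindromic prefixes exhaust all ``long enough'' structure, the first step is to observe that the occurrences of $v$ in $\bs$ are completely controlled by the occurrences of $u_{n+1}$: more precisely, because $u_{n+1}$ is the shortest palindromic prefix containing $v$, every occurrence of $v$ in $\bs$ extends (on both sides, within $\bs$) to an occurrence of $u_{n+1}$, and conversely each occurrence of $u_{n+1}$ carries exactly one occurrence of $v$ in the position fixed by the factorization $u_{n+1}=fvg$. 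Hence a return to $v$ is precisely $f$ conjugated with a return to $u_{n+1}$, i.e. if $r'$ is a return to $u_{n+1}$ then $f^{-1}r'f$ is a return to $v$, and this is a bijection between the two return sets. So it suffices to show that the returns to $u_{n+1}=Pal(x_1\cdots x_n)$ in $\bs$ are exactly the words $\mu_{x_1\cdots x_n}(x)$ for $x\in\Alph(x_{n+1}x_{n+2}\cdots)$.

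Next I would reduce the claim about returns to $u_{n+1}$ to a statement about the shifted epistandard word $\bs^{(n)}$. Writing $\varphi = \mu_{x_1\cdots x_n} = L_{x_1}\cdots L_{x_n}$ and recalling that $\bs = \varphi(\bs^{(n)})$ where $\bs^{(n)}$ is the epistandard word directed by $x_{n+1}x_{n+2}\cdots$, the key identity to exploit is $Pal(x_1\cdots x_n w) = \varphi(Pal(w))\,Pal(x_1\cdots x_n)$ from equation~\eqref{eq:formula(3)} (with $w=x_1\cdots x_n$ playing the role of the ``prefix'' and the variable letter sequence in the role of $v$). Concretely, for a letter $x$ the formula gives $Pal(x_1\cdots x_n x) = \varphi(x)\,u_{n+1}$, which is the key base case: this palindrome is an occurrence of $u_{n+1}$ preceded by $\varphi(x)$, so $\varphi(x)$ is a return to $u_{n+1}$ whenever $x$ occurs in the directive tail. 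To see these are the only returns, one uses that $\varphi$ is injective on factors (episturmian morphisms of the pure epistandard type are injective and do not create ``fake'' occurrences): an occurrence of $u_{n+1}$ in $\bs=\varphi(\bs^{(n)})$ must be synchronized with the block decomposition induced by $\varphi$, since $u_{n+1}$ ends with $x_1$ and $\varphi$ maps each letter to a word beginning with $x_1$ with no other interior occurrence of the relevant synchronizing pattern. Thus consecutive occurrences of $u_{n+1}$ correspond to consecutive letters in $\bs^{(n)}$, and the return is the $\varphi$-image of that single letter.

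The main obstacle, and the step deserving the most care, is this synchronization argument: showing that occurrences of $u_{n+1} = Pal(x_1\cdots x_n)$ in $\bs$ are in bijection with positions in $\bs^{(n)}$, with the return to $u_{n+1}$ at position $i$ being $\varphi(y_i)$ where $\bs^{(n)} = y_1y_2y_3\cdots$. The clean way to handle this is to proceed by induction on $n$. For $n=0$ the statement is the trivial observation that $u_1 = \empt$ has every single letter of $\bs^{(0)}=\bs$ as a return, which are exactly $\mu_\empt(x) = x$ for $x\in\Alph(\bs)=\Alph(x_1x_2\cdots)$. For the inductive step, write $\varphi = L_{x_1}\circ\psi$ where $\psi = \mu_{x_2\cdots x_n}$ directs the epistandard word $\bs' = \psi(\bs^{(n)})$ with directive word $x_2x_3\cdots$; by induction the returns to $Pal(x_2\cdots x_n)$ in $\bs'$ are the $\psi(x)$, $x\in\Alph(x_{n+1}\cdots)$. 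One then transports this through $L_{x_1}$, using that $L_{x_1}$ maps $\bs'$ to $\bs$ and, via $Pal(x_1 w) = L_{x_1}(Pal(w))x_1$, maps $Pal(x_2\cdots x_n)$-occurrences to $Pal(x_1\cdots x_n)$-occurrences; the point is that $L_{x_1}$ is ``length-injective'' in the sense that it maps distinct returns to distinct returns and preserves the covering/adjacency structure, and the trailing $x_1$ in $u_{n+1}=L_{x_1}(Pal(x_2\cdots x_n))x_1$ is exactly what guarantees no spurious new occurrences of $u_{n+1}$ are introduced beyond the images of occurrences of $Pal(x_2\cdots x_n)$ in $\bs'$. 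This reduces everything to the single-morphism case of $L_{x_1}$, which is a direct (if slightly fiddly) verification. I expect this last reduction, keeping track of exactly how occurrences and returns transform under one application of $L_a$, to be the technical heart of the argument.
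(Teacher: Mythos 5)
First, a point of comparison: the paper does not prove this statement at all. Theorem~\ref{T:return-words} is imported verbatim from Justin and Vuillon \cite{jJlV00retu}, so there is no internal proof to measure your attempt against; I am judging it against what the cited result actually requires. Your outline does follow the general architecture of the original argument (reduce the returns to a general factor $v$ to the returns to the palindromic prefix $u_{n+1}$, then compute the latter by synchronizing occurrences of $u_{n+1}$ with the decomposition $\bs=\mu_{x_1\cdots x_n}(\bs^{(n)})$), and the second half of your plan --- the induction peeling off one $L_{x_1}$ at a time, using $Pal(x_1w)=L_{x_1}(Pal(w))x_1$ together with the fact that every occurrence of $x_1$ in $L_{x_1}(\bs')$ begins a block of the decomposition, so that occurrences of $u_{n+1}$ start and end at block boundaries --- is sound and can be completed as you describe.

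The genuine gap is in your first step. You assert that, because $u_{n+1}$ is the shortest palindromic prefix containing $v$, every occurrence of $v$ in $\bs$ sits at the fixed offset $|f|$ inside an occurrence of $u_{n+1}$, giving a bijection between the two sets of occurrences. Minimality of $u_{n+1}$ only tells you that $v$ is not a factor of $u_n$; combined with $u_{n+1}=u_nu_{P(n)}^{-1}u_n$ (or $u_nx_nu_n$) this forces every occurrence of $v$ inside $u_{n+1}$ to meet the central part, but it does not by itself exclude two distinct such occurrences, nor an occurrence of $v$ lying across the overlap of two consecutive occurrences of $u_{n+1}$ in $\bs$ at an offset other than $|f|$. (Consecutive occurrences of $u_{n+1}$ can overlap substantially, or be separated by one letter $x\notin\Alph(u_{n+1})$; the latter case is harmless since $v\in F(u_{n+1})$ cannot contain $x$, but the former is not obviously so.) Proving that $v$ occurs exactly once in $u_{n+1}$ and that its occurrences in $\bs$ are synchronized with those of $u_{n+1}$ is precisely the substantive half of Justin and Vuillon's argument for general $v$; as written, your plan treats it as an observation, so the reduction ``returns to $v$ are the $f^{-1}r'f$ for $r'$ a return to $u_{n+1}$'' is not yet justified.
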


Because of the uniform recurrence of episturmian words, the following simple but important fact about return words holds.

\begin{lmm} \label{L:important-fact}
Suppose $\bs$ is an epistandard word and let $\bt$ be any episturmian word in the subshift of $\bs$. Then, for any factor $v$ of $\bs$, $r$ is a return to $v$ in $\bs$ if and only if $r$ is a return to $v$ in~$\bt$. 
\end{lmm}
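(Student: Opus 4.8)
The plan is to exploit the fact (recalled just before this lemma, from \cite{xDjJgP01epis}) that an episturmian word and any word in its subshift share exactly the same set of factors, together with the uniform recurrence of episturmian words. Recall that by definition the subshift of $\bs$ consists of all episturmian words having the same set of factors as $\bs$; in particular $\bt$ and $\bs$ have the same set of factors, and both are uniformly recurrent. The key observation is that being a return to $v$ is a property that can be tested purely by looking at finite factors: a non-empty word $r$ is a return to $v$ in an infinite word $\bw$ if and only if $rv$ is a factor of $\bw$, $v$ is a prefix of $rv$ and a suffix of $rv$, and $v$ has no occurrence in $rv$ other than as this prefix and this suffix. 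This last condition is again a statement about which words are or are not factors of $\bw$.

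First I would make precise the ``finitary'' characterization of returns: for a recurrent factor $v$ of $\bw$, the set of returns to $v$ in $\bw$ equals the set of non-empty words $r$ such that $rv \in F(\bw)$ (the factor set of $\bw$), $v$ is a proper prefix of $rv$, and the only occurrences of $v$ in $rv$ are the prefix occurrence and the suffix occurrence. The forward direction is immediate from Definition~\ref{D:return-word}: if $r = w_{n_i}\cdots w_{n_{i+1}-1}$ then $rv = w_{n_i}\cdots w_{n_{i+1}-1+|v|}$ is a factor with $v$ occurring exactly at positions $n_i$ and $n_{i+1}$ within it. For the converse, given such an $r$, one uses recurrence of $v$ to find an occurrence of $v$ in $\bw$ followed (after a shift by $|r|$) by the next occurrence of $v$; the ``no intermediate occurrence'' hypothesis on $rv$ forces this to be exactly a return, hence $r$ is a return to $v$.

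Now I would conclude as follows. Since $\bs$ is an episturmian word it is uniformly recurrent \cite{xDjJgP01epis}, so every factor of $\bs$ is recurrent in $\bs$; and $\bt$, lying in the subshift of $\bs$, is an episturmian word with $F(\bt) = F(\bs)$, hence $\bt$ is also uniformly recurrent and $v$ is recurrent in $\bt$ as well. Applying the characterization above to both $\bs$ and $\bt$: $r$ is a return to $v$ in $\bs$ iff $r$ is non-empty, $rv \in F(\bs)$, $v$ is a proper prefix of $rv$, and $v$ occurs in $rv$ only as prefix and suffix; and the same with $\bt$ in place of $\bs$. But every condition in this characterization refers only to membership in $F(\bs) = F(\bt)$ (the first two directly; the last one says certain words are \emph{not} factors, which is again determined by the common factor set). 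Hence the two characterizations coincide, and $r$ is a return to $v$ in $\bs$ if and only if $r$ is a return to $v$ in $\bt$.

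The only point requiring any care---and thus the main obstacle---is the converse direction of the finitary characterization, namely checking that a word $r$ satisfying the factor-set conditions genuinely arises as a return word in the infinite word, i.e.\ that consecutive occurrences of $v$ in $\bw$ realize exactly the ``jump'' $|r|$. This follows from recurrence (pick any occurrence of $v$; the word of length $|rv|$ starting there has its $v$-occurrences governed by the common factor set, so the gap to the next occurrence of $v$ equals $|r|$), but one should state it cleanly so that it applies uniformly to $\bs$ and to every $\bt$ in its subshift. Everything else is bookkeeping with factor sets.
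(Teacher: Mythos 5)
Your proof is correct and follows exactly the reasoning the paper intends: the lemma is stated there without a written proof, justified only by the remark that it follows from uniform recurrence, i.e.\ from the fact that $\bt$ and $\bs$ share the same factor set and that returns to a recurrent factor are determined by that factor set. Your finitary characterization of return words (that $r$ is a return to $v$ iff $rv$ is a factor with $v$ occurring in it only as prefix and suffix) is the right way to make this precise, and both directions of that characterization check out, including the overlapping-occurrence case where $|r|<|v|$.
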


That is, the returns to any factor $v$ of an epistandard word $\bs$ are the same as the returns to $v$ as a factor of any episturmian word $\bt$ with the same set of factors as $\bs$.  
Hereafter, we often use the above result without reference to it.

The following result is particularly useful in Sections~\ref{S:returnWordsEpisturmian}--\ref{SS:quasiperiods}.

\begin{prpstn} \label{P:pali-prefix}  
Suppose $\bs$ is an epistandard word directed by $\Delta = x_1x_2x_3\cdots$ with $x_i \in \cA$, and let $\bt$ be an episturmian word directed by a spinned version of $\Delta$. Then $\bt$ begins with $Pal(x_1 \cdots x_n)$ for some non-negative integer $n$ if and only if $\bt$ has a directive word of the form $\breve\Delta = x_1\cdots x_n \breve x_{n+1} \breve x_{n+2} \cdots$ where the prefix $x_1\cdots x_n$ is $L$-spinned.
\end{prpstn}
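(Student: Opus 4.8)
The plan is to prove both implications by relating the prefix $Pal(x_1\cdots x_n)$ to the shifting factor machinery recalled in equations~\eqref{eq:u_n2}--\eqref{eq:f-shift-infinite}.

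First I would prove the ``if'' direction, which is the easy half. Suppose $\bt$ has a directive word $\breve\Delta = x_1\cdots x_n\breve x_{n+1}\breve x_{n+2}\cdots$ whose prefix $x_1\cdots x_n$ is $L$-spinned. By Remark~\ref{R:directive}, writing $\breve w = x_1\cdots x_n$ (an $L$-spinned word, so $\mu_{\breve w} = \mu_w$ with $w = x_1\cdots x_n$), we have $\bt = \mu_w(\bt^{(n)})$ for the episturmian word $\bt^{(n)}$ directed by $\breve x_{n+1}\breve x_{n+2}\cdots$. By Fact~\ref{F:from_referee}, any word of the form $\mu_w(\by)$ with $\by$ infinite begins with $Pal(w) = Pal(x_1\cdots x_n)$, so $\bt$ begins with $Pal(x_1\cdots x_n)$, as required.

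For the ``only if'' direction, suppose $\bt$ begins with $Pal(x_1\cdots x_n)$. By Fact~\ref{F:aperiodic-spinned-versions} (in the aperiodic case) — and one must handle the periodic case separately using Theorem~\ref{T:directSame}, item~\ref{Ti:4}, or by a direct argument — $\bt$ has a directive word $\breve\Delta' = \breve x_1\breve x_2\cdots$ that is a spinned version of $\Delta = x_1x_2\cdots$. Writing $\breve w = \breve x_1\cdots\breve x_n$, by Remark~\ref{R:directive} we have $\bt = \mu_{\breve w}(\bt^{(n)})$ where $\bt^{(n)}$ is directed by $\TT^n(\breve\Delta')$. By equation~\eqref{eq:f-shift-infinite}, $\mu_{\breve w}(\bt^{(n)}) = S_{\breve w}^{-1}\mu_w(\bt^{(n)})$, so $\bt = S_{\breve w}^{-1}\mu_w(\bt^{(n)})$, i.e. $\mu_w(\bt^{(n)}) = S_{\breve w}\bt$. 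By Fact~\ref{F:from_referee}, $\mu_w(\bt^{(n)})$ begins with $Pal(w)$; on the other hand, by hypothesis $\bt$ begins with $Pal(w)$, so $S_{\breve w}\bt$ begins with $S_{\breve w}Pal(w)$. Thus $Pal(w)$ is a prefix of $S_{\breve w}Pal(w)$ and $S_{\breve w}Pal(w)$ is a prefix of $\mu_w(\bt^{(n)})$; comparing with the fact that $\mu_w(\bt^{(n)})$ begins with $Pal(w)$, we must have $S_{\breve w} = \empt$ (a strictly positive $|S_{\breve w}|$ would force $Pal(w)$ to be a proper internal factor of itself starting at position $|S_{\breve w}|$, contradicting that $Pal(w)$ is a palindrome of the prescribed minimal length — more cleanly, $|S_{\breve w}Pal(w)| > |Pal(w)|$ forces $Pal(w)$ to be a prefix of $\mu_w(\bt^{(n)})$ of length $|Pal(w)|$ while also $S_{\breve w}Pal(w)$ is a prefix, so $S_{\breve w}$ must be a prefix of $Pal(w)$ that is simultaneously ``shifted out,'' which is impossible unless $S_{\breve w}$ is empty). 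Since $S_{\breve w}$ is empty, by the definition of the shifting factor in equation~\eqref{eq:f-shift}, the product $\prod \mu_{x_1\cdots x_{i-1}}(x_i)$ over all $i\le n$ with $\breve x_i$ $R$-spinned is empty; since each factor $\mu_{x_1\cdots x_{i-1}}(x_i)$ is non-empty, there are no $R$-spinned letters among $\breve x_1,\ldots,\breve x_n$, i.e. the prefix $\breve x_1\cdots\breve x_n$ is $L$-spinned, hence equals $x_1\cdots x_n$. So $\breve\Delta'$ has the desired form.

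\textbf{Main obstacle.} The delicate point is the step extracting $S_{\breve w} = \empt$ from the prefix comparison: one needs to argue carefully that if $S_{\breve w}$ is a non-empty prefix of $Pal(w)$ and $\mu_w(\bt^{(n)}) = S_{\breve w}\bt$ with both $\mu_w(\bt^{(n)})$ and $\bt$ beginning with $Pal(w)$, then $\bt$ would have $Pal(w)$ occurring at position $|S_{\breve w}|$ as well as at position $0$, and by the return-word / palindromic-prefix structure of $\bt$ (all palindromic prefixes of the corresponding epistandard word are the $Pal(x_1\cdots x_i)$) this cannot happen for $0 < |S_{\breve w}| < |Pal(w)|$ unless it coincides with a shorter palindromic prefix, which then contradicts minimality — so a clean way is to instead note directly that $S_{\breve w}$ is a prefix of $Pal(w)$ by the remark following \eqref{eq:f-shift}, hence $|S_{\breve w}| \le |Pal(w)|$, and $S_{\breve w}Pal(w)$ being a prefix of a word beginning with $Pal(w)$ of length exactly $|Pal(w)|$ forces $|S_{\breve w}| = 0$. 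The periodic case also requires a small separate treatment since Fact~\ref{F:aperiodic-spinned-versions} does not apply there; one falls back on Theorem~\ref{T:directSame}.
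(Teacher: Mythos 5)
Your ``if'' direction is fine and matches the paper's (both reduce to Fact~\ref{F:from_referee}). The ``only if'' direction, however, has a genuine gap: the step where you extract $S_{\breve w}=\empt$ from the prefix comparison is invalid, and the intermediate claim you are really trying to establish --- that the \emph{given} directive word $\breve\Delta'$ already has an $L$-spinned prefix of length $n$ --- is false. Knowing that both $Pal(w)$ and $S_{\breve w}Pal(w)$ are prefixes of the same infinite word $\mu_w(\bt^{(n)})$ only tells you that $Pal(w)$ occurs at positions $0$ and $|S_{\breve w}|$, i.e.\ that $Pal(w)$ has period $|S_{\breve w}|$; two nested prefixes of different lengths coexist without any contradiction, so nothing forces $S_{\breve w}$ to be empty. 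Concretely, take $\bt$ to be the Tribonacci word, which begins with $Pal(a)=a$, and take the directive word $\breve\Delta'=\bar a\bar b\bar c(a\bar b\bar c)^\omega$ (the $n=0$ case of the family $(abc)^n\bar a\bar b\bar c(a\bar b\bar c)^\omega$ listed in Section~\ref{SS:morphicDecomp}). Here $\breve w=\bar a$, $S_{\breve w}=\mu_\empt(a)=a\neq\empt$, and indeed $\mu_a(\bt^{(1)})=a\bt=aabacaba\cdots$ has both $a$ and $aa$ as prefixes --- your argument would wrongly conclude $S_{\breve w}=\empt$ and that $\breve x_1$ is $L$-spinned, whereas $\breve x_1=\bar a$.

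The proposition only asserts the \emph{existence} of some directive word with an $L$-spinned prefix $x_1\cdots x_n$, and the missing idea is precisely how to produce it when the given one does not have that form: one must rewrite the directive word using the presentation of the monoid (Theorem~\ref{T:presentation}), replacing a prefix $\bar a\bar x_2\cdots\bar x_{k-1}a$ by the block-equivalent $a x_2\cdots x_{j-1}\bar a\cdots$ so as to push an $L$-spinned copy of $x_1$ to the front, and then induct on $n$ via the identity $Pal(x_1\cdots x_n)=L_{x_1}(Pal(x_2\cdots x_n))x_1$. This is exactly what the paper's proof does; your proposal never invokes Theorem~\ref{T:presentation} and so cannot reach the conclusion. (A minor additional remark: your appeal to Fact~\ref{F:aperiodic-spinned-versions} and to the periodic case of Theorem~\ref{T:directSame} is unnecessary, since the hypothesis of the proposition already supplies a directive word that is a spinned version of $\Delta$.)
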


\begin{proof}
When $\bt$ is directed by a spinned version of $\Delta$ of the form $\breve\Delta = x_1x_2\cdots x_n \breve x_{n+1} \breve x_{n+2} \cdots$ where the prefix $x_1x_2\cdots x_n$ is $L$-spinned, $\bt$ begins with $Pal(x_1 \cdots x_n)$ by Fact~\ref{F:from_referee}.


Conversely, suppose $Pal(x_1\cdots x_n)$ is a prefix of $\bt$, and suppose that $\bt$ is directed by $\breve \Delta  = (\breve{x}_i)_{i \geq 1}$ (a spinned version of $\Delta = ({x}_i)_{i \geq 1}$). From Theorem~\ref{T:directSame} we can suppose that $\breve \Delta$ contains infinitely many $L$-spinned letters. If $n = 0$, there is nothing to prove. Else let $x_1=a$. Assume first that $\breve{x}_1 = \bar{a}$. 
Let $k$ be the smallest positive integer such that $\breve {x}_k \in {\cA}$. Since $\bt$ begins with the letter $a$ (which is the first letter of $Pal(x_1\cdots x_n)$), we have ${x}_k = {a}$. Then $\breve \Delta = \bar a\bar{x}_2\cdots \bar{x}_{k-1} a\breve x_{k+1}\cdots$, and hence by Theorem~\ref{T:presentation}  $\bt$ is also directed by the spinned infinite word beginning with $a{x}_2\cdots {x}_{j-1}\bar{a}$, with $j \leq k$. Therefore we may assume from now on that $\breve{x}_1 = a$. Let $\bt'$ be the episturmian word directed by $\TT(\breve\Delta) = (\breve{x}_i)_{i \geq 2}$. It is easily seen from the equality $Pal(x_1\cdots x_n) = L_a(Pal(x_2\cdots x_n))a$ that $Pal(x_2\cdots x_n)$ is a prefix of $\bt'$. Hence by induction $\bt'$ is directed by a spinned version of $\TT(\Delta)$ with $x_2\cdots x_n$ as a prefix. And so $\bt$ is directed by a spinned version of $\Delta$ of the form $x_1x_2\cdots x_n\breve x_n\breve x_{n+1}\cdots$ where the prefix $x_1x_2\cdots x_n$ is $L$-spinned.
\end{proof}

\subsection{All epistandard words are quasiperiodic} \label{SS:epistandard-quasiperiods}

A first consequence of Theorem~\ref{T:return-words} is that any epistandard word is quasiperiodic. More precisely:

\begin{thrm} \label{T:epistandard-quasiperiods} Suppose $\bs$ is an epistandard word with directive word $\Delta = x_1x_2x_3\cdots$ with $x_i \in \cA$, and let $m$ be the smallest positive integer such that $\Alphit(x_1x_2\cdots x_{m}) = \Alphit(\bs)$. Then, for all $n \geq m$, $Pal(x_1 \cdots x_n)$ is a quasiperiod of $\bs$.
\end{thrm}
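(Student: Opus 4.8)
The plan is to use the explicit description of returns to a factor in an epistandard word (Theorem~\ref{T:return-words}) together with the characterization of quasiperiods via return words (Corollary~\ref{Cor:quasi}). Fix $n \geq m$ and set $v = Pal(x_1 \cdots x_n)$; note $v$ is a palindromic prefix of $\bs$ and, since $\bs$ is uniformly recurrent, $v$ is recurrent in $\bs$. By Corollary~\ref{Cor:quasi}, it suffices to show that every return to $v$ in $\bs$ has length at most $|v|$.

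First I would apply Theorem~\ref{T:return-words} with the factor being $v$ itself. Since $v = Pal(x_1\cdots x_n)$ is already a palindromic prefix of $\bs$, the shortest palindromic prefix $u_{N+1} = Pal(x_1\cdots x_N)$ containing $v$ is $v$ itself, i.e.\ we may take $N = n$ and the decomposition $u_{n+1} = fvg$ with $f = g = \varepsilon$. Hence the returns to $v$ are exactly the words $\mu_{x_1\cdots x_n}(x)$ for $x \in \Alph(x_{n+1}x_{n+2}\cdots)$. So the task reduces to bounding $|\mu_{x_1\cdots x_n}(x)|$ by $|v| = |Pal(x_1\cdots x_n)|$ for each such letter $x$.

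The key identity is equation~\eqref{eq:f-shift} (or the remark that $Pal(wx) = \mu_w(x)Pal(w)$ from the discussion around \eqref{eq:formula(3)}): for any letter $x$, $Pal(x_1\cdots x_n x) = \mu_{x_1\cdots x_n}(x)\,Pal(x_1\cdots x_n)$, so $|\mu_{x_1\cdots x_n}(x)| = |Pal(x_1\cdots x_n x)| - |v|$. Thus the return $\mu_{x_1\cdots x_n}(x)$ has length $\leq |v|$ precisely when $|Pal(x_1\cdots x_n x)| \leq 2|v|$. Now I would distinguish two cases according to whether $x$ already occurs in $x_1\cdots x_n$. If $x$ does occur in $x_1\cdots x_n$, then $Pal(x_1\cdots x_n)$ already contains the letter $x$, so it has a (non-empty) palindromic suffix ending just before... more cleanly: the longest palindromic suffix $q$ of $Pal(x_1\cdots x_n)x$ is non-empty (at least the occurrence of $x$ inside $Pal(x_1\cdots x_n)$ extends to a palindromic suffix of $Pal(x_1\cdots x_n)x$ of length $>1$), whence by the palindromic-closure formula $Pal(x_1\cdots x_n x) = (Pal(x_1\cdots x_n)x)q^{-1}\widetilde{Pal(x_1\cdots x_n)x}$ has length $2|v| + 2 - |q| \leq 2|v|$ when $|q| \geq 2$ --- and one checks $|q|\geq 2$ exactly because $x$ occurs in $v$. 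This is the case that matters: the hypothesis $n \geq m$ guarantees $\Alph(x_1\cdots x_n) = \Alph(\bs) \supseteq \Alph(x_{n+1}x_{n+2}\cdots)$, so every letter $x$ arising in Theorem~\ref{T:return-words} already occurs in $x_1\cdots x_n$, and we are always in this case. Therefore every return to $v$ has length $\leq |v|$, and Corollary~\ref{Cor:quasi} finishes the proof.

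The main obstacle is the careful bookkeeping in the palindromic-closure step: one must argue precisely why, when the letter $x$ occurs in $w := x_1\cdots x_n$, the longest palindromic suffix of $Pal(w)x$ has length at least $2$ (equivalently, why $|Pal(wx)| \leq 2|Pal(w)|$), since this is exactly what encodes "the return is no longer than $v$." A clean way to see it: $Pal(w)$ contains $x$, so write $Pal(w) = \alpha x \beta$ with $\beta$ the (possibly empty) part after the last occurrence of $x$; because $Pal(w)$ is a palindrome and $x$ occurs in it, $Pal(w)$ has $x\widetilde{\beta}$... I would instead invoke the standard fact (used implicitly throughout episturmian theory) that for $x \in \Alph(w)$ one has $Pal(wx) = Pal(w)\,z^{-1}\,Pal(w)$ where $z$ is the longest palindromic suffix of $Pal(w)$ preceded by $x$ in $\bs$, and $z$ is non-empty precisely when $x \in \Alph(w)$; this gives $|Pal(wx)| = 2|Pal(w)| - |z| < 2|Pal(w)|$. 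Modulo citing or reproving this standard lemma, the argument is short. (Alternatively, one can bypass palindromic closure entirely and bound $|\mu_{x_1\cdots x_n}(x)|$ directly: $\mu_{x_1\cdots x_n}(x)$ is a factor of $\bs$ and, being a return to the prefix $v$ which contains every letter of $\bs$, must have all its internal structure controlled by $v$ --- but the palindromic-closure route is the most transparent.)
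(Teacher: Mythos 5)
Your proof is correct, but it follows a different route from the one printed in the paper. The paper's proof is a short induction on the palindromic prefixes: for $k\geq n$ the letter $x_{k+1}$ already occurs in $x_1\cdots x_k$, so $|(Pal(x_1\cdots x_k)x_{k+1})^{(+)}|\leq 2|Pal(x_1\cdots x_k)|$, whence the palindrome $Pal(x_1\cdots x_{k+1})$ (having $Pal(x_1\cdots x_k)$ as both prefix and suffix) is covered by $Pal(x_1\cdots x_k)$ and therefore by $Pal(x_1\cdots x_n)$; since $\bs=\lim_k Pal(x_1\cdots x_k)$ (Theorem~\ref{T:epistandard}), the covering passes to $\bs$. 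You instead compute the returns to $v=Pal(x_1\cdots x_n)$ explicitly via Theorem~\ref{T:return-words} (with $f=g=\varepsilon$), bound each return $\mu_{x_1\cdots x_n}(x)$ by $|Pal(x_1\cdots x_nx)|-|v|\leq|v|$, and conclude with Lemma~\ref{L:quasi}/Corollary~\ref{Cor:quasi}. Both arguments hinge on exactly the same inequality $|Pal(wx)|\leq 2|Pal(w)|$ for $x\in\Alph(w)$, which is formula~\eqref{eq:u_{i+1}} of the paper, so you need not re-derive it via palindromic suffixes. Your route requires the heavier Justin--Vuillon machinery but yields more: it identifies the returns exactly, which is precisely what the paper later exploits in Lemma~\ref{L:ultimate_quasiperiods} to describe \emph{all} ultimate quasiperiods (and indeed the paper announces your route with ``a first consequence of Theorem~\ref{T:return-words}\dots'' before giving the referee's more elementary induction). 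One small slip: your claim that $|Pal(wx)|=2|Pal(w)|-|z|<2|Pal(w)|$ with $z$ nonempty is an overstatement --- $z=u_{P(i)}$ can be empty (e.g.\ $Pal(aba)=abaaba$ has length exactly $2|Pal(ab)|$) --- but only the non-strict inequality is needed, so the argument stands.
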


\begin{proof}[Proof of Theorem~$\ref{T:epistandard-quasiperiods}$]  We suppose that $\bs$ is an epistandard word with directive word $\Delta = x_1x_2x_3\cdots$, $x_i \in \cA$. Let $m$ be the smallest positive integer such that $\Alph(x_1x_2\cdots x_{m}) = \Alph(\bs)$.
Clearly, for $n < m$, $Pal(x_1 \cdots x_n)$ cannot be a quasiperiod of $\bs$ since it does not contain all of the letters in $\Alph(\bs)$.

Now let $n \geq m$. We know that $Pal(x_1 \cdots x_n)$ is a prefix of $s$. Suppose that for some $k \geq n$, $Pal(x_1 \cdots x_k)$ is covered by $Pal(x_1 \cdots x_n)$.  Since by choice of $m$, $x_{k+1}$ belongs to $\{x_1, \cdots, x_k\}$, we have $|(Pal(x_1 \cdots x_k)x_{k+1})^{(+)}| \leq 2|Pal(x_1 \cdots x_k)|$. Hence since $Pal(x_1 \cdots x_k)$ is a palindrome, $Pal(x_1 \cdots x_{k+1})$ is covered by $Pal(x_1 \cdots x_k)$, and so by $Pal(x_1 \cdots x_n)$. The result follows from Theorem~\ref{T:epistandard} by induction.

\end{proof}

\begin{xmpl} \label{ex:Trib} Recall the Tribonacci word:
\[
\br = abacabaabacababacabaabacabacabaabaca \cdots~,
\]
which is the epistandard word directed by $(abc)^\omega$. 
Observe that $Pal(abc) = abacaba$ is the shortest palindromic prefix of $\br$ such that $\Alph(Pal(abc)) = \Alph(\br) = \{a,b,c\}$. By Theorem~\ref{T:return-words}, the returns to $Pal(abc)$ in $\br$ are: $\mu_{abc}(a) = abacaba$, $\mu_{abc}(b) = abacab$, $\mu_{abc}(c) = abac$, none of which are longer than $Pal(abc)$. Hence $\br$ is $abacaba$-quasiperiodic; in fact $Pal(abc) = \mu_{abc}(a) = abacaba$ is the smallest quasiperiod of $\br$ since its prefixes $abac$, $abaca$, $abacab$ have returns longer than themselves. This latter fact is also evident from our description of quasiperiods of a (quasiperiodic) episturmian word (Theorem~\ref{T:episturmianQuasiperiods}, to follow).

More generally, the {\em $k$-bonacci word}, which is directed by $(a_1a_2\cdots a_k)^\omega$, is quasiperiodic with smallest quasiperiod $Pal(x_1 \cdots x_k)$. This fact was also observed in \cite{fLgR04quas} by noting that the $k$-bonacci word is generated by the morphism $\varphi_k$ on $\{a_1,a_2,\ldots, a_k\}$ defined by $\varphi_k(a_i) = a_1a_{i+1}$ for all $i \ne k$, and $\varphi_k(a_k) = a_1$.
\end{xmpl}

\begin{rmrk} \label{R:ultimately-L} From Fact~\ref{rnew4} and Theorem~\ref{T:epistandard-quasiperiods}, we immediately deduce that $\varphi(\bs)$ is a quasiperiodic infinite word for any epistandard word $\bs$. Moreover, if $\varphi$ is a pure episturmian morphism, then $\varphi(\bs)$ is a  {\em quasiperiodic episturmian word}. More precisely, $\mu_{\breve w}(\bs)$ is a quasiperiodic episturmian word for any epistandard word $\bs$ and spinned word $\breve w$. Such an episturmian word is directed by a spinned infinite word of the form $\breve w \Delta$ where $\Delta$ is the $L$-spinned directive word of $\bs$. Hence, if an episturmian word $\bt$ is directed by a spinned infinite word with all spins ultimately $L$, then $\bt$ is quasiperiodic.
\end{rmrk}

More generally, we have the following consequence of Theorem~\ref{T:epistandard-quasiperiods} (a converse of this result is stated in Theorem~\ref{T:episturmianQuasiperiods}, and a generalization is provided by Theorem~\ref{T:stronglyQuasEpistandardMorphisms} later).

\begin{crllr} \label{Cor:wvy}
 If an episturmian word $\bt$ is directed by $\breve\Delta = \breve{w}v\breve \by$ for some spinned words $\breve w$, $\breve \by$ and $L$-spinned word $v$ such that $\Alphit(v) = \Alphit(v\by)$, then $\bt$ is quasiperiodic. 
 
\noindent
Moreover, any word of the form $\mu_{\breve w}(Pal(v))p$ with $p$ a prefix of $S^{-1}_{\breve w}Pal(w)$ is a quasiperiod of $\bt$.
\end{crllr}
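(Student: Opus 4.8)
The statement is a corollary of Theorem~\ref{T:epistandard-quasiperiods}, and the natural approach is to transfer the quasiperiodicity of the underlying epistandard word through the morphism $\mu_{\breve w}$, using Fact~\ref{rnew4}, and then to keep careful track of the shifting factor $S_{\breve w}$ so as to describe the quasiperiods of $\bt$ itself (not merely of some shift of it). First I would let $\bs$ denote the epistandard word directed by $v\by$, so that $\bt^{(|w|)}$ (in the notation of Remark~\ref{R:directive}) is an episturmian word directed by a spinned version of $v\by$, and in particular has the same set of factors as $\bs$. Let $\bs'$ be the epistandard word directed by $v\Delta'$, where $\Delta'$ is the $L$-spinned directive word of $\by$; then $v$ is an $L$-spinned prefix of this directive word, and since $\Alphit(v) = \Alphit(v\by)$, we may apply Theorem~\ref{T:epistandard-quasiperiods} to conclude that $Pal(v)$ is a quasiperiod of $\bs'$ (here $|v|$ plays the role of the integer $m$, or rather $m \le |v|$, which is all we need).

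\textbf{Transferring through the morphism.} Since $\bs'$ is $Pal(v)$-quasiperiodic and $\mu_{w}$ is a non-erasing morphism, Fact~\ref{rnew4} gives that $\mu_w(\bs')$ is $\mu_w(Pal(v))$-quasiperiodic. Now I would use equation~\eqref{eq:f-shift-infinite}, which says $\mu_{\breve w}(\bs') = S_{\breve w}^{-1}\mu_w(\bs')$, i.e.\ $\mu_w(\bs') = S_{\breve w}\,\mu_{\breve w}(\bs')$. The word $\bt$ has the same set of factors as $\mu_{\breve w}(\bs')$ (both are directed by spinned versions of the same $L$-spinned word $w v \Delta'$, using Fact~\ref{F:episturmian}), and more precisely $\bt$ begins with $\mu_{\breve w}(\bs')$'s prefix determined by the first $L$-spinned letter; the cleanest route is to observe via Lemma~\ref{L:important-fact} that the returns to any factor are the same in $\bt$ as in $\mu_w(\bs')$, so it suffices to identify which prefixes $u$ of $\bt$ have all returns of length at most $|u|$, invoking Corollary~\ref{Cor:quasi}. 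A prefix $u = \mu_w(Pal(v))p$ of $\mu_w(\bs')$ of length at least $|\mu_w(Pal(v))|$ is a quasiperiod of $\mu_w(\bs')$ (since $\mu_w(Pal(v))$ already is, and any longer prefix that is itself a prefix of a covering is still a quasiperiod — this is the same inductive step as in the proof of Theorem~\ref{T:epistandard-quasiperiods}, using that $Pal(v)$ covers all longer palindromic prefixes of $\bs'$). Since $\mu_w(\bs') = S_{\breve w}\,\mu_{\breve w}(\bs')$ and $S_{\breve w}$ is a prefix of $Pal(w)$, the prefixes of $\bt$ (which is a shift of $\mu_w(\bs')$ by $|S_{\breve w}|$) of the form $\mu_{\breve w}(Pal(v))\,p'$, for $p'$ a prefix of $S_{\breve w}^{-1}Pal(w)\cdots$, correspond exactly to quasiperiods of $\mu_w(\bs')$ that are long enough to "absorb" the shift: one checks $|\mu_w(Pal(v))| = |S_{\breve w}\mu_{\breve w}(Pal(v))|$, and $S_{\breve w}\mu_{\breve w}(Pal(v))$ is exactly $\mu_w(Pal(v))$ by \eqref{eq:f-shift}, so a quasiperiod $\mu_w(Pal(v))p$ of $\mu_w(\bs')$ with $|p| \le |Pal(w)| - |S_{\breve w}|$ (equivalently $p$ a prefix of $S_{\breve w}^{-1}Pal(w)$, since $Pal(w)$ is the shifting factor $S_{\bar w}$) translates, after deleting the prefix $S_{\breve w}$, into the quasiperiod $\mu_{\breve w}(Pal(v))p$ of $\bt$.

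\textbf{Main obstacle.} The genuinely delicate point is the bookkeeping of the shift: one must verify that when $u$ is a quasiperiod of $\mu_w(\bs')$ with $u$ having $S_{\breve w}$ as a prefix, then $S_{\breve w}^{-1}u$ is a quasiperiod of $\TT^{|S_{\breve w}|}(\mu_w(\bs')) = \bt$. This is not automatic for arbitrary shifts of arbitrary quasiperiodic words, but it works here because $S_{\breve w}$ is a prefix of $\mu_w(Pal(v))$ (indeed $\mu_w(Pal(v)) = S_{\breve w}\mu_{\breve w}(Pal(v))$), so every occurrence of $\mu_w(Pal(v))$ in $\mu_w(\bs')$ "survives" the deletion of the length-$|S_{\breve w}|$ prefix and yields an occurrence of $\mu_{\breve w}(Pal(v))$ in $\bt$ at the shifted position; since these occurrences covered $\mu_w(\bs')$ and the bound $|p| \le |Pal(w)| - |S_{\breve w}|$ guarantees the longer candidate quasiperiod $\mu_{\breve w}(Pal(v))p$ is still itself a prefix of $\bt$ (here one uses Fact~\ref{F:from_referee}: $\bt = \mu_{\breve w}(\bt^{(|w|)})$ begins with $\mu_{\breve w}(Pal(v))\cdot(\text{prefix of }S_{\breve w}^{-1}Pal(w))$ because $\bt^{(|w|)}$ begins with $Pal(v)$), the covering property is preserved. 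I would close the argument by appealing to Lemma~\ref{L:quasi}/Corollary~\ref{Cor:quasi} with $\bw = \bt$ and the recurrent prefix $v' = \mu_{\breve w}(Pal(v))p$, noting recurrence of $v'$ follows from uniform recurrence of episturmian words.
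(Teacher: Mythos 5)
Your proposal assembles exactly the same ingredients as the paper's proof (Theorem~\ref{T:epistandard-quasiperiods}, Fact~\ref{rnew4}, Lemma~\ref{L:important-fact}, Fact~\ref{F:from_referee}, and the shifting-factor formulas \eqref{eq:f-shift}--\eqref{eq:f-shift-infinite}), but you apply them in a different order, and that order is what creates all of your ``delicate bookkeeping.'' The paper first transfers quasiperiodicity down at level $|w|$: by Proposition~\ref{P:pali-prefix} the word $\bt^{(|w|)}$ directed by $v\breve\by$ begins with $Pal(v)$, and since $Pal(v)$ is a quasiperiod of the epistandard word directed by $v\by$ and returns are the same throughout the subshift (Lemma~\ref{L:important-fact}), $Pal(v)$ is already a quasiperiod of the \emph{episturmian} word $\bt^{(|w|)}$. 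One then applies the spinned morphism $\mu_{\breve w}$ once, via Fact~\ref{rnew4}, and gets $\mu_{\breve w}(Pal(v))$ as a quasiperiod of $\bt = \mu_{\breve w}(\bt^{(|w|)})$ with no shift correction at all; the extension to $\mu_{\breve w}(Pal(v))p$ then only needs the observation that every occurrence of $\mu_{\breve w}(Pal(v))$ is followed by $S_{\breve w}^{-1}Pal(w)$. By instead pushing the epistandard word $\bs'$ through the \emph{unspinned} morphism $\mu_w$ and trying to undo the shift at the top, you take on the conjugation bookkeeping that the paper deliberately avoids.

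Within that bookkeeping there are two concrete errors. First, formula \eqref{eq:f-shift} gives $S_{\breve w}\mu_{\breve w}(Pal(v)) = \mu_w(Pal(v))\,S_{\breve w}$, not ``$S_{\breve w}\mu_{\breve w}(Pal(v))$ is exactly $\mu_w(Pal(v))$''; in particular your asserted length equality $|\mu_w(Pal(v))| = |S_{\breve w}\mu_{\breve w}(Pal(v))|$ fails whenever $\breve w$ contains an $R$-spinned letter. The occurrence-matching you want does go through, but only after one notes that each occurrence of $\mu_w(Pal(v))$ coming from an occurrence of $Pal(v)$ is followed by $Pal(w)$ (Fact~\ref{F:from_referee}), hence by its prefix $S_{\breve w}$, so that deleting the length-$|S_{\breve w}|$ prefix turns the covering by $\mu_w(Pal(v))$ into a covering by $\mu_{\breve w}(Pal(v))$. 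Second, $\bt$ is \emph{not} a shift of $\mu_w(\bs')$: the word $S_{\breve w}^{-1}\mu_w(\bs') = \mu_{\breve w}(\bs')$ is, but it differs from $\bt = \mu_{\breve w}(\bt^{(|w|)})$ unless $\breve\by$ happens to be $L$-spinned; they only share a set of factors. You do hedge this by invoking Lemma~\ref{L:important-fact} and Corollary~\ref{Cor:quasi} to pass from the subshift to $\bt$ itself, which is the correct repair, but the parenthetical ``$\bt$ (which is a shift of $\mu_w(\bs')$\dots)'' should be deleted. Once these two points are fixed your argument is sound, but the fixed version is essentially a longer rederivation of the paper's two-line proof; working at level $\bt^{(|w|)}$ before applying any morphism is the cleaner path.
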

\begin{proof} Let $(\bt^{(n)})_{n\geq 0}$ be the infinite sequence of episturmian words associated to $\bt$ and $\breve \Delta$ in Theorem~\ref{T:episturmian}. Then by Proposition~\ref{P:pali-prefix}, the episturmian word $\bt^{(|w|)}$, which is directed by $v\breve \by$,  begins with the palindromic prefix $Pal(v)$ of the epistandard word $\bs^{(|w|)}$ directed by the $L$-spinned version of $v\breve \by$. Moreover, since $\Alph(v) = \Alph(v\by)$, $Pal(v)$  is a quasiperiod of  $\bs^{(|w|)}$ by Theorem~\ref{T:epistandard-quasiperiods}. By Fact~\ref{F:episturmian}, $\bs^{(|w|)}$ is in the subshift of $\bt^{(|w|)}$. It follows from Lemma~\ref{L:important-fact} that $Pal(v)$ has the same returns in $\bt^{(|w|)}$ as it does in $\bs^{(|w|)}$, and since it is a prefix of $\bt^{(|w|)}$, it is also a quasiperiod of $\bt^{(|w|)}$.  
Therefore by Fact~\ref{rnew4} $\bt = \mu_{\breve w}(\bt^{(|w|)})$ is quasiperiodic and $\mu_{\breve w}(Pal(v))$ is a quasiperiod of $\bt$.


Now by formula~\eqref{eq:f-shift-infinite} and by Fact~\ref{F:from_referee}, each occurrence of $\mu_{\breve w}(Pal(v))$ is followed by $S^{-1}_{\breve w}Pal(w)$ in $\bt$, and so is followed by $p$ for any prefix $p$ of $S^{-1}_{\breve w}Pal(w)$. This shows that $\mu_{\breve w}(Pal(v))p$ is a quasiperiod of~$\bt$.
\end{proof}

\subsection{Ultimate quasiperiods}

Theorem \ref{T:epistandard-quasiperiods} shows that long enough palindromic prefixes of an epistandard word $\bs$ are quasiperiods of~$\bs$. Our next goal is to extend Theorem~\ref{T:epistandard-quasiperiods} by describing all the quasiperiods of any (quasiperiodic) episturmian word. Several lemmas are required, using the notion of ultimate quasiperiods that we now define.

A factor $v$ of an infinite word $\bw$ is said to be an {\it ultimate quasiperiod} of $\bw$ if it is a quasiperiod of a suffix of $\bw$.
In particular, when $\bw$ is a uniformly recurrent word (which is the case for episturmian words), a recurrent factor $v$ in $\bw$ is an ultimate quasiperiod of $\bw$ if any return to $v$ in $\bw$ has length at most $|v|$.

Clearly, if an ultimate quasiperiod is a prefix of $\bw$, then $\bw$ is quasiperiodic by Lemma~\ref{L:quasi}. Also note that the set of quasiperiods of a (quasiperiodic) infinite word $\bw$ consists of all its ultimate quasiperiods that are prefixes of it.

\medskip
Remark that we have
\begin{prpstn}
All episturmian words are ultimately quasiperiodic.
\end{prpstn}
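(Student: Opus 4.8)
The plan is to show that every episturmian word $\bt$ has an ultimate quasiperiod, by reducing to the epistandard case via Theorem~\ref{T:episturmian} and then invoking Theorem~\ref{T:epistandard-quasiperiods}. First I would fix a spinned directive word $\breve\Delta = \breve x_1\breve x_2\breve x_3\cdots$ of $\bt$; by Theorem~\ref{T:directSame} (or Theorem~\ref{T:normalisation}) we may assume $\breve\Delta$ contains infinitely many $L$-spinned letters. Let $(\bt^{(n)})_{n\geq0}$ be the associated sequence of episturmian words from Theorem~\ref{T:episturmian}, so $\bt^{(0)} = \bt$ and $\bt^{(n-1)} = \mu_{\breve x_n}(\bt^{(n)})$, and recall from Remark~\ref{R:directive} that $\bt^{(n)}$ is directed by $\TT^n(\breve\Delta)$. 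By Fact~\ref{F:episturmian}, each $\bt^{(n)}$ has the same set of factors as the epistandard word $\bs^{(n)}$ directed by the $L$-spinned version $\TT^n(\Delta)$ of $\TT^n(\breve\Delta)$, where $\Delta = x_1x_2x_3\cdots$.

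The key step is to choose $n$ so that $\TT^n(\breve\Delta)$ has an $L$-spinned letter in its first position. Let $n$ be such that $\breve x_{n+1}$ is $L$-spinned (such $n$ exists since $\breve\Delta$ has infinitely many $L$-spinned letters). Then $\bt^{(n)}$ is directed by a spinned word whose first letter is $L$-spinned, so by Proposition~\ref{P:pali-prefix}, $\bt^{(n)}$ begins with a palindromic prefix of $\bs^{(n)}$ — indeed for infinitely many such $n$ we get arbitrarily long palindromic prefixes of $\bs^{(n)}$. Now apply Theorem~\ref{T:epistandard-quasiperiods} to $\bs^{(n)}$: there is a smallest $m$ with $\Alph$ of the first $m$ letters of $\TT^n(\Delta)$ equal to $\Alph(\bs^{(n)})$, and then $Pal(x_{n+1}\cdots x_{n+k})$ is a quasiperiod of $\bs^{(n)}$ for all $k\geq m$. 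Choosing $k$ large enough (and $n$ with $\breve x_{n+1}$ $L$-spinned) so that $Pal(x_{n+1}\cdots x_{n+k})$ is actually a prefix of $\bt^{(n)}$, we obtain via Lemma~\ref{L:important-fact} that this palindrome has the same (short) returns in $\bt^{(n)}$ as in $\bs^{(n)}$, hence is a quasiperiod of $\bt^{(n)}$. Thus $\bt^{(n)}$ is quasiperiodic for suitable $n$.

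Finally, $\bt = \mu_{\breve x_1\cdots \breve x_n}(\bt^{(n)})$, so by Fact~\ref{rnew4} the image $\mu_{\breve x_1\cdots\breve x_n}(v)$ of a quasiperiod $v$ of $\bt^{(n)}$ is a quasiperiod of $\bt$ itself — in particular it is a prefix of $\bt$, so it is an ultimate quasiperiod of $\bt$ that happens to be a genuine prefix. (Even if one only wants "ultimately quasiperiodic" in the weaker sense of a quasiperiod of some suffix, this is clearly implied.) I expect the main obstacle to be the bookkeeping in the second step: one must pick $n$ simultaneously $L$-spinned in position $n+1$ and large enough that the relevant palindrome $Pal(x_{n+1}\cdots x_{n+k})$, which is a quasiperiod of $\bs^{(n)}$, is a prefix of $\bt^{(n)}$ rather than merely a factor — this is exactly what Proposition~\ref{P:pali-prefix} delivers, but one has to combine it carefully with the choice of $m$ and $k$ from Theorem~\ref{T:epistandard-quasiperiods}. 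The periodic case $|\Ult(\Delta)|=1$ is trivial since a periodic word $u^\omega$ is quasiperiodic with quasiperiod $u^2$ (or already handled by Remark~\ref{R:ultimately-L}).
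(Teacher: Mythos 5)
There is a genuine gap, and the clearest symptom is that your argument proves too much: if it worked, it would show that \emph{every} episturmian word is quasiperiodic (your final paragraph explicitly produces a quasiperiod that is a prefix of $\bt$), whereas the paper exhibits many non-quasiperiodic episturmian words (e.g.\ any episturmian word directed by a regular wavy word such as $(a\bar b c)^\omega$, and all episturmian Lyndon words). The step that fails is the one you yourself flagged as ``bookkeeping'': choosing $n$ with $\breve x_{n+1}$ $L$-spinned \emph{and} $k\geq m$ such that $Pal(x_{n+1}\cdots x_{n+k})$ is a prefix of $\bt^{(n)}$. By Proposition~\ref{P:pali-prefix} this requires some directive word of $\bt^{(n)}$ to have an $L$-spinned prefix of length $k$, and $k\geq m$ forces $\Alph(x_{n+1}\cdots x_{n+k})=\Alph(\bs^{(n)})$; the existence of such a factorization of a directive word is precisely the characterization of \emph{quasiperiodic} episturmian words (Theorem~\ref{T:quasi-characterization}) and simply does not hold in general. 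For $(a\bar b c)^\omega$ the maximal $L$-spinned blocks have length at most $2$ and never contain all three letters, so no shift $\bt^{(n)}$ ever begins with a palindrome long enough to be a quasiperiod of $\bs^{(n)}$; your claim that ``for infinitely many such $n$ we get arbitrarily long palindromic prefixes of $\bs^{(n)}$'' is false.

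The fix is to abandon the requirement that the quasiperiod be a prefix of $\bt$ (or of some $\bt^{(n)}$): ``ultimately quasiperiodic'' only asks for a quasiperiod of a \emph{suffix}. This is what the paper does, and it is much shorter. Let $\bs$ be the epistandard word with the same set of factors as $\bt$ and let $q$ be any quasiperiod of $\bs$ (which exists by Theorem~\ref{T:epistandard-quasiperiods}). Since $q$ is a factor of $\bt$ and $\bt$ is recurrent, $\bt$ has a suffix $\bt'$ beginning with $q$; by Lemma~\ref{L:important-fact} the returns to $q$ in $\bt'$ coincide with those in $\bs$, hence all have length at most $|q|$, and Lemma~\ref{L:quasi} gives that $\bt'$ is $q$-quasiperiodic. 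No morphic decomposition, no Proposition~\ref{P:pali-prefix}, and no choice of $n$ and $k$ are needed.
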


\begin{proof}
From Theorem~\ref{T:epistandard-quasiperiods}, all epistandard words are quasiperiodic. Let $\bs$ be an epistandard word and let $q$ be a quasiperiod of $\bs$. Then any episturmian word $\bt$ in the subshift of $\bs$ has a suffix $\bt'$ beginning with $q$. Since $\bt$ has the same set of factors than $\bs$, we can find infinitely many prefixes of $\bt'$ which are covered by $q$, i.e., $\bt'$ is quasiperiodic. Hence $\bt$ is ultimately quasiperiodic.
\end{proof}

Now we recall some insight about palindromic closure that will be useful later.
As previously, let $\bs$ denote an epistandard word with directive word $\Delta = x_1x_2x_3\cdots$ with $x_i \in \cA$. For $n \geq 0$ let $u_{n+1}=Pal(x_1 \cdots x_n)$. Note in particular that $u_1=\varepsilon$ and by Theorem~\ref{T:epistandard}, $\bs = \lim_{n\rightarrow \infty} u_n$.

As in \cite{jJlV00retu, jJgP02epis}, let us define $P(i) = \sup\{j < i \mid x_j = x_i\}$ if this number exists, undefined otherwise. That is, if $x_i = a$, then $P(i)$ is the position of the right-most occurrence of the letter $a$ in the prefix $x_1x_2\cdots x_{i-1}$ of the directive word $\Delta$. For instance, if $\Delta = (abc)^\omega$, $P(i) = i-3$ for any $i \geq 4$ and $P(1)$, $P(2)$, and $P(3)$ are undefined.

From the definitions of palindromic closure and the palindromes $(u_{i})_{i\geq 1}$, it follows that, for all $i \geq 1$, 
\begin{equation} \label{eq:u_{i+1}}
u_{i+1} = \begin{cases}
                     u_ix_iu_i &\textrm{if $x_i \notin \Alph(u_i)$,} \\
                     u_iu_{P(i)}^{-1}u_i &\textrm{otherwise}.
                \end{cases}
\end{equation}

Therefore, using Theorem~\ref{T:return-words} with $f=g=\varepsilon$, we deduce that for $n\geq 0$, the length of the longest return $r_{n+1}$ to $u_{n+1}$ in $\bs$ satisfies
\[
  |r_{n+1}| = \begin{cases} 
                |u_{n+1}| + 1 &\mbox{if some $x \in \Alph(\bs)$ does not occur in $u_{n+1}$}, \\
                |u_{n+1}| - |u_{p_n}| &\mbox{otherwise},
                \end{cases}
\]
where $p_n = \inf\{P(i) \mid i\geq n+1\}$ (see also \cite[Lem.~5.6]{jJlV00retu}).

\medskip
In other words, $p_n = \sup\{i\leq n  \mid \Alph(x_i\cdots x_{n}) = \Alph(x_i\cdots x_{n} \cdots)\}$. For instance, if $\Delta = (abc)^\omega$, then $p_4 = 1$.

\medskip
The next lemma 
gives the set of all ultimate quasiperiods of any episturmian word $\bt$. It simply amounts to determining all of the factors of $\bt$ that have no returns longer than themselves. 

\begin{notation} Hereafter, we denote by $F(w)$ the set of factors of a finite or infinite word $w$.
\end{notation}

\begin{lmm} \label{L:ultimate_quasiperiods} Suppose $\bs$ is an epistandard word directed by $\Delta = x_1x_2x_3\cdots$, $x_i \in \cA$. Let $m$ be the smallest positive integer such that $\Alphit(x_1x_2\cdots x_m) = \Alphit(\bs)$ and let $u_{n+1}=Pal(x_1 \cdots x_n)$ for all $n \geq 0$. Then the set of all ultimate quasiperiods of any episturmian word $\bt$ in the subshift of $\bs$ is given by 
\[
  \cQ = \bigcup_{n\geq m} \cQ_n \quad \mbox{with}\quad  \cQ_n = \{ q \in F(u_{n+1}) \mid |q| \geq |u_{n+1}| - |u_{p_n}|\},   
\] 
where $p_n = \sup\{i\leq n \mid \Alphit(x_i\cdots x_{n}) = \Alphit(x_i\cdots x_{n} \cdots)\}$.
\end{lmm}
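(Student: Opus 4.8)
The plan is to characterize, purely in terms of returns, which factors of $\bt$ qualify as ultimate quasiperiods, and then to translate this into the combinatorial condition defining $\cQ$. By Lemma~\ref{L:important-fact} it suffices to work inside $\bs$, since $\bt$ has the same factors and the same returns as $\bs$. Recall (from the discussion following equation~\eqref{eq:u_{i+1}}) that, for each $n \geq 0$, the longest return to $u_{n+1}$ in $\bs$ has length $|u_{n+1}|+1$ if some letter of $\Alph(\bs)$ is missing from $u_{n+1}$, and length $|u_{n+1}|-|u_{p_n}|$ otherwise, where $p_n = \sup\{i \leq n \mid \Alph(x_i\cdots x_n) = \Alph(x_i\cdots x_n\cdots)\}$. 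Thus $u_{n+1}$ is itself an ultimate quasiperiod precisely when $n \geq m$ (so no letter is missing) and its longest return has length $\leq |u_{n+1}|$, which is automatic in that regime. So the $u_{n+1}$ with $n \geq m$ are ultimate quasiperiods; the task is to pin down exactly which shorter factors are too.

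\textbf{Key steps.} First I would prove $\cQ \subseteq \{\text{ultimate quasiperiods}\}$. Fix $n \geq m$ and $q \in \cQ_n$, so $q$ is a factor of $u_{n+1}$ with $|q| \geq |u_{n+1}| - |u_{p_n}|$. Since $q$ is a factor of the epistandard (hence uniformly recurrent) word $\bs$, it is recurrent; I must bound the returns to $q$ in $\bs$. Take the shortest palindromic prefix $u_{k+1} = fqg$ of $\bs$ containing $q$; then $k \leq n$ by minimality (as $u_{n+1}$ already contains $q$), and by Theorem~\ref{T:return-words} every return to $q$ has the form $f^{-1}\mu_{x_1\cdots x_k}(x)f$ with $x \in \Alph(x_{k+1}x_{k+2}\cdots)$. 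The length of such a return is $|\mu_{x_1\cdots x_k}(x)| - |f| \leq |\mu_{x_1\cdots x_k}(x)|$, and using equations~\eqref{eq:u_n2} and~\eqref{eq:u_{i+1}} one identifies $|\mu_{x_1\cdots x_k}(x)|$ with $|u_{k+1}| - |u_{P(j)}|$ for the next occurrence position $j$ of $x$, hence at most $|u_{k+1}| - |u_{p_k}|$; combining with $|q| \geq |u_{n+1}| - |u_{p_n}| \geq |u_{k+1}| - |u_{p_k}|$ (a length comparison that will need care, since $p_k$ and $p_n$ differ) yields that every return to $q$ has length at most $|q|$, so $q$ is an ultimate quasiperiod. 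For the reverse inclusion, suppose $q$ is an ultimate quasiperiod, i.e.\ every return to $q$ in $\bs$ has length $\leq |q|$; then in particular $q$ contains every letter of $\Alph(\bs)$ (otherwise a return has length $|q|+1$), so $q$ is a factor of $u_{n+1}$ for some $n$, and taking $n$ minimal with $q \in F(u_{n+1})$ I would show $n \geq m$ and $|q| \geq |u_{n+1}| - |u_{p_n}|$ by running the return-length computation of Theorem~\ref{T:return-words} backwards: the longest return to $q$ has length $\geq |u_{n+1}| - |u_{p_n}|$ because the palindromic prefix $u_{n+1}$ realizing $q$ forces an occurrence whose associated return is the maximal one, using that $q$ is not a factor of the shorter palindrome $u_n$.

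\textbf{Main obstacle.} The delicate point is the bookkeeping around the quantities $p_n$, $p_k$, $P(j)$ and the lengths $|u_i|$: one must show that the ``longest return to $q$'' is governed by the smallest palindromic prefix containing $q$ in a way that is monotone enough to make the single inequality $|q| \geq |u_{n+1}| - |u_{p_n}|$ both necessary and sufficient across all valid choices of the containing palindrome. In particular, I expect to need the fact that the palindromic prefixes are nested and that $|u_{p_n}|$ is non-decreasing in $n$ on the relevant range (or at least that $u_{p_k} \preccurlyeq u_{p_n}$ when $k \leq n$ and $q \notin F(u_k)$), so that passing from $u_{k+1}$ to $u_{n+1}$ can only enlarge the gap $|u_{n+1}| - |u_{p_n}|$ relative to $|u_{k+1}| - |u_{p_k}|$. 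Verifying this monotonicity — essentially that once all letters have appeared, the ``window'' $[p_i, i]$ stabilizes or grows appropriately — is the technical heart of the argument; everything else is a direct application of Theorem~\ref{T:return-words} and Lemma~\ref{L:important-fact}.
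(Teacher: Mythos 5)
Your overall framework is the same as the paper's: reduce to $\bs$ via Lemma~\ref{L:important-fact}, compute returns via Theorem~\ref{T:return-words}, and translate ``ultimate quasiperiod'' into ``no return longer than $|q|$''. The reverse inclusion as you sketch it is essentially the paper's argument. But the forward inclusion has a genuine gap, and you have located the difficulty in the wrong place. You introduce the minimal index $k$ with $q \in F(u_{k+1})$, note only that $k \leq n$, and then require the comparison $|u_{n+1}| - |u_{p_n}| \geq |u_{k+1}| - |u_{p_k}|$, which you defer as the ``technical heart''. This monotonicity is not established in your proposal, and it is not the right tool: since $p_k$ and $p_n$ are computed from different windows of $\Delta$ (and $p_i$ can jump, e.g.\ for $\Delta = abc(ab)^\omega$ one has $p_3=1$, $p_4=2$, $p_5=4$), proving it directly is awkward. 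The paper's proof short-circuits the whole issue by showing that the length hypothesis already forces $k=n$: from formula~\eqref{eq:u_{i+1}}, $u_{n+1} = u_nx_nu_n$ (if $n=m$) or $u_{n+1}=u_nu_{P(n)}^{-1}u_n$ (if $n>m$), and from this one gets $|u_{n+1}| - |u_{p_n}| > |u_n|$. Hence any $q \in \cQ_n$ satisfies $|q| > |u_n|$, so $q \in F(u_{n+1})\setminus F(u_n)$ and $u_{n+1}$ \emph{is} the shortest palindromic prefix containing $q$; Theorem~\ref{T:return-words} then applies at level $n$ itself and the longest return to $q$ has length exactly $|u_{n+1}| - |u_{p_n}| \leq |q|$. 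Without this observation (or a proved substitute for your monotonicity claim) your forward inclusion is incomplete.

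Two smaller points. First, the length of a return $f^{-1}\mu_{x_1\cdots x_k}(x)f$ is $|\mu_{x_1\cdots x_k}(x)|$, not $|\mu_{x_1\cdots x_k}(x)| - |f|$; conjugation preserves length. This does not damage your bound but signals a misreading of Theorem~\ref{T:return-words}. Second, the same inequality $|u_{n+1}| - |u_{p_n}| > |u_n|$ is also what guarantees that the sets $\cQ_n$, as written with the condition $q \in F(u_{n+1})$ rather than $q \in F(u_{n+1})\setminus F(u_n)$, describe exactly the ultimate quasiperiods: a factor of $u_{n+1}$ meeting the length bound automatically fails to be a factor of $u_n$, so no double counting with a weaker threshold at a lower level can occur. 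Your proposal never verifies this consistency either, and it is resolved by the same missing inequality.
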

\begin{proof} First observe that the number $p_n$ exists for all $n\geq m$. Indeed, the set $\{i\leq n \mid \Alph(x_i\cdots x_{n}) = \Alph(x_i \cdots x_n \cdots)\}$ is not empty, as by the definition of $m$ it contains $i=1$.

Clearly, if $n < m$, then no factor of $u_{n+1}$ can be an ultimate quasiperiod of $\bt$  since $u_{n+1}$ does not contain all of the letters in $\Alph(\bt) = \Alph(\Delta)$. So let us now fix $n \geq m$. For any $q\in \cQ_n$, $|q| \geq |u_{n+1}| - |u_{p_n}| > |u_{n}|$. Indeed from formula~\eqref{eq:u_{i+1}}, if $n = m$ then $u_{n+1}= u_nx_nu_n$, and if $n \geq m+1$, then $u_{n+1} = u_{n}u_{P(n)}^{-1}u_{n}$ where $P(n) \geq {p_n}$. Hence $q \in F(u_{n+1})\setminus F(u_n)$, i.e., $u_{n+1}$ is the shortest palindromic prefix of $\bs$ containing $q$. Therefore, by Theorem~\ref{T:return-words}, the returns to $q \in \cQ_n$ are a certain circular shift of the returns to $u_{n+1}$ and the longest of these return words has length $|u_{n+1}| - |u_{{p_n}}|$. Thus any return to $q$ has length at most $|q|$; 
whence $q$ is an ultimate quasiperiod of $\bt$. It remains to show that any other factor $w \in F(u_{n+1})\setminus F(u_n)$ with $|w| < |u_{n+1}| - |u_{{p_n}}|$ is not an ultimate quasiperiod. This is clearly true since the longest return to any such $w$ has length $|u_{n+1}| - |u_{{p_n}}| > |w|$. That is, at least one of the returns to $w$ is longer than it,  which implies that $w$ is not an ultimate quasiperiod of $\bt$.
\end{proof}

\begin{xmpl} Let us consider the Fibonacci case. As observed in \cite{aD81acom}, $|u_n| = F_{n+1}-2$ for all $n \geq 1$ where $F_k$ is the $k$-th Fibonacci number ($F_1 = 1$, $F_2 = 2$, $F_k = F_{k-1}+F_{k-2}$ for $k \geq 2$). Since for $n \geq 2$, 
$p_n = n-1$ and  $|u_{n+1}| - |u_{p_n}| = F_{n+1}$, the
ultimate quasiperiods of the Fibonacci word are the factors of $u_{k}$ of
length between $F_k$ and $F_{k+1} -2$ for all $k \geq 3$. The first
few ultimate quasiperiods of the Fibonacci word (in order of increasing length) are: $aba$ ($u_3$), $abaab$, $baaba$, $abaaba$ ($u_4$), $abaababa$, $baababaa$, $aababaab$, $ababaaba$, $abaababaa$, $baababaab$, $aababaaba$, $abaababaab$, $baababaaba$, $abaababaaba$ ($u_5$), $\ldots$
\end{xmpl}

Lemma~\ref{L:ultimate_quasiperiods} yields the following trivial characterization of quasiperiodic episturmian words.

\begin{crllr} \label{Cor:quasi-char} Suppose $\bs$ is an epistandard word with set of ultimate quasiperiods $\cQ$. Then an episturmian word $\bt$ in the subshift of $\bs$ is quasiperiodic if and only if some $v \in \cQ$ is a prefix of~$\bt$. 
\end{crllr}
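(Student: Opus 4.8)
The plan is to deduce Corollary~\ref{Cor:quasi-char} directly from Lemma~\ref{L:ultimate_quasiperiods} together with the elementary observations already recorded about ultimate quasiperiods. Recall that a factor $v$ of an infinite word $\bw$ is an ultimate quasiperiod if it is a quasiperiod of some suffix of $\bw$, and that (as noted just before Lemma~\ref{L:ultimate_quasiperiods}) the set of quasiperiods of a quasiperiodic infinite word $\bw$ is precisely the set of ultimate quasiperiods of $\bw$ that happen to be prefixes of $\bw$. So the statement to prove is really just the assertion that, for an episturmian $\bt$ in the subshift of $\bs$, the ultimate quasiperiods of $\bt$ are exactly the elements of the set $\cQ$ computed in Lemma~\ref{L:ultimate_quasiperiods} (this set depending only on the directive word of $\bs$, not on $\bt$), and that $\bt$ is quasiperiodic iff one of them is a prefix of $\bt$.

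First I would argue the ``if'' direction: suppose some $v \in \cQ$ is a prefix of $\bt$. By Lemma~\ref{L:ultimate_quasiperiods}, $v$ is an ultimate quasiperiod of $\bt$, meaning every return to $v$ in $\bt$ has length at most $|v|$ (using the reformulation of ultimate quasiperiodicity for uniformly recurrent words, which applies since episturmian words are uniformly recurrent by \cite{xDjJgP01epis}). Moreover $v$, being a prefix of $\bt$, is recurrent in $\bt$ because $\bt$ is recurrent. Hence by Lemma~\ref{L:quasi} (or Corollary~\ref{Cor:quasi}), $v$ is a quasiperiod of $\bt$, so $\bt$ is quasiperiodic.

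For the ``only if'' direction, suppose $\bt$ is quasiperiodic with quasiperiod $q$. By Corollary~\ref{Cor:quasi}, $q$ is a recurrent prefix of $\bt$ every return to which has length at most $|q|$; in particular $q$ is a recurrent factor of $\bt$ with no return longer than itself, which is exactly the condition (for the uniformly recurrent word $\bt$) that $q$ be an ultimate quasiperiod of $\bt$. Since $\bt$ lies in the subshift of $\bs$, Lemma~\ref{L:ultimate_quasiperiods} identifies the set of ultimate quasiperiods of $\bt$ with $\cQ$, so $q \in \cQ$, and $q$ is a prefix of $\bt$. This completes both implications.

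Honestly, there is no real obstacle here: the corollary is a bookkeeping consequence of Lemma~\ref{L:ultimate_quasiperiods}, and the only point requiring a moment's care is matching up the two equivalent descriptions of ``ultimate quasiperiod'' — the defining one (quasiperiod of a suffix) and the working one for uniformly recurrent words (recurrent factor with no return longer than itself) — which is precisely the equivalence stated in the paragraph preceding Lemma~\ref{L:ultimate_quasiperiods} and underlying its proof. One should also recall explicitly that episturmian words are recurrent (indeed uniformly recurrent) so that any prefix is automatically recurrent, which is what lets us invoke Lemma~\ref{L:quasi}/Corollary~\ref{Cor:quasi} in the ``if'' direction without an extra hypothesis.
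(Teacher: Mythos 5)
Your proof is correct and follows exactly the route the paper intends: the paper states this corollary without proof as a ``trivial'' consequence of Lemma~\ref{L:ultimate_quasiperiods}, relying on the observations recorded just before that lemma (an ultimate quasiperiod that is a prefix is a quasiperiod, via Lemma~\ref{L:quasi}, and the quasiperiods of a quasiperiodic word are precisely its ultimate quasiperiods that are prefixes). Your write-up simply makes that bookkeeping explicit, including the correct appeal to uniform recurrence to equate the two descriptions of ultimate quasiperiods.
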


Moreover, Lemma \ref{L:ultimate_quasiperiods} can be reformulated (more nicely) using episturmian morphisms, together with the iterated palindromic closure function.

\begin{lmm} \label{L:ultimate_quasiperiods-2} Suppose $\bs$ is an epistandard word directed by $\Delta \in \cAw$. Then the set of ultimate quasiperiods of any episturmian word $\bt$  in the subshift of $\bs$  is the set of all  words 
\[
  q \in F(Pal(wv)), \quad \mbox{with $|q| \geq |\mu_w(Pal(v))|$},
\]  
where $w$, $v$ are words such that $\Delta = wv\by$ with $\Alphit(v) = \Alphit(v\by)$.
\end{lmm}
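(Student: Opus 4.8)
The plan is to show that Lemma~\ref{L:ultimate_quasiperiods-2} is just a restatement of Lemma~\ref{L:ultimate_quasiperiods} via the dictionary between the index notation and the morphism notation. So first I would fix an epistandard word $\bs$ directed by $\Delta = x_1x_2x_3\cdots$ and the palindromes $u_{n+1} = Pal(x_1\cdots x_n)$, and recall from Lemma~\ref{L:ultimate_quasiperiods} that the set of ultimate quasiperiods of any $\bt$ in the subshift of $\bs$ is $\cQ = \bigcup_{n\geq m}\cQ_n$, where $m$ is minimal with $\Alph(x_1\cdots x_m) = \Alph(\bs)$ and $\cQ_n = \{q\in F(u_{n+1}) \mid |q| \geq |u_{n+1}| - |u_{p_n}|\}$ with $p_n = \sup\{i\leq n \mid \Alph(x_i\cdots x_n) = \Alph(x_i\cdots x_n\cdots)\}$.

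Next I would translate the quantity $|u_{n+1}| - |u_{p_n}|$ into morphism language. Write $w = x_1\cdots x_{p_n - 1}$ and $v = x_{p_n}\cdots x_n$, so that $wv = x_1\cdots x_n$ and $u_{n+1} = Pal(wv)$. By formula~\eqref{eq:formula(3)} we have $Pal(wv) = \mu_w(Pal(v))Pal(w)$, and since $u_{p_n} = Pal(x_1\cdots x_{p_n-1}) = Pal(w)$, this gives $|u_{n+1}| - |u_{p_n}| = |\mu_w(Pal(v))|$. Thus $\cQ_n = \{q\in F(Pal(wv)) \mid |q|\geq |\mu_w(Pal(v))|\}$. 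By the definition of $p_n$, the pair $(w,v)$ obtained this way satisfies $\Alph(v) = \Alph(v\by)$ where $\by = x_{n+1}x_{n+2}\cdots$, i.e. $\Delta = wv\by$ with the required alphabet condition, and moreover $\Alph(wv) = \Alph(\Delta)$ (so that $Pal(wv)$ already contains every letter). This shows the set in Lemma~\ref{L:ultimate_quasiperiods-2} contains each $\cQ_n$ and hence contains $\cQ$.

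For the reverse inclusion I would take an arbitrary decomposition $\Delta = wv\by$ with $\Alph(v) = \Alph(v\by)$ and a word $q\in F(Pal(wv))$ with $|q|\geq |\mu_w(Pal(v))|$, and show $q\in\cQ$. Set $n = |wv|$, so $Pal(wv) = u_{n+1}$ and $|\mu_w(Pal(v))| = |u_{n+1}| - |u_{|w|}|$. Two points need care here. First, the condition $\Alph(v) = \Alph(v\by)$ forces $\Alph(wv) = \Alph(\Delta)$, hence $n\geq m$, so $\cQ_n$ is defined. Second, and this is the main obstacle, I need $|q|\geq |u_{n+1}| - |u_{p_n}|$, i.e. $|u_{|w|}|\leq |u_{p_n}|$, i.e. $|w| \leq p_n$ — equivalently $p_n \geq |w|+1$. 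This follows because $p_n = \sup\{i\leq n\mid \Alph(x_i\cdots x_n) = \Alph(x_i\cdots x_n\cdots)\}$ and the index $i = |w|+1$ (the starting position of $v$) lies in this set precisely by the hypothesis $\Alph(v) = \Alph(v\by)$; hence $p_n \geq |w|+1$ and $|u_{p_n}| \geq |u_{|w|+1}| \geq |u_{|w|}|$ since the $u_i$ are nondecreasing in length. Therefore $|q| \geq |\mu_w(Pal(v))| = |u_{n+1}| - |u_{|w|}| \geq |u_{n+1}| - |u_{p_n}|$, so $q\in\cQ_n\subseteq\cQ$. Combining the two inclusions gives the claimed equality, and I would close by noting that Lemma~\ref{L:ultimate_quasiperiods} guarantees this is indeed the full set of ultimate quasiperiods (so in particular the set described does not depend on the choice of representation $\Delta = wv\by$, only on $\cQ$).
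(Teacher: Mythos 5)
Your proof is correct, and the first inclusion (every $\cQ_n$ is of the stated form, via $w = x_1\cdots x_{p_n-1}$, $v = x_{p_n}\cdots x_n$ and the identity $|u_{n+1}|-|u_{p_n}| = |\mu_w(Pal(v))|$ from formula~\eqref{eq:formula(3)}) coincides with the paper's. Where you genuinely diverge is the reverse inclusion. The paper, given an arbitrary decomposition $\Delta = wv\by$ with $\Alph(v)=\Alph(v\by)$, re-derives quasiperiodicity from scratch: it splits $v = v_1v_2$ with $v_2$ the smallest suffix satisfying $\Alph(v_2)=\Alph(v_2\by)$, invokes Theorem~\ref{T:epistandard-quasiperiods} and Fact~\ref{rnew4} to get that $\mu_{wv_1}(Pal(v_2))=\mu_w(Pal(v))$ is a quasiperiod of $\bs$, and then argues that every factor of $Pal(wv)$ of length at least $|\mu_w(Pal(v))|$ is an ultimate quasiperiod. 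You instead stay entirely inside the index formulation of Lemma~\ref{L:ultimate_quasiperiods}: you observe that the starting position $i=|w|+1$ of $v$ belongs to the set defining $p_n$, so $p_n \geq |w|+1$, hence $|u_{p_n}|\geq |Pal(w)|$ and the length threshold $|u_{n+1}|-|u_{p_n}|$ is at most $|\mu_w(Pal(v))|$, which places $q$ in $\cQ_n$. Your route is shorter and makes the lemma a pure translation of Lemma~\ref{L:ultimate_quasiperiods}, at the cost of giving no independent construction of the quasiperiods; the paper's route is more redundant but self-verifying. One cosmetic slip to fix: with the convention $u_{k+1}=Pal(x_1\cdots x_k)$ one has $Pal(w)=u_{|w|+1}$, not $u_{|w|}$, so the identity should read $|\mu_w(Pal(v))| = |u_{n+1}|-|u_{|w|+1}|$ and the inequality you need is $|w|+1\leq p_n$ --- which is exactly what your argument establishes, so the conclusion is unaffected.
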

\begin{proof}  Let $\Delta = x_1x_2x_3\cdots$ and let $m$ be the smallest positive integer such that $\Alph(x_1x_2\cdots x_{m}) = \Alph(\bs)$. Then, by Lemma~\ref{L:ultimate_quasiperiods}, the set of all ultimate quasiperiods of $\bs$ (and hence of $\bt$) is given by $\cQ = \bigcup_{n\geq m}\cQ_n$ with
\[
  \cQ_n = \{ q \in F(u_{n+1}) \mid |q| \geq |u_{n+1}| - |u_{p_n}|\}, 
\] 
where  $p_n = \sup\{i\leq n  \mid \Alph(x_i\cdots x_{n}) = \Alph(x_i\cdots x_{n} \cdots)\}$. So, for fixed $n \geq m$, $\Delta$ can be written as $\Delta = wv\by$ where $w = x_1\cdots x_{{p_n}-1}$, $v = x_{{p_n}}\cdots x_n$, $\by = x_{n+1}x_{n+2}\cdots$, and $\Alph(v) = \Alph(v\by)$ (by the definition of $p_n$). Then $u_{n+1} = Pal(x_1 \cdots x_n) = Pal(wv)$ and $u_{p_n} = Pal(x_1 \cdots x_{p_n}) = Pal(w)$.
So now, using formula \eqref{eq:formula(3)}, we have $Pal(wv)= \mu_w(Pal(v))Pal(w)$;
in particular, $|u_{n+1}| - |u_{p_n}| = |Pal(wv)| - |Pal(w)| = |\mu_{w}(Pal(v))|$. 
Thus, Lemma~\ref{L:ultimate_quasiperiods} tells us that for $q \in \cQ_n$, there exist words $w$, $v$ such that $q \in 
 F(Pal(wv))$ and  $|q| \geq |\mu_w(Pal(v))|$.

Conversely, assume $\Delta = wv\by$ with $\Alph(v) = \Alph(v\by)$. Let $v_1$, $v_2$ be such that $v = v_1v_2$ with $v_2$ the smallest suffix of $v$ such that $\Alph(v_2) = \Alph(v_2\by)$. Let $n = |wv|$. It follows from the choice of $v_2$ that $p_n = |wv_1|$+1. By Theorem~\ref{T:epistandard-quasiperiods}, $Pal(v_2)$ is a quasiperiod of the epistandard word directed by $v_2\by$. By Fact~\ref{rnew4} $\mu_{wv_1}(Pal(v_2))$ is a quasiperiod of $\bs$. Since any occurrence of $\mu_{wv_1}(Pal(v_2))$ in $\bs$ is followed by $Pal(wv_1)$, we deduce that any factor of $\mu_{wv_1}(Pal(v_2))Pal(wv_1)$ of length greater than $|\mu_{wv_1}(Pal(v_2))|$ is an ultimate quasiperiod of $\bt$. Using formula~\eqref{eq:formula(3)}, we see that $Pal(wv_1) = \mu_{w}(Pal(v_1))Pal(w)$ so that
$\mu_{wv_1}(Pal(v_2)) = \mu_{w}(\mu_{v_1}(Pal(v_2))Pal(v_1)) = \mu_{w}(Pal(v_1v_2)) = \mu_w(Pal(v))$. Hence any factor of $\mu_{w}(Pal(v))Pal(w) = Pal(wv)$ of length greater than $|\mu_w(Pal(v))|$ ($\geq |\mu_{wv_1}(Pal(v_2))|$) is an ultimate quasiperiod of $\bt$.
\end{proof}

\subsection{Quasiperiods of episturmian words} \label{SS:quasiperiods}

We are now ready to state the main theorem of this section, which describes all of the quasiperiods of an episturmian word.

\begin{thrm}  \label{T:episturmianQuasiperiods} 
The set of quasiperiods of an episturmian word $\bt$ is the set of all words 
\begin{equation}\label{eq:words}
  \mu_{\breve w}(Pal(v))p, \quad \mbox{with $p$ a prefix of $S_{\breve w}^{-1}Pal(w)$},
\end{equation}
where $w$, $v$ are $L$-spinned words such that $\bt$ is directed by $\breve wv\breve \by$ for some spinned version $\breve w$ of $w$ and some spinned version $\breve\by$ of an $L$-spinned infinite  word $\by$ with $\Alphit(v) = \Alphit(v\by)$. 

Moreover, the smallest quasiperiod of $\bt$ is the word $\mu_{\breve w}(Pal(v))$ where $wv$ is of minimal length for the property $\Alphit(wv) = \Alphit(wv\by)$, and amongst all decompositions of $wv$ into $w$ and $v$, the word $v$ is the shortest suffix of $wv$ such that $\Alphit(v) = \Alphit(v\by)$.
\end{thrm}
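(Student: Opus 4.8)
The plan is to reduce everything to the combinatorial description of ultimate quasiperiods already obtained in Lemma~\ref{L:ultimate_quasiperiods-2}, and then to translate that description into the spinned-morphism language via the normalization/block-equivalence machinery. Recall that a quasiperiod of $\bt$ is exactly an ultimate quasiperiod of $\bt$ that happens to be a prefix of $\bt$ (this was noted right after the definition of ultimate quasiperiod). So the proof splits into two inclusions. First I would take any word $q$ of the form \eqref{eq:words}, with $\bt$ directed by $\breve w v\breve\by$ as in the statement. By Corollary~\ref{Cor:wvy} (applied with the spinned word $\breve w$, the $L$-spinned word $v$, and $\breve\by$), $\bt$ is quasiperiodic and every word $\mu_{\breve w}(Pal(v))p$ with $p$ a prefix of $S_{\breve w}^{-1}Pal(w)$ is a quasiperiod of $\bt$. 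That takes care of one direction essentially for free.

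For the converse, suppose $q$ is a quasiperiod of $\bt$; then $q$ is a prefix of $\bt$ and an ultimate quasiperiod, and by Fact~\ref{F:episturmian} it is an ultimate quasiperiod of the epistandard word $\bs$ in the subshift of $\bt$ (returns are the same by Lemma~\ref{L:important-fact}). Let $\Delta$ be the $L$-spinned directive word of $\bs$. By Lemma~\ref{L:ultimate_quasiperiods-2}, $q \in F(Pal(w_0v_0))$ with $|q|\geq|\mu_{w_0}(Pal(v_0))|$ for some factorization $\Delta = w_0v_0\by$ with $\Alph(v_0) = \Alph(v_0\by)$, and using the length computation there, $q$ is a factor of $Pal(w_0v_0) = \mu_{w_0}(Pal(v_0))Pal(w_0)$ extending $\mu_{w_0}(Pal(v_0))$ on the right (since $|q| \geq |\mu_{w_0}(Pal(v_0))|$ and $q$ is a prefix of $\bt$, hence of $\bs^{\text{-shifted}}$; here one must be a little careful, but the key point is that $q$, being a prefix of $\bt$ rather than an arbitrary factor, forces the occurrence of $q$ inside $Pal(w_0v_0)$ to start at a shifted position dictated by $\breve w$). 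The real content is therefore: why does a quasiperiod of $\bt$ (as opposed to $\bs$) take the form $\mu_{\breve w}(Pal(v))p$ for a spinned version $\breve w$ of $w_0$ with shifting factor $S_{\breve w}$, rather than the unspinned $\mu_{w_0}(Pal(v_0))$ followed by a prefix of $Pal(w_0)$?

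The answer, and the step I expect to be the main obstacle, is the bookkeeping of the shift. By formula~\eqref{eq:f-shift-infinite}, $\bt^{(n)}$ for $n = |w_0|$ is obtained from the epistandard word $\bs^{(n)}$ (directed by the $L$-version of $v_0\by$) by applying $\mu_{\breve w}$ for the appropriate spinned version $\breve w$ of $w_0 = x_1\cdots x_n$ coming from $\bt$'s directive word, and $\mu_{\breve w}(\cdot) = S_{\breve w}^{-1}\mu_{w_0}(\cdot)$, i.e.\ the image is the $\mu_{w_0}$-image with its length-$|S_{\breve w}|$ prefix stripped off (and relocated, by Fact~\ref{F:from_referee}, as a suffix-continuation since every $\mu_{\breve w}(\by')$ begins with $Pal(w_0)$ only up to this shift). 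Concretely: the occurrences of $q$ in $\bt$ are the images under $\mu_{\breve w}$ of occurrences of $Pal(v_0)$-truncations in $\bt^{(n)}$, and each such image occupies, inside the string $\mu_{w_0}(\ldots)Pal(w_0)$, the window obtained by shifting right by $|S_{\breve w}|$. Unwinding this, $q = \mu_{\breve w}(Pal(v))p$ where $v$ is the shortest suffix of $w_0v_0$ (call it $wv$) with $\Alph(v)=\Alph(v\by)$, $w$ is the corresponding prefix, and $p$ is a prefix of $S_{\breve w}^{-1}Pal(w)$ — the ``$Pal(w)$ with its shift-prefix removed'' — exactly matching Corollary~\ref{Cor:wvy}. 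I would carry this out by reducing, via Proposition~\ref{P:pali-prefix}, to the case where the relevant prefix of $\breve\Delta$ is honestly $L$-spinned up to position $|w|$ only when $q$'s occurrence forces it, and otherwise using Theorem~\ref{T:presentation}/block-equivalence to rewrite the directive word so that the shift is explicit; the length identities $|Pal(wv)| - |Pal(w)| = |\mu_w(Pal(v))|$ and $|S_{\bar w}| = |Pal(w)|$ (from \eqref{eq:u_n2}) are what make the two windows line up. Finally, for the ``smallest quasiperiod'' claim: minimality of $|q| = |\mu_{\breve w}(Pal(v))| = |\mu_w(Pal(v))|$ over all valid $(w,v,\breve w)$ is equivalent, since $\mu_{\breve w}$ and $\mu_w$ preserve length and $|\mu_w(Pal(v))|$ is strictly increasing in both $|w|$ and $|v|$ in the relevant sense, to choosing $wv$ of minimal length with $\Alph(wv) = \Alph(wv\by)$ and then $v$ the shortest suffix with $\Alph(v) = \Alph(v\by)$; this is forced because any shorter quasiperiod prefix would, by the converse direction, have to arise from a strictly shorter such $wv$, contradicting minimality, and taking the shortest valid $v$ minimizes the first letter position $|w|+1$ hence keeps the window as short as possible. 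I would spell out this monotonicity carefully, as it is where a sloppy argument could hide a gap.
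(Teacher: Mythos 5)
Your forward inclusion is fine: Corollary~\ref{Cor:wvy} does exactly that job, and this matches the paper. The converse is where there is a genuine gap. You correctly identify the main obstacle --- passing from the unspinned description $q\in F(Pal(w_0v_0))$, $|q|\ge|\mu_{w_0}(Pal(v_0))|$ of Lemma~\ref{L:ultimate_quasiperiods-2} to the spinned form $\mu_{\breve w}(Pal(v))p$ together with the assertion that $\bt$ is \emph{directed by} a word beginning with $\breve w v$ --- but you do not overcome it. Your key sentence, that ``the occurrences of $q$ in $\bt$ are the images under $\mu_{\breve w}$ of occurrences of $Pal(v_0)$-truncations in $\bt^{(n)}$,'' presupposes that $\bt$ already factors as $\mu_{\breve w v}(\bt')$, which is precisely the conclusion to be established. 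The paper's route is: write $Pal(wv)=fqg$ with $f$ a prefix of $Pal(w)$; prove a small lemma that every prefix $f$ of $Pal(w)$ equals $S_{\breve w}$ for some spinned version $\breve w$ of $w$; then --- and this is the step entirely absent from your proposal --- show via Lemma~\ref{L:u_w} and Corollary~\ref{C:smallestPalindrom} that $Pal(wv)$ is the \emph{shortest} palindromic prefix of $\bs$ containing $q$, so that Theorem~\ref{T:return-words} pins down the returns to $q$ as $f^{-1}\mu_{wv}(\alpha)f$; this forces every occurrence of $q$ to be preceded by $f$, whence $f\bt$ is episturmian and equals $\mu_{wv}(\bt')$ for an episturmian $\bt'$ (Lemma~\ref{L:pureEpisturmian}), and finally $\bt=\mu_{\breve w v}(\bt')$ by formula~\eqref{eq:f-shift-infinite}. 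Corollary~\ref{C:smallestPalindrom} fails when $v$ is a power of a single letter, so the periodic case must be (and is, in the paper) handled separately; your proposal does not notice this.

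The ``smallest quasiperiod'' part is also not established by what you wrote. The quantity $|\mu_w(Pal(v))|$ is \emph{not} strictly monotone over the admissible decompositions: the paper's final case (where the extension $x$ is a power of the letter $\alpha$) produces two distinct decompositions $w_1v_1$, $w_2v_2$ with $\mu_{\breve w_1}(Pal(v_1))=\mu_{\breve w_2}(Pal(v_2))$, so an argument resting on strict monotonicity in $|w|$ and $|v|$ would be false as stated. The actual proof requires normalizing to decompositions satisfying the hypothesis that $v$ has no proper suffix $s$ with $\Alph(s)=\Alph(s\by)$ and then running a case analysis on $|w_1v_1|$ versus $|w_2v_2|$ using Lemma~\ref{L:u_w}. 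You flag that this is where a sloppy argument could hide a gap; it does.
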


The above theorem shows that if there do not exist words $\breve w$, $\by$, $\breve \by$, and $v$ (as defined above) such that $\bt$ is directed by $\breve w v \breve \by$ with $\Alph(v) = \Alph(v\by)$, then $\bt$ does not have any quasiperiods, and hence $\bt$ is non-quasiperiodic. For instance, any regular wavy word $\breve\Delta$ (recall the definition from Section~\ref{SS:uniqueDirective}) clearly directs an episturmian word with no quasiperiods since $\breve\Delta$ is the only directive word for $\bt$ and it does not contain an $L$-spinned factor $v$ containing all letters that follow it in $\breve\Delta$. For example, $(a\bar b c)^\omega$ directs a non-quasiperiodic episturmian word in the subshift of the Tribonacci word.

Let us now illustrate the last part of the theorem. For the epistandard word directed by $\Delta = ca(ab)^\omega$, i.e., the image of the Fibonacci word by the morphism $L_cL_a$, the shortest word $wv$ such that $\Delta = wv\by$ for some infinite word $\by$ with $\Alph(wv) = \Alph(wv\by)$ is the word $caab$. There are three ways to decompose this word $caab$ into $wv$: 1)~$w = \varepsilon$ and $v = caab$;   
2)~$w = c$ and $v = aab$; 3)~$w = ca$ and $v = ab$. The corresponding quasiperiods of the form $\mu_w(Pal(v))$ are respectively: 1)~$Pal(caab) = cacacbcacac$; 2)~$\mu_{c}(Pal(aab)) = cacacbcaca$; 3)~$\mu_{ca}(Pal(ab)) = cacacbca$.

\begin{proof}[Proof of Theorem~$\ref{T:episturmianQuasiperiods}$]
Corollary~\ref{Cor:wvy} proves that any word of the form~\eqref{eq:words} is a quasiperiod of $\bt$.

Now suppose that $q$ is a quasiperiod of $\bt$. We show that $\bt$ has at least one directive word of the form $\breve w v\breve \by$ where $\breve w$, $\breve \by$ are spinned versions of some $L$-spinned words $w$, $\by$, and where $v$ is an $L$-spinned word such that $\Alph(v) = \Alph(v\by)$. Moreover, we show that $q = \mu_{\breve w}(Pal(v))p$ for some prefix $p$ of $S_{\breve w}^{-1}Pal(w)$.  

Let $\bs$ be an epistandard word with $L$-spinned directive word $\Delta$ such that $\bt$ is directed by a spinned version of $\Delta$. 
The quasiperiod $q$ is an ultimate quasiperiod of $\bt$ that occurs as a prefix of $\bt$. So, by Lemma~\ref{L:ultimate_quasiperiods-2}, 
\begin{equation*} 
q \in F(Pal(wv)), \quad \mbox{with} \quad |q| \geq |\mu_{w}(Pal(v))|,
\end{equation*}
for some $L$-spinned words $w$, $v$, and $\by$  such that $\Delta = wv\by$ with $\Alph(v) = \Alph(v\by)$. In particular, we have $Pal(wv) = fqg$ for some words $f$, $g$ with $|f| + |g| \leq |Pal(w)|$, where $Pal(wv) = \mu_w(Pal(v))Pal(w)$.
By definition of the $Pal$ function, $Pal(w)$ is a prefix of $Pal(wv)$. Consequently $f$ is a prefix of $Pal(w)$.

Note that
\begin{equation} \label{eq:quasi-2}
  q = f^{-1}\mu_w(Pal(v))Pal(w)g^{-1}.
\end{equation}  
Thus $q = (f^{-1}\mu_{w}(Pal(v))f)p$ with $p := f^{-1}Pal(w)g^{-1}$ a prefix of $f^{-1}Pal(w)$.

\medskip

The following result 
shows that $f = S_{\breve w}$ for some spinned version $\breve w$ of $w$.

\begin{lmm}
Given a word $w$ and a prefix $f$ of $Pal(w)$, 
there exists a spinned version $\breve w$ of $w$ such that $f = S_{\breve w}$.
\end{lmm}
\begin{proof} The proof proceeds by induction on $|w|$. The lemma is clearly true for $|w| = 0$ since in this case $f$ is a prefix of $Pal(w) = Pal(\empt) = \empt$, and hence $f = \empt =  S_{\empt}$. 

Now suppose $|w| \geq 1$ and let us write $w = xw'$ where $x$ is a letter. Since $f$ is a prefix of $Pal(w) = Pal(xw') = \mu_x(Pal(w'))x$ (see formula~\eqref{eq:formula(3)}), we have $f = \mu_x(f')$ or $f = \mu_{x}(f')x$ for some prefix $f'$ of $Pal(w')$. Moreover, by the induction hypothesis, $f' = S_{\breve w'}$ for some spinned version $\breve w'$ of $w'$. Hence, using formula~\eqref{eq:f-shift}, we have $f = \mu_x(S_{\breve w'}) = S_{x\breve w'} \quad \mbox{or} \quad f = \mu_x(S_{\breve w'})x = S_{\bar x\breve w'}$.
That is, $f = S_{\breve w}$ for some spinned version $\breve w$ of $w = xw'$.
\end{proof}

Now we have to prove that $\bt$ is directed by a spinned infinite word beginning with $\breve wv$.
For this we need some further intermediate results, as follows.

\begin{lmm} \label{L:u_w}
For any word $u$ containing at least two different letters and for any other word $w$, there exists a word $u_w$ containing at least two different letters such that $\mu_w(u) = Pal(w)u_w$.
\end{lmm}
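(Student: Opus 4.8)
The statement to prove is Lemma~\ref{L:u_w}: for any word $u$ with $|\Alph(u)| \geq 2$ and any word $w$, there is a word $u_w$ with $|\Alph(u_w)| \geq 2$ such that $\mu_w(u) = Pal(w)u_w$. The plan is to induct on $|w|$, peeling off the first letter of $w$. The base case $w = \empt$ is immediate: $\mu_\empt = \Id$, $Pal(\empt) = \empt$, so $u_\empt = u$ works. For the inductive step, write $w = xw'$ with $x$ a letter, so that $\mu_w = L_x \mu_{w'}$. By the induction hypothesis, $\mu_{w'}(u) = Pal(w')u_{w'}$ with $|\Alph(u_{w'})| \geq 2$; applying $L_x$ gives $\mu_w(u) = L_x(Pal(w'))L_x(u_{w'})$.

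\textbf{Key step.} Now I need to recognize $Pal(w) = Pal(xw')$ inside this expression. From formula~\eqref{eq:formula(3)} (the special case $Pal(xv) = L_x(Pal(v))x$), we have $Pal(xw') = L_x(Pal(w'))x$, so $L_x(Pal(w')) = Pal(w')x^{-1}$ — wait, more usefully, $L_x(Pal(w'))$ is $Pal(xw')$ with its last letter $x$ removed. So $\mu_w(u) = L_x(Pal(w')) L_x(u_{w'})$, and I want to show this equals $Pal(xw') u_w = L_x(Pal(w'))\, x\, u_w$ for a suitable $u_w$. Hence it suffices to show that $L_x(u_{w'})$ begins with the letter $x$; then I can set $u_w := x^{-1}L_x(u_{w'})$, and $\mu_w(u) = L_x(Pal(w'))\,x\,u_w = Pal(xw')u_w = Pal(w)u_w$ as required.

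\textbf{The two things to check about $u_w$.} First, $L_x(u_{w'})$ does begin with $x$: since $u_{w'}$ contains at least two different letters, it contains some letter $b \neq x$; but more directly, either the first letter of $u_{w'}$ is $\neq x$ (in which case $L_x$ prepends $x$ to it) or the first letter is $x$ itself — in both cases $L_x(u_{w'})$ starts with $x$ (recall $L_x$ fixes $x$ and sends every other letter $b$ to $xb$). So $u_w := x^{-1}L_x(u_{w'})$ is well-defined. Second, I must verify $|\Alph(u_w)| \geq 2$. Since $u_{w'}$ contains two distinct letters $c$, $d$, at least one of them, say $c$, is $\neq x$; then $L_x(u_{w'})$ contains the factor $xc$, and in fact contains both $x$ and $c$; removing one leading $x$ leaves a word still containing an occurrence of $c$ (as $c \ne x$ occurs, and only a single leading $x$ is stripped) and still containing $x$ unless $u_{w'}$ had exactly one occurrence of a non-$x$ letter at its very start followed by nothing — but $u_{w'}$ has length $\geq 2$ and contains $\geq 2$ distinct letters, so after stripping one $x$ the word $u_w$ still contains two distinct letters. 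A clean way to see this: $L_x(u_{w'})$ contains every letter that $u_{w'}$ contains (each letter $b \neq x$ appears in $L_x(b) = xb$, and $x$ appears because $|u_{w'}|\ge 2$ forces $L_x(u_{w'})$ to have length $\ge 3 > 1$, hence strictly more letters than just a possible initial segment), so $|\Alph(L_x(u_{w'}))| \geq |\Alph(u_{w'})| \geq 2$; and deleting a single initial letter $x$ cannot drop the alphabet size below $2$ because $L_x(u_{w'})$ contains a non-$x$ letter $c$ together with at least one further occurrence of $x$ (again using $|u_{w'}| \geq 2$, so $u_{w'}$ has a letter after its first, whose image under $L_x$ contributes an $x$ or is $x$). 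This bookkeeping is routine.

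\textbf{Main obstacle.} The only delicate point is the final alphabet-size bookkeeping for $u_w$ after removing the leading $x$; one must use $|u_{w'}| \geq 2$ (which follows from $|\Alph(u_{w'})| \geq 2$) to ensure enough letters survive. Everything else is a direct unwinding of the identity $Pal(xw') = L_x(Pal(w'))x$ and the definition of $L_x$. No step requires any result beyond formula~\eqref{eq:formula(3)} and the definitions already in the excerpt.
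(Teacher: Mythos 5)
Your proof is correct and follows essentially the same route as the paper's: induction on $|w|$, writing $w = xw'$, using $Pal(xw') = L_x(Pal(w'))x$ to get $\mu_w(u) = Pal(xw')\,x^{-1}L_x(u_{w'})$, and taking $u_w = x^{-1}L_x(u_{w'})$. The only difference is that you spell out the check that $L_x(u_{w'})$ begins with $x$ and that $u_w$ still contains two distinct letters, which the paper simply asserts; your bookkeeping for this is sound.
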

\begin{proof} The proof proceeds by induction on $|w|$. The lemma is trivially true for $|w| = 0$. Now suppose $|w| \geq 1$ and let us write $w = xw'$ where $x$ is a letter. Then $\mu_w(u) = \mu_{xw'}(u) = \mu_x(\mu_{w'}(u))$ where, by the induction hypothesis, $\mu_{w'}(u) = Pal(w')u_{w'}$ for some word $u_{w'}$ containing at least two different letters. Hence, $\mu_{w}(u) = \mu_x(Pal(w')u_{w'}) = \mu_x(Pal(w'))\mu_x(u_{w'})$, and therefore by formula~\eqref{eq:formula(3)} we have $\mu_w(u) = Pal(xw')x^{-1}\mu_x(u_{w'})$ where the word $x^{-1}\mu_x(u_{w'})$ contains at least two different letters. 
This completes the proof of the lemma.
\end{proof}

\begin{crllr}
\label{C:smallestPalindrom}
For any letter $x$ and for any words $v$, $w$ such that $v$ contains at least one letter different from $x$, 
\begin{equation} \label{eq:smallestPalindrom}
|\mu_{w}(Pal(vx))| \geq |Pal(wv)| + 2.
\end{equation}
\end{crllr}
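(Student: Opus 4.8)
The plan is to reduce inequality~\eqref{eq:smallestPalindrom} to the case $w = \empt$ by exploiting Lemma~\ref{L:u_w}, and then to handle the base case directly from the formulas for iterated palindromic closure. First I would compute both sides after applying $\mu_w$. By formula~\eqref{eq:formula(3)}, $Pal(wv) = \mu_w(Pal(v))Pal(w)$ and $Pal(wvx) = \mu_w(Pal(vx))Pal(w)$ as well (apply the formula with the word $vx$ in place of $v$). Wait --- more carefully: formula~\eqref{eq:formula(3)} gives $Pal(w \cdot vx) = \mu_w(Pal(vx))Pal(w)$. Hence the claimed inequality $|\mu_w(Pal(vx))| \geq |Pal(wv)| + 2$ is equivalent, after adding $|Pal(w)|$ to both sides, to $|Pal(wvx)| \geq |Pal(wv)| + |Pal(w)| + 2$. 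So it suffices to show that $|Pal(wvx)| - |Pal(wv)| \geq |Pal(w)| + 2$ whenever $v$ contains a letter different from $x$.

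Next I would analyze the difference $|Pal(wvx)| - |Pal(wv)|$ using the recursion~\eqref{eq:u_{i+1}} applied at the step where the letter $x$ is appended to $wv$. Write $z = wv$; appending $x$ gives either $Pal(zx) = Pal(z)\,x\,Pal(z)$ if $x \notin \Alph(Pal(z))$, or $Pal(zx) = Pal(z)\,Pal(z')^{-1}\,Pal(z)$ where $z'$ is the prefix of $z$ obtained by deleting its last occurrence of $x$. In the first case $|Pal(zx)| - |Pal(z)| = |Pal(z)| + 1 \geq |Pal(wv)| + 1$; since $v$ has a letter $\neq x$, we have $|Pal(wv)| \geq |Pal(w)| + 1$ (adding even one new letter strictly lengthens the palindromic closure, and here $Pal(w)$ is a proper prefix of $Pal(wv)$ that is itself a palindrome, so the gap is at least... actually I need $\geq |Pal(w)|+1$, which holds because $Pal(w)$ is a proper prefix of the longer palindrome $Pal(wv)$). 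That already gives the bound with room to spare. In the second case, $|Pal(zx)| - |Pal(z)| = |Pal(z)| - |Pal(z')|$ where $z'$ is a proper prefix of $z = wv$; the key point is that because $v$ contains a letter different from $x$, the word $z'$ (which stops just before the last $x$ of $wv$) is strictly shorter than $wv$ but at least as long as... hmm, I must be careful that $z'$ could still contain $w$ as a prefix or even be a proper prefix of $w$.

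The main obstacle is exactly this bookkeeping in the second case: controlling $|Pal(z')|$ from above so that $|Pal(z)| - |Pal(z')| \geq |Pal(w)| + 2$. I would handle it by using Lemma~\ref{L:u_w} differently: since $v$ contains at least one letter different from $x$, the word $Pal(vx)$ contains at least two distinct letters, so Lemma~\ref{L:u_w} gives $\mu_w(Pal(vx)) = Pal(w)u_w$ with $u_w$ containing at least two distinct letters, hence $|u_w| \geq 2$, yielding $|\mu_w(Pal(vx))| \geq |Pal(w)| + 2$ --- but I still need the extra $|Pal(wv)| - |Pal(w)| = |\mu_w(Pal(v))|$ on top. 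So the cleanest route is: $Pal(vx)$ has $Pal(v)$ as a proper prefix (by definition of iterated palindromic closure, $Pal(v)$ is a proper prefix of $Pal(vx) = (Pal(v)x)^{(+)}$, and moreover $Pal(vx)$ is a palindrome of length $\geq |Pal(v)x| = |Pal(v)|+1$), and in fact since $v$ has a letter $\neq x$, the palindromic suffix handling forces $|Pal(vx)| \geq |Pal(v)| + 2$. Then apply $\mu_w$, which is non-erasing and injective-ish enough that $|\mu_w(Pal(vx))| - |\mu_w(Pal(v))| \geq |Pal(vx)| - |Pal(v)| \geq 2$; combined with $|\mu_w(Pal(v))| = |Pal(wv)| - |Pal(w)|$ this gives precisely $|\mu_w(Pal(vx))| \geq |Pal(wv)| - |Pal(w)| + 2$, and we still need to absorb $|Pal(w)|$.

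Reconciling these: the genuinely clean argument is to prove the equivalent statement $|Pal(wvx)| \geq |Pal(wv)| + |Pal(w)| + 2$, and to do so note $Pal(wvx) = \mu_w(Pal(vx))Pal(w)$ while $Pal(wv) = \mu_w(Pal(v))Pal(w)$, so the inequality becomes $|\mu_w(Pal(vx))| \geq |\mu_w(Pal(v))| + |Pal(w)| + 2$. Now $Pal(vx)$ contains two distinct letters (as $v$ has a letter $\neq x$ and $x$ occurs in $Pal(vx)$), so by Lemma~\ref{L:u_w}, $\mu_w(Pal(vx)) = Pal(w)\,u_w$ with $u_w$ containing two distinct letters; thus $|\mu_w(Pal(vx))| = |Pal(w)| + |u_w|$. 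It remains to check $|u_w| \geq |\mu_w(Pal(v))| + 2$. Since $Pal(v)x$ is a proper prefix of $Pal(vx)$ (indeed $Pal(vx) = (Pal(v)x)^{(+)}$ is a palindrome strictly longer than $Pal(v)x$ unless $Pal(v)x$ is already a palindrome, which it is not since $v$ has a letter $\neq x$, so it is at least one letter longer), we get $|Pal(vx)| \geq |Pal(v)| + 2$, hence $\mu_w(Pal(vx))$ extends $\mu_w(Pal(v))$ by at least two letters' worth of images, i.e. $|\mu_w(Pal(vx))| \geq |\mu_w(Pal(v)x)| = |\mu_w(Pal(v))| + |\mu_w(x)| \geq |\mu_w(Pal(v))| + 1$, and using the two-letter gap instead gives $|\mu_w(Pal(vx))| \geq |\mu_w(Pal(v))| + 2$ directly since each appended letter image has length $\geq 1$. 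Subtracting $|Pal(w)|$ from $|\mu_w(Pal(vx))| = |Pal(w)| + |u_w|$ and combining yields $|u_w| \geq |\mu_w(Pal(v))| + 2$, completing the argument. The delicate point to get right in writing this up is the claim $|Pal(vx)| \geq |Pal(v)| + 2$: it follows because the longest palindromic suffix of $Pal(v)x$ is shorter than $Pal(v)x$ itself (as $x$ is not the first letter of $Pal(v)$, given $v$ contains a letter $\neq x$ and, if $v$ starts with $x$, one must instead observe $Pal(v)$ does not end in $x \cdots x$ spanning it), so $Pal(vx) = Pal(v)x v'^{-1}\widetilde{Pal(v)x}$ with $v'$ a proper palindromic suffix, forcing length $\geq |Pal(v)| + 2$.
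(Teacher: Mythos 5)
Your reduction of inequality~\eqref{eq:smallestPalindrom} to showing $|\mu_w(Pal(vx))| \ge |\mu_w(Pal(v))| + |Pal(w)| + 2$ is correct, but the final step that is supposed to deliver this bound does not go through. You apply Lemma~\ref{L:u_w} to the whole word $Pal(vx)$ and obtain $\mu_w(Pal(vx)) = Pal(w)\,u_w$ with $|u_w|\ge 2$, and separately you show $|\mu_w(Pal(vx))| \ge |\mu_w(Pal(v))| + 2$. Combining these two facts gives only $|u_w| \ge |\mu_w(Pal(v))| + 2 - |Pal(w)|$, not the required $|u_w| \ge |\mu_w(Pal(v))| + 2$; you are short by exactly $|Pal(w)|$. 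The underlying problem is that the prefix $Pal(w)$ you extract from $\mu_w(Pal(vx))$ overlaps with (indeed is contained in) the prefix $\mu_w(Pal(v))$, since $\mu_w(Pal(v))$ itself begins with $Pal(w)$ as soon as $Pal(v)$ contains two distinct letters (Lemma~\ref{L:u_w} again, or Fact~\ref{F:from_referee}); that occurrence of $Pal(w)$ therefore contributes no length beyond $\mu_w(Pal(v))$. Likewise, the estimate ``$\mu_w(Pal(vx))$ extends $\mu_w(Pal(v))$ by at least two letters' worth of images, each of length $\ge 1$'' can only ever yield $+2$, never $+|Pal(w)|+2$, because $|\mu_w(y)|$ can equal $1$ for a letter $y$ (e.g.\ $w=y^k$).

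The paper closes exactly this gap by applying Lemma~\ref{L:u_w} not to $Pal(vx)$ but to the suffix $u := Pal(v)^{-1}Pal(vx)$: using formula~\eqref{eq:u_{i+1}} one checks, in the two cases $x\notin\Alph(v)$ and $x\in\Alph(v)$, that $Pal(vx) = Pal(v)\,u$ with $u$ containing at least two \emph{distinct} letters, whence
$\mu_w(Pal(vx)) = \mu_w(Pal(v))\,\mu_w(u) = \mu_w(Pal(v))\,Pal(w)\,u_w = Pal(wv)\,u_w$ with $|u_w|\ge 2$.
Here the copy of $Pal(w)$ sits \emph{after} $\mu_w(Pal(v))$ and genuinely adds $|Pal(w)|$ to the length, which is precisely what your version is missing. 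Your auxiliary claim $|Pal(vx)|\ge|Pal(v)|+2$ is true under the hypothesis on $v$ (and your justification of it is somewhat shaky in the case where $v$ begins with $x$), but it is not strong enough on its own: what matters is that the appended suffix $u$ contains two distinct letters, not merely that it has length at least two.
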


\begin{proof}
We distinguish two cases: $x \not\in \Alph(v)$, $x \in \Alph(v)$. If $x \not\in \Alph(v)$, then $Pal(vx) = Pal(v) xPal(v)$ and the word $u = xPal(v)$ contains at least two different letters. On the other hand, if $x \in \Alph(v)$, then $v = v_1xv_2$ where $v_2$ is $x$-free, in which case $Pal(vx) = Pal(v) Pal(v_1)^{-1}Pal(v)$ and the word $u = Pal(v_1)^{-1}Pal(v)$ contains at least two different letters. Hence, in either case, $Pal(vx) = Pal(v) u$ where $u$ is a word containing at least two different letters. Thus, it follows from Lemma~\ref{L:u_w} that $\mu_w(Pal(vx)) = \mu_w(Pal(v))Pal(w)u_w$ for some word $u_w$ containing at least two different letters. Since $Pal(wv) = \mu_w(Pal(v))Pal(w)$ and $|u_w| \geq 2$, the proof is thus complete.
\end{proof}

\begin{note} Inequality~\eqref{eq:smallestPalindrom} is not true in general. For instance, when $v = \varepsilon$ and $w = xx$, we have $\mu_w(Pal(vx)) = x$ and $Pal(wv) = xx$.
\end{note}

\medskip

Let us come back to our proof of Theorem~\ref{T:episturmianQuasiperiods}. Writing $v = v'x$, we have $|q| \geq |\mu_{w}(Pal(v))| > |Pal(wv')|$. Hence as an immediate consequence of Corollary~\ref{C:smallestPalindrom}, when $v$ contains at least two different letters, 
$Pal(wv)$ is the smallest palindromic prefix of $\bs$ of which $q$ is a factor. Therefore by Theorem~\ref{T:return-words} and Lemma~\ref{L:important-fact}, the returns to $q$ in $\bt$ are the words $f^{-1}\mu_{wv}(\alpha)f$ where $\alpha \in \Alph(\by)$. Consequently, each occurrence of $q$ is preceded by the word $f$. Thus, the set of factors of $f\bt$ is exactly the same as the set of factors of $\bt$; whence, the infinite word $f\bt$ (which is clearly recurrent) is episturmian. Moreover, returns to $fq$ in $f\bt$ are of the form $\mu_{wv}(\alpha)f$ for letters $\alpha \in \Alph(\by)$. Hence we deduce that there exists an infinite word $\bt'$ such that $f\bt = \mu_{wv}(\bt')$. Moreover, we deduce from the following lemma that $\bt'$ is episturmian.

\begin{lmm} \label{L:pureEpisturmian}
For any letter $\alpha$, an infinite word $\bw$ is episturmian if and only if $\mu_\alpha(\bw)$ is episturmian.
\end{lmm}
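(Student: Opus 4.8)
The plan is to reduce both implications to Justin--Pirillo's decomposition theorem, Theorem~\ref{T:episturmian}$(ii)$, keeping in mind that $\mu_\alpha = L_\alpha$. Three elementary facts about $L_\alpha$ will carry the argument: $(a)$ $L_\alpha$ and $R_\alpha$ are injective on $\cA^\omega$ and preserve recurrence --- any factor of $L_\alpha(\by)$ is a factor of $L_\alpha(v)$ for some factor $v$ of $\by$, and occurrences of $v$ in $\by$ induce occurrences of $L_\alpha(v)$ in $L_\alpha(\by)$, so recurrence of $\by$ forces recurrence of $L_\alpha(\by)$; $(b)$ the identity $L_\alpha(\by) = \alpha\,R_\alpha(\by)$ for all $\by \in \cA^\omega$, obtained by taking limits in the relation $\alpha\,R_\alpha(y_1\cdots y_k) = L_\alpha(y_1\cdots y_k)\,\alpha$ (an easy induction on $k$); and $(c)$ episturmianity depends only on the set of factors, so --- by uniform recurrence of episturmian words --- every shift of an episturmian word is again episturmian.

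For ``$\bw$ episturmian $\Rightarrow \mu_\alpha(\bw)$ episturmian'', I would apply Theorem~\ref{T:episturmian}$(ii)$ to $\bw$, obtaining a directive word $\breve\Delta$ and a recurrent sequence $(\bw^{(n)})_{n\ge 0}$ with $\bw^{(0)}=\bw$, and then check that $\mu_\alpha(\bw) = L_\alpha(\bw)$ is directed by $\alpha\breve\Delta$ via the sequence $(L_\alpha(\bw),\,\bw,\,\bw^{(1)},\,\bw^{(2)},\dots)$. The only nonformal point is that $L_\alpha(\bw)$ must be recurrent, which is fact $(a)$; then Theorem~\ref{T:episturmian}$(ii)$ gives the conclusion.

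For the converse, put $\bt := \mu_\alpha(\bw) = L_\alpha(\bw)$ and assume $\bt$ is episturmian. Then $\bt$ begins with $\alpha$, and since in $L_\alpha(\bw)$ every non-$\alpha$ letter lies strictly between two $\alpha$'s, $\bt$ has no factor $cd$ with $c,d\in\cA\setminus\{\alpha\}$. By Theorem~\ref{T:episturmian}$(ii)$ and Remark~\ref{R:directive}, write $\bt = \mu_{\breve x_1}(\bt^{(1)})$ with $\bt^{(1)}$ episturmian and $\mu_{\breve x_1}\in\{L_c,R_c\}$ for some letter $c$. A short case analysis using the two constraints just noted shows that either $c=\alpha$, or $\bt=(\alpha c)^\omega$ for some $c\neq\alpha$ --- and in the latter case $\bw=c^\omega$, which is trivially episturmian. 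If $\mu_{\breve x_1}=L_\alpha$, injectivity of $L_\alpha$ gives $\bw=\bt^{(1)}$, episturmian. If $\mu_{\breve x_1}=R_\alpha$, then (since $\bt$ begins with $\alpha$ and $R_\alpha(d)$ begins with $d$ for $d\neq\alpha$) $\bt^{(1)}$ begins with $\alpha$, say $\bt^{(1)}=\alpha\bt''$; comparing $\bt=R_\alpha(\alpha\bt'')=\alpha\,R_\alpha(\bt'')$ with $\bt=L_\alpha(\bw)=\alpha\,R_\alpha(\bw)$ and using injectivity of $R_\alpha$ yields $\bw=\bt''=\TT(\bt^{(1)})$, which is episturmian by fact $(c)$.

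The main obstacle --- the reason this is more than ``unwind the directive word both ways'' --- is that the morphic decomposition supplied by Theorem~\ref{T:episturmian}$(ii)$ need not begin with $L_\alpha$: it may begin with $R_\alpha$, or, in degenerate periodic situations, with $L_c$ or $R_c$ for $c\neq\alpha$. The identity $L_\alpha(\by)=\alpha\,R_\alpha(\by)$ is precisely the tool that converts an $R_\alpha$-decomposition back into the desired $L_\alpha$-preimage, while isolating the leftover degenerate cases through the constraints ``$\bt$ begins with $\alpha$'' and ``$\bt$ has no factor $cd$ with $c,d\neq\alpha$'' is the delicate bookkeeping to get right.
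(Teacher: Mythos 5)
Your proof is correct, and the forward direction coincides with the paper's (both reduce it to Theorem~\ref{T:episturmian}$(ii)$ by prepending $\alpha$ to a decomposition of $\bw$; your explicit check that $L_\alpha$ preserves recurrence is a detail the paper leaves implicit). For the converse, however, you take a genuinely different route. The paper argues globally on directive words: since $\mu_\alpha(\bw)$ begins with $\alpha = Pal(\alpha)$, Proposition~\ref{P:pali-prefix} yields a directive word of $\mu_\alpha(\bw)$ beginning with the $L$-spinned letter $\alpha$, so by Theorem~\ref{T:episturmian} and Remark~\ref{R:directive} one may peel off $L_\alpha$ and conclude by injectivity. You instead take an \emph{arbitrary} first step $\mu_{\breve x_1}\in\{L_c,R_c\}$ of a decomposition of $\bt=L_\alpha(\bw)$ and run a local case analysis, using the two structural constraints on $L_\alpha$-images ($\bt$ starts with $\alpha$ and has no factor $cd$ with $c,d\neq\alpha$) to eliminate $c\neq\alpha$ except in the degenerate case $\bt=(\alpha c)^\omega$, and the commutation identity $L_\alpha(\by)=\alpha R_\alpha(\by)$ together with shift-invariance of episturmianity to handle the case $\mu_{\breve x_1}=R_\alpha$. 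Your argument is longer but more self-contained: it does not invoke the normalization/palindromic-prefix machinery of Proposition~\ref{P:pali-prefix}, and it makes explicit exactly which first steps of a decomposition are compatible with membership in $L_\alpha(\cA^\omega)$, including the periodic corner cases. The paper's proof is shorter given its prior results, at the cost of resting on Proposition~\ref{P:pali-prefix} (whose application with $n=1$ tacitly assumes the relevant directive words begin with the letter $\alpha$); your version sidesteps that dependence entirely. Both establish the lemma.
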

\begin{proof}
$(\Rightarrow)$: Immediately follows from Theorem~\ref{T:episturmian} (see also Corollary~3.12 in \cite{jJgP02epis} which shows more generally that if $\bw$ is episturmian and $\varphi$ is an episturmian morphism, then $\varphi(\bw)$ is episturmian). \medskip

\noindent $(\Leftarrow)$: Conversely,  suppose $\bz := \mu_\alpha(\bw)$ is an episturmian word. Then $\bz$ begins with the letter $\alpha$. Hence, by Proposition~\ref{P:pali-prefix},  $\bz$ is directed by a spinned infinite word $\breve\Delta$ beginning with $\alpha$, say $\breve\Delta = \alpha\breve\by$ for some spinned infinite word $\breve\by$. So by Theorem~\ref{T:episturmian} and Remark~\ref{R:directive}, $\bz = \mu_\alpha(\bz')$ where $\bz'$ is an episturmian word directed by $\breve\by$. By the injectivity of $\mu_\alpha$, $\bz' = \bw$; whence $\bw$ is episturmian.
\end{proof}

Now, since $f = S_{\breve w}$, we have $\bt = \mu_{\breve wv}(\bt')$ by formula~\eqref{eq:f-shift-infinite}, and so $\bt$ is directed by a spinned infinite word beginning with $\breve w v$.

It remains to consider the case when $v$ is a power of a letter $x$. In this case, the condition $\Alph(v) = \Alph(v\by)$ implies that $\by = x^\omega$; thus $\bs$ (and hence $\bt$) is periodic. More precisely,  $\bs = (\mu_{w}(x))^\omega$ and $q = f^{-1}\mu_{w}(x)f$. Since  $f = S_{\breve w}$, it follows from formula~\eqref{eq:f-shift} that $q = \mu_{\breve w}(x)$ and $\bt = q^\omega$ showing that $\bt$ is directed by $\breve wx^\omega$. 

The proof of Theorem~\ref{T:episturmianQuasiperiods} is thus complete, except for the last part concerning the smallest quasiperiod which we prove below.

\medskip

As previously, let $\bs$ be an epistandard word such that $\bt$ belongs to the subshift of $\bs$ and let  $\Delta$ be the $L$-spinned directive word of $\bs$. 
First of all, we observe that the smallest quasiperiod of $\bt$ is of the form
$q := \mu_{\breve w}(Pal(v))$, where $\breve w$ is a spinned version of a word $w$ such that $\Delta = wv\by$ and $\Alph(v) = \Alph(v\by)$. Assume first that $v$ has a proper suffix $s$ such that $\Alph(s) = \Alph(s\by)$, so that writing $v = ps$ for a non-empty word $p$, $\mu_{\breve w p}(Pal(s))$ is also a quasiperiod of $\bt$. By formula~\eqref{eq:formula(3)},
$\mu_{p}(Pal(s))$ is a proper prefix of $Pal(ps) = Pal(v)$. Thus $\mu_{\breve w p}(Pal(s))$ is a proper prefix of  $\mu_{\breve w}(Pal(v))$ showing that the second of these words is not the smallest quasiperiod of $\bt$.

From now on, we consider $L$-spinned words $w_1$, $w_2$, $v_1$, $v_2$, spinned versions $\breve{w}_1$ and $\breve{w}_2$ of $w_1$ and $w_2$ respectively, spinned words $\by_1$ and $\by_2$ such that $\mu_{\breve{w}_1}(Pal(v_1))$ and $\mu_{\breve{w}_2}(Pal(v_2))$ are quasiperiods of $\bt$, $\Delta = w_1v_1\by_1 = w_2v_2\by_2$, $\Alph(v_1) = \Alph(v_1\by_1)$,  and $\Alph(v_2) = \Alph(v_2\by_2)$. Moreover we assume that $v_1$ and $v_2$ verify the following hypothesis:
\begin{description}
\item{(H)} for $i = 1, 2$, $v_i$ has no proper suffix $s_i$ with $\Alph(s_i) = \Alph(s_i\by_i)$
\end{description}

 Note that $|\mu_{\breve{w}_1}(Pal(v_1))| = |\mu_{{w}_1}(Pal(v_1))|$ and $|\mu_{\breve{w}_2}(Pal(v_2))| = |\mu_{{w}_2}(Pal(v_2))|$, so that to determine whether $\mu_{\breve{w}_1}(Pal(v_1))$ or $\mu_{\breve{w}_2}(Pal(v_2))$ is the smallest quasiperiod, we just have to determine which of the two words $\mu_{{w}_1}(Pal(v_1))$ and $\mu_{{w}_2}(Pal(v_2))$ is the shortest word. But then we can use the fact that $w_1v_1$ and $w_2v_2$ are both prefixes of $\Delta$. When $|w_1| = |w_2|$, Hypothesis (H) implies that $w_1 = w_2$ and $v_1 = v_2$.

Before considering the case where $|w_1v_1| < |w_2v_2|$, let us recall that if $\alpha$ is a letter and if $x$ is an $\alpha$-free word, then $\mu_x(\alpha) = Pal(x)\alpha$ (see formula~\eqref{eq:formula(3)}). 
Moreover, we observe that for any word $x$ and distinct letters $\alpha$ and $\beta$, $Pal(x\alpha)$ is a prefix of $\mu_x(Pal(\alpha\beta))$. Indeed, $\mu_x(Pal(\alpha\beta)) = \mu_x(\mu_\alpha(\beta)\alpha) = \mu_{x\alpha}(\beta\alpha)$ and  this word contains $Pal(x\alpha)$ as a prefix by Lemma~\ref{L:u_w}.

Assume $|w_1v_1| < |w_2v_2|$ (the case $|w_1v_1| > |w_2v_2|$ is symmetric) and let us show that $|\mu_{\breve{w}_1}(Pal(v_1))|$ is less than (or equal to) $|\mu_{\breve{w}_2}(Pal(v_2))|$.
In this case $w_1v_1$ is a proper prefix of $w_2v_2$. Let $x$ be the non-empty word such that $w_1v_1x = w_2v_2$. Hypothesis (H) implies that $v_1$ cannot be a factor of $v_2$ except as a prefix. Thus there exists a possibly empty word $y$ such that $v_1x = yv_2$. Assume $x$ has length at least 2 and is a suffix of $v_2$. 
Then Hypothesis (H) implies that the two last letters of $v_2$ and so of $x$ are different. Let $\alpha$ and $\beta$ be the two last letters of $x$, and let $x'$ be the word such that $x = x'\alpha\beta$. 
Then $Pal(v_1)$ is a prefix of $Pal(v_1x')$ which is  a prefix of $\mu_{v_1x'}(\alpha\beta)$ by Lemma~\ref{L:u_w}. Finally we can see that $\mu_{v_1x'}(\alpha\beta)$ is a prefix of $\mu_{y}(Pal(v_2))$. This shows that $Pal(v_1)$ is a proper prefix of $\mu_{y}(Pal(v_2))$ and so $\mu_{\breve{w}_1}(Pal(v_1))$ is a shorter quasiperiod than $\mu_{\breve{w}_2}(Pal(v_2))$. 

Now consider $|x| = 1$ and $|v_2| \geq 2$. Let $\alpha$ be the last letter of $v_1$ and let $v_1'$ and $v_2'$ be the words such that $v_1 = v_1' \alpha$, and $v_2 = v_2'\alpha x$. Then 
$Pal(v_1) = Pal(v_1' \alpha)$ is a proper prefix of
$\mu_{v_1'}(Pal(\alpha x)) = \mu_{yv_2'}(Pal(\alpha x))$ (see the paragraph before last, taking $\alpha\beta = \alpha x$),   which is a prefix of $\mu_{y}(Pal(v_2))$. So again we find that $\mu_{\breve{w}_1}(Pal(v_1))$ is a shorter quasiperiod than $\mu_{\breve{w}_2}(Pal(v_2))$. 

Now we come to the case when $v_2 = \alpha$ for a letter $\alpha$. If $x$ contains a letter different from $\alpha$, then $Pal(v_1)$ is a proper prefix of $\mu_{v_1}(\mu_{x}(\alpha)) = \mu_{y}(\alpha) = \mu_{y}(Pal(v_2))$. We still conclude as previously. 

Lastly, assume that $x$ is a power of $\alpha$. Hypothesis (H) implies that $\alpha$ is the first letter of $v_1$ and more precisely $v_1 = \alpha v_1'$ with $v_1'$ an $\alpha$-free word. Thus  
$Pal(v_1) = \mu_{\alpha}(Pal(v_1')) \alpha
           = \mu_{\alpha}(Pal(v_1') \alpha) 
           = \mu_{\alpha}(\mu_{v_1'}(\alpha)) 
            = \mu_{\alpha v_1'}(\alpha) = \mu_{v_1}(x) = \mu_{y}(v_2)$. So in this particular case 
$\mu_{\breve{w}_1}(Pal(v_1)) = \mu_{\breve{w}_2}(Pal(v_2))$, and once again $\mu_{\breve{w}_1}(Pal(v_1))$ is the smallest quasiperiod.
\end{proof}

\begin{rmrk} \label{R:no-of-quasiperiods}
Theorem \ref{T:episturmianQuasiperiods} shows in particular that if an episturmian word $\bt$ is directed by a spinned word $\breve\Delta$ with all spins ultimately $L$, then $\bt$ has infinitely many quasiperiods since there are infinitely many factorizations of $\breve\Delta$ into the given form (i.e., any {\em positive shift} of an epistandard word is quasiperiodic). 
\end{rmrk}

Let us demonstrate Theorem~\ref{T:episturmianQuasiperiods} with some examples. First we provide an example of an episturmian word having infinitely many quasiperiods, but which is  not epistandard and all of its directive words are {\em wavy} (recall that a wavy word is a spinned infinite word containing  infinitely many $L$-spinned letters and infinitely many $R$-spinned letters).

\begin{xmpl} \label{ex:1}
Consider the episturmian word $\bt$ with normalized directive word $\breve\Delta = abc(\bar{b}a)^\omega$. From Theorem~\ref{T:directSame}, we observe that $\bt$ has infinitely many directive words:
\[
 \breve\Delta = abc(\bar{b}a)^\omega \quad \mbox{and} \quad \breve\Delta_i = a\bar b \bar c (\bar b \bar a)^iba(\bar b a)^\omega \quad \mbox{for each $i\geq0$},
 \]
 all of which are wavy.
Hence, by Theorem~\ref{T:episturmianQuasiperiods}, the set of quasiperiods of $\bt$ consists of $Pal(abc) = abacaba$ and, for each $i\geq 0$, the words:
\[
\mu_{\breve w}(bab) \quad \mbox{and} \quad \mu_{\breve w}(bab)a \quad \mbox{where $\breve w = a\bar b \bar c (\bar b \bar a)^i$}.
\]
Note that $Pal(ba) = bab$ and $S_{\breve w}^{-1}Pal(w) = (Pal(w)a^{-1})^{-1}Pal(w) = a$. 
\end{xmpl}

Next we give some examples of quasiperiodic episturmian words having only finitely many quasiperiods.

\begin{xmpl} \label{ex:epi-quasiperiods} Consider the quasiperiodic episturmian word $\mu_{adbc\bar d}(a\br)$ where $\br$ is the Tribonacci word;  
it has only two directive words: $adbc\bar d(a\bar b\bar c)^\omega$ and $a\bar d \bar b \bar c d(a\bar b \bar c)^\omega$ (the first one being its normalized directive word). So we see that $\bt$ has {\em only} one quasiperiod, namely $Pal(adbc) = adabadacadabada$. Similarly, the episturmian word $\mu_{cbaa}(c\br)$, which is directed by exactly three different spinned infinite words ($cbaa(\bar{a}\bar{b}c)^\omega$, $cba\bar{a} a(\bar{b}c\bar{a})^\omega$, and $cb\bar{a} a a(\bar{b}c\bar{a})^\omega$), has only two quasiperiods: $Pal(cba) = cbcacbc$ and $Pal(cbaa) = cbcacbcacbc$.
\end{xmpl}

We can also construct episturmian words having exactly $k$ quasiperiods for any fixed integer $k \geq 1$, as shown by the following example.

\begin{xmpl} \label{ex:3}
Consider the episturmian word with normalized directive word $\breve \Delta = a b c\bar a(b\bar c \bar a)^\omega$. By Theorem~\ref{T:directSame}, $\bt$ has exactly two directive words: $\breve\Delta$ and 
$\bar a \bar b\bar c a(b\bar c\bar a)^\omega$. Hence, by Theorem~\ref{T:episturmianQuasiperiods}, $\bt$ has only one quasiperiod, namely $Pal(abc)$. Now, to construct an episturmian word having exactly $k$ quasiperiods for a fixed integer $k\geq 1$, we consider $\breve w\breve\Delta  = \breve w a b c\bar a(b\bar c \bar a)^\omega$ where $\breve w$ is a spinned version of any  $L$-spinned word $w$ with $\Alph(w) \cap \{a,b,c\} = \emptyset$ and $|Pal(w)| = k-1$. Then, by Theorem~\ref{T:episturmianQuasiperiods}, the set of quasiperiods of the episturmian word directed by $\breve w\breve\Delta$ consists of the $k$ words:
\[
  \mu_{\breve w}(Pal(abc))p \quad \mbox{where $p$ is a prefix of $Pal(w)$.}
 \] 
 For example, $d^{k-1}\breve\Delta = d^{k-1} a b c\bar a(b\bar c \bar a)^\omega$ directs an episturmian word with exactly $k$ quasiperiods: $\mu_{d}^{k-1}(Pal(abc))p = d^{k-1}ad^{k-1}bd^{k-1}ad^{k-1}cd^{k-1}ad^{k-1}bd^{k-1}ap$ where $p$ is a prefix of $d^{k-1}$.
\end{xmpl}

\subsection{Characterizations of quasiperiodic episturmian words}

From Theorem~\ref{T:episturmianQuasiperiods}, we immediately obtain the following characterization of quasiperiodic episturmian words. 

\begin{thrm} \label{T:quasi-characterization} An episturmian word is quasiperiodic if and only if 
there exists a spinned word $\breve w$, an $L$-spinned word $v$ and a spinned version $\breve \by$ of an $L$-spinned word $\by$ such that $\bt$ is directed by $\breve wv\breve \by$ with $\Alphit(v) = \Alphit(v\by)$. \medskip
\end{thrm}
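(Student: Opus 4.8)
The plan is to derive Theorem~\ref{T:quasi-characterization} directly from Theorem~\ref{T:episturmianQuasiperiods}, which has just been established. The statement is essentially a ``cleaned-up'' restatement of the structural condition appearing in that theorem, so the work is largely a matter of unpacking definitions in both directions.

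\begin{proof}[Proof of Theorem~\ref{T:quasi-characterization}]
Suppose first that $\bt$ is quasiperiodic, say $q$ is a quasiperiod of $\bt$. By Theorem~\ref{T:episturmianQuasiperiods}, the set of quasiperiods of $\bt$ is non-empty precisely when there exist $L$-spinned words $w$, $v$ such that $\bt$ is directed by $\breve w v \breve\by$ for some spinned version $\breve w$ of $w$ and some spinned version $\breve\by$ of an $L$-spinned infinite word $\by$ with $\Alph(v) = \Alph(v\by)$ (and then $q = \mu_{\breve w}(Pal(v))p$ for a suitable prefix $p$). Since $q$ exists, so do such $w$, $v$, $\by$, $\breve w$, $\breve\by$; this gives exactly the directive word $\breve w v \breve\by$ with $\Alph(v) = \Alph(v\by)$ required by the statement.

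Conversely, assume $\bt$ is directed by $\breve w v \breve\by$ where $\breve w$ is a spinned word, $v$ is an $L$-spinned word, and $\breve\by$ is a spinned version of an $L$-spinned word $\by$, with $\Alph(v) = \Alph(v\by)$. Then $\breve\Delta := \breve w v \breve\by$ is of the form required in the hypothesis of Corollary~\ref{Cor:wvy} (with the word there called $v$ being $L$-spinned and satisfying $\Alph(v) = \Alph(v\by)$). Hence by Corollary~\ref{Cor:wvy}, $\bt$ is quasiperiodic; indeed $\mu_{\breve w}(Pal(v))$ is a quasiperiod of $\bt$. (Alternatively, one may simply invoke the first paragraph of the proof of Theorem~\ref{T:episturmianQuasiperiods}, which shows that any word of the form~\eqref{eq:words} is a quasiperiod.) This completes the proof.
\end{proof}

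There is essentially no obstacle here: the theorem is a direct corollary, and the only mild subtlety is to notice that in Theorem~\ref{T:episturmianQuasiperiods} the word $\breve w$ is allowed to be an arbitrary spinned word while $v$ and $\by$ must be $L$-spinned, and that these are exactly the hypotheses of the statement. One should be slightly careful that in the forward direction we are using the \emph{existence} part of Theorem~\ref{T:episturmianQuasiperiods} (a quasiperiod forces the structural decomposition) rather than just the description of the set of quasiperiods; but this is exactly what the second paragraph of that theorem's proof delivers, since it begins ``Now suppose that $q$ is a quasiperiod of $\bt$'' and concludes that $\bt$ has a directive word of the asserted form. Thus nothing beyond a careful reading of Theorem~\ref{T:episturmianQuasiperiods} and Corollary~\ref{Cor:wvy} is needed.
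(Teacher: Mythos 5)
Your proof is correct and matches the paper exactly: the paper presents Theorem~\ref{T:quasi-characterization} as an immediate consequence of Theorem~\ref{T:episturmianQuasiperiods}, with the forward direction following from the fact that any quasiperiod forces the structural decomposition of the directive word, and the converse from Corollary~\ref{Cor:wvy}. Nothing further is needed.
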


The above characterization is clearly not useful when one wants to decide whether or not a given episturmian word is quasiperiodic. In this regard, our normalized directive word plays an important role as it provides a more effective way to decide.

\begin{thrm} \label{T:normalized-quasi-characterization} 
An episturmian word $\bt$ is quasiperiodic if and only if the (unique) normalized directive word of $\bt$ takes the form 
$\breve w a v_1 \bar{a} v_2 \bar{a} \cdots v_{k} \bar{a} v\breve \by$ for some $L$-spinned letter $a$, spinned word $\breve w$, $a$-free $L$-spinned words  $v$, $v_1$, \ldots, $v_{k}$ ($k \geq 0$), and a spinned version $\breve \by$ of an $L$-spinned word $\by$ such that  
$\Alphit(av) = \Alphit(av\by)$.
\end{thrm}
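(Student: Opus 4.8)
The plan is to deduce this characterization from Theorem~\ref{T:quasi-characterization}, which says $\bt$ is quasiperiodic if and only if $\bt$ is directed by some $\breve w v\breve\by$ in which $v$ is $L$-spinned, $\breve\by$ is a spinned version of an $L$-spinned word $\by$, and $\Alphit(v)=\Alphit(v\by)$. So it is enough to show that $\bt$ admits such a directive word if and only if the normalized directive word of $\bt$ has the displayed form. I would first dispose of the periodic case: there $\bt=(\mu_{\breve u}(x))^\omega$ has only finitely many directive words and a non-unique normalized directive word, but $\bt$ is always quasiperiodic and one checks directly that each of its directive words is of the stated form (with $k=0$). So assume henceforth $\bt$ aperiodic; then by Fact~\ref{F:aperiodic-spinned-versions} all directive words of $\bt$ are spinned versions of one $L$-spinned word $\Delta$.

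For the ``if'' direction, suppose the normalized directive word of $\bt$ is $\breve w\,a\,v_1\bar a\,v_2\bar a\cdots v_k\bar a\,v\,\breve\by$ as in the statement. Since each $v_i$ is $a$-free, each word $a v_i a$ is an ($a$-based) block, so Theorem~\ref{T:presentation} permits the block-transformations $a v_i\bar a\to\bar a\bar v_i a$ for $i=1,\dots,k$ in turn; as block-equivalence of a finite prefix does not change the directed word (Theorem~\ref{T:presentation}, or item~\ref{Ti:1} of Theorem~\ref{T:directSame}), $\bt$ is also directed by $\breve w\,\bar a\bar v_1\bar a\bar v_2\cdots\bar a\bar v_k\,(av)\,\breve\by$. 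Here $av$ is an $L$-spinned word, $\by$ is $L$-spinned, and $\Alphit(av)=\Alphit(av\by)$, so Theorem~\ref{T:quasi-characterization} gives that $\bt$ is quasiperiodic.

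For the ``only if'' direction, suppose $\bt$ is quasiperiodic, so by Theorem~\ref{T:quasi-characterization} it is directed by some $\breve w_0 v_0\breve\by_0$ with $v_0,\by_0$ $L$-spinned and $\Alphit(v_0)=\Alphit(v_0\by_0)$; thus $\Delta=w_0 v_0\by_0$, and from the position at which $v_0$ begins onward every letter of $\Delta$ lies in $B:=\Alphit(v_0)$. I would then analyse the normalized directive word $\breve\Delta_{\mathrm{norm}}$ of $\bt$, which is a spinned version of $\Delta$, and exhibit in it a decomposition of the stated form. The route I expect to be cleanest is to run the normalization procedure of Theorem~\ref{T:normalisation} on $\breve w_0 v_0\breve\by_0$ --- repeatedly replacing a factor $\bar c\bar u c$ (with $u$ $c$-free) by $c u\bar c$ --- and to track how far each replacement can push an $R$-spinned letter to the right. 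Because $\breve\Delta_{\mathrm{norm}}$, being normalized, contains no factor of $\bigcup_{c\in\cA}\bar c\bar\cA^{*}c$, these rightward migrations are tightly constrained; combined with the fact that from the start of $v_0$ onward $\Delta$ uses only letters of $B$, so the persisting letters of $B$ keep recurring, one can single out an $L$-spinned occurrence of a letter $a$, far enough to the right, whose ensuing stretch of $\breve\Delta_{\mathrm{norm}}$ has, up to some point $j$, only $\bar a$ among its $R$-spinned letters and only non-$a$ letters among its $L$-spinned ones, with the final $a$-free $L$-spinned run $v$ before $j$ containing every letter other than $a$ that still occurs from $j$ onward. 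The suffix $\breve\by$ beginning at $j$ is then a spinned version of an $L$-spinned word $\by$ with $\Alphit(av)=\Alphit(av\by)$ (the equality inherited from $\Alphit(v_0)=\Alphit(v_0\by_0)$), which is exactly the stated form.

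The technical heart, and the step I expect to be the main obstacle, is this last direction: showing that the normalization of a quasiperiodic-form directive word produces \emph{precisely} the displayed shape --- that all $R$-spins pushed rightward either collapse into the prefix $\breve w$ or are carried by a single distinguished letter $a$, that beyond finitely many copies of $\bar a$ they are absorbed into the arbitrary spinned tail $\breve\by$, and that a genuine $a$-free $L$-spinned block $v$ meeting the alphabet condition really survives. Everything else is either a direct appeal to Theorems~\ref{T:presentation}, \ref{T:directSame}, \ref{T:normalisation}, and \ref{T:quasi-characterization}, or the routine periodic bookkeeping noted above.
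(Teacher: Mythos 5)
Your ``if'' direction is correct and coincides with the paper's argument: the $k$ block-transformations $a v_i\bar a \leftrightarrow \bar a\bar v_i a$ turn the normalized form into $\breve w\,\bar a\bar v_1\bar a\cdots\bar v_k\,(av)\,\breve\by$, to which Theorem~\ref{T:quasi-characterization} applies with the $L$-spinned block $av$. The periodic bookkeeping is also essentially right (though a periodic episturmian word has infinitely many directive words, not finitely many; all that matters is that its normalized directive words end in $x^\omega$, which fit the form with $k=0$ and $v=\varepsilon$).

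The ``only if'' direction, however, is a plan rather than a proof, and you say so yourself: the step you call ``the technical heart'' --- showing that the normalized directive word of a quasiperiodic episturmian word has \emph{precisely} the displayed shape --- is the entire content of that implication, and it is left unexecuted. Tracking how $R$-spins migrate under a global normalization of $\breve w_0 v_0\breve\by_0$ is genuinely delicate; nothing in your sketch establishes that the surviving $R$-spins in the relevant stretch are all carried by a single letter $a$, nor that an $a$-free $L$-spinned block $v$ with $\Alphit(av)=\Alphit(av\by)$ survives. (On the latter point, note that the first letter of $v_0$ need not serve as the distinguished $a$: if $v_0=av$ with $v$ containing $a$, one must first re-cut $v_0\by_0$, e.g.\ by a minimal-window argument, to isolate a letter followed by a block free of that letter and still covering the alphabet of the tail.) The paper's proof avoids all of this by never normalizing across $v_0$: it normalizes the finite prefix $\breve w_1$ and the tail $\breve\by$ \emph{separately}, so that the $L$-spinned block $av$ is untouched, and then observes that the concatenation $\breve w_1(av)\breve\by$ can only fail to be normalized through a forbidden factor in $\bar a\bar\cA^*a$ straddling the junction between $\breve w_1$ and the $L$-spinned $a$ --- i.e.\ only when $\breve w_1$ has a suffix in $\bar a\bar\cA^*$. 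Removing the maximal such suffix $\bar a\bar v_1\cdots\bar a\bar v_k$ by $k$ local block-transformations then produces the displayed normalized form directly. That localization of the normalization problem to a single junction is the idea your proposal is missing.
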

\begin{proof}

First assume that an episturmian word $\bt$ is directed by $\breve w a v_1 \bar{a} v_2 \bar{a} \cdots v_{k} \bar{a} v\breve \by$ (as in the hypotheses). 
Then by Theorem~\ref{T:presentation}, $\bt$ is also directed by $\breve w \bar a {\bar v}_1 \bar{a} {\bar v}_2 \bar{a} \cdots {\bar v}_{k} {a} v\breve \by$. By Theorem~\ref{T:quasi-characterization}, $\bt$ is quasiperiodic.

Now assume that $\bt$ is quasiperiodic. By Theorem~\ref{T:quasi-characterization}, one of its directive words is ${\breve w}_1v' \breve\by$ for spinned words ${\breve w}_1$, $\breve\by$ and an $L$-spinned word $v'$ with $\Alph(v') = \Alph(v'\by)$. If $v'$ contains only one letter $a$, $\breve\by = a^\omega$ and the normalized directive word of $\bt$ ends with $aa^\omega$, then the condition is verified.

Now assume that $v'$ contains at least two (different) letters: let us write $v' = av$. Without loss of generality, we can assume that $\breve\by$ and $\breve{w}_1$ are normalized. 
If $\breve{w}_1$ has no suffix in $\bar a \bar\cA^*$, then 

$\breve{w}_1av\by$ is normalized and of the required form (with $k = 0$).
Otherwise $\breve{w}_1 = \breve w \bar a {\bar v}_1 \bar{a} {\bar v}_2 \bar{a} \cdots {\bar v}_{k}$ for some spinned words $\breve w$, $\breve \by$ and some $a$-free $L$-spinned words  $v_1$, \ldots, $v_{k}$ ($k \geq 0$) such that $\breve w$ has no suffix in $\bar a \bar\cA^*$.
Then the normalized directive word of $\bt$ is $\breve w a v_1 \bar{a} v_2 \bar{a} \cdots v_{k} \bar{a} v\breve \by$.
\end{proof}

\begin{xmpl}
The episturmian words with directive words $(a\bar{b}\bar{c})^\omega$, 
$(a\bar{b}c\bar{a}b\bar{c})^\omega$ or
$(a\bar{b}\bar{a}ca\bar{a}\bar{b}cbc\bar{b})^\omega$ are not quasiperiodic whereas the one with normalized directive word $ab\bar abc\bar c(\bar a bc)^\omega$ is quasiperiodic (it is also directed by $\bar a\bar babc\bar c(\bar a bc)^\omega$).
\end{xmpl}

\begin{rmrk} \label{R:quasi-directive} 
It follows from Theorem~\ref{T:quasi-characterization} that any quasiperiodic episturmian word has at least two directive words. 
Indeed, if $\bt$ is a quasiperiodic episturmian word, then $\bt$ has a directive word of the form $\breve\Delta = \breve w v \breve \by$ where the words $\breve w$, $\breve\by$ are spinned versions of some $L$-spinned words $w$, $\by$ and $v$ is an $L$-spinned word such that $\Alph(v) = \Alph(v\by)$. If $\breve\by$ (and hence $\breve\Delta$) has all spins ultimately $L$, then item~\ref{Ti:2} in part $iii)$ of Theorem~\ref{T:directSame} shows that $\bt$ also has a wavy directive word. 
Now suppose $\breve\by$ does not have all spins ultimately $L$. Then $\breve \by$ must be wavy, and hence contains infinitely many $R$-spinned letters. Choose $x$ to be the left-most $R$-spinned letter in $\breve \by$. Then $\breve\by$ begins with $u\bar x$ for some $L$-spinned word $u$ (possibly empty). Hence, since $x$ occurs in $v$, we see that $\breve\Delta = \breve wv\breve \by$ contains the factor $vu\bar x = v'xv''u\bar x$. Thus $\breve\Delta$ does not satisfy the conditions of Theorem~\ref{T:uniqueDirective} as it contains a factor in $x\cA^*\bar x$, and therefore $\bt$ 
does not have a unique directive word.  Moreover, we easily deduce from Theorems~\ref{T:quasi-characterization} and \ref{T:uniqueDirective} that any episturmian word having a unique directive word is necessarily non-quasiperiodic.
\end{rmrk}

In view of the above remark, one might suspect that an episturmian word is non-quasiperiodic if and only if it has a unique directive word. But this is not true. For example, both $ba(\bar b c \bar a)^\omega$ and $\bar b \bar a b (c\bar  a \bar b)^\omega$ direct the {\em same} non-quasiperiodic episturmian word $\bt$ by Theorems~\ref{T:directSame} and \ref{T:quasi-characterization}. These two spinned infinite words are the only directive words for $\bt$, which might lead one to guess that an episturmian word is non-quasiperiodic if and only if it has finitely many directive words. But again, this is not true. For example, as stated in Example~\ref{ex:epi-quasiperiods},  $adbc\bar d(a\bar b\bar c)^\omega$ and $a\bar d \bar b \bar c d (a\bar b \bar c)^\omega$ are the only two directive words of the {\em quasiperiodic} episturmian word $\mu_{adbc\bar d}(a\br)$ where $\br$ is the Tribonacci word. Moreover there exist non-quasiperiodic episturmian words, such as the one directed by $(ab\bar b\bar c)^\omega$, that have infinitely many directive words (the words $(ab\bar b\bar c)^na\bar b\bar c(ab\bar b\bar c)^\omega$ are pairwise different directive words).

Nevertheless, in the Sturmian case we have:
\begin{prpstn} \label{P:Sturmian-Lyndon-unique} A Sturmian word is non-quasiperiodic if and only if it has a unique directive word.
\end{prpstn}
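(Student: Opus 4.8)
The proof splits into the two implications, and the substance lies in showing that a non-quasiperiodic Sturmian word has a unique directive word. The converse implication --- unique directive word $\Rightarrow$ non-quasiperiodic --- holds for \emph{any} episturmian word and is already recorded in the last sentence of Remark~\ref{R:quasi-directive}; alternatively, one may derive it from Proposition~\ref{P:uniqueDirective-Sturmian}, since then the normalized directive word is regular wavy, hence has no $L$-spinned factor containing both letters, and being the unique directive word of $\bt$, Theorem~\ref{T:quasi-characterization} rules out quasiperiodicity.

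For the other implication I would argue the contrapositive: if the Sturmian word $\bt$, over $\{a,b\}$, does not have a unique directive word, then $\bt$ is quasiperiodic. The first point is a specialization of Theorem~\ref{T:quasi-characterization} to the binary aperiodic setting: $\bt$ is quasiperiodic as soon as some directive word of $\bt$ has an $L$-spinned factor $v$ with $\Alph(v)=\{a,b\}$. Indeed, writing that directive word as $\breve w v\breve\by$, where $\breve\by$ is the (necessarily infinite) suffix following an occurrence of $v$, one gets $\Alph(v\by)\subseteq\{a,b\}=\Alph(v)$, so Theorem~\ref{T:quasi-characterization} applies. It therefore suffices to exhibit such a directive word.

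Now I would apply Theorem~\ref{T:uniqueDirective} to the normalized directive word $\breve\Delta$ of $\bt$ (which exists and is unique, $\bt$ being aperiodic, by Theorem~\ref{T:normalisation}): conditions~(1) and~(3) there hold automatically for $\breve\Delta$, so the hypothesis forces~(2) or~(4) to fail. If~(2) fails then $\breve\Delta$ is ultimately $L$-spinned and $\bt$ is quasiperiodic by Remark~\ref{R:ultimately-L}. If~(4) fails then $\breve\Delta$ has a factor $cu\bar c$ with $c$ a letter and $u$ an $L$-spinned word; if $u$ contains the letter $d\ne c$ we are done with $v=cu$, so assume $u=c^k$, i.e.\ $\breve\Delta$ contains $c^{k+1}\bar c$. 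Here the key structural fact is that, in a normalized spinned word having infinitely many $L$-spinned letters, the maximal $R$-spinned run beginning at this $\bar c$ is monochromatic, say ${\bar c}^{s}$, and is immediately followed by an $L$-run over some letter $e$: indeed $\breve\Delta$ has no factor in $\bar e\bar\cA^*e$, so that run cannot contain $\bar e$, which forces $e=d$ and the run to be a power of $\bar c$. Hence $\breve\Delta$ contains $c^{k+1}{\bar c}^{s}d = c^{k}\cdot c{\bar c}^{s}d$, and applying the block-transformation $c\bar c\mapsto\bar c c$ a total of $s$ times yields $c{\bar c}^{s}d\equiv{\bar c}^{s}cd$; by Theorems~\ref{T:presentation} and~\ref{T:directSame}, $\bt$ is then also directed by the spinned word obtained from $\breve\Delta$ by this replacement, and that word contains the $L$-spinned factor $cd$ with $\Alph(cd)=\{a,b\}$. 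By the observation of the previous paragraph, $\bt$ is quasiperiodic.

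The delicate case is (4): the normalized directive word may itself contain no $L$-spinned factor using both letters (for instance $\breve\Delta=(\bar ba\bar ab)^\omega$), so one genuinely has to pass to another directive word. The two ingredients that make the step work are the normalization constraint --- which forces $R$-runs to be monochromatic and to be followed immediately by the ``missing'' letter, $L$-spinned --- and the single block-transformation $c\bar c\equiv\bar c c$, which shifts an $L$-spinned $c$ across that run so that it becomes adjacent to that letter. I would also note explicitly that the argument is genuinely binary: the clause ``$u$ contains $d$'' relies on $\{c,d\}$ being the whole alphabet, and this is precisely where the statement breaks on larger alphabets (cf.\ the non-quasiperiodic ternary word directed by $ba(\bar bc\bar a)^\omega$, for which the relevant $L$-spinned factor $ba$ omits the letter $c$).
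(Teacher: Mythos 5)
Your proof is correct, and the backward implication coincides with the paper's (it is also recorded verbatim in Remark~\ref{R:quasi-directive}). For the forward implication, however, you take a genuinely different route. The paper's proof invokes Proposition~\ref{P:uniqueDirective-Sturmian} to conclude that the normalized directive word is not regular wavy, and then jumps to the assertion that it must contain an $L$-spinned factor $ab$ or $ba$, after which Theorem~\ref{T:quasi-characterization} applies. You instead go through the general episturmian criterion of Theorem~\ref{T:uniqueDirective}, observe that conditions (1) and (3) hold automatically for a normalized directive word so that (2) or (4) must fail, dispose of the failure of (2) via Remark~\ref{R:ultimately-L}, and, when (4) fails, pin down the shape of the offending factor as $c^{k+1}\bar{c}^{\,s}d$ using the normalization constraint, then apply $s$ block-transformations $c\bar{c}\equiv\bar{c}c$ to reach a block-equivalent directive word containing the $L$-spinned factor $cd$. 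This costs more work but is actually more robust than the paper's argument: the intermediate claim in the paper is not literally true of the normalized directive word itself --- for instance $\bar{a}b\bar{b}a(\bar{a}b)^{\omega}$ is normalized, is not regular wavy, and contains no $L$-spinned $ab$ or $ba$, yet it directs a quasiperiodic Sturmian word precisely because it is block-equivalent to $\bar{a}\bar{b}ba(\bar{a}b)^{\omega}$ --- so the passage to another directive word that you carry out explicitly is exactly the step the paper's proof leaves unjustified. Your closing remark on where the argument uses $|\cA|=2$ (the letter $d$ occurring in $u$ forces $\Alph(cu)=\cA$ only on a binary alphabet) correctly identifies why the statement fails for larger alphabets, in agreement with the examples following Remark~\ref{R:quasi-directive}.
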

\begin{proof} First let us suppose by way of contradiction that $\bt$ is a non-quasiperiodic Sturmian word, but has more than one directive word. Then, by Proposition~\ref{P:uniqueDirective-Sturmian}, $\bt$ has infinitely many directive words. Moreover, as $\bt$ is aperiodic, all of the directive words of $\bt$ are spinned versions of the same $\Delta \in \{a,b\}^\omega\setminus(\cA^*a^\omega \cup \cA^*b^\omega) $ by Fact~\ref{F:aperiodic-spinned-versions} and none of these directive words are regular wavy. Hence the normalized directive word of $\bs$ contains $ab$ or $ba$. But then $\bt$ is quasiperiodic by Theorem~\ref{T:quasi-characterization}, a contradiction.

Conversely, by Proposition~\ref{P:uniqueDirective-Sturmian}, if a Sturmian word $\bt$ has a unique directive word, then its (normalized) directive word is regular wavy. Such a word clearly does not fulfill Theorem~\ref{T:quasi-characterization} and so $\bt$ is not quasiperiodic.
\end{proof}

Let us recall that a Sturmian word is non-quasiperiodic if and only if it is a Lyndon word~\cite{fLgR04quas}. The generalization of this aspect will be discussed in Section~\ref{S:episturmianLyndon}.

\section{\label{S:QuasiperiodicEpisturmianMorphisms}Quasiperiodicity and episturmian morphisms}

In this section we draw connections between the results of the previous section and {\em strongly quasiperiodic} morphisms. As in \cite{fLgR07quas}, a morphism $f$ on $\cA$ is called {\it strongly quasiperiodic} (on $\cA$) if for any (possibly non-quasiperiodic) infinite word $\bw$, $f(\bw)$ is quasiperiodic.

\subsection{\label{quasiperiodicity}Strongly quasiperiodic epistandard morphisms}

Quasiperiodicity of epistandard words can also be explained by the strong quasiperiodicity of epistandard morphisms.   

\begin{thrm}\label{T:stronglyQuasEpistandardMorphisms}
Let $v$ be a word over an alphabet $\cA$ containing at least two letters. The epistandard morphism 
$\mu_v$ is strongly quasiperiodic if and only if $\Alphit(v) = \cA$. Moreover $Pal(v)$ is a quasiperiod of $\mu_v(\bw)$ for any infinite word $\bw$.
\end{thrm}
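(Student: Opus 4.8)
The plan is to treat the two implications separately, relying on Fact~\ref{rnew4} (quasiperiodicity is preserved by non-erasing morphisms applied to the quasiperiod), on Theorem~\ref{T:return-words} (explicit returns to factors of epistandard words), and on the already-proved Theorem~\ref{T:epistandard-quasiperiods}, together with Corollary~\ref{Cor:wvy} and Lemma~\ref{L:important-fact}.

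First I would prove the ``if'' direction: assume $\Alph(v) = \cA$ and let $\bw$ be an arbitrary infinite word over $\cA$. The word $\bw$ contains at least one letter, say $\alpha$, and by Fact~\ref{F:from_referee} the infinite word $\mu_v(\bw)$ begins with $Pal(v)$. The key point is that every occurrence of a letter $\alpha$ in $\bw$ produces an occurrence of $\mu_v(\alpha)$ in $\mu_v(\bw)$, and each such block is followed by $Pal(v)$: this follows from formula~\eqref{eq:f-shift-infinite} applied with spin pattern $\bar v$, since $S_{\bar v} = Pal(v)$, or more directly from formula~\eqref{eq:formula(3)} which gives $Pal(v\alpha) = \mu_v(\alpha)Pal(v)$. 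Hence, writing $\bw = w_1w_2w_3\cdots$, after reading $\mu_v(w_1\cdots w_n)$ the next $|Pal(v)|$ letters of $\mu_v(\bw)$ coincide with $Pal(v)$, and $|\mu_v(w_{n+1})| \le |\mu_v(v')|$ for a suitable bound; the essential inequality I need is that consecutive occurrences of $Pal(v)$ in $\mu_v(\bw)$ are at distance at most $|Pal(v)|$, i.e. that $Pal(v)$ is a prefix of $\mu_v(x)Pal(v)$ and that $|\mu_v(x)| \le |Pal(v)|$ for every letter $x$ occurring in $\bw$. Since $\Alph(v) = \cA$, every letter $x$ occurs in $v$, so $Pal(v)$ itself contains $\mu_v(x)$-controlled returns: more precisely, by Theorem~\ref{T:return-words} the returns to $Pal(v)$ in the epistandard word directed by $v\by$ (for any infinite $\by$ over $\cA$) are the words $\mu_v(x)$ with $x \in \Alph(\by)$, each of length at most $|Pal(v)|$ by the length formula following~\eqref{eq:u_{i+1}}. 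Combining this with Corollary~\ref{Cor:wvy} (taking $\breve w = \empt$) gives that $Pal(v)$ covers $\mu_v(\bw)$ when $\bw$ is itself episturmian; for a general $\bw$ I would argue directly that each position of $\mu_v(\bw)$ lies inside some occurrence of $Pal(v)$, using that the gap between the occurrence of $Pal(v)$ starting at the image of position $i$ and the one starting at the image of position $i+1$ is $|\mu_v(w_i)| \le |Pal(v)|$.

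For the ``only if'' direction, suppose $\Alph(v) \ne \cA$, so there is a letter $c \in \cA \setminus \Alph(v)$. I would exhibit an infinite word $\bw$ for which $\mu_v(\bw)$ is not quasiperiodic. A natural candidate is to take $\bw$ with $|\bw|_c = \infty$ but the occurrences of $c$ arbitrarily sparse, e.g. $\bw = a\,c\,a^2\,c\,a^3\,c\cdots$ where $a$ is some letter of $\Alph(v)$ (note $\Alph(v)$ is nonempty since $|v|\ge 1$; in fact $|\cA|\ge 2$ so $v$ is nonempty). Then $\mu_v(\bw) = \mu_v(a)\,c\,\mu_v(a)^2\,c\,\mu_v(a)^3\,c\cdots$ because $\mu_v$ fixes letters not in $\Alph(v)$... one must be slightly careful here: $\mu_v(c) = c$ exactly when $c \notin \Alph(v)$, which holds. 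Any quasiperiod $q$ of $\mu_v(\bw)$ would have to be a prefix of $\mu_v(\bw)$, hence a prefix of $\mu_v(a^\omega)$ (if $q$ is short) or contain the letter $c$; but the blocks $\mu_v(a)^n$ of pure ``$c$-free'' material grow without bound, so for $n$ large the middle of the block $\mu_v(a)^n$ contains positions not covered by any occurrence of $q$ (every occurrence of $q$ that covers a position deep inside such a block must itself be $c$-free, hence be a power-bounded factor of $\mu_v(a)^\omega$, and there are only finitely many positions near the block boundary where a $c$-containing occurrence of $q$ can reach). This forces $q$ to be $c$-free and a factor of $\mu_v(a)^\omega$, but then $q$ cannot cover the occurrences of $c$ in $\mu_v(\bw)$ — contradiction. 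So $\mu_v(\bw)$ is non-quasiperiodic.

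The main obstacle I anticipate is the ``only if'' direction: making rigorous the claim that an occurrence of a prospective quasiperiod $q$ deep inside a long $c$-free block $\mu_v(a)^n$ must be a $c$-free factor, and then that such $q$ fails to cover all positions (one needs to handle the case where $\mu_v(a)$ itself is trivial, i.e. $a$ is the letter with $\mu_v(a)=a$, versus the case $|\mu_v(a)|\ge 2$, and also the degenerate possibility that $q$ is a power of a single letter). A clean way around this is to invoke uniform recurrence: if $\mu_v(\bw)$ were $q$-quasiperiodic it would be quasiperiodic, but the gaps between occurrences of $c$ are unbounded, whereas a quasiperiod forces bounded gaps between occurrences of any letter appearing in it; if $q$ is $c$-free it cannot be a prefix of $\mu_v(\bw)$ unless $\mu_v(\bw)$ starts with an unbounded $c$-free run that is not all of a quasiperiod — a short combinatorial argument closes this. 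The ``if'' direction should be routine given Theorem~\ref{T:return-words}, formula~\eqref{eq:formula(3)}, and Corollary~\ref{Cor:wvy}, with the only mild subtlety being the extension from episturmian $\bw$ to arbitrary $\bw$, which is handled by the direct distance-estimate above rather than by appealing to the episturmian machinery.
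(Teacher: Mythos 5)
Your overall strategy is sound and both directions would go through, but you take a genuinely different route from the paper in each direction. For the ``if'' part, the paper proceeds by induction on $|v|$ through a covering-set lemma (Lemma~\ref{L:coverPart1}): starting from the trivial covering of $\bw$ by $\cA$, each application of $L_{v_i}$ transforms a covering of $\mu_{v_{i+1}\cdots v_{|v|}}(\bw)$ by $\{u_i\}\cup\{u_ix\}$ into one of $\mu_{v_i\cdots v_{|v|}}(\bw)$ by $\{u_{i-1}\}\cup\{u_{i-1}x\}$, the extra letters $x$ disappearing once every letter of $\cA$ has been consumed. Your direct distance estimate --- an occurrence of $Pal(v)$ at every block boundary $|\mu_v(w_1\cdots w_i)|$ by Fact~\ref{F:from_referee}, with consecutive gaps $|\mu_v(w_{i+1})| = |Pal(vw_{i+1})|-|Pal(v)| \le |Pal(v)|$ because $w_{i+1}\in\Alph(v)$ --- is correct and arguably more transparent; it also rightly avoids Theorem~\ref{T:return-words} and Corollary~\ref{Cor:wvy}, which only apply to episturmian $\bw$. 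For the ``only if'' part the paper's witness is much simpler than yours: it takes $\mu_v(ab^\omega)$ with $a\notin\Alph(v)$, $b\in\Alph(v)$, which contains exactly \emph{one} occurrence of $a$ and is therefore immediately non-quasiperiodic (a quasiperiod containing $a$ would force infinitely many $a$'s; one not containing $a$ leaves that position uncovered). Your sparse-gaps witness $aca^2ca^3c\cdots$ also works but needs the longer unbounded-gap argument you sketch.

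Two slips you should repair. First, your claim that ``$\mu_v$ fixes letters not in $\Alph(v)$'' is false: $L_b(c)=bc$ for every $c\ne b$, so in fact $\mu_v(c)=Pal(v)c$ when $c\notin\Alph(v)$. This does not break your argument --- $\mu_v(c)$ still contains exactly one $c$ and $\mu_v(a)$ none, so the $c$-free blocks still have unbounded length --- but the displayed formula for $\mu_v(\bw)$ as written is wrong. Second, ``$|\cA|\ge 2$ so $v$ is nonempty'' is a non sequitur; the case $v=\varepsilon$ (where $\Alph(v)=\emptyset\ne\cA$ and $\mu_v$ is the identity) must be handled separately, which is trivial since $|\cA|\ge 2$ guarantees the existence of a non-quasiperiodic word fixed by the identity, but your construction as stated presupposes a letter $a\in\Alph(v)$.
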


To prove this result, which is a direct consequence of Lemma~\ref{L:coverPart1} (below), we need to consider infinite words covered by several words. We say that a set $X$ of words covers an infinite word $\bw$ if and only if there exist two sequences of words $(p_n)_{n \geq 0}$ and $(z_n)_{n \geq 0}$ such that, for all $n \geq 0$, $p_nz_n$ is a prefix of $\bw$, $z_n \in X$,  $p_0 = \varepsilon$ and $|p_n| < |p_{n+1}| \leq |p_nz_n|$. The last inequalities mean that $p_n$ is a prefix of $p_{n+1}$ which is a prefix of $p_nz_n$, itself a prefix of $\bw$. Once again the sequence $(p_nz_n)_{n\geq 0}$ is called a covering sequence of prefixes of the word $\bw$. Observe that a word is covered by $X$ if and only if it is covered by $X \cup \{\varepsilon\}$.

\medskip

\begin{lmm}\label{L:coverPart1}
For  $\bw$ an infinite word over $\cA$, $y$ a letter, $X$ a subset of $\cA$ and $u$ a word, if $\bw$ is covered by $\{u\}\cup \{ux \mid x \in X \cup \{y\}\}$
then $L_y(\bw)$ is covered by $\{L_y(u)y\} \cup \{L_y(u)yx \mid x \in X \setminus \{y\}\}$.
\end{lmm}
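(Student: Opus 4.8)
\textbf{Proof plan for Lemma~\ref{L:coverPart1}.}

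The plan is to translate a covering sequence for $\bw$ through the morphism $L_y$ and check that the images of the covering words still overlap enough to cover $L_y(\bw)$. Recall that $L_y$ places a letter $y$ to the left of every occurrence of any letter $b \ne y$; in particular, for any non-empty word $z$ over $\cA$, the word $L_y(z)$ ends with the last letter of $z$, and $L_y(z)y$ is a prefix of $L_y(zz')$ whenever the first letter of $z'$ is different from $y$, while $L_y(z)$ itself is a prefix of $L_y(zz')$ when the first letter of $z'$ equals $y$. The key elementary observation is thus: for a letter $x$, $L_y(ux)$ equals $L_y(u)yx$ if $x \ne y$, and equals $L_y(u)y$ if $x = y$. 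This is exactly why the target covering family is $\{L_y(u)y\} \cup \{L_y(u)yx \mid x \in X\setminus\{y\}\}$ rather than the naive image of the original family.

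First I would fix a covering sequence $(p_n z_n)_{n\geq 0}$ of prefixes of $\bw$ with each $z_n \in \{u\} \cup \{ux \mid x \in X \cup \{y\}\}$, $p_0 = \empt$, and $|p_n| < |p_{n+1}| \leq |p_n z_n|$. Apply $L_y$: the words $L_y(p_n z_n)$ are prefixes of $L_y(\bw)$, and since $L_y$ is non-erasing and injective, $|L_y(p_n)| < |L_y(p_{n+1})| \leq |L_y(p_n z_n)| \leq |L_y(p_n z_n)| + 1$. The plan is to take $p'_n := L_y(p_n)$ (with a small adjustment at $n=0$, where $p'_0 = \empt$ already since $p_0 = \empt$) and to replace each $z_n$ by the appropriate member $z'_n$ of the target family: if $z_n = u$ or $z_n = uy$, set $z'_n = L_y(u)y$ (note $L_y(uy) = L_y(u)y$, and $L_y(u)$ is a prefix of $L_y(u)y$, so in the first case $L_y(p_n)L_y(u)$ being a prefix of $L_y(\bw)$ forces $L_y(p_n)L_y(u)y$ to be a prefix of $L_y(\bw)$ as well — here one uses that $L_y(\bw)$ is infinite and that the next letter is accounted for, or more cleanly that $L_y(u)y$ is always a prefix of $L_y(u b)$ for the letter $b$ following $u$ in $p_n^{-1}\bw$); if $z_n = ux$ with $x \in X \setminus \{y\}$, set $z'_n = L_y(u)yx = L_y(ux)$. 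In all cases $z'_n$ is a prefix of $L_y(\bw)$ shifted by $L_y(p_n)$, i.e. $L_y(p_n)z'_n$ is a prefix of $L_y(\bw)$, and $z'_n$ lies in the required set.

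It remains to verify the length inequalities $|L_y(p_n)| < |L_y(p_{n+1})| \leq |L_y(p_n)z'_n|$. The left inequality is immediate since $p_n$ is a proper prefix of $p_{n+1}$ and $L_y$ is non-erasing. For the right inequality, from $|p_{n+1}| \leq |p_n z_n|$ we get that $L_y(p_{n+1})$ is a prefix of $L_y(p_n z_n)$; now $|L_y(p_n z_n)| = |L_y(p_n)| + |L_y(z_n)|$, and in each case $|L_y(z_n)| \leq |z'_n|$: indeed $|L_y(u)| \leq |L_y(u)y| = |z'_n|$, $|L_y(uy)| = |L_y(u)y| = |z'_n|$, and $|L_y(ux)| = |z'_n|$ for $x \ne y$. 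Hence $|L_y(p_{n+1})| \leq |L_y(p_n)| + |z'_n| = |L_y(p_n)z'_n|$, as needed. The main subtlety — the step I expect to need the most care — is the case $z_n = u$, where one must argue that the covering word can be lengthened to $L_y(u)y$: this works because the letter of $\bw$ immediately after the prefix $p_n u$ is some letter $b$, and $L_y(u)y$ is a prefix of $L_y(ub)$ regardless of whether $b = y$ (then $L_y(ub) = L_y(u)y$) or $b \ne y$ (then $L_y(ub) = L_y(u)yb$); since $|p_{n+1}| \leq |p_n u| < |p_n u b|$ one still has $|L_y(p_{n+1})| \leq |L_y(p_n)| + |L_y(u)| \leq |L_y(p_n)| + |L_y(u)y|$, so the inequality is safe even though we have enlarged the covering word. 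This completes the construction of a covering sequence of prefixes of $L_y(\bw)$ over $\{L_y(u)y\} \cup \{L_y(u)yx \mid x \in X\setminus\{y\}\}$.
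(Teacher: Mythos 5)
Your proposal is correct and follows essentially the same route as the paper: push the covering sequence through $L_y$, note that $L_y(uy)=L_y(u)y$ and $L_y(ux)=L_y(u)yx$ for $x\neq y$, and extend each covering occurrence of $L_y(u)$ to $L_y(u)y$ because the next letter in $L_y(\bw)$ is always $y$. The paper states this more tersely (``each occurrence of $L_y(u)$ is followed by the letter $y$''), while you spell out the length inequalities explicitly; the content is the same.
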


\begin{proof}
If $y \in X$, then $L_y(\bw)$ is covered by $\{L_y(u)\} \cup \{L_y(u)y\} \cup \{L_y(u)yx \mid x \in X\setminus \{y\}\}$, and each occurrence of $L_y(u)$ is followed by the letter $y$. So $L_y(\bw)$ is covered by $\{L_y(u)y\} \cup \{L_y(u)yx \mid x \in X\setminus \{y\}\}$.
If $y \not \in X$, thus $X=X \setminus \{y\}$ and  then $L_y(\bw)$ is covered by $\{L_y(u)\} \cup \{L_y(u)yx \mid x \in X\}$, and similarly each occurrence of $L_y(u)$ is followed by the letter $y$. So $L_y(\bw)$ is covered by $\{L_y(u)y\} \cup \{L_y(u)yx \mid x \in X\}$.
\end{proof}

\medskip
\begin{proof}[Proof of Theorem~$\ref{T:stronglyQuasEpistandardMorphisms}$]
Let $v = v_1\cdots v_{|w|}$ be a word (with each $v_i$ a letter) such that $\Alph(v) = \cA$.
Let $u_{|v|} = \varepsilon$ and, for any $i$ from $|v|$ to $1$, let $u_{i-1}=L_{v_i}(u_i)v_i$: observe that $u_{i-1} = Pal(v_i\cdots v_{|v|})$. 
It is immediate that any infinite word $\bw$ is covered by the set $\cA$, which can be expressed as $\cA = \{\varepsilon\}\cup \{\varepsilon x \mid x \in \cA\} =  \{u_{|v|}\} \cup \{u_{|v|}x \mid x \in \cA\}$. 

By induction using Lemma~\ref{L:coverPart1}, we can state that, for $1 \leq i \leq |v|$, $\mu_{v_i \cdots v_{|v|}}(\bw)$ is covered by $\{u_{i-1}\} \cup \{u_{i-1}x \mid x \in \cA \setminus \{v_i, \ldots, v_{|v|}\}\}$. So, since $u_0=Pal(v)$, $\mu_v(\bw)$ is covered by $\{Pal(v)\} \cup \{Pal(v)x \mid x \in \cA \setminus \{v_1, \ldots, v_{|v|}\}\}$. Therefore, since $\Alph(v) = \cA$, $\mu_v(w)$ is covered by $Pal(v)$, that is, $\bw$ is $Pal(v)$-quasiperiodic. Hence $\mu_v$ is strongly quasiperiodic.

\medskip

Now let us consider the case where $\Alph(v) \neq \cA$. First suppose that $v  = \varepsilon$. Then, since $\cA$ contains at least two letters (and so there exists at least one non-quasiperiodic word over $\cA)$, the morphism $\mu_v$,  which is the identity, is not strongly quasiperiodic.  
Assume now that $v \neq \varepsilon$. Let $a$ be a letter in $\cA \setminus \Alph(v)$ and let $b$ be a letter in $\Alph(v)$.
The word $\mu_v(ab^\omega)$ contains only one occurrence of the letter $a$, and so it is non-quasiperiodic. Thus the morphism $\mu_v$ is not strongly quasiperiodic.
\end{proof}

\begin{rmrk} Note that any infinite word $\Delta$ can be written $\Delta = v\by$ with $\Alph(v) = \Alph(v\by)$. So if $\bt$ (resp.~$\bt'$) is the epistandard word directed by $\Delta$ (resp.~$\by$), then $\bt = \mu_v(\bt')$ and $\bt$ is quasiperiodic by Theorem~\ref{T:stronglyQuasEpistandardMorphisms}.  The same approach allows us to show that if an episturmian word is directed by a spinned infinite word with all spins ultimately $L$, then it is quasiperiodic, as deduced previously (see Remark~\ref{R:ultimately-L}). Note also that Corollary~\ref{Cor:wvy} is a direct consequence of Theorem~\ref{T:stronglyQuasEpistandardMorphisms}.
\end{rmrk}

\subsection{Useful lemmas}

We now introduce useful material for a second proof of Theorem~\ref{T:episturmianQuasiperiods} using morphisms (see Section~\ref{SS:secondProofCharacterizationQuasiperiods}). The next results (that maybe some readers will read when necessary to understand the proof of Theorem~\ref{T:episturmianQuasiperiods}) show how to obtain some quasiperiods (or a covering set) of a word of the form $L_x(\bw)$ or $R_x(\bw)$ for an infinite word $\bw$ (not necessarily episturmian).

\begin{lmm}\label{L:coverPart2} {\em (converse of Lemma~$\ref{L:coverPart1}$)}\\
For an infinite word $\bw$  over $\cA$, $y$ a letter, $X$ a subset of $\cA$ and $u$ a word, if $L_y(\bw)$ is covered by $\{L_y(u)y\} \cup \{L_y(u)yx \mid x \in X \setminus \{y\}\}$, then $\bw$ is covered by $\{u\}\cup \{ux \mid x \in X \cup \{y\}\}$.
\end{lmm}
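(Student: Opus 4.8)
The plan is to pull the covering of $L_y(\bw)$ back through the morphism $L_y$. Write $\bw=w_1w_2w_3\cdots$ with $w_k\in\cA$, so that $L_y(\bw)=B_1B_2B_3\cdots$ where $B_k:=L_y(w_k)$ is a \emph{block}, that is, an element of $\{y\}\cup\{yc\mid c\in\cA\setminus\{y\}\}$ (all blocks are present since $\bw$ is infinite). The one structural fact the argument rests on is the following: a prefix $Q$ of $L_y(\bw)$ equals $L_y(\alpha)$ for some prefix $\alpha$ of $\bw$ if and only if the letter of $L_y(\bw)$ immediately following $Q$ is $y$, equivalently $Q=B_1\cdots B_j$ for some $j\geq 0$. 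Indeed, if $Q=L_y(w_1\cdots w_j)=B_1\cdots B_j$ then the next letter begins $B_{j+1}$ and so is $y$; conversely a letter different from $y$ always terminates its block, while a block equal to $y$ is terminated exactly when the following letter is again $y$ (if it is some $c\neq y$, that $y$ is the first letter of the block $yc$), so whenever the letter after $Q$ is $y$ the position $|Q|$ is a block boundary and $Q=B_1\cdots B_j$.

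Now let $(P_nZ_n)_{n\geq 0}$ be a covering sequence of $L_y(\bw)$ realizing the hypothesis, so $P_0=\varepsilon$, each $Z_n\in\{L_y(u)y\}\cup\{L_y(u)yx\mid x\in X\setminus\{y\}\}$, each $P_nZ_n$ is a prefix of $L_y(\bw)$, and $|P_n|<|P_{n+1}|\leq|P_nZ_n|$. Every $Z_n$ begins with $L_y(u)y$, hence begins with the letter $y$; so $P_n$ is immediately followed by $y$ in $L_y(\bw)$ for every $n$, and therefore $P_n=L_y(p_n)$ for a unique prefix $p_n$ of $\bw$, with $p_0=\varepsilon$. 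Since $P_n$ and $P_{n+1}$ are prefixes of $L_y(\bw)$ and $|P_n|<|P_{n+1}|$, we get $P_n=B_1\cdots B_i$ a proper prefix of $P_{n+1}=B_1\cdots B_{i'}$ with $i<i'$, whence $p_n=w_1\cdots w_i$ is a proper prefix of $p_{n+1}=w_1\cdots w_{i'}$ and in particular $|p_n|<|p_{n+1}|$.

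It remains to exhibit, for each $n$, a word $q_n\in\{u\}\cup\{ux\mid x\in X\cup\{y\}\}$ with $p_nq_n$ a prefix of $\bw$ and $|p_{n+1}|\leq|p_nq_n|$; I split according to how $P_nZ_n$ sits in the block decomposition. If $P_nZ_n$ ends at a block boundary, say $P_nZ_n=B_1\cdots B_s$, then $Z_n=B_{i+1}\cdots B_s=L_y(\zeta_n)$ with $\zeta_n=uy$ or $\zeta_n=ux$ ($x\in X\setminus\{y\}$), so by injectivity $w_{i+1}\cdots w_s=\zeta_n$, the word $p_n\zeta_n=w_1\cdots w_s$ is a prefix of $\bw$, and $P_{n+1}$, being a prefix of $B_1\cdots B_s$, gives $|p_{n+1}|\leq s=|p_n\zeta_n|$; take $q_n=\zeta_n\in\{ux\mid x\in X\cup\{y\}\}$. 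If instead $P_nZ_n$ ends strictly inside a block, then — since a letter $\neq y$ always ends its block — we must have $Z_n=L_y(u)y$ with its trailing $y$ being the first letter of a block $B_s=yc$ ($c\neq y$); then $P_nZ_n=B_1\cdots B_{s-1}y$ and $L_y(u)=B_{i+1}\cdots B_{s-1}$, so $u=w_{i+1}\cdots w_{s-1}$ and $p_nu=w_1\cdots w_{s-1}$ is a prefix of $\bw$; moreover any prefix of $P_nZ_n=B_1\cdots B_{s-1}y$ that is followed by $y$ in $L_y(\bw)$ is among the $B_1\cdots B_j$ with $j\leq s-1$ (the prefix $B_1\cdots B_s$ is already strictly longer than $P_nZ_n$), so $P_{n+1}=B_1\cdots B_{i'}$ with $i'\leq s-1$ and $|p_{n+1}|\leq s-1=|p_nu|$; take $q_n=u$. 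In either case $q_n\in\{u\}\cup\{ux\mid x\in X\cup\{y\}\}$, $p_nq_n$ is a prefix of $\bw$, and $|p_n|<|p_{n+1}|\leq|p_nq_n|$, so $(p_nq_n)_{n\geq 0}$ shows that $\bw$ is covered by $\{u\}\cup\{ux\mid x\in X\cup\{y\}\}$.

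The only genuinely delicate point is the second case above: a priori $P_nZ_n=L_y(p_n)L_y(u)y=L_y(p_nuy)$, yet $p_nuy$ need not be a prefix of $\bw$, because the trailing $y$ of $Z_n$ may fall strictly inside a block of $L_y(\bw)$. This is exactly why the target covering set must contain the ``letterless'' word $u$, and why there one can only guarantee $|p_{n+1}|\leq|p_n|+|u|$ rather than $|p_n|+|u|+1$. Everything hinges on correctly characterizing which prefixes of $L_y(\bw)$ are immediately followed by $y$, so I would establish that characterization first and regard the remaining case analysis as routine bookkeeping.
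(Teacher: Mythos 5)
Your proof is correct and follows essentially the same route as the paper's: both pull the covering sequence back through $L_y$ via the observation that a prefix of $L_y(\bw)$ has the form $L_y(\alpha)$ for a prefix $\alpha$ of $\bw$ exactly when it is followed in $L_y(\bw)$ by the letter $y$, and then split into cases according to whether the trailing $y$ of $L_y(u)y$ completes a block of $L_y(\bw)$ (yielding the covering word $uy$ or $ux$) or begins a new one (yielding $u$). Your explicit block decomposition is just a more systematic packaging of the paper's case analysis on $u_n$ and on whether $|p_{n+1}|<|p_nu_n|$.
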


\begin{proof}
By hypothesis, there exist $(p_n)_{n \geq 0}$, $(u_n)_{n\geq 0}$ such that $p_0=\varepsilon$, $u_n \in \{L_y(u)y\} \cup \{L_y(u)yx \mid x \in X \setminus \{y\}\}$ and for all $n \geq 0$, both $|p_n| < |p_{n+1}| \leq |p_nu_n|$ and $p_nu_n$ is a prefix of $\bw$.

\noindent
For all $n \geq 0$, $p_nL_y(u)y$ is a prefix of $L_y(\bw)$ so there exists $p'_n$ such that $p_n=L_y(p'_n)$.  

\noindent
We consider the following two complementary cases:
\begin{enumerate}
\item
Case $u_n=L_y(u)y$.

The word $p_nL_y(u)y=L_y(p'_nu)y$ is a prefix of $L_y(\bw)$ so $p'_n$ is a prefix of $\bw$. For the prefixes $p_{n+1}$ and $p_nu_n$ of $\bw$, we have $|p_{n+1}| \leq |p_nu_n|$, thus $p_{n+1}$ is a prefix of $p_nu_n$. 

If $|p_{n+1}| < |p_nu_n|$, let $u'_n=u$: $p_{n+1}=L_y(p_{n+1}')$ is a prefix of $p_nL_y(u)=L_y(p'_nu)$. Moreover $p_{n+1}y$ and $p_nL_y(u)y$ are prefixes of $L_y(w)$. So $p_{n+1}'$ is a prefix of $p'_nu$ which itself is a prefix of $w$. 

If $|p_{n+1}| = |p_nu_n|$, then $p_{n+1}=p_nu_n$ and from $p_{n+1}u_{n+1}$ prefix of $\bw$, we know that $p_{n+1}y$ is a prefix of $w$, hence $p'_nuy=p_{n+1}'$ is a prefix of $\bw$. In  this case let $u'_n=uy$.

\item
Case $u_n=L_y(u)yx$ for some $x \in X \setminus \{y\}$. In this case (as previously), we can see that with $u'_n=ux$, $p_{n+1}'$ is a prefix of $p'_nu'_n$ itself a prefix of $\bw$. 
\end{enumerate}

\medskip
We have proved that the sequence $(p'_nu'_n)_{n \geq 0}$ is a covering sequence of prefixes of $\bw$. Thus the word $\bw$ is covered by $\{u\} \cup \{ux \mid x \in X \cup \{y\}\}$.
\end{proof}

\begin{lmm}\label{L:new1}
{\rm (generalization of \cite[Lem.~5.5]{fLgR07quas})}\\
Let $\bw$ be an infinite word starting with a letter $x$. For any letter $y \neq x$, the word $R_y(\bw)$ is quasiperiodic if and only if $\bw$ is quasiperiodic. 

\noindent
Moreover if $q$ is a quasiperiod of $R_y(\bw)$ then two cases are possible:
\begin{enumerate}
\item $q = R_y(q')$ with $q'$ a quasiperiod of $\bw$;
\item $qy = R_y(q'z)$ with $z \neq a$ a letter such that both $q'$ and $q'z$ are quasiperiods of $\bw$.
\end{enumerate}
\end{lmm}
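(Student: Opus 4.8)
\textbf{Plan for proving Lemma~\ref{L:new1}.}
The statement concerns the morphism $R_y$, which places a $y$ to the \emph{right} of each non-$y$ letter. Since $\bw$ starts with the letter $x \neq y$, the image $R_y(\bw)$ starts with $xy$, and more importantly every letter of $\bw$ (which is a non-$y$ letter unless it equals $y$, and even then) becomes ``$xy$'' or ``$y$''... let me instead think in terms of the closely related left morphism: there is a classical identity relating $R_y$ on words starting with a fixed letter to $L_y$ up to a shift. Concretely, if $\bw = x\bw'$, then $R_y(\bw) = x\,L_y(y^{-1}R_y(\bw))$ is awkward; the cleaner route is $R_y(\bw) = R_y(x\bw') $ and one checks that $T(R_y(\bw)) = $ (first letter $x$ removed) is, up to relabelling, of the form $L_y(\cdot)$ applied to a shift of $\bw$. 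So the overall strategy is: \textbf{(1)} reduce the claim about $R_y$ to the already-established Lemmas~\ref{L:coverPart1} and~\ref{L:coverPart2} about $L_y$ by exploiting this shift identity, using Corollary~\ref{Cor:quasi} (the return-word characterization of quasiperiodicity) to handle the one-letter prefix shift harmlessly; or alternatively \textbf{(2)} argue directly with covering sequences of prefixes, mimicking the structure of the proofs of Lemmas~\ref{L:coverPart1}--\ref{L:coverPart2}.

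I would take the direct approach, as it parallels the cited predecessor \cite[Lem.~5.5]{fLgR07quas} most transparently. First I would establish the forward (easy) direction of both the equivalence and the structural dichotomy: assume $q'$ is a quasiperiod of $\bw$ with covering sequence $(p_nq')_{n\geq 0}$ of prefixes. Since $\bw$ begins with $x\neq y$ and $q'$ is a prefix of $\bw$, $q'$ also begins with $x$, so $R_y(q')$ begins with $xy$; and because $R_y$ inserts a $y$ after every non-$y$ letter, each occurrence of $q'$ in $\bw$ lifts to an occurrence of $R_y(q')$ in $R_y(\bw)$, with the gap lengths scaling appropriately, so $R_y(\bw)$ is covered by $R_y(q')$ — this is just Fact~\ref{rnew4}. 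For the refinement, one notes that $R_y(q')$ may be immediately followed in $R_y(\bw)$ by the letter $y$ (when the letter of $\bw$ right after an occurrence of $q'$ is not $y$), so that $R_y(q'z) = R_y(q')y\cdots$ for the next letter $z$; here case~2 arises precisely when $q'z$ is itself a prefix-quasiperiod. The substance is the converse: given a quasiperiod $q$ of $R_y(\bw)$, I must show it has one of the two forms and descends to a quasiperiod of $\bw$.

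For the converse, let $(P_n q)_{n\geq 0}$ be a covering sequence of prefixes of $R_y(\bw)$. Since $R_y(\bw)$ begins $xy$ and $q$ is a prefix, $q$ begins with $x$; and since in $R_y(\bw)$ a $y$ can only follow a non-$y$ letter, the structure of $R_y(\bw)$ forces that $q$ either ends at a ``block boundary'' (i.e.\ $q = R_y(q')$ for a prefix $q'$ of $\bw$) or ends just after inserting a $y$, i.e.\ $qy = R_y(q'z)$ where $q'z$ is a prefix of $\bw$ and $z$ is the next letter (necessarily $z \neq y$, since a $y$ in $\bw$ would produce $yy$ and $q$ would not end where claimed — I would verify this parity/block-structure claim carefully, as it is where the hypothesis ``$\bw$ starts with $x \neq y$'' and the precise definition of $R_y$ are used). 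In the first case, the $P_n$ are also forced to be images $P_n = R_y(p_n)$ (again by block structure), and one reads off directly that $(p_nq')_{n\geq 0}$ covers $\bw$, giving case~1. In the second case one gets $qy = R_y(q'z)$, and the covering sequence of $R_y(\bw)$ projects to a sequence showing both $q'$ and $q'z$ are prefix-quasiperiods of $\bw$: roughly, each $P_n$ is $R_y(p_n)$ for a prefix $p_n$, and the inequality $|P_n| < |P_{n+1}| \le |P_nq|$ translates, after dividing through the block structure, to $p_nq'$ or $p_nq'z$ being the relevant covering prefix of $\bw$ — mirroring exactly the case analysis in the proof of Lemma~\ref{L:coverPart2}.

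\textbf{Main obstacle.} The one genuinely delicate point is the block-structure claim: that every position in $R_y(\bw)$ lies in a canonical decomposition into blocks of the form $xy$ (for $x\neq y$) or $y$ (when $\bw$ has a $y$), that this decomposition is ``synchronizing'' so that any sufficiently constrained prefix of $R_y(\bw)$ must end at a block boundary or one letter past it, and that consequently $q$, $P_n$ and $q'$, $p_n$ correspond as claimed — together with ruling out the sub-case $z = y$ in part~2. This requires a short but careful combinatorial argument about occurrences of $y$ in $R_y(\bw)$ (no two $y$'s are adjacent, every $y$ is preceded by a non-$y$ letter, and the first letter is $x \neq y$), after which everything else is a routine transcription of the covering-sequence bookkeeping already done for $L_y$ in Lemmas~\ref{L:coverPart1} and~\ref{L:coverPart2}. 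Once the block structure is pinned down, I would present the converse by following, nearly verbatim, the two-case split in the proof of Lemma~\ref{L:coverPart2}, adapted to record the extra possibility $qy = R_y(q'z)$.
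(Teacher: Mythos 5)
Your plan matches the paper's proof essentially step for step: the forward direction is exactly Fact~\ref{rnew4}, and the converse is the same direct covering-sequence bookkeeping, hinging on the synchronization fact that every non-$y$ letter of $R_y(\bw)$ begins a block $cy$ (so $q$ starting with $x\neq y$ forces each $p_n=R_y(p_n')$), followed by the split on whether the last letter of $q$ is $y$ (giving $q=R_y(q')$) or some $z\neq y$ (giving $qy=R_y(q'z)$, with each occurrence of $q$ followed by $y$, whence both $q'$ and $q'z$ cover $\bw$) --- and this case split on the last letter automatically disposes of the ``$z=y$'' worry you raise. One parenthetical claim in your obstacle discussion is false --- two $y$'s \emph{can} be adjacent in $R_y(\bw)$ when $y$ occurs in $\bw$, e.g.\ $R_y(xy)=xyy$ --- but it is not needed, since the argument only uses that a non-$y$ letter always starts a block.
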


\begin{proof}
Assume first that $\bw$ is quasiperiodic. By Fact~\ref{rnew4}, $R_y(\bw)$ is quasiperiodic. More precisely, if $v$ is a quasiperiod of $\bw$, then $R_y(v)$ is a quasiperiod of $R_y(\bw)$.

\medskip

Assume now that $R_y(\bw)$ is quasiperiodic and let $q$ be one of its quasiperiods. 
By hypothesis, we know that $q$ starts with $x$ (as $\bw$). 

A first case is that $q = R_y(v)$ (for a word $v$) which is equivalent to the fact that $q$ ends with the letter $y$. Let $(p_n)_{n \geq 0}$ be a sequence of words such that $(p_nq)_{n \geq 0}$ is a covering sequence of prefixes of $R_y(\bw)$. Let $n \geq 0$. From the fact that $q$ starts with $x$, we deduce that there exists a word $p_n'$ such that $p_n =R_y(p_n')$. Then we can see that $p_n'v$ is a prefix of $\bw$. Moreover the inequality $|p_n| < |p_{n+1}| \leq |p_nq|$ implies  $|p_n'| < |p_{n+1}'| \leq |p_n'v|$. So $(p_n'v)_{n \geq 0}$ is a covering sequence of prefixes of $\bw$: $v$ is a quasiperiod of $\bw$.

Now assume that $q$ ends with a letter $z$ different from $y$. Then $q = R_y(v)z$ for a word $v$ and each occurrence of $q$ is followed by the letter $y$. Let $(p_n)_{n \geq 0}$ be a sequence of words such that $(p_nq)_{n \geq 0}$ is a covering sequence of prefixes of $R_y(\bw)$. Let $n \geq 0$. From the fact that $q$ starts with $x$, we deduce that there exists a word $p_n'$ such that $p_n =R_y(p_n')$. Then we can see that $p_n'v$ is a prefix of $\bw$. Now since $q$ does not end with $y$, the inequality $|p_n| < |p_{n+1}| \leq |p_nq|$ is actually 
$|p_n| < |p_{n+1}| \leq |p_nR_y(v)|$. And so $(p_n'v)_{n \geq 0}$ is a covering sequence of prefixes of $\bw$; thus $v$ is a quasiperiod of $\bw$. Since each occurrence of $R_y(v)$ is followed by $y$, $(p_n'vz)_{n \geq 0}$ is also a covering sequence of prefixes of $\bw$; whence $vz$ is also a quasiperiod of $\bw$.
\end{proof}

\begin{lmm}\label{L:new2} 
Let $x, y$ be two different letters, let $\bw$ be an infinite word, and let $u$ be a finite word such that $L_x(\bw)$ is $uy$-quasiperiodic. Then there exists a word $v$ such that $\bw$ is $vy$-quasiperiodic, $uy = L_x(vy)$, and $|uy| > |vy|$.
\end{lmm}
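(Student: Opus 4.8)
The plan is to analyze the structure of $L_x$ and see how a covering of $L_x(\bw)$ by $uy$ descends to a covering of $\bw$. First I would observe that since $uy$ is a quasiperiod of $L_x(\bw)$, it is in particular a prefix of $L_x(\bw)$, and $L_x(\bw)$ begins with the letter $x$ (because $L_x$ maps any letter to a word starting with $x$). So $uy$ begins with $x$. The key structural fact is that every occurrence of $x$ in $L_x(\bw)$ is at the start of a block $L_x(a)$ for some letter $a$, and the images of letters under $L_x$ are $x$ itself and words $xa$ with $a\neq x$; hence $L_x(\bw)$ has a natural factorization into blocks $x$ and $xa$. Because $uy$ is a prefix of $L_x(\bw)$ and $y\neq x$ while $uy$ ends in the letter $y$, the prefix $uy$ of $L_x(\bw)$ must end exactly at a block boundary — the occurrence of $y$ is the second letter of a block $xy$ and is immediately followed either by $x$ (start of the next block) or by the end; so $uy = L_x(v)\,xy' \cdots$? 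More carefully: I claim $uy = L_x(vy)$ for the unique word $vy$ with $L_x(vy)$ equal to that prefix. Indeed, reading $uy$ from the left in blocks, each block is $x$ or $xa$; since $uy$ ends with $y\neq x$, the last block read is $xy$, and so $uy = L_x(w_1)L_x(w_2)\cdots L_x(w_k)$ where the last letter is $w_k=y$; setting $vy = w_1\cdots w_k$ gives $uy = L_x(vy)$ with $|uy| = |u|+1 > |v|+1 = |vy|$ since $L_x$ strictly increases length on any word containing a letter $\neq x$, and $vy$ contains $y\neq x$. Actually I should double-check the strict inequality: $|L_x(vy)| = |vy| + |vy|_{\neq x} \geq |vy| + 1$ because $y$ contributes, giving $|uy| > |vy|$.

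Next I would show $\bw$ is $vy$-quasiperiodic. Let $(p_n q)_{n\geq 0}$ with $q = uy$ be a covering sequence of prefixes of $L_x(\bw)$, so $p_0 = \varepsilon$, $|p_n| < |p_{n+1}| \leq |p_n q|$, and each $p_n q$ is a prefix of $L_x(\bw)$. Since $q = uy$ begins with $x$, and $p_n q$ is a prefix of $L_x(\bw)$, the word $p_n$ ends at a block boundary; hence there is a word $p_n'$ with $p_n = L_x(p_n')$. Then $p_n q = L_x(p_n')L_x(vy) = L_x(p_n' vy)$ is a prefix of $L_x(\bw)$, so by injectivity of $L_x$ on finite words (equivalently, the block factorization is unique), $p_n' vy$ is a prefix of $\bw$. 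It remains to check the length inequalities $|p_n'| < |p_{n+1}'| \leq |p_n' vy|$. The strict inequality $|p_n'| < |p_{n+1}'|$ is immediate since $L_x$ is length-nondecreasing and injective, so $|p_n| < |p_{n+1}|$ forces $p_n \neq p_{n+1}$ as prefixes, hence $p_n'\neq p_{n+1}'$, hence (both being prefixes of $\bw$) $|p_n'| < |p_{n+1}'|$. For the upper bound, from $|p_{n+1}| \leq |p_n q| = |p_n| + |uy|$ and applying $|L_x(z)| = |z| + |z|_{\neq x}$: one gets $|L_x(p_{n+1}')| \leq |L_x(p_n')| + |L_x(vy)|$. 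The slightly delicate point is passing from this inequality on the $L_x$-images back to an inequality $|p_{n+1}'| \leq |p_n' vy|$ on the preimages — but since $p_{n+1}'$ and $p_n' vy$ are both prefixes of $\bw$ (the latter by what we just proved, the former since $p_{n+1}' vy$ is), and $L_x$ restricted to prefixes of $\bw$ is strictly length-increasing and order-preserving on lengths, $|L_x(p_{n+1}')| \leq |L_x(p_n' vy)|$ gives $|p_{n+1}'| \leq |p_n' vy|$. Thus $(p_n' vy)_{n\geq 0}$ is a covering sequence of prefixes of $\bw$, so $vy$ is a quasiperiod of $\bw$.

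I expect the main obstacle to be the bookkeeping around block boundaries: carefully justifying that whenever a word that begins with $x$ occurs as a prefix of $L_x(\bw)$, its occurrence in the block decomposition starts at a block boundary (so it has an $L_x$-preimage), and similarly that $q=uy$ ending in $y\neq x$ forces it to end at a block boundary. This is where one uses that $L_x(a)$ always starts with $x$ and has no internal occurrence of $x$ except the first letter, so the positions of $x$ in $L_x(\bw)$ are exactly the block starts; a clean way to phrase it is: $L_x(\bw)$ has a unique factorization as a concatenation of words from $\{x\}\cup\{xa : a\in\cA\setminus\{x\}\}$, and any prefix ending in a letter $\neq x$ is necessarily a concatenation of an initial segment of these factors. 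Once that combinatorial lemma is in hand, the length inequalities and the transfer of the covering sequence are routine, and the three required conclusions — $\bw$ is $vy$-quasiperiodic, $uy = L_x(vy)$, and $|uy| > |vy|$ — all follow. One could alternatively derive this from Lemma~\ref{L:coverPart2} by taking $X = \emptyset$ there (so $L_y(\bw)$ covered by $\{L_y(u)y\}$ means $L_y(\bw)$ is $L_y(u)y$-quasiperiodic), after first noting $uy$ must have the form $L_x(u)x$ — but going directly seems cleaner and avoids matching up notation.
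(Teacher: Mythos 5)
Your proof is correct and follows essentially the same route as the paper's: both factor each $p_n$ as $L_x(p_n')$ using the fact that $p_n$ is followed by the letter $x$, deduce that $p_n'vy$ is a prefix of $\bw$, and transfer the covering inequalities back through $L_x$. The extra care you take with the block decomposition and with justifying the length inequalities on preimages is just a more detailed write-up of the same argument.
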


\begin{proof}
Let $x, y, u, \bw$ be as in the lemma. Since $uy$ is a prefix of $L_x(w)$ and since $x \neq y$, there exists a word $v$ such that $uy = L_x(vy)$. We have $|uy| > |vy|$.

Let $(p_n)_{n \geq 0}$ be such that  $(p_nuy)_{n \geq 0}$ is a covering sequence of prefixes of $L_x(\bw)$. Since $uy$ is a prefix of $L_x(\bw)$, $x$ is the first letter of $u$, and so from $p_nx$ prefix of $L_x(\bw)$, there exists for any $n \geq 0$ a word $p_n'$ such that $p_n = L_x(p_n')$.  From the prefix $L_x(p_n'vy)$ of $L_x(\bw)$, we deduce that $p_n'vy$ is a prefix of $\bw$. The inequality 
$|p_n| < |p_{n+1}| \leq |p_nuy|$, that is 
$|L_x(p_n')| < |L_x(p_{n+1}')| \leq |L_x(p_n'vy)|$,  implies
that $|p_n'| < |p_{n+1}'| \leq |p_n'vy|$. Hence $vy$ covers $\bw$. 
\end{proof}

\begin{rmrk}
Lemmas~\ref{L:new1} and \ref{L:new2} are not true when $x = y$. For instance $ab^\omega$ is not quasiperiodic whereas $R_a(ab^\omega) = a(ba)^\omega$ is quasiperiodic.
Moreover the word $L_a(b^\omega) = (ab)^\omega$ is $aba$-quasiperiodic whereas $b^\omega$ has no quasiperiod ending with $a$.
\end{rmrk}

\begin{lmm}\label{L:new3} 
Suppose $\bt$ is an episturmian word starting with a letter $z$. Let $x$ be a letter different from $z$ and let $u$ be a non-empty word.  Then there exists a set  $\cB$  of letters such that the word $R_x(\bt)$ is covered by $\{u\} \cup \{u\}\cB$ if and only if this word is covered by $\{u, ux\}$.
\end{lmm}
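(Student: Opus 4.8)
The plan is to treat the two implications of the biconditional separately; ``$R_x(\bt)$ covered by $\{u,ux\}$ $\Rightarrow$ there exists $\cB$'' is immediate (take $\cB=\{x\}$, so that $\{u\}\cup\{u\}\cB=\{u,ux\}$), and all the work lies in the converse. So I would fix a set of letters $\cB$ and a covering sequence of prefixes $(p_nu_n)_{n\geq0}$ of $R_x(\bt)$ with $u_n\in\{u\}\cup\{u\}\cB$ for all $n$. Since every element of $\{u\}\cup\{u\}\cB$ has $u$ as a prefix, $u$ is a prefix of $R_x(\bt)$; and since $\bt$ begins with the letter $z\neq x$, the word $R_x(\bt)$ begins with $zx$, so $u$ itself begins with $z\neq x$. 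This last fact is the one used repeatedly.

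The single structural ingredient I would record is the elementary observation that \emph{in $R_x(\bt)$ every occurrence of a letter different from $x$ is immediately followed by an occurrence of $x$}. Indeed, writing $\bt=z_1z_2\cdots$, the word $R_x(\bt)$ is the concatenation of the blocks $R_x(z_i)$, each equal to $z_ix$ when $z_i\neq x$ and to $x$ otherwise; hence a letter $\neq x$ can occur only as the first letter of a block of the form $z_ix$, and so is immediately followed by $x$.

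Then I would rewrite the covering sequence: for each $n$ put $u_n'=ux$ if $u_n=ux$, and $u_n'=u$ otherwise (i.e.\ when $u_n=u$, or $u_n=ub$ for some $b\in\cB$ with $b\neq x$). Clearly $u_n'\in\{u,ux\}$, each $p_nu_n'$ is a prefix of $p_nu_n$ hence of $R_x(\bt)$, and $|p_n|<|p_{n+1}|$ still holds; the only point to re-check is $|p_{n+1}|\leq|p_nu_n'|$ in the case $u_n=ub$ with $b\neq x$, where $u_n'=u$ is strictly shorter than $u_n$. There we have $|p_{n+1}|\leq|p_nu_n|=|p_nu|+1$, so suppose for contradiction $|p_{n+1}|=|p_nu|+1$. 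Then $p_{n+1}=p_nub$ (both $p_{n+1}$ and $p_nub$ are prefixes of $R_x(\bt)$ of length $|p_{n+1}|$), so the letter of $R_x(\bt)$ at position $|p_{n+1}|$ is $b\neq x$, whence by the structural observation the letter at position $|p_{n+1}|+1$ is $x$. But $p_{n+1}u_{n+1}$ is a prefix of $R_x(\bt)$ with $u_{n+1}$ non-empty, so the letter at position $|p_{n+1}|+1$ is the first letter of $u_{n+1}$, which, like $u$, is $z\neq x$ --- a contradiction. Hence $|p_{n+1}|\leq|p_nu|=|p_nu_n'|$, and $(p_nu_n')_{n\geq0}$ is a covering sequence of $R_x(\bt)$ by words in $\{u,ux\}$.

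I do not expect a genuine obstacle: the proof is the structural observation plus bookkeeping on prefix lengths, the crux being that one can never be forced to consume a single-letter extension $ub$ with $b\neq x$ all the way to its last letter, since the position immediately after that $b$ carries the letter $x$, with which no element of the covering set can begin. (It may be worth remarking that episturmianity of $\bt$ is not really needed here beyond guaranteeing that $\bt$ is infinite with known first letter $z\neq x$; the statement holds for any such infinite word.)
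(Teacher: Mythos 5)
Your proof is correct and follows essentially the same route as the paper's: the key point in both is that in $R_x(\bt)$ every letter different from $x$ is immediately followed by $x$ (equivalently, $yz$ is never a factor for $y\neq x$), so a covering prefix $p_nub$ with $b\neq x$ can never be consumed up to its last letter, since the next covering word must start with $z\neq x$. The only cosmetic difference is that you truncate all the words $ub$ ($b\neq x$) to $u$ in one pass, while the paper eliminates the letters of $\cB\setminus\{x\}$ one at a time; your closing remark that episturmianity of $\bt$ is not really needed is also accurate.
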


\begin{proof}
The if part is immediate. Assume that $R_x(\bt)$ is covered by $\{u\} \cup \{u\}\cB$. There is nothing to prove if $\cB \subseteq \{x\}$;  hence we assume that $\cB$ contains a letter $y$ different from $x$. Since $\bt$ starts with $z$ and $z\neq x$, the word $R_x(\bt)$ and its prefix $u$ start with $z$. 
If the word $uy$ is a factor of $R_x(\bt)$, the word $u$ ends with the letter $x$. 
Let $(p_n)_{n \geq 0}$ and $(u_n)_{n \geq 0}$ such that $(p_nu_n)_{n \geq 0}$ is a covering sequence of prefixes of $R_x(\bt)$ and for all $n \geq 0$, $u_n \in \{u\} \cup \{u\}\cB$. 
Let $n$ be any integer such that $u_n = uy$. Since $y \neq x$ and $z\neq x$, the word $yz$ is not a factor of $R_x(\bt)$. Since $|p_{n+1}| \leq |p_nu_n|$, since $p_{n+1}$ and $p_nu_n$ are prefixes of $R_x(\bt)$, and since $u_{n+1}$ starts with $z$, necessarily $|p_{n+1}| < |p_nu_n| = |p_nuy|$. We deduce from what precedes that $R_x(\bt)$ is covered by $\{x\} \cup \{u\}(\cB\setminus\{y\})$. Hence considering successively all letters of $\cB$ different from $x$, it follows that $R_x(\bt)$ is covered by $\{u, ux\}$.
\end{proof}

\begin{lmm}
\label{L:new4}
Let $\bt$ be an episturmian word starting with a letter $z$. Let $x$ be a letter different from $z$ and let $q$ be a word. If the word $R_x(\bt)$ is covered by $\{q, qx\}$ then $L_x(\bt)$ is covered by $xq$. 
\end{lmm}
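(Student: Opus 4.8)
The plan is to reduce the statement to the identity $L_x(\bt)=x\,R_x(\bt)$, which holds for every infinite word $\bt$. This follows by passing to the limit in the elementary identity $x\,R_x(w)=L_x(w)\,x$ (valid for all finite words $w$, by a one-line induction on $|w|$, since both $L_x$ and $R_x$ merely insert the letter $x$ around each non-$x$ letter): as $L_x(c)$ begins with $x$ for every letter $c$, the prefixes $L_x(w)$ and $L_x(w)\,x$ of $L_x(\bt)$ interleave, so their common limit is $L_x(\bt)=x\,R_x(\bt)$. Thus it suffices to turn a covering sequence of prefixes of $R_x(\bt)$ by $\{q,qx\}$ into a covering sequence of prefixes of $x\,R_x(\bt)$ by the single word $xq$.

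The key point is that, since $\bt$ --- hence $R_x(\bt)$, hence $q$ --- begins with the letter $z\neq x$, every occurrence of $q$ in $L_x(\bt)$ is immediately preceded by $x$. I would obtain this from the block decomposition $R_x(\bt)=\prod_{i\geq1}R_x(t_i)$, where $\bt=t_1t_2\cdots$: each block $R_x(t_i)$ equals $x$ or $t_ix$, so it ends with $x$ and contains its non-$x$ letter (if any) only in first position; since the first block is $R_x(z)=zx$, every non-$x$ letter of $R_x(\bt)$ occurring at a position $\geq2$ is the first letter of some block $R_x(t_i)$ with $i\geq2$, hence is immediately preceded by the last letter $x$ of $R_x(t_{i-1})$. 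Writing $L_x(\bt)=x\,R_x(\bt)=\ell_1\ell_2\cdots$ (so $\ell_1=x$ and $\ell_{i+1}$ is the $i$-th letter of $R_x(\bt)$), an occurrence of $q$ in $L_x(\bt)$ cannot begin at position $1$ (as $\ell_1=x\neq z$); if it begins at position $2$ it is preceded by $\ell_1=x$; and if it begins at position $j\geq3$ then $\ell_j=z$ is a non-$x$ letter of $R_x(\bt)$ at a position $\geq2$, so $\ell_{j-1}=x$ by the previous remark.

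The construction is then routine. Let $(p_nu_n)_{n\geq0}$ be a covering sequence of prefixes of $R_x(\bt)$ with $p_0=\varepsilon$ and each $u_n\in\{q,qx\}$. Since $q$ is a prefix of $u_n$, the word $q$ occurs at position $|p_n|+1$ in $R_x(\bt)$, hence at position $|p_n|+2$ in $L_x(\bt)$; by the previous paragraph this occurrence is preceded by $x$, so $xq$ occurs at position $|p_n|+1$ in $L_x(\bt)$. Let $p'_n$ be the prefix of $L_x(\bt)$ of length $|p_n|$. Then $p'_n\,xq$ is a prefix of $L_x(\bt)$, $p'_0=\varepsilon$, $|p'_n|=|p_n|<|p_{n+1}|=|p'_{n+1}|$, and $|p'_{n+1}|=|p_{n+1}|\leq|p_nu_n|\leq|p_n|+|q|+1=|p'_n\,xq|$, while $xq$ is a (finite, hence proper) prefix of $L_x(\bt)$. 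Therefore $(p'_n\,xq)_{n\geq0}$ is a covering sequence of prefixes of $L_x(\bt)$, i.e. $L_x(\bt)$ is covered by $xq$.

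The only step requiring genuine attention is the block-structure argument: one must check that a non-$x$ letter occurring at a position $\geq2$ of $R_x(\bt)$ cannot lie in the first block, which is precisely where the hypothesis $z\neq x$ (equivalently $R_x(t_1)=t_1x$, with its non-$x$ letter in first position) is used; everything else is bookkeeping with the definition of a covering sequence, and episturmianity of $\bt$ intervenes only to guarantee a well-defined first letter $z\neq x$.
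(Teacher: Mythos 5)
Your proof is correct, and it takes a genuinely different route from the paper's. The paper first disposes of $q=\varepsilon$, then reduces to the case where $q$ ends with $x$ (if not, every occurrence of $q$ is followed by $x$), writes $q=R_x(v)$ and $qx=R_x(vx)$, pulls the covering back through $R_x$ to conclude that $\bt$ is covered by $\{v,vx\}$ (by the technique of Lemma~\ref{L:new1}), pushes forward through $L_x$, and finishes with the identity $L_x(v)x=xR_x(v)=xq$. You instead work entirely on the image side: you invoke the conjugacy identity $L_x(\bt)=xR_x(\bt)$ and shift the given covering sequence by one position, justified by the structural observation that in $R_x(\bt)$ every non-$x$ letter at a position $\geq 2$ is preceded by $x$, so each occurrence of $q$ (which begins with $z\neq x$) extends leftward to an occurrence of $xq$; the bookkeeping $|p_{n+1}|\leq |p_nu_n|\leq |p'_n\,xq|$ is verified correctly. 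Your version avoids both the case split on the last letter of $q$ and the appeal to Lemma~\ref{L:new1}, and it makes transparent that episturmianity of $\bt$ is irrelevant beyond fixing its first letter. The one case you leave implicit is $q=\varepsilon$, where your claim that $q$ begins with $z$ is void; but that case is vacuous under the hypotheses, since the covering inequality then forces $u_0=x$ to be a prefix of $R_x(\bt)$, contradicting that $R_x(\bt)$ begins with $z\neq x$ --- worth a sentence, nothing more.
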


\begin{proof}
If $q$ is empty, the result is immediate since in this case $\bt = x^\omega$. If $q$ ends with a letter different from $x$, then each occurrence of $q$ is followed by an $x$ and so $R_x(\bt)$ is covered by $qx$. Hence we can assume without loss of generality that $q$ ends with $x$. Then $q = R_x(v)$ and $qx = R_x(vx)$ for some word $v$. By the same technique used in the proof of Lemma~\ref{L:new1}, we can see that $\bt$ is covered by $\{v, vx\}$. This implies that $L_x(\bt)$ is covered by $\{L_x(v), L_x(v)x\}$ with all the occurrences of $L_x(v)$ followed by an $x$. So $L_x(\bt)$ is covered by $L_x(v)x$. It is well-known that $L_x(v)x = xR_x(v) = xq$. 
\end{proof}

\begin{lmm}
\label{L:new5}
Suppose $\bt$ is an infinite word (not necessarily episturmian) covered by the set $\{q, qa\}$ and by the set $\{q, qc\}$ for some word $q$ and two different letters $a$ and $c$. Then $\bt$ is $q$-quasiperiodic.
\end{lmm}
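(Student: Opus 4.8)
The plan is to argue directly with the occurrences of $q$ in $\bt$, since $\bt$ is not assumed recurrent and so Lemma~\ref{L:quasi} cannot be invoked. First I would record an elementary reformulation of quasiperiodicity. Write $O$ for the set of integers $\ell\geq 0$ such that $q$ occurs in $\bt$ starting at position $\ell+1$ (i.e.\ $q=\bt_{\ell+1}\cdots\bt_{\ell+|q|}$). Since the first block $z_0$ of either covering lies in $\{q,qa\}$ resp.\ $\{q,qc\}$, $q$ is a prefix of $\bt$, so $0\in O$. The claim I would establish is: \emph{an infinite word having $q$ as a prefix is $q$-quasiperiodic if and only if $O$ is unbounded and any two consecutive elements of $O$ differ by at most $|q|$.} One direction builds a covering sequence whose $n$-th prefix has length the $n$-th element of $O$; the other, given a covering sequence $(p_nz_n)_{n\geq0}$ with each $z_n=q$, uses that every $|p_n|$ lies in $O$, that $|p_n|\to\infty$, and that $|p_{n+1}|\leq|p_n|+|q|$.

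Next I would extract what is needed from the first hypothesis. Fix a covering sequence $(p_nz_n)_{n\geq0}$ of $\bt$ with $z_n\in\{q,qa\}$; since $p_nq$ is a prefix of $p_nz_n$, each $|p_n|$ lies in $O$ and $|p_n|\to\infty$, so $O$ is unbounded. Suppose, for contradiction, that $\bt$ is not $q$-quasiperiodic. By the reformulation there are consecutive elements $\ell<\ell'$ of $O$ with $\ell'\geq\ell+|q|+1$. Let $n$ be the largest index with $|p_n|\leq\ell$ (it exists because $|p_0|=0\leq\ell$ and $|p_n|\to\infty$). Then $|p_{n+1}|>\ell$ and $|p_{n+1}|\in O$, and since no element of $O$ lies strictly between $\ell$ and $\ell'$ this forces $|p_{n+1}|\geq\ell'$. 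Combining $|p_{n+1}|-|p_n|\geq\ell'-\ell\geq|q|+1$ with $|p_{n+1}|\leq|p_nz_n|\leq|p_n|+|q|+1$ gives $|p_{n+1}|-|p_n|=|q|+1$, hence $|p_n|=\ell$, $|p_{n+1}|=\ell'=\ell+|q|+1$, and, since $|z_n|\geq|p_{n+1}|-|p_n|=|q|+1>|q|$, necessarily $z_n=qa$. As $p_nz_n=p_nqa$ is a prefix of $\bt$, the letter of $\bt$ at position $\ell+|q|+1$ equals $a$.

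Finally I would run the identical argument on the covering of $\bt$ by $\{q,qc\}$ — note that $\ell$ and $\ell'$ depend only on $O$, hence are the same in both runs — to conclude that the letter of $\bt$ at position $\ell+|q|+1$ also equals $c$. Since $a\neq c$ this is a contradiction, so $\bt$ is $q$-quasiperiodic. I expect the only mildly delicate point to be the bookkeeping in the reformulation step and keeping the notions of ``consecutive in $O$'' and ``consecutive in a covering sequence'' carefully separated; once that is in place the heart of the proof is just the one-line length count $|q|+1\leq|p_{n+1}|-|p_n|\leq|q|+1$. Degenerate situations (such as $q=\varepsilon$, or $\ell=0$) need no special treatment: they are either precluded by the hypotheses or handled verbatim by the same estimate.
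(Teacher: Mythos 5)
Your proposal is correct, and it rests on the same key observation as the paper's own proof: a jump of length $|q|+1$ between consecutive occurrences of $q$ forces the corresponding covering block to be $qa$ in one covering and $qc$ in the other, so the letter at the position just after the jump would have to equal both $a$ and $c$. The paper phrases this directly (refining the $\{q,qc\}$-covering so that the $p_n$ enumerate all occurrences of $q$ and showing each step is at most $|q|$), whereas you argue by contradiction via the occurrence set $O$; your write-up is more detailed but not a genuinely different route.
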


\begin{proof}
Let $(p_n)_{n \geq 0}$ be prefixes of $\bt$ and $(q_n)_{n\geq 0}$ be words belonging to $\{q, qc\}$ such that $(p_nq_n)_{n \geq 0}$ is a covering sequence of $\bt$. Moreover let us assume that there is no word $p$ except the elements of $(p_n)_{n\geq 0}$ such that $pq$ is a prefix of $\bt$. Let $n$ be any integer such that $q_n = qc$. Since $\bt$ is also covered by $\{q, qa\}$ and since all the words $p$ such that $pq$ is a prefix of $\bt$ belong to $\{p_n \mid n \geq 0\}$, we necessarily have $|p_{n+1}| \leq |pq|$ (that is $|p_{n+1}| \neq |p_nqc|$). The sequence $(p_nq)$ is a covering sequence of $\bt$; hence $\bt$ is $q$-quasiperiodic.
\end{proof}

\subsection{\label{SS:secondProofCharacterizationQuasiperiods}A second proof of Theorem~\ref{T:episturmianQuasiperiods}} 

We now give a second proof of Theorem~\ref{T:episturmianQuasiperiods} which provides further insight into the connection between quasiperiodicity and morphisms.

\medskip
For any spinned word $\breve w$, let us denote by $q_{\breve{w}}$ the word $S_{\breve{w}}^{-1}Pal(w)$ appearing in Theorem~\ref{T:episturmianQuasiperiods}. Then, for any spinned word $\breve v$ and letter $a$, we have:
\begin{itemize}
\item $q_\varepsilon = \varepsilon$; 

\item $q_{a\breve v} = L_a(q_{\breve v})a$;

\item $q_{\bar a \breve v} = R_a(q_{\breve v})$.  
\end{itemize}

Indeed by formula~\eqref{eq:f-shift}, we have $S_{a\breve{v}} = L_a(S_{\breve{v}})$ and
$S_{\bar a\breve{v}} = L_a(S_{\breve{v}})a = aR_a(S_{\breve{v}})$. Hence $q_{a\breve{v}} = S_{a\breve{v}}^{-1}L_a(Pal(v))a = (L_a(S_{\breve v}))^{-1}L_a(Pal(v))a = L_a(S_{\breve v}^{-1}Pal(v))a = L_a(q_{\breve v})a$
and, since $L_a(Pal(v))a = aR_a(Pal(v))$, $q_{\bar a\breve{v}} = S_{\bar a\breve{v}}^{-1}(aR_a(Pal(v))) = (aR_a(S_{\breve v}))^{-1}(aR_a(Pal(\breve v))) = R_a(S_{\breve v}^{-1}(Pal(\breve v))$.

These formulae, which define recursively the word $q_{\breve{w}}$, will be helpful in the following proof.

\medskip

First assume that $\bt$ is an episturmian word directed by $\breve wv\breve \by$ where $\breve w$ is a spinned word, $\breve\by$ is a spinned version of an $L$-spinned infinite word $\by$ and $v$ is an $L$-spinned word such that $\Alph(v) = \Alph(v\by)$. 
Then, by Theorem~\ref{T:stronglyQuasEpistandardMorphisms}, $Pal(v)$ is a  quasiperiod of $\mu_v(\bt')$ where $\bt'$ is the episturmian word directed by $\breve \by$. 
Now we prove by induction on $|\breve w|$ that $\mu_{\breve w}(Pal(v))p$ is a quasiperiod of $\bt$ for any prefix $p$ of $q_{\breve w}$. More precisely we prove that $\mu_{\breve w}(Pal(v))$ is a quasiperiod of $\bt = \mu_{\breve w}(\mu_{v}(\bt'))$ (this is also a consequence of Fact~\ref{rnew4}) and each occurrence of $\mu_{\breve w}(Pal(v))$ in $\bt$ is followed by the word $q_{\breve w}$ (which is a direct consequence of the above inductive formulae). Observe that the previous fact obviously holds if $\breve{w} = \varepsilon$. When it holds, let $(p_n)_{n \geq 0}$ be a sequence of prefixes such that $|p_n| \leq |p_{n+1}| \leq |p_n\mu_{\breve w}(Pal(v))|$ and $p_n\mu_{\breve w}(Pal(v))q_{\breve w}$ is a prefix of $\bt$. Then
$L_a(p_n\mu_{\breve w}(Pal(v))q_{\breve w}) = L_a(p_n) \mu_{a\breve w}(Pal(v))L_a(q_{\breve w})$ and is followed as a prefix of $L_a(\bt)$ by $a$. Note that the word $L_a(p_n) \mu_{a\breve w}(Pal(v))L_a(q_{\breve w})a = 
L_a(p_n) \mu_{a\breve w}(Pal(v))q_{a\breve w}$ is a prefix of $L_a(\bt)$. Moreover we can verify that
$|L_a(p_n)| \leq |L_a(p_{n+1})| \leq |L_a(p_n)\mu_{a\breve w}(Pal(v))|$ since $p_n$ is a prefix of $p_{n+1}$ itself a prefix of $p_n\mu_{\breve w}(Pal(v))$. 
Similarly $R_a(p_n\mu_{\breve w}(Pal(v))q_{\breve w}) = R_a(p_n)\mu_{\bar a\breve w}(Pal(v))R_a(q_{\breve w}) = R_a(p_n)\mu_{\bar a \breve w}(Pal(v))q_{\bar a\breve w}$ is a prefix of $R_a(\bt)$ and $|R_a(p_n)| \leq |R_a(p_{n+1})| \leq |R_a(p_n)\mu_{\bar a\breve w}(Pal(v))|$.

\bigskip

To end the proof of Theorem~\ref{T:episturmianQuasiperiods}, we prove by induction on $|q|$ that if $q$ is the quasiperiod of an episturmian word $\bt$ then (at least) one directive word of $\bt$ can be written $\Delta = \breve w v \breve \by$ where $\breve w$ is a spinned word,  $\breve\by$ is a spinned version of an $L$-spinned infinite word $\by$ and $v$ is an $L$-spinned word such that $\Alph(v) = \Alph(v\by)$. Moreover we have $q = \mu_{\breve{w}}(Pal(v))p$ with $p$ a prefix of $q_{\breve{w}} = S_{\breve{w}}^{-1}Pal(w)$.

We observe that $|q| \geq 1$ since $q$ covers the infinite word $\bt$.

When $|q| = 1$, for a letter $a$, $q = a$ and the word $\bt = a^\omega$ is directed by $a^\omega$ which is of the form $\breve w v a^\omega$ with $\breve w = \varepsilon$ and $v = a$. Then $q_{\breve{w}}  = \varepsilon$, and taking $p = \varepsilon$, $q = \mu_{\breve{w}}(Pal(v))p$. 

\medskip

Assume from now on that $|q| \geq 2$.
Without loss of generality, we assume that $\Delta = {\breve x}_1 {\breve x}_2 \cdots$ is the normalized directive word of $\bt$, and let $(\bt^{(n)})_{n \geq 0}$ be the infinite sequence of episturmian words associated to $\bt$ and $\Delta$ in Theorem~\ref{T:episturmian}.

\medskip
Let us first consider: \\
\textit{Case $\bt = L_a(\bt^{(1)})$ (that is, ${\breve x}_1 = a$ for a letter $a$).} 

Assume that $q$ does not end with the letter $a$. By Lemma~\ref{L:new2}, 
$q = L_a(q_1)$ for a quasiperiod $q_1$ of $\bt^{(1)}$ such that $|q_1| < |q|$. 
By the induction hypothesis, $\bt^{(1)}$ has a directive word of the form $\breve{w}v\breve{\by}$ 
with $\breve{w}$, $v$, $\breve{\by}$ as in the theorem, and $q_1 = \mu_{\breve{w}}(Pal(v))p$ for a prefix $p$ of $q_{\breve{w}}$. We have $q = L_a(\mu_{\breve{w}}(Pal(v))p) = \mu_{a\breve{w}}(Pal(v))L_a(p)$. Observing that $\bt$ is directed by $(a\breve{w})v\breve{\by}$ and that $L_a(p)$ is a prefix of $L_a(q_{\breve{w}})a = q_{a\breve{w}}$, we get the result for $q$ and $\bt$.

Now assume that $q$ ends with the letter $a$, that is $q = L_a(q_1)a$ for a word $q_1$. By Lemma~\ref{L:coverPart2},  $\bt^{(1)}$ is covered by the set $\{q_1, q_1a\}$. 

Possibly $q_1a$ is a quasiperiod of $\bt^{(1)}$. In this case, $q = L_a(q_1a)$. Observe that $|q| = |L_a(q_1a)| \geq |q_1a|$. The equality holds only when $a$ is the only letter occurring in $q_1$ so in $q$. In this case, taking $\breve w = a^{|q|-1}$ and $v = a$, the word $\bt$ is directed by $\breve wva^\omega$ and we have $q = aa^{|q|-1} = Pal(a)q_{a^{|q|-1}} = \mu_{a^{|q|-1}}(Pal(a))q_{\breve w} = \mu_{\breve w}(Pal(v))q_{\breve w}$: the induction result holds here. 
When $|q| = |L_a(q_1a)| > |q_1a|$, the proof ends, using the induction hypothesis,  as in the previous case where $q$ did not end by $a$ (the only difference is that $q$ ends with the word $L_a(p)a$ but $L_a(p)a$ is still a prefix of $q_{a\breve{w}}$).

It is also possible that $q_1$ is a quasiperiod of $\bt^{(1)}$, but in this case we can once again  conclude using the induction hypothesis. 

Now we assume that neither $q_1$ nor $q_1a$ is a quasiperiod of $\bt^{(1)}$. This implies in particular that $\bt^{(1)}$ has a factor $q_1b$ for a letter $b$ different from $a$. So the following property holds for  $n =1$ (and $a_1 = a$):
\begin{quote}
\textit{Prop(n)}: $\bt = L_{a_1}L_{a_2}\cdots L_{a_n}(\bt^{(n)})$ (that is, $\breve{x}_i = a_i$, for $1 \leq i \leq n$), $\bt^{(n)}$ is covered by the set $\{q_n\} \cup \{q_na_i \mid i = 1, \ldots, n\}$ with $q_0 = q$, and for $i = 1, \ldots, n$, $q_{i-1} = L_{a_i}(q_i)a_i$. Moreover $q_na$ and $q_nb$ are both factors of $\bt^{(n)}$. 
\end{quote}
Let us assume that Property~\textit{Prop(n)} holds for some $n \geq 1$ with $q_n \neq \varepsilon$ and $\bt^{(n)} = L_{a_{n+1}}(\bt^{(n+1)})$ (that is ${\breve x}_{n+1} = a_{n+1}$). From $a \neq b$ (so $a \neq a_{n+1}$ or $b \neq a_{n+1}$), we deduce that $q_nc$ is a factor of $\bt^{(n)}$ for a letter $c \neq a_{n+1}$ and so $q_n$ must end with the letter $a_{n+1}$, that is, $q_n = L_{a_{n+1}}(q_{n+1})a_{n+1}$ for some word $q_{n+1}$. By Lemma~\ref{L:coverPart2},  $q_{n+1}$ is covered by the set $\{q_{n+1}\} \cup \{q_{n+1}a_i \mid i = 1, \ldots, n, n+1\}$. Moreover it is quite immediate that $q_{n+1}a$ and $q_{n+1}b$ are both factors of $\bt^{(n+1)}$. Hence \textit{Prop(n+1)} holds. 

Observing that for any $i \geq 1$ we have  $|q_{i+1}| < |q_i|$, we deduce that one of the two following cases holds:
\begin{enumerate} 
\item \textit{There exists an integer $n \geq 1$ such that Property~Prop(n) holds with $q_n = \varepsilon$.} 
In this case, we verify that $q = Pal(a_1\cdots a_n)$. Let $\breve w = \varepsilon$, $v = a_1\cdots a_n$ and let $\breve \by$ be any directive word of the episturmian word $\bt^{(n)}$. Then $\bt$ is directed by $\breve w v \breve \by$ and $q = \mu_{\breve{w}}(Pal(v))$. Since $\bt^{(n)}$ is covered by the letters of $v$, we deduce that $\Alph(v) = \Alph(v\by)$. Hence the induction result holds.

\item \textit{There exists an integer $n \geq 1$ and a letter $c$ such that Property~Prop(n) holds with $q_n \neq \varepsilon$ and $\bt^{(n)}  = R_c(\bt^{(n+1)})$} (that is ${\breve x}_{n+1} = \bar c$). By Theorem~\ref{T:normalisation}, since $\Delta$ is normalized, the word $\bt^{(n+1)}$ does not start with the letter $c$ (otherwise the infinite word $\breve{x}_{n+1}\breve{x}_{n+2}\cdots$ starts with a factor in $\bar c \bar\cA c$).  Hence the first letter of $q_n$ is not $c$ whereas its last letter is $c$ since $a \neq b$ and both $q_na$ and $q_nb$ are factors of $R_c(\bt^{(n+1)})$. By Lemma~\ref{L:new3}, we deduce that $R_c(\bt^{(n+1)})$ is covered by $\{q_n, q_nc\}$. Since for $i = 1, \ldots, n$, $q_{i-1} = L_{a_i}(q_i)a_i$, we can inductively verify that the words $\bt^{(i)}$ ($i = n, \ldots, 1$) are covered by $\{q_i, q_ic\}$. In particular $\bt^{(1)}$ is covered by $\{q_1, q_1c\}$. 
By Lemma~\ref{L:new5}, since we have assumed that $\bt^{(1)}$ is not $q_1$-quasiperiodic, we have $c = a$. So $R_a(\bt^{(n+1)})$ is covered by $\{q_n, q_na\}$. By Lemma~\ref{L:new4}, $L_a(\bt^{(n+1)})$ is $aq_n$-quasiperiodic. 
Observe that $\bt = L_{a_1}L_{a_2} \cdots L_{a_n}R_{a_1}(\bt^{(n+1)}) = R_{a_1}R_{a_2} \cdots R_{a_n}L_{a}(\bt^{(n+1)})$, and that $|q_n| < |q|$. By induction hypothesis, $L_a(\bt^{(n+1)})$ is directed by a word $\breve w' v \breve\by$ for a finite spinned word $\breve w'$, a finite $L$-spinned word $v$ and a spinned version $\breve\by$ of an infinite $L$-spinned word $\by$ such that $\Alph(v) = \Alph(v\by)$, and $aq_n = \mu_{\breve w'}(Pal(v))p'$ with $p'$ a prefix of $q_{\breve w'}$. Thus we have proved that the word $\bt$ is directed by $\breve w v\breve\by$ where $\breve w = \bar{a}_1 \bar{a}_2 \cdots \bar{a}_n \breve w'$, and $\mu_{\bar{a}_1\bar{a}_2\cdots \bar{a}_n}(qa_n) = \mu_{\breve w}(Pal(v))p'$.
To end the proof of the current case we have to prove that $q = \mu_{\breve w}(Pal(v))p'$. First assume that $a_i = a$ for some $i$ between $2$ and $n$. We know that the word $\bt^{(i)}$ is covered by $\{q_i, q_ia\}$ and $\bt^{(i-1)} = L_{a_i}(\bt^{(i)}) = L_{a}(\bt^{(i)})$. Hence by Lemma~\ref{L:coverPart1}, $\bt^{(i-1)}$ is covered by $L_a(q_i)a = q_{i-1}$. Using Fact~\ref{rnew4}, we can deduce that 
$\bt^{(1)}$ is $q_1a$-quasiperiodic, a contradiction. Hence for $i =2, \ldots, n$, $a_i \neq a$. By induction we can prove that $\mu_{\bar{a}_i\bar{a}_{i+1}\cdots \bar{a}_n}(aq_n)= a q_{i-1}$ for $i = n+1, \ldots, 2$. Indeed if $\mu_{\bar{a}_{i+1}\cdots \bar{a}_n}(aq_n)= a q_{i}$ then $\mu_{\bar{a}_i\bar{a}_{i+1}\cdots \bar{a}_n}(aq_n)=$ $\mu_{\bar{a}_i}(aq_i) =$ $R_{a_i}(aq_i) =$ $aa_i R_{a_i}(q_i) =$ $aL_{a_i}(q_i){a_i} =$ $ aq_{i-1}$. Now 
$\mu_{\bar{a}_1\bar{a}_{2}\cdots \bar{a}_n}(aq_n)=$ $R_{a_1}(aq_1) =$ $aR_a(q_1) = L_a(q_1)a = q$.

\end{enumerate}

Now we come to:

\noindent
\textit{Case $\bt = R_a(\bt^{(1)})$.}  
By Theorem~\ref{T:normalisation}, since $\Delta$ is the normalized directive word of $\bt$, the word $\bt$ does not start with the letter $a$.  By Lemma~\ref{L:new1}, the word $\bt^{(1)}$ is quasiperiodic and more precisely there exists a quasiperiod $q_1$ of $\bt^{(1)}$ such that 
$q = R_a(q_1)$  or $q = R_a(q_1)z$ with $z \neq a$: in this last case, $q_1z$ is also a quasiperiod of $\bt^{(1)}$. Since $q$ does not start with $a$, $|q_1| < |q|$.
By induction hypothesis, $\bt^{(1)}$ has a directed word of the form $\breve{w}v\breve{\by}$ 
with $\breve{w}$, $v$, $\breve{\by}$ as in the theorem, and $q_1 = \mu_{\breve{w}}(Pal(v))p$ for a prefix $p$ of $q_{\breve{w}}$. When $q = R_a(q_1)$, since $R_a(\mu_{\breve{w}}) = \mu_{\bar a \breve w}$ and $R_a(p)$ is a prefix of $R_a(q_{\breve{w}}) = q_{\bar a\breve w}$, $q = \mu_{\bar a \breve w}(Pal(v))R_a(p)$ and $\bt$ (which is directed by $\bar a \breve w v \breve \by$) verify the induction result. When $q = R_a(q_1)z$ with $z \neq a$, since $q_1z$ is also a quasiperiod of $\bt^{(1)}$, we deduce that $pz$ is a prefix of $q_{\breve w}$ and so $R_a(p)z$ is a prefix of $R_a(p)za$ a prefix of $q_{\bar a\breve w}$, which allows us to conclude once again that $q = \mu_{\bar a \breve w}(Pal(v))R_a(p)z$ and $\bt$ (which is directed by $\bar a \breve w v \breve \by$) verify the induction result. 

This ends the second proof of Theorem~\ref{T:episturmianQuasiperiods}.

\subsection{Strongly quasiperiodic episturmian morphisms} \label{SS:stongly-quasi-morphisms}

The aim of this section is to characterize all the episturmian morphisms that are strongly quasiperiodic, i.e., the episturmian morphisms that map any infinite word onto a quasiperiodic word. Our characterization (Theorem~\ref{T:quasi-morphisms}) generalizes Theorem~\ref{T:stronglyQuasEpistandardMorphisms} to all episturmian morphisms.

Looking at Theorem~\ref{T:stronglyQuasEpistandardMorphisms} and Theorem~\ref{T:episturmianQuasiperiods}, one might guess that an episturmian morphism $\mu_{\breve u}$ is strongly quasiperiodic if and only if $\breve u$ has an $L$-spinned factor $v$. We will see that it is not the case. In particular, there exist $R$-spinned words $\breve u$ such that $\mu_{\breve u}$ is strongly quasiperiodic, as shown by the following result.

\begin{lmm}
\label{L:reversible2}
Let $\breve u$ be a finite spinned word over $\cA \cup \bar \cA$ and let $a$ be a letter in $\cA$. If there exist $L$-spinned words $v$, $y$ and a spinned word $\breve w$ such that $\breve u = \breve w \bar a \bar v \bar y$ and $\Alphit(v)=\cA \setminus \{a\}$, then for any infinite word $\bt$, $\mu_{\breve u}(a\bt)$ is quasiperiodic.
\end{lmm}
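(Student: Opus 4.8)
The plan is to strip off the outer morphisms and reduce, via Fact~\ref{rnew4}, to showing that a single inner word of the form $\mu_{\bar a\bar v}(\bz)$ is quasiperiodic, where $\bz$ is an infinite word starting with the letter $a$. First I would write $\mu_{\breve u}=\mu_{\breve w}\,\mu_{\bar a\bar v}\,\mu_{\bar y}$ and set $\bz:=\mu_{\bar y}(a\bt)$. Then $\bz$ begins with $a$: indeed $\mu_{\bar y}$ is a (possibly empty) composition of morphisms $R_c$, and each such $R_c$ sends a word beginning with a letter $d$ to a word beginning with $d$. Note also that $v$ is $a$-free, since $\Alph(v)=\cA\setminus\{a\}$. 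By Fact~\ref{rnew4} it then suffices to prove that $\mu_{\bar a\bar v}(\bz)$ is quasiperiodic, because $\mu_{\breve u}(a\bt)=\mu_{\breve w}\bigl(\mu_{\bar a\bar v}(\bz)\bigr)$.

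The heart of the argument would be the identity
$$\mu_{\bar a\bar v}(\bz)=\mu_{av}\bigl(\TT(\bz)\bigr).$$
To get it, I would apply formula~\eqref{eq:f-shift-infinite} to the ($R$-spinned) spinned version $\bar a\bar v$ of the word $av$: this gives $\mu_{\bar a\bar v}(\bz)=S_{\bar a\bar v}^{-1}\mu_{av}(\bz)$, and $S_{\bar a\bar v}=Pal(av)$ by the observation following~\eqref{eq:f-shift}. On the other hand $\bz=a\,\TT(\bz)$, so $\mu_{av}(\bz)=\mu_{av}(a)\,\mu_{av}\bigl(\TT(\bz)\bigr)$; and since $v$ is $a$-free, the previous occurrence of $a$ in $av$ is its first letter, so by~\eqref{eq:u_{i+1}} and~\eqref{eq:formula(3)} one has $\mu_{av}(a)\,Pal(av)=Pal(ava)=Pal(av)^{2}$, i.e.\ $\mu_{av}(a)=Pal(av)$. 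Hence $\mu_{av}(\bz)=Pal(av)\,\mu_{av}\bigl(\TT(\bz)\bigr)$, and cancelling $Pal(av)$ yields the identity. I expect this to be the step needing the most care: the cancellation is legitimate precisely because $\bz$ starts with $a$ (so that the leading block of $\mu_{av}(\bz)$ is exactly $Pal(av)$) and because $v$ is $a$-free — which is exactly what the hypotheses on $a\bt$ and on $\Alph(v)$ supply.

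Finally, since $\Alph(av)=\{a\}\cup\Alph(v)=\cA$, Theorem~\ref{T:stronglyQuasEpistandardMorphisms} tells us that $\mu_{av}$ is strongly quasiperiodic and that $Pal(av)$ is a quasiperiod of $\mu_{av}(\bx)$ for every infinite word $\bx$; taking $\bx=\TT(\bz)$ shows that $\mu_{\bar a\bar v}(\bz)=\mu_{av}\bigl(\TT(\bz)\bigr)$ is $Pal(av)$-quasiperiodic. Applying Fact~\ref{rnew4} to the morphism $\mu_{\breve w}$ then gives that $\mu_{\breve u}(a\bt)$ is $\mu_{\breve w}(Pal(av))$-quasiperiodic, which is the desired conclusion. (If $\cA$ has a single letter the statement is trivial, as then $v=\varepsilon$ and $\mu_{\breve u}(a\bt)=a^{\omega}$, which is covered by the letter $a$.)
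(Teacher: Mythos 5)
Your proof is correct and follows essentially the same route as the paper: the paper reduces Lemma~\ref{L:reversible2} to Lemma~\ref{L:pal}, whose content is exactly your key identity (there stated for finite words as $\mu_{\bar a\bar v}(ax)=\mu_{av}(xa)$, resting on $S_{\bar a\bar v}=Pal(av)=\mu_{av}(a)$) combined with Theorem~\ref{T:stronglyQuasEpistandardMorphisms} and Fact~\ref{rnew4}. The only difference is cosmetic: you apply the conjugation formula~\eqref{eq:f-shift-infinite} directly to the infinite word, whereas the paper establishes the identity on finite words and passes to the infinite case via infinitely many quasiperiodic prefixes.
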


Let us mention that for instance the word $\bar a \bar b \bar c \bar a \bar b$ verifies the condition of the previous lemma for each of the letters $a$, $b$ and $c$, so that $\mu_{\bar a \bar b \bar c \bar a \bar b}$ is strongly quasiperiodic over $\{a, b, c\}$.

Lemma~\ref{L:reversible2} is a consequence of the following one.

\begin{lmm}\label{L:pal}
Suppose $a$ is an $L$-spinned letter, $x$ is an $L$-spinned word, $v$ is an $a$-free $L$-spinned word, and $\bw$ is an infinite word over $\cA$. Then the words $\mu_{av}(xa)$, $\mu_{\bar a \bar v}(ax)$, and $\mu_{\bar a \bar v}(a\bw)$ are $Pal(av)$-quasiperiodic.
\end{lmm}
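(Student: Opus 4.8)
The plan is to reduce everything to Theorem~\ref{T:stronglyQuasEpistandardMorphisms}, which gives that $Pal(av)$ is a quasiperiod of $\mu_{av}(\by)$ for every infinite word $\by$ over $\Alph(av)$ (the relevant alphabet here, as in the application Lemma~\ref{L:reversible2} where $\Alph(v)=\cA\setminus\{a\}$), and then to transfer this to $\mu_{\bar a\bar v}$ by means of the shifting factor and to descend to the two finite-word statements. I would first record the needed identities. Since $v$ is $a$-free, $\mu_v(a)=Pal(v)a$, so by \eqref{eq:formula(3)} (with $w=a$) $\mu_{av}(a)=L_a(\mu_v(a))=L_a(Pal(v))a=Pal(av)$, which by the standard identity $L_a(u)a=aR_a(u)$ also equals $aR_a(Pal(v))$. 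Moreover $\bar a\bar v$ is the opposite of the word $av$, hence its shifting factor is $S_{\bar a\bar v}=Pal(av)$ (the observation after \eqref{eq:f-shift}, via \eqref{eq:u_n2}); so \eqref{eq:f-shift} and \eqref{eq:f-shift-infinite} give
\[
\mu_{\bar a\bar v}(u)=Pal(av)^{-1}\,\mu_{av}(u)\,Pal(av)\quad\mbox{and}\quad\mu_{\bar a\bar v}(\by)=Pal(av)^{-1}\,\mu_{av}(\by)
\]
for finite $u$ and infinite $\by$, and in particular $\mu_{\bar a\bar v}(a)=Pal(av)$.

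Next I would treat the third (main) statement. By Theorem~\ref{T:stronglyQuasEpistandardMorphisms}, $\mu_{av}(a\bw)$ is $Pal(av)$-quasiperiodic, and by the second displayed formula $\mu_{av}(a\bw)=Pal(av)\cdot\mu_{\bar a\bar v}(a\bw)$, while $\mu_{\bar a\bar v}(a\bw)=\mu_{\bar a\bar v}(a)\,\mu_{\bar a\bar v}(\bw)$ begins with $Pal(av)$. The conclusion then follows from an elementary observation that I would prove separately: \emph{if an infinite word $\bz$ begins with $q$ and $q\bz$ is $q$-quasiperiodic, then $\bz$ is $q$-quasiperiodic}. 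Indeed, a position $j\le|q|$ of $\bz$ is covered by the initial copy of $q$ in $\bz$, and a position $j>|q|$ is covered, inside $q\bz$, only by an occurrence of $q$ starting beyond position $|q|$, hence by one lying entirely inside $\bz$.

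For the two finite statements (with $x\ne\varepsilon$; if $x=\varepsilon$ both words equal $Pal(av)$, a degenerate case) I would use the companion observation: \emph{a prefix $P$ of a $q$-quasiperiodic infinite word with $|q|<|P|$ and having $q$ as a suffix is itself $q$-quasiperiodic} — positions of $P$ beyond $|P|-|q|$ are covered by the terminal copy of $q$, while an earlier position is covered by an occurrence of $q$ starting no later than it and therefore ending before position $|P|$. For $\mu_{av}(xa)$: it equals $\mu_{av}(x)\,Pal(av)$, hence ends with $Pal(av)$ and is strictly longer, and it is a prefix of $\mu_{av}(xa\,a^\omega)=\mu_{av}(xa)\,Pal(av)^\omega$, which is $Pal(av)$-quasiperiodic by Theorem~\ref{T:stronglyQuasEpistandardMorphisms}; apply the observation. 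For $\mu_{\bar a\bar v}(ax)$: the first displayed formula gives $\mu_{\bar a\bar v}(ax)=Pal(av)^{-1}\mu_{av}(ax)\,Pal(av)=\mu_{av}(x)\,Pal(av)$, so it ends with $Pal(av)$, and it is a prefix of $\mu_{\bar a\bar v}(ax\bz)=\mu_{\bar a\bar v}(ax)\,\mu_{\bar a\bar v}(\bz)$ for any infinite $\bz$, which is $Pal(av)$-quasiperiodic by the third statement applied with $\bw:=x\bz$; apply the observation again.

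The only non-bookkeeping point — the main obstacle — is the passage from $\mu_{av}$ to $\mu_{\bar a\bar v}$; the key realisation is that $Pal(av)$ is precisely the shifting factor of $\mu_{\bar a\bar v}$, so that $\mu_{\bar a\bar v}(a\bw)$ is literally $\mu_{av}(a\bw)$ with one leading copy of $Pal(av)$ removed. After that, the whole lemma collapses to the two elementary covering observations above, whose proofs are short but must be carried out carefully; and one should not forget the alphabet hypothesis, without which the statement fails (for instance $\mu_{\bar a\bar b}(ac^\omega)=(abac)^\omega$ is not $aba$-quasiperiodic, so some restriction on the letters of $\bw$ and $x$ — satisfied automatically when $\Alph(v)=\cA\setminus\{a\}$ — is indispensable).
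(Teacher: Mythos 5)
Your proof is correct and follows essentially the same route as the paper's: both rest on the identity $\mu_{\bar a\bar v}(ax)=\mu_{av}(xa)$ obtained from $S_{\bar a\bar v}=Pal(av)=\mu_{av}(a)$ and then invoke Theorem~\ref{T:stronglyQuasEpistandardMorphisms}; you merely derive the infinite-word case first and deduce the two finite cases from it, whereas the paper proves $\mu_{av}(xa)$ first and obtains the infinite case as a limit of quasiperiodic prefixes, and you spell out the two elementary covering observations that the paper leaves implicit. Your remark that the statement tacitly requires $\Alph(x)\cup\Alph(\bw)\subseteq\Alph(av)$ (automatic in the application, where $\Alph(v)=\cA\setminus\{a\}$) is a fair catch that the paper's own proof does not address.
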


\begin{proof} ~
\begin{enumerate}
\item[(1)]
By Theorem~\ref{T:stronglyQuasEpistandardMorphisms}, the word $\mu_{av}(xa^\omega)$ is $Pal(av)$-quasiperiodic where $Pal(av) = \mu_{av}(a)$ since $Pal(ava) = \mu_{av}(a)Pal(av) = Pal(av)Pal(av)$ by formulae~\eqref{eq:formula(3)} and \eqref{eq:u_{i+1}}. 
Consequently, since $\mu_{av}(xa)$ ends with $\mu_{av}(a)$, it is $Pal(av)$-quasiperiodic.

\item[(2)] By formulae~\eqref{eq:f-shift} and \eqref{eq:u_n2},
$\mu_{\bar a \bar v}(xa) = S_{\bar a\bar v}^{-1}\mu_{av}(xa)S_{\bar a\bar v}$ and $S_{\bar a\bar v} = Pal(av)$. 
So $Pal(av)\mu_{\bar a \bar v}(ax) =\mu_{av}(ax)\mu_{av}(a)=\mu_{av}(axa)=\mu_{av}(a)\mu_{av}(xa)=Pal(av)\mu_{av}(xa)$, that is {$\mu_{\bar a \bar v} (ax)=\mu_{av}(xa)$}. 
It follows from $(1)$ that $\mu_{\bar a \bar v}(ax)$ is $Pal(av)$-quasiperiodic.

\item[(3)] From (2), we deduce that $\mu_{\bar a \bar v}(a\bw)$ has infinitely many quasiperiodic prefixes with quasiperiod $Pal(av)$. Hence $\mu_{\bar a \bar v}(a\bw)$ is $Pal(av)$-quasiperiodic.
\end{enumerate}
\end{proof}

\begin{proof}[Proof of Lemma~$\ref{L:reversible2}$]
Let $\breve u$, $a$, $\breve w$, $v$, $y$ be as in the hypotheses of Lemma~\ref{L:reversible2}.
Let $\bt$ be an infinite word over $\cA$. The word $\mu_{\bar y}(a\bt)$ starts with $a$. By Lemma~\ref{L:pal}, $\mu_{\bar a \bar v}(\mu_{\bar y}(\bt))$ is quasiperiodic and so $\mu_{\breve u}(a\bt) = \mu_{\breve w \bar a \bar v \bar y}(a\bt)$ is quasiperiodic.
\end{proof}

\medskip

The following remark will be useful several times (see the proof of Theorem~\ref{T:normalized-quasi-characterization} for more details):

\begin{rmrk}\label{R:decomp}
An infinite word over $\cA \cup \bar \cA$ has a decomposition $\breve w v \breve \by$ with $\Alph(v) = \Alph(v\breve \by)$ if and only if its normalized decomposition can be written in the form $\breve w' a v_1 \bar a v_2 \cdots \bar a v_k \breve \by'$ with $\Alph(av_i) = \Alph(av_ia v_{i+1} \cdots a v_k \breve \by')$  for some $1 \leq i \leq k$.
\end{rmrk}

Now we state our characterization of strongly quasiperiodic episturmian morphisms.

\begin{thrm}\label{T:quasi-morphisms}
Let $\cA$ be an alphabet containing at least three letters. 
An episturmian morphism is strongly quasiperiodic on $\cA$ if and only if its normalized directive word $\breve u$ verifies one of the following three conditions:
\begin{itemize}
\item[i)]
$\breve u=\breve w a v_1 \bar a v_2 \bar a \cdots v_k \bar a v \breve {y}$ 

where $a$ is a letter of $\cA$ (with spin $L$), $\breve w$, $\breve y$ are spinned words, $v_1, \ldots, v_k$ ($k \geq 0$) are  $a$-free $L$-spinned words, and $v$ is an $L$-spinned word such that $\Alphit(av)=\Alphit(av\breve y)$.
\item[ii)]
For any letter $a$ in $\cA$, $\breve u= \breve w \bar a \bar v \bar y$ for some spinned word $\breve w$ and $L$-spinned words $v$, $y$ such that $\Alphit(v)=\cA \setminus \{a\}$.
\item[iii)] $\breve u$ verifies case $ii)$ for all letters in $\cA$ except for one letter $a \in \cA$ such that
$\breve u = \breve w v \bar a \bar y$ where $v$ and $y$ are $L$-spinned words verifying $\Alphit(v)=\cA \setminus \{a\}$.
\end{itemize}
\end{thrm}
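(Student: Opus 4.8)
The plan is to prove the two implications of Theorem~\ref{T:quasi-morphisms} separately. The direction ``$\breve u$ has one of the forms (i)--(iii) $\Rightarrow \mu_{\breve u}$ is strongly quasiperiodic'' is assembled from results already at hand, whereas the converse is the substantial part. Throughout, since every infinite word $\bw$ over $\cA$ can be written $\bw=b\bx$ with $b$ its first letter and $\bx$ infinite, it suffices in the forward direction to cover $\mu_{\breve u}(b\bx)$ for each possible first letter $b$, and in the backward direction to exhibit, for a $\breve u$ of none of the three forms, a single bad pair $(b,\bx)$.

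For sufficiency I would treat the three forms in turn. For (i), say $\breve u=\breve w\,a v_1\bar a v_2\bar a\cdots v_k\bar a\,v\breve y$: first strip the prefix $\breve w$ using Fact~\ref{rnew4}, reducing to the morphism $\mu_{a v_1\bar a\cdots v_k\bar a\,v\breve y}$; then a chain of $a$-based block transformations (Theorem~\ref{T:presentation}) rewrites $a v_1\bar a v_2\cdots v_k\bar a$ as $\bar a\bar v_1\bar a\bar v_2\cdots\bar a\bar v_k\,a$, so this morphism equals $\mu_{\bar a\bar v_1\cdots\bar a\bar v_k\,av\breve y}$; finally, the alphabet condition of (i) makes the epistandard factor $\mu_{av}$ strongly quasiperiodic (this is Theorem~\ref{T:stronglyQuasEpistandardMorphisms}, or its proof via Lemma~\ref{L:coverPart1} when $a\notin\Alph(v)$), and Fact~\ref{rnew4} transports quasiperiodicity through the remaining $\mu_{\bar a\bar v_1\cdots}$. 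For (ii), given $\bw=b\bx$ I would invoke the decomposition of $\breve u$ that (ii) provides at the letter $b$, namely $\breve u=\breve w\bar b\bar v\bar y$ with $\Alph(v)=\cA\setminus\{b\}$, whereupon Lemma~\ref{L:reversible2} gives at once that $\mu_{\breve u}(b\bx)$ is quasiperiodic. For (iii), words $\bw$ not beginning with the exceptional letter $a$ are handled as in (ii); for $\bw=a\bx$ I would use $\breve u=\breve w v\bar a\bar y$ together with Lemma~\ref{L:pal}, noting that $\mu_{\bar a}(\mu_{\bar y}(a\bx))$ still begins with $a$ and $a\notin\Alph(v)$, so $\mu_v$ of this word begins with $Pal(v)a$; Lemma~\ref{L:pal} then produces a $Pal(av)$-quasiperiodic image, and Fact~\ref{rnew4} finishes.

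For necessity I would argue the contrapositive by induction on $|\breve u|$: a normalized $\breve u$ satisfying none of (i)--(iii) admits some $\bw$ with $\mu_{\breve u}(\bw)$ non-quasiperiodic. First, $\Alph(\breve u)=\cA$ is forced, for otherwise, choosing distinct letters $b,c$ with $c\notin\Alph(\breve u)$, the word $\mu_{\breve u}(cb^\omega)=\mu_{\breve u}(c)(\mu_{\breve u}(b))^\omega$ has exactly one occurrence of $c$ and so is non-quasiperiodic by Corollary~\ref{Cor:quasi}. Next I would peel off the rightmost letter $\breve c$ of $\breve u$: when $\breve c=c$ is $L$-spinned, Lemmas~\ref{L:coverPart1} and~\ref{L:coverPart2} convert quasiperiods of $\mu_{\breve u}(L_c(\bx))$ to and from quasiperiods of the image of $\bx$ under the shorter morphism; when $\breve c=\bar c$ is $R$-spinned, Lemmas~\ref{L:new1}--\ref{L:new5} do the analogous job for $R_c$ (the normalization of $\breve u$, via Theorem~\ref{T:normalisation}, ensures the shorter image does not begin with $c$, so the hypotheses hold). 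Reading off the resulting dichotomy with the help of Remark~\ref{R:decomp}: either $\breve u$, after absorbing its trailing $\bar a$-blocks, exposes an $L$-spinned tail $v$ with $\Alph(v)=\cA$ --- which is precisely case (i) --- or the image stays quasiperiodic only through the ``reversible'' mechanism, where $\breve u$ ends in $\bar a\bar v\bar y$ or $v\bar a\bar y$ with $\Alph(v)$ already equal to $\cA\setminus\{a\}$ --- cases (ii)/(iii). If $\breve u$ meets none of these, the induction reaches a base case at which a concrete $\bw=b\bx$, with $\bx$ periodic and $b$ chosen so that $\mu_{\breve u}(b)$ contributes a letter that does not recur in $\mu_{\breve u}(\bx)$, yields an image with an un-coverable isolated factor, contradicting Corollary~\ref{Cor:quasi}.

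The hard part will be the $R$-spinned case of the necessity argument. One cannot simply declare that a trailing $\bar a$ ruins quasiperiodicity, since Lemma~\ref{L:reversible2} exhibits genuinely $R$-spinned directive words that are strongly quasiperiodic. Separating the reversible situations of (ii)--(iii), where the palindrome identity $\mu_{\bar a\bar v}(ax)=\mu_{av}(xa)$ of Lemma~\ref{L:pal} rescues the quasiperiod $Pal(av)$, from the non-reversible ones, and in the latter producing a witness whose image has an isolated factor, is where the bulk of the combinatorial case analysis lives --- a substantial extension of the Sturmian treatment in~\cite{fLgR07quas}; the delicate bookkeeping is to keep the inductive hypothesis strong enough to survive both an $L$-peel and an $R$-peel.
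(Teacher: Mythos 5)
Your sufficiency argument coincides with the paper's: block-transformations via Theorem~\ref{T:presentation} plus Theorem~\ref{T:stronglyQuasEpistandardMorphisms} for case $i)$, Lemma~\ref{L:reversible2} for case $ii)$, and Lemma~\ref{L:pal} together with the identity $\mu_{\bar a \bar y_k}(a\bt)=\mu_{a y_k}(\bt)$ for the exceptional letter in case $iii)$. That half is fine.

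The necessity half is where you diverge from the paper and where there is a genuine gap. The paper performs no induction on $|\breve u|$: for a normalized $\breve u$ failing $i)$--$iii)$ it exhibits an explicit \emph{episturmian} witness $\bt$ --- directed by $(\bar a_2\cdots\bar a_m a)^\omega$, by $(a\bar b)^\omega$, or by $(a a_2\bar a_3\cdots\bar a_m)^\omega$ according to whether $\breve u$ ends with an $L$-spinned letter, or ends $R$-spinned with $\Alph(v)\neq\cA$ resp. $\Alph(v)=\cA$ for its trailing $R$-spinned block $\bar v$ --- checks that $\breve u$ followed by that directive word is still normalized, and then applies the already-established characterization of quasiperiodic episturmian words (Theorem~\ref{T:normalized-quasi-characterization}, via Remark~\ref{R:decomp}) to conclude that $\mu_{\breve u}(\bt)$ is non-quasiperiodic. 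Your induction, by contrast, is only sketched, and its terminal step fails exactly in the hard case: your proposed base-case witness needs $\mu_{\breve u}(b)$ to contribute a letter that does not recur in $\mu_{\breve u}(\bx)$, which is only possible when some letter of $\cA$ is absent from $\Alph(\breve u)$; but after your own first reduction you are in the situation $\Alph(\breve u)=\cA$, where every letter recurs in $\mu_{\breve u}(\bw)$ for every $\bw$ and no ``isolated factor'' exists. This is precisely Case~2 of the paper's proof ($\cA=\Alph(v)$ with $\breve u$ failing $ii)$ and $iii)$), which is the combinatorial heart of the theorem and is not addressed by your sketch. There is also a directional confusion in the peeling step: the rightmost letter of $\breve u$ corresponds to the \emph{innermost} morphism applied to $\bw$, whereas Lemmas~\ref{L:coverPart1}, \ref{L:coverPart2} and \ref{L:new1}--\ref{L:new5} transfer covers across the \emph{outermost} application (they compare $\bw$ with $L_y(\bw)$ or $R_y(\bw)$); moreover Lemmas~\ref{L:new3} and \ref{L:new4} are stated only for episturmian words, while your induction must handle arbitrary infinite words. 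Either adopt the paper's strategy of choosing episturmian witnesses so that Theorem~\ref{T:normalized-quasi-characterization} can be invoked, or supply in full the case analysis your induction defers.
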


Before proving this theorem, let us observe that we do not include in this result the Sturmian case. Indeed, Theorem~\ref{T:quasi-morphisms} is no longer valid when $n \leq 2$. For instance the morphism $\mu_{\bar a\bar b}$ ($a \mapsto aba$, $b \mapsto ba$) is strongly quasiperiodic but its normalized directive word fulfills none of the above conditions. A complete description of strongly quasiperiodic Sturmian morphisms is provided in~\cite{fLgR07quas}.

\begin{proof}
Let $\breve u$ be the normalized directive word (of the morphism $\mu_{\breve u}$).

\medskip
If $\breve u$ verifies $i)$, then by Theorem~\ref{T:presentation}, $a v_1 \bar a v_2 \bar a \cdots v_k \bar a v \equiv \bar a \bar {v}_1 \bar a \bar{v}_2 \bar a \cdots \bar{v}_k a v$. Moreover from Theorem~\ref{T:stronglyQuasEpistandardMorphisms}, the epistandard morphism $\mu_{av}$ is strongly quasiperiodic. Thus $\mu_{\breve u}$ is strongly quasiperiodic on $\cA$.

\medskip
If $\breve u$ verifies $ii)$, then by Lemma~\ref{L:reversible2}, $\mu_{\breve u}$ is strongly quasiperiodic over $\cA$.

\medskip
If $\breve u$ verifies $iii)$, then Lemma~\ref{L:reversible2}, $\mu_{\breve u}(\bt)$ is quasiperiodic for any word $\bt$ that does not start with~$a$. 

Let us decompose $y = y_0 a y_1 \cdots a y_k$ where $k$ is the number of occurrences of $a$ in $y$.
In the proof of Lemma~\ref{L:pal}, we have seen that $\mu_{\bar a \bar y_k} (ax)=\mu_{ay_k}(xa)$ for any    finite word $x$. This formula naturally extends to any infinite word $\bt$,
$\mu_{\bar a \bar y_k}(a\bt) = \mu_{ay_k}(\bt)$.
Hence 
$\mu_{\breve u}(a\bt) =$
$\mu_{\breve{w}v\bar{a}\bar{y}_0\bar{a}\bar{y}_1\cdots \bar{y}_{k-1}ay_k}(\bt)$. 
By Theorem~\ref{T:presentation}, $\mu_{\bar{a}\bar{y}_ia} = \mu_{ay_i\bar{a}}$ for each $i$, so that  $\mu_{\breve u}(a\bt) =
\mu_{\breve{w}v{a}{y}_0\bar{a}\bar{y}_1\bar{a}\cdots {y}_{k-1}\bar{a}y_k}(\bt)$.
From Theorem~\ref{T:stronglyQuasEpistandardMorphisms}, the epistandard morphism $\mu_{va}$ is strongly quasiperiodic. Thus $\mu_{\breve u}(a\bt)$ is quasiperiodic. Consequently $\mu_{\breve u}$ is strongly quasiperiodic on $\cA$.

\medskip

To end, we prove that if $\breve u$ verifies none of the conditions $i)$--$iii)$, then there exists (at least) one word $\bt$ such that $\mu_{\breve u}(\bt)$ is not quasiperiodic (and so $\mu_{\breve u}$ is not strongly quasiperiodic). This is immediate if $\breve u=\varepsilon$.

\begin{itemize}

\item
Let us first consider the case where $\breve u$ ends with an $L$-spinned letter $a$, that is $\breve u=\breve w a$ for some spinned word $\breve w$. Since $|\cA| \geq 2$, there exist $m \geq 2$ pairwise different letters $a_1 = a,\, a_2,\, \ldots ,\, a_m$ such that $\cA = \{a_1, a_2, \ldots, a_m\}$. Let $\bt$ be the episturmian word with normalized directive word $(\bar{a}_2 \cdots \bar{a}_m a)^\omega$. Since $\breve w a$ is normalized, $\breve w a (\bar{a}_2 \cdots \bar{a}_m a)^\omega$ is also normalized. Moreover since $\breve u$ does not verify condition $i)$, the word $\breve w a (\bar{a}_2 \cdots \bar{a}_m a)^\omega$ cannot be decomposed into the form $\breve w' b v_1 \bar b v_2 \cdots \bar b v_k \breve y' \cdots$ where $b$ is an $L$-spinned letter, $\breve w'$ is a spinned word, $\breve y'$ is a spinned version of an $L$-spinned word $y'$, and $v_1, \ldots, v_k$ ($k \geq 0$) are $b$-free $L$-spinned words such that $\Alph(bv')=\Alph(bv'y')$. By Remark~\ref{R:decomp} and Theorem~\ref{T:normalized-quasi-characterization}, the word $\mu_{\breve u}(\bt)$ is not quasiperiodic.

\item
Now we consider the case when $\breve u$ ends with an $R$-spinned letter, that is $\breve u = \breve w \bar v$ for a non-empty $L$-spinned word $v$ and a spinned word $\breve w$ such that $\breve w=\varepsilon$ or $\breve w$ ends with an $L$-spinned letter. Two cases can hold:

\begin{description}
\item{Case 1:} $\cA \neq \Alph(v)$.

Let $a$ be a letter in $\cA \setminus \Alph(v)$, and let $b$ be any other letter (remember $|\cA|\geq 2$).
Let $\bt$ be the episturmian word with normalized directive word $(a \bar b)^\omega$ and let $\bt' = \mu_{\breve u}(\bt)$. Then $\bt'$ is directed by $\breve \bu'=\breve w \bar v (a \bar b)^\omega$ and, since $\breve w \bar v$ is normalized and $\bar a \not \in \Alph(\bar v)$, $\breve \bu'$ is normalized. Moreover since $\breve w \bar v$ does not verify $i)$, $\breve \bu'$ cannot be decomposed into the form $\breve w' c v_1 \bar c v_2 \cdots \bar c v_k \breve y' \cdots$ with 
an $L$-spinned letter $c$, a spinned word $\breve w'$, a spinned version $\breve y'$ of an $L$-spinned word $y'$ and some $c$-free $L$-spinned words $v_1, \ldots, v_k$ ($k \geq 0$) such that $\Alph(cv_k)= \Alph(cv_ky')$.
By Remark~\ref{R:decomp} and Theorem~\ref{T:normalized-quasi-characterization}, the word $\mu_{\breve u}(\bt)$ is not quasiperiodic.

\item{Case 2:} $\cA = \Alph(v)$.

Since $\breve u$ does not verify $ii)$, there exists a letter $a$ and $a$-free $L$-spinned words $v_0, \ldots, v_k \in \cA^*$ such that $\breve u = \breve w \bar v_0 \bar a \bar v_1 \cdots \bar a \bar v_k$ and 
for all $i$, $1 \leq i \leq k$, $\Alph(v_i) \neq \cA\setminus \{a\}$.
Moreover since $\breve u$ does not verify $iii)$, then either $v_0 \neq \varepsilon$, or $\breve w$ cannot be written in the form $\breve w=\breve w' v'$ for a spinned word $\breve w$ and an $L$-spinned word $v'$ such that $\Alph(v')=\cA \setminus \{a\}$.

Since $\cA$ contains at least three letters, there exist $m\geq 3$ pairwise different letters $a_1 = a$, $a_2$, \ldots, $a_m$ such that $\cA=\{a_1, a_2, \ldots, a_m\}$. 

Let $\bt$ be the episturmian word with normalized directive word $(a a_2 \bar a_3 \cdots \bar a_m)^\omega$ and let $\bt'=\mu_{\breve u}(\bt)$.
Then $\bt'$ is directed by $\breve w \bar v_0 \bar a \bar v_1 \cdots \bar a \bar v_k a a_2 \bar a_3 \cdots \bar a_m (a a_2 \cdots \bar a_m)^\omega  \equiv \breve w \bar v_0 a v_1 \bar a \cdots \bar a v_k \bar a a_2 \bar a_3 \cdots \bar a_m (a a_2 \bar{a}_3\cdots \bar a_m)^\omega$.
Since $\breve w$ ends with an $L$-spinned letter and each $v_i$ ($0 \leq i \leq k$) is $a$-free, this word is normalized.

Since $\breve w \bar v$ does not verify $i)$ and from the previous observations, the word $\breve w \bar v_0 a v_1 \bar a \cdots \bar a v_k \bar a a_2 \bar a_3 \cdots \bar a_m (a a_2 \cdots \bar a_m)^\omega$ cannot be decomposed into the form $\breve w' b v_1' \bar b v_2' \cdots \bar b v_k' \breve \by' \cdots$ with 
an $L$-spinned letter $b$, a spinned word $\breve w'$, a spinned version $\breve \by'$ of an $L$-spinned word $\by'$ and some $b$-free $L$-spinned words $v_1', \ldots, v_k'$ ($k \geq 0$) such that with $\Alph(bv_k')=\Alph(bv_k'\breve \by')$.
By Remark~\ref{R:decomp} and Theorem~\ref{T:normalized-quasi-characterization}, the word $\mu_{\breve u}(\bt)$ is not quasiperiodic.
\end{description}
\end{itemize}
\end{proof}

\normalsize

\section{Episturmian Lyndon words} \label{S:episturmianLyndon}

Theorem~\ref{T:quasi-characterization} provides a characterization of quasiperiodic episturmian words. In the binary case, it was proved in \cite{fLgR07quas} that a Sturmian word is quasiperiodic if and only if it is not an infinite Lyndon word. A natural question to ask is then: ``does this result still hold for episturmian words on a larger alphabet?'' By a result in \cite{fLgR07quas}, one can see that any infinite Lyndon word is non-quasiperiodic. In this section, we show that there is a much wider class of episturmian words that are non-quasiperiodic, besides those that are infinite Lyndon words. This follows from our characterization of episturmian Lyndon words (Theorem~\ref{T:Lyndon-episturmian}, to follow).

Let us first recall the notion of lexicographic order and the definition of Lyndon words (see \cite{mL83comb} for instance).

Suppose the alphabet $\cA$ is totally ordered by the relation $<$. Then we 
can totally order $\cA^*$ by the \emph{lexicographic order} $\leq$ 
defined as follows. Given two words $u$, $v \in \cA^+$, we have $u
\leq v$ if and only if either $u$ is a prefix of $v$ or $u =
xau^\prime$ and $v = xbv^\prime$, for some $x$, $u^\prime$,
$v^\prime \in \cAstar$ and letters $a$, $b$ with $a < b$. This is
the usual alphabetic ordering in a dictionary. We write $u < v$ when $u\leq v$ and $u\ne v$, in which case we say that $u$ is (strictly) \emph{lexicographically smaller} than $v$. The notion of lexicographic order naturally extends to infinite words in $\cAw$. We denote by $\min(\cA)$ the smallest letter with respect to the lexicographic order.

A non-empty finite word $w$ over $\cA$ is a {\em Lyndon word} if it is lexicographically smaller than all of its proper suffixes for the given order $<$ on $\cA$. Equivalently, $w$ is the lexicographically smallest primitive word in its conjugacy class; that is, $w < vu$ for all non-empty words $u$, $v$ such that $w = uv$. The first of these definitions extends to infinite words: an infinite word over $\cA$ is an {\em infinite Lyndon word} if and only if it is (strictly) lexicographically smaller than all of its proper suffixes for the given order on $\cA$. That is, a finite or infinite word $w$ is a Lyndon word if and only if $w < \TT^i(w)$ for all $i > 0$.

In this section, we assume that $|\cA| > 1$ since on a $1$-letter alphabet there are no infinite Lyndon words. Also note that an infinite Lyndon word cannot be periodic. Therefore we consider only aperiodic episturmian words (i.e., those with $|\Ult(\Delta)| > 1$).

\subsection{A complete characterization} \label{SS:Lyndon-characterization}

In this section, generalizing previous results in \cite{fLgR07quas} (Sturmian case) and \cite{aG07orde} (Arnoux-Rauzy sequences or strict episturmian words), we prove:

\begin{thrm} \label{T:Lyndon-episturmian}
Let $\cA = \{a_1, \ldots, a_m\}$ be an alphabet ordered by $a_1 < a_2 < \cdots < a_m$ and, for $1 \leq i \leq m$, let $\cB_i = \{a_i, \ldots, a_m\}$. 

An episturmian word $\bw$ is an infinite Lyndon word if and only if there exists an integer $j$ such that $1 \leq j <m$ and the (normalized) directive word of $\bw$ belongs to:
$$(\bar\cB_2^*a_1)^*\cdots 
(\bar\cB_j^*a_{j-1})^*(\bar\cB_{j+1}^*a_j)^*(\bar\cB_{j+1}^+\{a_j\}^+)^\omega.$$
\end{thrm}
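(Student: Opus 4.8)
The plan is to combine the characterization of quasiperiodic episturmian words (Theorem~\ref{T:quasi-characterization}, or its normalized version Theorem~\ref{T:normalized-quasi-characterization}) with a direct analysis of the lexicographic order on episturmian words in terms of their directive words. The key structural input we need is a lexicographic comparison: given an episturmian word $\bw$ with normalized directive word $\breve\Delta = \breve x_1\breve x_2\cdots$, we want to understand $\TT^i(\bw)$ versus $\bw$ in terms of the shape of $\breve\Delta$. Since $\bw$ begins with the left-most $L$-spinned letter of $\breve\Delta$ (Proposition~3.11 of~\cite{jJgP02epis}), and by Fact~\ref{F:episturmian} $\bw$ has the same factor set as the epistandard word $\bs$ directed by the underlying $L$-spinned word, the first letter of $\bw$ equals $\min(\Alph(\bw))$ exactly when $\bw$ is a Lyndon word is the kind of statement we must sharpen. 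First I would observe that an infinite Lyndon word must have $\min(\Alph(\bw))=a_\ell$ occurring as its first letter, and in fact (by the standard Lyndon theory, since the letter $a_\ell$ occurs infinitely often by uniform recurrence) $\bw$ is Lyndon iff every proper suffix starting with $a_\ell$ is $>\bw$; I would reduce the whole analysis to the subalphabet actually appearing and relabel so that $\min(\Alph(\bw)) = a_1$ — but note the theorem's index $j$ is precisely designed to record where the true alphabet of the aperiodic tail starts, so I would be careful to keep $\cB_i$ as stated.

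The heart of the argument is the following. Write the normalized directive word and decompose it according to its $L$-spinned letters: $\breve\Delta = \breve w_0\, c_1\, \breve w_1\, c_2\, \breve w_2\cdots$ where $c_1,c_2,\dots$ are the successive $L$-spinned letters and each $\breve w_k$ is $R$-spinned. By Theorem~\ref{T:episturmian} and formula~\eqref{eq:f-shift-infinite}, $\bw$ is obtained as a limit of images $\mu_{\breve x_1\cdots\breve x_n}(a_n)$, and prefixes/suffixes of $\bw$ correspond to the blocks $\mu$-images of the iterated palindromic closures. I would show: (i) $\bw$ is \emph{not} a Lyndon word whenever $\breve\Delta$ contains an $L$-spinned factor $v$ with $\Alph(v)=\Alph(v\by)$ where $\by$ is the tail — equivalently whenever $\bw$ is quasiperiodic by Theorem~\ref{T:quasi-characterization} — because then $\bw$ has a prefix $q = \mu_{\breve w}(Pal(v))$ that is a quasiperiod, and a quasiperiodic word cannot be Lyndon (this is the cited result from~\cite{fLgR07quas}: a Lyndon word $\bw$ satisfies $\bw < \TT^i(\bw)$ strictly, so an occurrence of the prefix $q$ overlapping the previous one would force $\bw < \TT^i\bw$ with $q$ a prefix of both, contradicting that $q$ is also a prefix of the larger word unless the overlap is trivial). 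Then (ii), conversely, I would show that if $\breve\Delta$ has the form in the theorem then $\bw$ \emph{is} Lyndon, by directly verifying $\bw < \TT^i(\bw)$ for all $i$: the tail $(\bar\cB_{j+1}^+\{a_j\}^+)^\omega$ means every letter $>a_j$ gets spin $R$ forever, $a_j$ itself reappears $L$-spinned infinitely often, and the prefix part $(\bar\cB_2^*a_1)^*\cdots(\bar\cB_{j+1}^*a_j)^*$ is exactly what is forbidden by the quasiperiodicity criterion; and I would compute that under this form the word $\bw$ looks like $a_j$ followed by material that is lexicographically pushed up at every shift — essentially $\bw$ has the shape of an "increasing" epistandard-type word on $\cB_{j+1}\cup\{a_j\}$ relative to the order. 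I expect this direction to require the combinatorial lemma that $\mu_a(\bw)$ or $R_a$-images behave monotonically with respect to $\leq$ when $a = \min$, together with an induction on the directive word mirroring the inductive proof of Theorem~\ref{T:episturmianQuasiperiods}.

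The main obstacle, I expect, is the precise bookkeeping in direction (ii) — establishing that the exact regular expression in the statement, and \emph{nothing weaker}, yields a Lyndon word. One has to rule out all the ways the directive word could fail: an $L$-spinned letter appearing that is $>a_j$ (which would eventually give a factor making $\bw$ either periodic or non-Lyndon because a smaller suffix would appear), an occurrence of $a_{j-1}$ or smaller appearing too late, or the $R$-spinned blocks $\bar\cB_i$ containing a letter outside $\cB_i$. I would handle this by showing that each such violation produces, via Theorem~\ref{T:directSame} and normalization, either a quasiperiodic word (hence non-Lyndon by direction (i)) or an explicit suffix $\TT^i(\bw) < \bw$ — the latter typically by exhibiting a position where a small letter $a_k$ with $k<\ell$ occurs in $\bw$ not at the start, using the palindromic-prefix description and the occurrence structure of letters in $\mu$-images. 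The delicate point is that normalization (no factor in $\bigcup_a \bar a\bar\cA^* a$) is essential to make the correspondence between the shape of $\breve\Delta$ and the lexicographic behavior of $\bw$ rigid; without it the same $\bw$ has many directive words and the clean regular expression would not hold. I would therefore state and use a lemma, proved by induction on $|\breve w|$ in the spirit of the second proof of Theorem~\ref{T:episturmianQuasiperiods}, computing $\min(\Alph(\bw))$ and the "first return to the minimal letter" directly from the normalized directive word, and conclude by matching this data against the Lyndon condition.
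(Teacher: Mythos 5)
Your overall shape (constrain the normalized directive word for the forward direction, then verify the converse by an induction along the directive word) is the right one, but two of the load-bearing steps are missing or misidentified, and they are precisely the two places where the paper has to do real work.

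For the forward direction you make quasiperiodicity (via Theorem~\ref{T:quasi-characterization}) the primary exclusion mechanism. That only removes a proper subset of the bad directive words: the paper's whole point in this section is that non-quasiperiodic does \emph{not} imply Lyndon (e.g.\ the word directed by $(a\bar b c)^\omega$ is non-quasiperiodic with a unique directive word, yet not Lyndon for any order; see Remark~\ref{R:uniqueDirective-Lyndon}). You do acknowledge needing ``an explicit suffix $\TT^i(\bw)<\bw$'' for the remaining violations, but you never identify the concrete configurations. The paper isolates exactly three (Lemma~\ref{L:Lyndon-properties}): $L_bfL_a$ with $a<b$ and $f$ arbitrary, $R_afL_b$ with $a<b$ and $f\in\cR_\cA^*$, and $R_xfL_x$ with $f\in\cR_\cA^*$ and a recurrence hypothesis; the first two are the ``small letter occurs not at the start'' argument you gesture at, but the third is different in kind (the image \emph{does} start with the minimal letter $x$, and one must use recurrence to produce a factor $x^{n+1}$ beating the prefix $x^ny$). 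These local obstructions are then propagated to the whole word via Fact~\ref{F:episturmian_preserve_order} (order preservation implies that if $\mu_{\breve u}(\bw')$ is Lyndon then so is $\bw'$), which is the mechanism that lets one read the constraint off an arbitrary position of the directive word. Without part $iii)$ and without this propagation step, the case analysis does not close.

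For the converse, your plan to verify $\bw<\TT^i(\bw)$ for all $i$ directly, using only the monotonicity of $\mu_a$ with respect to $\leq$, is not enough: order preservation holds for \emph{every} morphism in $\cL_\cA\cup\cR_\cA$ and says nothing about whether the image of a Lyndon word is Lyndon. The paper instead factors the directive word into blocks $v_k\in(\bar\cB_{k+1}^*a_k)^*$ and $v_n\in\bar\cB_{j+1}^+\{a_j\}^+$, invokes the characterization of Lyndon-preserving episturmian morphisms (Proposition~\ref{P:Lyndon-morphisms}, $f\in(\mathcal{R}_{\{a_2,\ldots,a_m\}}^*L_{a_1})^*\{R_{a_m}\}^*$) to conclude that each finite prefix image $\mu_{v_1\cdots v_k}(a_j)$ is a finite Lyndon word, and then applies Melan\c{c}on's criterion (Lemma~\ref{defMelancon}: an infinite word is Lyndon iff it has infinitely many Lyndon prefixes). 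Neither of these two tools appears in your plan, and reproving the Lyndon-morphism characterization from scratch is a substantial sub-project. So the proposal as written has genuine gaps on both sides of the equivalence, even though the normalization observation and the use of order-preservation are correctly placed.
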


\begin{note} In the above theorem, we have put the word \emph{normalized} between brackets since one can easily verify from Theorem~\ref{T:uniqueDirective} that a spinned infinite word of the given form is the unique directive word of exactly one episturmian  word. 
\end{note}

\begin{xmpl}
Let $\cA=\{a,b,c,d\}$. Then the word $(\bar b \bar c a)(\bar d \bar c b)^2 (\bar d c c)^\omega$ directs a Lyndon episturmian word, so does $aa(\bar d c)^\omega$, but $\bar c a \bar b a \bar d c d^\omega$ does not (this spinned word directs a periodic word).
\end{xmpl}

\begin{rmrk} The ``if and only if'' condition can be reformulated as follows: 
\begin{quote} {\it 
The (normalized) directive word of $\bw$ takes the form $v_1\cdots v_{j} \by$
where:
\begin{itemize}
\item for $1 \leq k \leq j$, $v_k$ is a spinned word in $(\bar{\cB}_{k+1}^*a_k)^*$;
\item $\by$ is a spinned infinite word belonging to $(\bar{\cB}_{j+1}^+\{a_j\}^+)^\omega$.
\end{itemize}}
\end{quote}
\end{rmrk}

\begin{rmrk} \label{R:uniqueDirective-Lyndon}
Theorems \ref{T:uniqueDirective} and \ref{T:Lyndon-episturmian} show that any episturmian Lyndon word has a unique spinned directive word, but the converse is not true. Certainly, there exist episturmian words with a unique directive word which are not infinite Lyndon words. For example, the regular wavy word $(a\bar b c)^\omega$ is the unique directive word of the strict episturmian word: 
\[
\lim_{n\rightarrow\infty}{\mu_{a\bar b c}^n(a)} = acabaabacabacabaabaca\cdots 
\]
which is clearly not an infinite Lyndon word by Theorem~\ref{T:Lyndon-episturmian} and also by the fact that $acabaaw$ is not a Lyndon word for any order on $\{a,b,c\}$ and for any word $w$. 
\end{rmrk}

In order to prove the Theorem~\ref{T:Lyndon-episturmian}, we recall two useful results and state a new one (Lemma~\ref{L:Lyndon-properties}).

\begin{lmm}{\rm \cite{Mel2000}}
\label{defMelancon}
An infinite word is a Lyndon word if and only if it has infinitely many different Lyndon words as prefixes. 
\end{lmm}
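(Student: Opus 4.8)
The plan is to prove the two implications separately, using throughout the two equivalent forms of the Lyndon property recalled above: a finite word $u = u_1\cdots u_N$ is Lyndon iff $u < u_{p+1}\cdots u_N$ for every $p$ with $1 \le p \le N-1$, and an infinite word $\bw$ is a Lyndon word iff $\bw < \TT^i(\bw)$ for all $i \ge 1$. I will also use the elementary remark that the distinct Lyndon prefixes of $\bw$ are linearly ordered by length, so that ``$\bw$ has infinitely many Lyndon prefixes'' is the same as ``$\bw$ has Lyndon prefixes of unbounded length.''

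First I would treat the easy implication. Assume $\bw$ has Lyndon prefixes of unbounded length. Fix $i \ge 1$; I must show $\bw < \TT^i(\bw)$, i.e. $w_1 w_2 \cdots < w_{i+1} w_{i+2}\cdots$. Choose a Lyndon prefix $u = w_1\cdots w_N$ with $N > i$. Since $u$ is Lyndon, it is strictly smaller than its proper suffix $w_{i+1}\cdots w_N$; as $u$ is the longer of these two words it cannot be a proper prefix of that suffix, so the inequality is realized by a genuine letter mismatch at some position $j \le N - i$, with $w_1\cdots w_{j-1} = w_{i+1}\cdots w_{i+j-1}$ and $w_j < w_{i+j}$. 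That same mismatch witnesses $\bw < \TT^i(\bw)$ as infinite words. As $i$ was arbitrary, $\bw$ is an infinite Lyndon word. (Note that this direction uses only unbounded, not literally infinitely many, Lyndon prefixes.)

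For the harder implication, assume $\bw$ is an infinite Lyndon word; I must produce Lyndon prefixes of unbounded length. The starting point is a workable test for when a prefix is Lyndon. For $p \ge 1$ let $d(p)$ be the first index at which $\bw$ and $\TT^p(\bw)$ differ; this is finite since $\bw < \TT^p(\bw)$ strictly, and $w_t = w_{p+t}$ for $t < d(p)$ while $w_{d(p)} < w_{p+d(p)}$. Comparing $w_1\cdots w_N$ with its proper suffix $w_{p+1}\cdots w_N$ and using $\bw < \TT^p(\bw)$, one checks that $w_1\cdots w_N$ is Lyndon iff $p + d(p) \le N$ for every $p \in \{1,\ldots,N-1\}$: the only way the Lyndon condition can fail at $p$ is that $w_{p+1}\cdots w_N$ is a prefix of $w_1\cdots w_N$, that is, $\bw$ admits the period $p$ on the window $[1,N]$. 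I then argue by contradiction. Suppose $w_1\cdots w_N$ is the \emph{longest} Lyndon prefix. Then for every $M > N$ the prefix $w_1\cdots w_M$ fails the test, so $\bw$ has some period $p(M) \in \{1,\ldots,M-1\}$ on $[1,M]$; moreover $p(M) \ge N$, because a period $p < N$ on $[1,M]$ would be a period on $[1,N]$ and would make the proper suffix $w_{p+1}\cdots w_N$ a prefix of, hence smaller than, $w_1\cdots w_N$, contradicting that this prefix is Lyndon.

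The crux, and the step I expect to be the main obstacle, is to convert this unbounded family of internal repetitions into a single \emph{global} period of $\bw$. Since infinite Lyndon words are aperiodic (a purely periodic $z^\omega$ satisfies $\TT^{|z|}(z^\omega) = z^\omega$, violating strict $\bw < \TT^{|z|}(\bw)$), any global period yields the desired contradiction and finishes the proof. To extract one I would feed the periods $p(M)\ge N$, valid on the nested windows $[1,M]$, into the Fine--Wilf periodicity theorem: two periods $p<p'$ valid on a common window of length at least $p+p'$ force the period $\gcd(p,p')$ there, and this gcd can never drop below $N$ (that would again violate the Lyndon property of $w_1\cdots w_N$). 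Hence the admissible periods form a gcd-closed set bounded below by $N$, with smallest element $p^\ast \ge N$ dividing all the others and valid on arbitrarily long windows; validity on arbitrarily long windows is exactly global periodicity with period $p^\ast$, the sought contradiction. The delicate points I anticipate are the ``weak period'' cases where $p(M) > M/2$, for which the Fine--Wilf hypothesis must be secured by passing to suitably large common windows, and the verification that no reduction ever drops the period below the threshold $N$; the latter threshold, forced precisely by the Lyndon-ness of the longest prefix, is what prevents any collapse.
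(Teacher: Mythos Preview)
The paper does not prove this lemma at all: it is stated with a bare citation to \cite{Mel2000} and used as a black box. So there is no ``paper's proof'' to compare against; let me simply assess your argument.

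Your backward implication is fine. For the forward implication your set-up is correct and useful: with $d(p)$ the first mismatch between $\bw$ and $\TT^p(\bw)$, the prefix $w_1\cdots w_N$ is Lyndon iff $p+d(p)\le N$ for every $p<N$, and if $u=w_1\cdots w_N$ is the longest Lyndon prefix then every longer prefix $w_1\cdots w_M$ carries an internal period $p(M)\in[N,M-1]$. The gap is the Fine--Wilf step. You assert that the family $\{p(M)\}$ is ``gcd-closed'' and that its minimum $p^\ast$ is a period on arbitrarily long windows, but to apply Fine--Wilf to $p(M_1)$ and $p(M_2)$ you must work on a \emph{common} window, namely $[1,\min(M_1,M_2)]$, and you need its length to be at least $p(M_1)+p(M_2)-\gcd$. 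Since $p(M_2)$ may be as large as $M_2-1$, this hypothesis can fail badly, and your proposed fix of ``passing to suitably large common windows'' goes the wrong way: enlarging the window does not help, because $p(M_1)$ is only known to be a period on $[1,M_1]$, not beyond. You flag exactly this as the delicate point, but you do not resolve it, and I do not see a straightforward rescue along these lines.

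A cleaner route avoids Fine--Wilf entirely. With $u=w_1\cdots w_N$ the longest Lyndon prefix and $\bw'=\TT^N(\bw)$, the Lyndon property $\bw<\bw'$ forces $u\le\bw'$ (if $u>\bw'$ via a mismatch then $u\bw'>\bw'$). If the inequality $u<\bw'$ is realised by a mismatch at some position $j\le|u|$, check directly that $uw'_1\cdots w'_j$ is a Lyndon prefix of $\bw$ strictly longer than $u$: for shifts inside $u$ use that $u$ is Lyndon, for the shift at $|u|$ use the mismatch itself, and for shifts in the tail use that $w'_1\cdots w'_{j-1}$ coincides with a prefix of $u$. If instead $u$ is a prefix of $\bw'$, repeat with $\bw'$ in place of $\bw$; perpetual repetition gives $\bw=u^\omega$, contradicting aperiodicity of infinite Lyndon words. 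Either way the assumed maximality of $u$ fails, so Lyndon prefixes are unbounded.
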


A morphism $f$ is said to preserve finite (resp.~infinite) Lyndon words if for each finite (resp.~infinite) Lyndon word $w$, $f(w)$ is a finite (resp.~infinite) Lyndon word. For episturmian morphisms, we have:

\begin{prpstn} {\em \cite{gR03lynd, gR07onmo}} \label{P:Lyndon-morphisms}
Let $\cA = \{a_1, \ldots, a_m\}$ be an alphabet ordered by $a_1 < a_2 < \cdots < a_m$. Then the following assertions are equivalent for an episturmian morphism:
\begin{itemize}
\item $f$ preserves finite Lyndon words;
\item $f$ preserves infinite Lyndon words;
\item $f \in (\mathcal{R}_{\{a_2, \ldots, a_m\}}^*L_{a_1})^*\{R_{a_m}\}^*$.
\end{itemize}
\end{prpstn}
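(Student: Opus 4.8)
The plan is to prove the three equivalences by the cycle $(3)\Rightarrow(1)\Rightarrow(2)\Rightarrow(3)$, after first reducing to pure morphisms. For the reduction, observe that if the permutation part of $f$ is a non-identity permutation $\sigma$, then there are letters $a_i<a_j$ with $\sigma(a_i)>\sigma(a_j)$, so the Lyndon word $a_ia_j$ is sent to $\sigma(a_i)\sigma(a_j)$, which is not Lyndon; hence any Lyndon-preserving episturmian morphism is pure and we may write $f=\mu_{\breve u}$ and read off the grammar $\big(\mathcal{R}_{\{a_2,\ldots,a_m\}}^{*}L_{a_1}\big)^{*}\{R_{a_m}\}^{*}$ as: the only $L$-spinned letter is $a_1=\min(\cA)$, no $R_{a_1}$ occurs, and every $R$-spinned letter to the right of the last $L_{a_1}$ equals $R_{a_m}$.

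For sufficiency $(3)\Rightarrow(1)$, I would isolate two facts. First, each elementary morphism is \emph{order-preserving} ($u<v\Rightarrow f(u)<f(v)$): at the first position of difference the competing letters $z<z'$ give images with first letters $z<z'$ for $R_{a_i}$ and $R_{a_m}$ (which fix the first letter), and $a_1$-or-$z$ versus $a_1$-or-$z'$ for $L_{a_1}$, and a short case check confirms the order survives. Second, I would prove three block lemmas: $L_{a_1}$ maps Lyndon words to Lyndon words \emph{beginning with} $a_1$; every $R_{a_i}$ with $i\ge 2$ maps $a_1$-initial Lyndon words to $a_1$-initial Lyndon words; and $R_{a_m}$ maps Lyndon words to Lyndon words. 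In each case one splits the proper suffixes of $f(w)$ into those of the form $f(s)$ for a proper suffix $s$ of $w$ (handled by order-preservation together with $w<s$) and those starting strictly inside an image block, which begin with the appended/prepended letter—namely $a_i>a_1$ for $R_{a_i}$ on an $a_1$-initial word, or $a_m=\max$ for $R_{a_m}$—hence are larger than $f(w)$, whose first letter is the least possible. Composing these along the grammar (with $R_{a_m}^{t}$ innermost, each $L_{a_1}$ resetting the leading letter to $a_1$) yields $(3)\Rightarrow(1)$. The passage $(1)\Rightarrow(2)$ is then immediate from Melançon's Lemma~\ref{defMelancon}: an infinite Lyndon word $\bw$ has infinitely many finite Lyndon prefixes, whose images are finite Lyndon prefixes of $f(\bw)$ of increasing length, so $f(\bw)$ is an infinite Lyndon word.

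The implication $(2)\Rightarrow(3)$ is the crux, and I would argue it in contrapositive form, fixing the normalized directive word $\breve u$ of $f$ (Theorem~\ref{T:normalisation}) so that ``not of the prescribed shape'' is a concrete condition. Up to block-equivalence (Theorem~\ref{T:presentation}), a $\breve u$ outside the grammar must display one of three forbidden features: an $L$-spinned letter $b\neq a_1$, an occurrence of $R_{a_1}$, or an $R_{a_i}$ with $2\le i\le m-1$ to the right of the last $L_{a_1}$. For each I would exhibit a short Lyndon word with non-Lyndon image: $L_b$ sends $a_1b$ to $ba_1b$ (smaller suffix $a_1b$); $R_{a_1}$ sends $a_1a_2$ to $a_1a_2a_1$ (smaller suffix $a_1$); and a trailing $R_{a_i}$ produces, as in $L_{a_1}R_{a_i}(a_m)=a_1a_ma_1a_i$, an image with the strictly smaller proper suffix $a_1a_i$ (since $a_i<a_m$). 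The real work is to propagate such a local violation through the outer generators: using that $L_{a_1}$ and $R_{a_m}$ are order-preserving and injective (and hence reflect a strict first-difference descent), one reduces to the innermost offending generator, builds the finite witness there, and reads it back out. To obtain the infinite witness that $(2)$ requires, I would replace the finite witness $w$ (Lyndon, not beginning with $a_m$) by $w\,a_m^{\omega}$, which is again Lyndon, and note that $f(w\,a_m^{\omega})=f(w)f(a_m)^{\omega}$ retains the smaller suffix because the descent in $f(w)$ is decided at a finite position.

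I expect the main obstacle to be exactly this propagation step. One must choose the witness, and tune the total order on $\cA$, so that the suffix-descent created at the offending generator is \emph{strict} and survives both the remaining inner morphism and the outer ``safe'' generators; and one must verify that block-equivalence genuinely normalizes $\breve u$ into a representative where the three forbidden features are mutually exclusive and exhaustive. The converses of the first two block lemmas—that $L_{a_1}(\bw)$ or $R_{a_m}(\bw)$ being Lyndon forces $\bw$ Lyndon—are what make the peeling rigorous, and establishing these order-reflection statements is the technical heart of the necessity half.
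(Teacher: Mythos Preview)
The paper does not prove this proposition at all: it is quoted from \cite{gR03lynd, gR07onmo} and used as a black box in the proof of Theorem~\ref{T:Lyndon-episturmian}. There is therefore no proof in the paper to compare your attempt against.

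That said, two comments on your outline. First, your reduction to pure morphisms is not valid as written. You claim that if the permutation part $\sigma$ of $f$ is non-trivial then $f$ sends the Lyndon word $a_ia_j$ to $\sigma(a_i)\sigma(a_j)$; but $f$ is a composition, not $\sigma$ alone, so $f(a_ia_j)$ is $\sigma(g(a_ia_j))$ or $g(\sigma(a_i)\sigma(a_j))$ depending on how you normalize, and in neither case does your one-line argument apply. You need a genuine argument that a non-trivial permutation component obstructs Lyndon preservation \emph{through} the pure part. Second, in $(2)\Rightarrow(3)$ you correctly identify the propagation of a local witness through outer generators as ``the real work,'' and you correctly isolate the needed order-reflection lemmas for $L_{a_1}$ and $R_{a_m}$; but you do not actually carry this out, and the case analysis on the normalized directive word (why the three forbidden features are exhaustive, and why normalization cannot hide one behind a block-transformation) needs more care than you indicate. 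The sufficiency direction $(3)\Rightarrow(1)\Rightarrow(2)$ via the block lemmas and Lemma~\ref{defMelancon} is on solid ground.
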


Now we prove a lemma concerning the action of morphisms in $\cL_{\cA}\cup\cR_{\cA}$ on infinite Lyndon words.

\begin{lmm} \label{L:Lyndon-properties}
Suppose $\bw$ is an infinite word over an ordered alphabet $\cA$ and let $a$, $b \in \cA$ with $a < b$. Then, the following properties hold.
\begin{enumerate}
\item[i)] $L_bfL_a(\bw)$ is not an infinite Lyndon word for any non-erasing morphism $f$.
\item[ii)] $R_afL_b(\bw)$ is not an infinite Lyndon word for any morphism $f$ in $\cR_{\cA}^*$.
\item[iii)] If $\bw$ is recurrent, then $R_xfL_x(\bw)$ is not an infinite Lyndon word for any letter $x$ and  morphism $f$ in $\cal{R}_{\cA}^*$.
\end{enumerate}
\end{lmm}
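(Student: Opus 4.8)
The plan is to prove each of the three items by contradiction, exploiting the fact (Lemma~\ref{defMelancon}) that an infinite Lyndon word must have infinitely many Lyndon prefixes, together with the explicit action of the morphisms $L_a$, $R_a$ on words. The key preliminary observation is that if $x$ is a letter and $g$ is a morphism in $\mathcal{L}_\cA \cup \mathcal{R}_\cA$ (or a product of such), then the image of any sufficiently long factor of $\bw$ reveals a short ``forbidden'' factor that prevents the Lyndon property. Concretely, for $L_a$ the image of every letter $\ne a$ starts with $a$, and for $R_a$ the image of every letter $\ne a$ ends with $a$; products in $\mathcal{R}_\cA^*$ preserve a controlled structure that I can describe by the ``shifting factor'' $S_{\breve w}$ from formula~\eqref{eq:f-shift}. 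The strategy in each case is to locate, inside $L_bfL_a(\bw)$ (resp.\ $R_afL_b(\bw)$, resp.\ $R_xfL_x(\bw)$), two occurrences of a letter or short word such that the suffix beginning at the later occurrence is lexicographically smaller than (a prefix of) the whole word, contradicting the definition $w < \TT^i(w)$ for all $i>0$.

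For item $i)$, I would argue as follows. Since $a<b$, the word $z := L_a(\bw)$ contains the letter $a$ (indeed $a$ is its first letter, as $L_a$ maps everything to words beginning with $a$), and $z$ is not of the form $a^\omega$ unless $\bw = a^\omega$, a degenerate case easily handled (then $fL_a(\bw)=f(a^\omega)$ and $L_bf(a^\omega)$ begins with $b$ but $f(a)$, being non-empty, contributes a letter $<b$ eventually — I would check this directly, or simply note $a^\omega$ is excluded if we want a genuine infinite word issue; in fact one shows the resulting word has a proper suffix starting with a letter $\le$ its first, so it is not Lyndon). In the main case, $fL_a(\bw)$ contains at least two distinct letters, one of which is $\le$ (the first letter of $f(a)$) which is $<b$ after applying $f$ — more carefully, $L_bfL_a(\bw)$ begins with the letter $b$ (since $f$ is non-erasing and $L_b$ prepends $b$ to images of letters $\ne b$... here I must be careful: $L_bfL_a(\bw)$ begins with $b$ only if the first letter of $fL_a(\bw)$ is $\ne b$). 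The clean argument: $L_bfL_a(\bw)$ contains the letter $a$ as an interior letter arbitrarily far out (because $L_a(\bw)$ does, $f$ is non-erasing, and $L_b$ does not delete letters), and $a$ is strictly smaller than the first letter of $L_bfL_a(\bw)$ whenever that first letter is $b$; if instead the first letter equals $a$ then take the suffix at a later occurrence of $a$ which is again $\le$ a prefix — in all sub-cases one produces $i>0$ with $\TT^i(L_bfL_a(\bw)) < L_bfL_a(\bw)$. So item $i)$ reduces to a short case analysis on the first letter.

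For item $ii)$, with $f \in \mathcal{R}_\cA^*$ and $a<b$: here $R_afL_b(\bw)$ begins with the letter $a$ whenever the first letter of $fL_b(\bw)$ differs from $a$, and $R_a$ appends $a$ after every non-$a$ letter of $fL_b(\bw)$. The word $L_b(\bw)$ has every non-first letter preceded by $b$, so its structure is constrained; applying $f\in\mathcal R_\cA^*$ preserves a similar ``$b$ on the right'' pattern in a suitable sense. The point is that $R_afL_b(\bw)$ contains $aa$ as a factor (from $R_a$ applied across a boundary where two non-$a$ letters meet, which happens since $L_b(\bw)$ has adjacent non-$b$... I should instead argue: $R_afL_b(\bw)$ begins with $a$ and contains arbitrarily far out an occurrence of $a$ followed by a letter that is $\le$ the second letter of the word, giving a smaller suffix). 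The most robust route is to use Proposition~\ref{P:Lyndon-morphisms} contrapositively together with the presentation (Theorem~\ref{T:presentation}): the morphism $R_afL_b$ is \emph{not} in $(\mathcal{R}^*_{\{a_2,\dots\}}L_{a_1})^*\{R_{a_m}\}^*$ for the order making $a$ minimal, hence does not preserve infinite Lyndon words — but this only says \emph{some} Lyndon word is not sent to a Lyndon word, not that \emph{every} $\bw$ fails. So I cannot shortcut this way; I must show directly that the \emph{specific prefix structure} forces a descending suffix. I expect the crux is: the first letter of $R_afL_b(\bw)$ is some letter $c$; I claim $c > a$ or $c = a$ with an early repetition, and I locate a later occurrence of the prefix of length $1$ or $2$ witnessing non-Lyndon-ness; the letter $b$ appearing in $L_b(\bw)$ (it does, unless $\bw$ avoids all letters $\ne b$, i.e.\ $\bw = $ constant, a handled degenerate case) maps under $fR_a$ to something containing $a$, and $a$ sits below the start.

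For item $iii)$, the recurrence hypothesis is essential and is the main obstacle. Here $x$ is a single letter and $f\in\mathcal R_\cA^*$, and $R_xfL_x(\bw)$: note $L_x$ maps $\bw$ to a word beginning with $x$ with $x$ inserted before every non-$x$ letter, and $R_x$ (outermost) inserts $x$ after every non-$x$ letter of $fL_x(\bw)$. The recurrence of $\bw$ guarantees that $\bw$ contains some letter $y\ne x$ (else $\bw=x^\omega$ and the image is $x^\omega$, not Lyndon as it is periodic — excluded anyway since Lyndon words are aperiodic, but here we want ``not infinite Lyndon'' which periodic words trivially satisfy; so that sub-case is fine), and moreover that letter recurs. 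The structure $R_xfL_x(\bw)$ then contains the factor $xx$ somewhere (image across a suitable boundary) while beginning with $x$; I would show the first letter is $x$ and that there is a later occurrence of $x$ such that the suffix starting there is $\le$ a prefix of the whole word. The delicate point — and what I expect to be the real obstacle — is ruling out the possibility that the word is ``$x$-dominated'' in a way that keeps it Lyndon: one must use that $\bw$ is recurrent so that whatever non-$x$ pattern appears near the start reappears later, creating a genuine equal-or-smaller suffix; without recurrence one could imagine a word where the unique ``bad'' pattern sits at the very front and is never repeated, which is exactly why $iii)$ needs the hypothesis (contrast with $i)$, $ii)$ where the asymmetry $a<b$ does the work unconditionally). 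I would make this precise by taking the shortest Lyndon prefix containing the first non-$x$ block, then using recurrence to find a later occurrence of the \emph{reversal-closed} factor that produces $\TT^i(R_xfL_x(\bw)) \le R_xfL_x(\bw)$ with equality forbidden by primitivity. The routine lexicographic verifications I will only sketch.
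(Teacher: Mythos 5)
Your overall strategy (exhibit a shifted suffix lexicographically smaller than the word) is the right one, and your item $i)$ essentially works, though your case analysis on the first letter is unnecessary: $L_b(z)$ \emph{always} begins with $b$ (if $z$ begins with $c\ne b$ then $L_b(c)=bc$; if with $b$ then $L_b(b)=b$), so $L_bfL_a(\bw)$ begins with $b$ and, containing an occurrence of $a<b$, cannot be Lyndon --- that is the paper's entire proof of $i)$. The genuine gap in your item $ii)$ is that you never identify the one structural fact that makes it a one-liner, namely that every morphism in $\cR_{\cA}^*$ \emph{preserves the first letter} ($R_c(d)$ begins with $d$ for every letter $d$). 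Hence $fL_b(\bw)$ begins with $b$, so $R_afL_b(\bw)$ begins with $b$ and contains $a$ (already from $R_a(b)=ba$), and $a<b$ kills the Lyndon property exactly as in $i)$. Your detour through Proposition~\ref{P:Lyndon-morphisms}, the speculation about the factor $aa$, and the unresolved claim ``$c>a$ or $c=a$ with an early repetition'' leave $ii)$ without a complete argument.

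Item $iii)$ also has a real gap: observing that the word ``contains $xx$ somewhere'' proves nothing, because the word may legitimately begin with a long block $x^n$. The decisive comparison, which your sketch never reaches, is between the \emph{maximal initial block} and a strictly longer later block. Concretely: $fL_x(\bw)$ begins with $x$ (first-letter preservation again), so if the image is aperiodic it begins with $x^ny$ for some $n\ge 1$ and $y\ne x$; recurrence of $\bw$ forces $fL_x(\bw)$ to contain a factor $zx^{n+r}y$ with $z\ne x$ and $r\ge 0$; applying $R_x$ turns this into $zx^{n+r+1}yx$, so $R_xfL_x(\bw)$ begins with $x^ny$ yet contains $x^{n+1}$. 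Were it Lyndon, $x$ would be its minimal letter, whence $x^{n+1}<x^ny$ and the suffix starting at that block is smaller than the word --- contradiction. Your appeal to ``the reversal-closed factor'' is a non sequitur here ($\bw$ is merely a recurrent infinite word, and no closure under reversal is available or needed), and without the $x^{n+1}$ versus $x^ny$ comparison the argument does not close.
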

\begin{proof} $i)$ The infinite word $L_bfL_a(\bw)$ starts with $b$ and contains an occurrence of the letter $a$; thus, since $a < b$, it cannot be an infinite Lyndon word. \medskip

\noindent $ii)$ As $f \in \cR_{\cA}^*$, the infinite word $R_afL_b(\bw)$ starts with $b$ and contains an occurrence of the letter $a$; thus, since  $a < b$, it cannot be an infinite Lyndon word. \medskip

\noindent $iii)$ To be an infinite Lyndon word, $R_xfL_x(\bw)$ must be aperiodic, in which case a letter different from $x$ occurs in it; in particular this letter occurs in $fL_x(\bw)$. Moreover, as $f\in \cR_{\cA}^*$, the infinite word $fL_x(\bw)$ begins with $x$ and hence with a prefix $x^ny$ for some integer $n \geq 1$ and a letter $y \neq x$. The recurrence of the infinite  word $\bw$ implies the recurrence of $fL_x(\bw)$, and so $fL_x(\bw)$ contains a factor $zx^{n+r}y$ for some letter $z \neq x$ and  integer $r \geq 0$. Now $R_xfL_x(\bw)$ begins with $x^ny$ and contains $zx^{n+r+1}y$, and so contains $x^{n+1}$. To be an infinite Lyndon word, it needs $x = \min(\Alph(R_xfL_x(\bw)))$, but then $x^{n+1} < x^n y$. Thus $R_xfL_x(\bw)$ is not an infinite Lyndon word. 
\end{proof}

Lastly, we need an important easy fact:

\begin{fact}\cite{gR07onmo}\label{F:episturmian_preserve_order}
Any morphism $f$ in $\cL_{\cA}\cup\cR_{\cA}$ preserves the lexicographic order for infinite words. More precisely, for any infinite words $\bw$ and $\bw'$, $\bw < \bw'$ if and only if $f(\bw) < f(\bw')$.
\end{fact}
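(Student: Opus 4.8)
The plan is to reduce the claimed equivalence to a single implication. Since the lexicographic order on $\cAw$ is total, it suffices to prove the forward (strict monotonicity) direction, namely that $\bw < \bw'$ implies $f(\bw) < f(\bw')$ for every $f$ of the form $L_a$ or $R_a$. Indeed, once this is established the converse follows by trichotomy: if $f(\bw) < f(\bw')$, then $\bw = \bw'$ would force $f(\bw) = f(\bw')$ and $\bw > \bw'$ would force $f(\bw) > f(\bw')$ by the forward implication, so the only remaining possibility is $\bw < \bw'$. To prove the forward direction I would use that two distinct infinite words differ at a first position: write $\bw = u c\bx$ and $\bw' = u d\by$, where $u \in \cA^*$ is the common prefix, $c, d \in \cA$ with $c < d$, and $\bx, \by \in \cAw$. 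Since $f$ is a morphism, $f(\bw) = f(u)f(c\bx)$ and $f(\bw') = f(u)f(d\by)$ agree on the common prefix $f(u)$, so it remains to compare $f(c\bx)$ with $f(d\by)$.

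The key step is a pair of uniform observations about the initial letters of the images, which dissolve the apparent difficulty that $L_a$ and $R_a$ treat the letter $a$ differently from the others. For $R_a$ one checks directly that the first letter of $R_a(x)$ is $x$ for \emph{every} $x \in \cA$: indeed $R_a(a) = a$ begins with $a$, and for $x \neq a$ the word $R_a(x) = xa$ begins with $x$. Hence $R_a(c\bx)$ begins with $c$ and $R_a(d\by)$ begins with $d$, and as $c < d$ this immediately gives $R_a(c\bx) < R_a(d\by)$. For $L_a$ one checks that $L_a(x)$ begins with $a$ for every $x$, and therefore that $L_a(x\bz)$ begins with the two-letter word $ax$ for every letter $x$ and every infinite tail $\bz$: when $x \neq a$ the second letter is the $x$ of $L_a(x) = ax$, while when $x = a$ the second letter is supplied by the following image $L_a(\bz)$, which again starts with $a = x$. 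Consequently $L_a(c\bx)$ begins with $ac$ and $L_a(d\by)$ with $ad$; these agree in their first letter and differ in the second, where $c < d$, so $L_a(c\bx) < L_a(d\by)$. In both cases prepending the common finite prefix $f(u)$ preserves the strict inequality, yielding $f(\bw) < f(\bw')$ and closing the forward direction.

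The only genuinely delicate point, and the step I expect to be the main obstacle, is precisely the handling of the cases $c = a$ or $d = a$ in the $L_a$ argument, since there the images $L_a(c)$ and $L_a(d)$ have different lengths ($1$ versus $2$) and one might worry that the comparison is not governed by a single position. The uniform prefix statement that $L_a(x\bz)$ begins with $ax$ is exactly what rules this out; verifying it carefully, using that $\bx$ and $\by$ are genuinely infinite so that the required subsequent letter always exists, is the crux of the proof. Everything else is a direct appeal to the definition of the lexicographic order together with the trichotomy reduction above.
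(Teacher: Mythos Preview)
Your proof is correct. The reduction via trichotomy to the forward implication is clean, and the case analysis for $L_a$ and $R_a$ is handled properly; in particular, your observation that $L_a(x\bz)$ always begins with the two-letter prefix $ax$ (regardless of whether $x = a$) is exactly the right way to dissolve the length mismatch between $L_a(a)$ and $L_a(x)$ for $x \neq a$.

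There is nothing to compare against in the paper itself: this statement is recorded as a \emph{Fact} with a citation to \cite{gR07onmo} and is not proved in the paper. So your argument is not a reworking of the paper's proof but an independent (and perfectly adequate) verification of a result the authors import from elsewhere.
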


A consequence of the above fact is that for any word $\bw$ and for any morphism $f$ in $\cL_{\cA}\cup\cR_{\cA}$, if $f(\bw)$ is a Lyndon word then necessarily $\bw$ is also a Lyndon word.

\begin{proof}[Proof of Theorem $\ref{T:Lyndon-episturmian}$] Assume that $\Delta$ is the normalized directive word of a Lyndon episturmian word. Then $\Delta$ contains no factor of the form $\bar{x}\bar vx$ for any letter $x$ and $v \in \cA^*$. By 
Lemma~\ref{L:Lyndon-properties}, it does not contain any factor of the form $bva$ or $\bar{a}\bar vb$ with $v \in \cA^*$ and  $a <b$. Thus $\Delta$ takes the form given in the statement of the theorem. Indeed by item $i)$ of Lemma~\ref{L:Lyndon-properties} and by Fact~\ref{F:episturmian_preserve_order}, only one letter (namely $a_j$) can have all spins ultimately $L$. Since a Lyndon word is not periodic, at least one other letter in $\cA$ should occur infinitely often. By items $ii)$-$iii)$ of Lemma~\ref{L:Lyndon-properties}, such a  letter should belong to $\bar{\cal B}_{j+1}$. Moreover, the sequence of letters with spin $L$ must be order-increasing and items $ii)$--$iii)$ of Lemma~\ref{L:Lyndon-properties} determine the conditions on letters with spin~$R$. 
\medskip

Conversely, suppose that the (normalized) directive word $\Delta$ of the episturmian word $\bw$ takes the form given in the statement of the theorem. Write $\Delta = v_1 \cdots v_{j}\by$ where, for $1 \leq k \leq j$, $v_k$ is a spinned word in $(\bar\cB_{k+1}^*a_k)^*$ and $\by$ is a spinned infinite word belonging to $(\bar\cB_{j+1}^+\{a_j\}^+)^\omega$. Then, because of the recurrence of the letter $a_j$ and of at least one other letter in $\cB_{j+1}$ in $\by$, there exists a sequence of spinned words $(v_{n})_{n \geq j}$, with each $v_{n}$ in $\bar\cB_{j+1}^+\{a_j\}^+$, such that $\by = \prod_{n \geq j} v_{n}$. Now, for each $k \geq 1$, $\mu_{v_k}$ is a Lyndon morphism on $\cB_k$ by Proposition~\ref{P:Lyndon-morphisms}. Hence, for each $k \geq 1$, the word $\mu_{v_1\cdots v_k}(a_j)$ is a Lyndon word. From $\bw = \lim_{k \to \infty} \mu_{v_1\cdots v_k}(a_j)$, we deduce from Lemma~\ref{defMelancon} that the episturmian word $\bw$ is an infinite Lyndon word.
\end{proof}

\subsection{Strict episturmian Lyndon words}

Let us recall from \cite{jJgP02epis} that an epistandard word $\bs$, or any episturmian word in the subshift of $\bs$, is said to be $\cA$-strict if its $L$-spinned directive word $\Delta$ verifies Ult$(\Delta) = \cA$. For these words, also called Arnoux-Rauzy sequences \cite{pAgR91repr}, Theorem~\ref{T:Lyndon-episturmian} gives:

\begin{crllr} \label{Cor:as&Lyndon-2}
Let $\cA = \{a_1, \ldots, a_m\}$ be an alphabet ordered by $a_1 < a_2 < \cdots < a_m$. An $\cA$-strict episturmian word $\bw$ is an infinite Lyndon word if and only if the (normalized) directive word of $\bw$ belongs to $\{a_1,\bar a_2, \ldots, \bar a_m\}^\omega.$ 
\end{crllr}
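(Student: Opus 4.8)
The plan is to derive Corollary~\ref{Cor:as&Lyndon-2} directly from Theorem~\ref{T:Lyndon-episturmian} by specializing the condition ``$\Ult(\Delta) = \cA$'' into the form of the directive word displayed there. First I would recall that an $\cA$-strict episturmian word is one whose underlying $L$-spinned directive word $\Delta$ satisfies $\Ult(\Delta) = \cA = \{a_1,\ldots,a_m\}$, so that every letter of $\cA$ occurs infinitely often in $\Delta$. By Fact~\ref{F:aperiodic-spinned-versions}, the normalized directive word $\breve\Delta$ of $\bw$ is a spinned version of this $\Delta$, hence also has every letter of $\cA$ occurring infinitely often (ignoring spins).

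Now I would apply Theorem~\ref{T:Lyndon-episturmian}. If $\bw$ is an infinite Lyndon word, its normalized directive word lies in
\[
(\bar\cB_2^*a_1)^*\cdots(\bar\cB_j^*a_{j-1})^*(\bar\cB_{j+1}^*a_j)^*(\bar\cB_{j+1}^+\{a_j\}^+)^\omega
\]
for some $1\le j<m$. The key observation is that in such a word, the letters $a_1,\ldots,a_{j-1}$ each occur only finitely often (they appear only in the finite prefix $(\bar\cB_2^*a_1)^*\cdots(\bar\cB_j^*a_{j-1})^*$, since the tail $(\bar\cB_{j+1}^*a_j)^*(\bar\cB_{j+1}^+\{a_j\}^+)^\omega$ only uses letters from $\cB_{j+1}\cup\{a_j\}$). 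Since $\bw$ is $\cA$-strict, every letter occurs infinitely often, forcing $j=1$. Then the directive word lies in $(\bar\cB_2^*a_1)^*(\bar\cB_2^+\{a_1\}^+)^\omega$, and since $\bar\cB_2 = \{\bar a_2,\ldots,\bar a_m\}$ and the $a_1$'s are $L$-spinned, the whole word lies in $\{a_1,\bar a_2,\ldots,\bar a_m\}^\omega$. (One should also check the tail indeed uses all of $a_2,\ldots,a_m$ infinitely often, which is automatic once we know the word has every letter infinitely often.)

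Conversely, if the normalized directive word $\breve\Delta$ of $\bw$ lies in $\{a_1,\bar a_2,\ldots,\bar a_m\}^\omega$ and $\bw$ is $\cA$-strict, then every letter occurs infinitely often in $\breve\Delta$. In particular $a_1$ occurs infinitely often with spin $L$ and each $a_i$ ($i\ge 2$) occurs infinitely often with spin $R$, so $\breve\Delta$ can be written as $\prod_{n\ge1} v_n$ with each $v_n\in \bar\cB_2^+\{a_1\}^+$; hence $\breve\Delta\in(\bar\cB_2^+\{a_1\}^+)^\omega$, which is the case $j=1$ of the form in Theorem~\ref{T:Lyndon-episturmian} (with the prefixing $(\bar\cB_2^*a_1)^*$ factor taken empty). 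Therefore $\bw$ is an infinite Lyndon word.

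I do not anticipate a serious obstacle here; the corollary is essentially a bookkeeping exercise on which letters occur infinitely often in the regular-expression description of Theorem~\ref{T:Lyndon-episturmian}. The one point requiring a little care is the forward direction: one must argue that $\cA$-strictness is incompatible with $j>1$ because letters below $a_j$ are confined to a finite prefix of the directive word, and also that the normalized directive word of an aperiodic episturmian word is genuinely a spinned version of its $L$-spinned directive word (which is Fact~\ref{F:aperiodic-spinned-versions}), so that ``$\Ult(\Delta)=\cA$'' translates faithfully to the normalized word. Everything else follows by inspection of the grammar.
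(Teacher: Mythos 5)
Your proposal is correct and follows the same route as the paper, which presents the corollary as an immediate specialization of Theorem~\ref{T:Lyndon-episturmian} to the strict case (where $\Ult(\Delta)=\cA$ forces $j=1$ and collapses the regular expression to $\{a_1,\bar a_2,\ldots,\bar a_m\}^\omega$). The only tiny slip is the parenthetical claim in the converse direction that the prefix factor $(\bar\cB_2^*a_1)^*$ may be taken empty: if $\breve\Delta$ begins with $a_1$ it cannot lie in $(\bar\cB_{2}^+\{a_1\}^+)^\omega$, but that prefix factor absorbs any initial block of $a_1$'s, so the conclusion stands.
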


This can be reformulated as a generalization of Proposition 6.4 in \cite{fLgR07quas}:

\begin{crllr} {\em \cite{aG07orde}} \label{Cor:decomposable2} An $\cA$-strict episturmian word $\bt$ is an infinite Lyndon word if and only if it can be infinitely decomposed over the set of morphisms $\{L_a, R_x \mid x \in \cA\setminus\{a\}\}$ where $a = \min(\cA)$ for the given order on~$\cA$. 
\end{crllr}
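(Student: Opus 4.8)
The plan is to derive Corollary~\ref{Cor:decomposable2} directly from Corollary~\ref{Cor:as&Lyndon-2}, which has itself just been obtained as a specialization of Theorem~\ref{T:Lyndon-episturmian}. The statement is an equivalence between two descriptions of an $\cA$-strict episturmian Lyndon word: one in terms of its normalized directive word lying in $\{a_1, \bar a_2, \ldots, \bar a_m\}^\omega$, the other in terms of being infinitely decomposable over the morphism set $\{L_a, R_x \mid x \in \cA \setminus \{a\}\}$ with $a = \min(\cA) = a_1$. Since $\mu_{a_1} = L_{a_1}$ and $\mu_{\bar a_i} = R_{a_i}$ for $i \geq 2$, a spinned infinite word $\breve\Delta$ lies in $\{a_1, \bar a_2, \ldots, \bar a_m\}^\omega$ if and only if, writing $\breve\Delta = \breve x_1 \breve x_2 \breve x_3 \cdots$, every morphism $\mu_{\breve x_n}$ belongs to $\{L_{a_1}, R_{a_2}, \ldots, R_{a_m}\} = \{L_a, R_x \mid x \in \cA \setminus \{a\}\}$; by Theorem~\ref{T:episturmian}(ii) this is exactly what it means for $\bt$ to be infinitely decomposable over that morphism set.

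First I would make the translation between ``directive word over $\{a_1, \bar a_2, \ldots, \bar a_m\}$'' and ``infinite decomposition over $\{L_{a_1}, R_{a_2}, \ldots, R_{a_m}\}$'' fully explicit, invoking the definition of $\mu$ on spinned letters ($\mu_a = L_a$, $\mu_{\bar a} = R_a$) together with Theorem~\ref{T:episturmian}(ii), so that one direction ($\Leftarrow$) is immediate once we have a directive word of the right shape. The subtlety is that an $\cA$-strict episturmian word need not be directed \emph{only} by words of this form; we must be careful that ``infinitely decomposable over this morphism set'' refers to the existence of \emph{some} such decomposition, and likewise that in Corollary~\ref{Cor:as&Lyndon-2} we use the \emph{normalized} directive word. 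So the forward direction ($\Rightarrow$): if $\bt$ is an $\cA$-strict episturmian Lyndon word, then by Corollary~\ref{Cor:as&Lyndon-2} its normalized directive word $\breve\Delta$ lies in $\{a_1, \bar a_2, \ldots, \bar a_m\}^\omega$, and the remark above converts this into an infinite decomposition over $\{L_a, R_x \mid x \in \cA \setminus \{a\}\}$. The converse direction ($\Leftarrow$): if $\bt$ has such an infinite decomposition, then it has a directive word $\breve\Delta \in \{a_1, \bar a_2, \ldots, \bar a_m\}^\omega$; this word certainly contains infinitely many $L$-spinned letters (in fact only the letter $a_1$ carries spin $L$, and since $\bt$ is $\cA$-strict, $\Ult(\Delta) = \cA$ forces $a_1$ to recur), and it contains no factor in $\bigcup_{x \in \cA}\bar x \bar\cA^* x$ since the only $L$-spinned letter available to close such a factor is $a_1$, whereas $\bar a_1$ never occurs; hence $\breve\Delta$ is already normalized, and Corollary~\ref{Cor:as&Lyndon-2} yields that $\bt$ is an infinite Lyndon word.

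The main obstacle, such as it is, is bookkeeping rather than mathematics: one must check that the word $\breve\Delta \in \{a_1, \bar a_2, \ldots, \bar a_m\}^\omega$ arising from an infinite decomposition is genuinely the normalized directive word (so that Corollary~\ref{Cor:as&Lyndon-2} applies verbatim), which amounts to the no-factor-in-$\bar x \bar\cA^* x$ condition of Theorem~\ref{T:normalisation}; as noted, this holds automatically here because $\bar a_1$ does not appear among the available letters, so no factor of the prohibited form $\bar a_i \bar\cA^* a_k$ can end with an $L$-spinned letter. One should also note explicitly that $\cA$-strictness is preserved under the equivalences in play (it is a property of the common set of factors, equivalently of the underlying $L$-spinned directive word $\Delta$ via $\Ult(\Delta) = \cA$), so passing between a decomposition and the normalized directive word does not disturb the hypothesis. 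With these remarks in place the proof is a short paragraph.

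\begin{proof}[Proof of Corollary~$\ref{Cor:decomposable2}$]
Write $a = \min(\cA) = a_1$. By definition of the operator $\mu$ we have $\mu_{a_1} = L_{a_1}$ and $\mu_{\bar a_i} = R_{a_i}$ for $2 \leq i \leq m$, so
\[
  \{\mu_{\breve x} \mid \breve x \in \{a_1, \bar a_2, \ldots, \bar a_m\}\} = \{L_a\} \cup \{R_x \mid x \in \cA\setminus\{a\}\}.
\]
Hence, by Theorem~\ref{T:episturmian}$(ii)$, an episturmian word $\bt$ can be infinitely decomposed over the set $\{L_a, R_x \mid x \in \cA\setminus\{a\}\}$ if and only if $\bt$ has a (spinned) directive word $\breve\Delta$ belonging to $\{a_1, \bar a_2, \ldots, \bar a_m\}^\omega$.

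Suppose $\bt$ is an $\cA$-strict episturmian word that is an infinite Lyndon word. By Corollary~\ref{Cor:as&Lyndon-2}, the normalized directive word of $\bt$ lies in $\{a_1, \bar a_2, \ldots, \bar a_m\}^\omega$, so by the observation above $\bt$ is infinitely decomposable over $\{L_a, R_x \mid x \in \cA\setminus\{a\}\}$.

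Conversely, suppose $\bt$ admits such an infinite decomposition, and let $\breve\Delta \in \{a_1, \bar a_2, \ldots, \bar a_m\}^\omega$ be the corresponding directive word, with $L$-spinned version $\Delta$. Since $\bt$ is $\cA$-strict, $\Ult(\Delta) = \cA$; in particular $a_1$ occurs infinitely often in $\breve\Delta$, so $\breve\Delta$ has infinitely many $L$-spinned letters. The only $L$-spinned letter occurring in $\breve\Delta$ is $a_1$, and $\bar a_1$ never occurs in $\breve\Delta$; consequently $\breve\Delta$ has no factor in $\bigcup_{x\in\cA}\bar x\bar\cA^* x$ (such a factor would have to end with the $L$-spinned letter $x = a_1$ and begin with $\bar a_1$, which is impossible). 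By Theorem~\ref{T:normalisation}, $\breve\Delta$ is the normalized directive word of $\bt$, and it belongs to $\{a_1, \bar a_2, \ldots, \bar a_m\}^\omega$; hence $\bt$ is an infinite Lyndon word by Corollary~\ref{Cor:as&Lyndon-2}.
\end{proof}
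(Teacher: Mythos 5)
Your proof is correct and follows essentially the same route as the paper, which presents Corollary~\ref{Cor:decomposable2} as a direct reformulation of Corollary~\ref{Cor:as&Lyndon-2} via the dictionary $\mu_{a_1}=L_{a_1}$, $\mu_{\bar a_i}=R_{a_i}$. Your explicit check that a directive word in $\{a_1,\bar a_2,\ldots,\bar a_m\}^\omega$ arising from such a decomposition is automatically normalized (so that Corollary~\ref{Cor:as&Lyndon-2} applies verbatim) is a worthwhile detail the paper leaves implicit.
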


The above result also follows from the following generalization of a result on Sturmian words given by Borel and Laubie~\cite{jpBfL93quel} (see also \cite{gR07conj}).

\begin{thrm} 
\label{T:Lyndon}
An $\cA$-strict episturmian word $\bt$ is an infinite Lyndon word if and only if $\bt = a\bs$ where $a = \min(\cA)$ for the given order on $\cA$ and $\bs$ is an (aperiodic) $\cA$-strict epistandard word. Moreover, if $\Delta$ is the $L$-spinned directive word of $\bs$, then $\bt = a\bs$ is the unique episturmian word in the subshift of $\bs$  directed by the spinned version of $\Delta$ having all spins $R$, except when $x_i = a$.  
\end{thrm}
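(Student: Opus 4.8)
The plan is to prove Theorem~\ref{T:Lyndon} by reducing it, via Corollary~\ref{Cor:decomposable2}, to a statement about directive words, and then using the relationship between the shift on directive words and the removal of letters from the word. Recall that Corollary~\ref{Cor:decomposable2} says an $\cA$-strict episturmian word $\bt$ is an infinite Lyndon word if and only if it admits an infinite decomposition over $\{L_a,R_x\mid x\in\cA\setminus\{a\}\}$ with $a=\min(\cA)$; equivalently (by Corollary~\ref{Cor:as&Lyndon-2}), the normalized directive word of $\bt$ lies in $\{a,\bar a_2,\dots,\bar a_m\}^\omega$, i.e.\ it is the unique episturmian word directed by the spinned word $\breve\Delta$ obtained from the $L$-spinned directive word $\Delta$ of the epistandard word $\bs$ in the subshift of $\bt$ by putting spin $R$ on every letter except occurrences of $a$. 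So the real content of Theorem~\ref{T:Lyndon} is to identify \emph{which} episturmian word in the subshift of $\bs$ is directed by this particular $\breve\Delta$, and to show it equals $a\bs$.

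First I would fix the $L$-spinned directive word $\Delta=x_1x_2x_3\cdots$ of the $\cA$-strict epistandard word $\bs$, so $\Ult(\Delta)=\cA$, and let $\breve\Delta=\breve x_1\breve x_2\cdots$ be the spinned version with $\breve x_i = x_i$ when $x_i=a$ and $\breve x_i=\bar x_i$ otherwise. Let $\bt$ be the episturmian word directed by $\breve\Delta$; by Fact~\ref{F:episturmian} it has the same set of factors as $\bs$, hence lies in the subshift of $\bs$. The claim $\bt=a\bs$ is equivalent to saying $\bt$ is the shift of $\bs$ by one position to the \emph{left} in the doubly-infinite picture, i.e.\ $\bs = \TT(\bt)$ and the letter preceding is $a$. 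The cleanest route is through the shifting-factor formula~\eqref{eq:f-shift-infinite}: for a finite prefix $\breve w=\breve x_1\cdots\breve x_n$ of $\breve\Delta$ and the corresponding $L$-spinned prefix $w=x_1\cdots x_n$ of $\Delta$, we have $\mu_{\breve w}(\by)=S_{\breve w}^{-1}\mu_w(\by)$ where $S_{\breve w}=\prod_{i=n,\dots,1,\ \breve x_i=\bar x_i}\mu_{x_1\cdots x_{i-1}}(x_i)$. I would then compute $S_{\breve w}$ for $\breve w$ the prefix of $\breve\Delta$ of a suitable length: by equation~\eqref{eq:u_n2}, $Pal(x_1\cdots x_n)=\prod_{1\le j\le n}\mu_{x_1\cdots x_{j-1}}(x_j)$, and the only factors omitted from $S_{\breve w}$ relative to this product are the terms with $x_j=a$; those terms are exactly the ``$a$-contributions''. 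Since $a$ is the first letter of $Pal(x_1\cdots x_n)=\mu_{x_1\cdots x_{n-1}}(x_n)\cdots\mu_{x_1}(x_2)x_1$ (the very last factor being $x_1=a$, assuming w.l.o.g.\ $x_1=a$ — and if $x_1\ne a$ one handles a finite initial segment separately), the claim becomes $S_{\breve w}=a^{-1}Pal(x_1\cdots x_n)$ or, more precisely, that $aS_{\breve w}=Pal(x_1\cdots x_n)$ up to identifying the contributions of $a$. I would prove $aS_{\breve w}=Pal(x_1\cdots x_n)$ by induction on $n$, using the recursive description: when appending $\breve x_{n+1}$, either $\breve x_{n+1}=a$ (spin $L$) so $S$ is unchanged while $Pal$ gains a factor $\mu_{x_1\cdots x_n}(a)$ on the left which begins with $a$, matching the extra $a$; or $\breve x_{n+1}=\bar x_{n+1}$ (spin $R$) so $S$ gains the factor $\mu_{x_1\cdots x_n}(x_{n+1})$, which is precisely the new leftmost block in $Pal$. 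Passing to the limit with~\eqref{eq:f-shift-infinite} over the sequence $(\bt^{(n)})$ of Theorem~\ref{T:episturmian} then gives $\bt = a^{-1}\bs$, i.e.\ $a\bt=\bs$, hence $\bt=a\bs$ — wait, more carefully: $\mu_{\breve w}(\bt^{(|w|)})=S_{\breve w}^{-1}\mu_w(\bt^{(|w|)})$, and as $n\to\infty$, $\mu_w(\bt^{(|w|)})$ converges to $\bs$ while $S_{\breve w}$ converges to $a^{-1}Pal\to a^{-1}\bs$, so the shift lands on $\TT^{|S|}(\bs)$ with $|S|=|Pal|-1$, and one reads off that the letter at position $|Pal|$ of $\bs$, namely the letter following $Pal(x_1\cdots x_n)$ in $\bs$, is preceded by $a$... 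I would phrase this limiting argument as: $\bt$ and $\bs$ have the same factors, $\bs$ begins with $a$ (as $x_1$ can be taken $=a$ after block-equivalence), and for every $n$ the prefix of $\bt$ of length $|Pal(x_1\cdots x_n)|-1$ equals $a^{-1}Pal(x_1\cdots x_n)$, forcing $\bt=a^{-1}\bs$, i.e.\ $\bs=a\bt$. Hmm — that gives $\bs=a\bt$ not $\bt=a\bs$; the correct reading (consistent with Fact~\ref{F:subshift} and the known Sturmian/Borel–Laubie picture) is that $a\bt$ or $a\bs$ is the common bi-infinite extension, and since $\bt$ is the Lyndon one it must be the one \emph{lexicographically smallest}, which is $a\bs$ (the smallest letter followed by the smallest continuation, the standard word). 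I would make this precise by showing directly that $a\bs$ is directed by $\breve\Delta$: apply $L_a$ and then the appropriate compositions, or simply check that $a\bs$ is episturmian (its set of factors is that of $\bs$, since $a$ is left-special in $\bs$ as $\bs$ is $\cA$-strict standard, so $a\bs$ is recurrent with the same factors) and that its normalized directive word lies in $\{a,\bar a_2,\dots,\bar a_m\}^\omega$, hence by Corollary~\ref{Cor:as&Lyndon-2} it is the Lyndon one — and by uniqueness (Theorem~\ref{T:uniqueDirective}, since $\breve\Delta$ is regular wavy) this identifies it with $\bt$.

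The second sentence of the theorem — that $\bt=a\bs$ is directed by the spinned version of $\Delta$ having all spins $R$ except at occurrences of $a$, and is the unique episturmian word in the subshift of $\bs$ so directed — then follows from the normalization and uniqueness results already recalled: the spinned word $\breve\Delta$ described contains infinitely many $L$-spinned letters (the occurrences of $a$, which are infinite since $a\in\Ult(\Delta)=\cA$) and infinitely many $R$-spinned letters (any other letter of $\cA$), it is regular wavy (each letter $\ne a$ always carries spin $R$, $a$ always spin $L$), hence by Theorem~\ref{T:uniqueDirective} and Proposition~\ref{P:uniqueDirective-Sturmian}-type reasoning (or directly Theorem~\ref{T:uniqueDirective}) it is the unique directive word of exactly one episturmian word, and that word lies in the subshift of $\bs$ by Fact~\ref{F:episturmian}. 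Combined with the first part this gives the full statement.

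The main obstacle I anticipate is the bookkeeping in the induction $aS_{\breve w}=Pal(x_1\cdots x_n)$ — getting the off-by-one and the placement of the initial $a$ exactly right, and correctly handling the case $x_1\ne a$ (where $\bs$ does not begin with $a$ and one must first pass to the shifted epistandard word in the subshift whose directive word does begin with $a$, using Fact~\ref{F:subshift} and block-equivalence). Once that identity is in place, everything else is an application of the already-established machinery (Theorems~\ref{T:uniqueDirective}, \ref{T:Lyndon-episturmian}, Corollary~\ref{Cor:decomposable2}, Facts~\ref{F:episturmian} and~\ref{F:subshift}), so I would concentrate the writing effort on that lemma and treat the rest as routine verification.
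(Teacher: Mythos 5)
Your reduction of the first sentence to Corollary~\ref{Cor:as&Lyndon-2} matches the paper, and your treatment of the uniqueness statement via Theorem~\ref{T:uniqueDirective} and Fact~\ref{F:episturmian} is fine. But the core of the theorem --- identifying the episturmian word directed by the spinned version $\breve\Delta$ of $\Delta$ with all spins $R$ except at occurrences of $a$ as being precisely $a\bs$ --- is exactly the content of Theorem~\ref{T:3.17} (Justin--Pirillo), which the paper states immediately before the proof and then simply invokes; the paper's entire proof is ``Corollary~\ref{Cor:as&Lyndon-2} plus Theorem~\ref{T:3.17}''. You instead try to re-derive this identification from the shifting-factor formula, and that is where your argument breaks: the identity $aS_{\breve w}=Pal(x_1\cdots x_n)$ you propose to prove by induction is false. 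Already for $\Delta=(abc)^\omega$ and $\breve w=a\bar b\bar c$ one has $S_{\breve w}=\mu_{ab}(c)\mu_a(b)=abacab$, so $aS_{\breve w}=aabacab\neq abacaba=Pal(abc)$; and for longer prefixes the discrepancy is not an off-by-one at all, since $S_{\breve w}$ omits \emph{every} block $\mu_{x_1\cdots x_{j-1}}(a)$ from the product \eqref{eq:u_n2}, so $|Pal(w)|-|S_{\breve w}|$ grows without bound (e.g.\ it equals $8$ for $\breve w=(a\bar b\bar c)^2$). The inductive step you sketch (``$Pal$ gains a factor $\mu_{x_1\cdots x_n}(a)$ on the left which begins with $a$, matching the extra $a$'') cannot work, because a whole block is gained, not a single letter. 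The correct statement is that $S_{\breve w}^{-1}Pal(w)$ is a prefix of $a\bs$ of length $|Pal(w)|-|S_{\breve w}|\to\infty$, and proving \emph{that} is essentially re-proving Theorem~\ref{T:3.17}.

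Your two fallback arguments do not close this gap either. The appeal to ``the Lyndon one must be lexicographically smallest, hence $a\bs$'' presupposes that $a\bs$ is one of the candidates and is not a proof; and ``simply check that the normalized directive word of $a\bs$ lies in $\{a,\bar a_2,\dots,\bar a_m\}^\omega$'' is circular, since computing the directive word of $a\bs$ is precisely the point at issue. A clean repair along your lines would be to prove directly, by induction on $n$ using $L_b(\bz)=bR_b(\bz)$ for infinite $\bz$, that $a\bs=\mu_{\breve x_1\cdots\breve x_n}(a\bs^{(n)})$ where $\bs^{(n)}$ is the epistandard word directed by $\TT^n(\Delta)$ and each $a\bs^{(n)}$ is recurrent (using $\cA$-strictness to see that $a$ left-extends every prefix of $\bs^{(n)}$) --- but the shortest route is the one the paper takes: quote Theorem~\ref{T:3.17}.
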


The proof of the above theorem requires the following result that is essentially Theorem~3.17 from~\cite{jJgP02epis}, apart from the fact that $a\bs$ is in the subshift of $\bs$, which follows from  Fact~\ref{F:episturmian}.

\begin{thrm} \label{T:3.17} 
Suppose $\bs$ is an epistandard word directed by $\Delta = x_1x_2x_3 \cdots$ and let $a$ be a letter. Then $a\bs$ is an episturmian word if and only if $a \in \eUlt(\Delta)$, in which case $a\bs$ is the unique episturmian word in the subshift of $\bs$ 
directed by the spinned version of $\Delta$ having all spins $R$, except when $x_i = a$.
\end{thrm}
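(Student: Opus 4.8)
The plan is to prove both implications by working with one distinguished spinned version of $\Delta$, namely the word $\breve\Delta=\breve x_1\breve x_2\cdots$ with $\breve x_i=x_i$ when $x_i=a$ and $\breve x_i=\bar x_i$ when $x_i\neq a$; this is precisely the spinned version named in the statement. First I would record the identity that drives everything: writing $\bs^{(n)}$ for the epistandard word directed by $\TT^n\Delta$, I claim $a\bs=\mu_{\breve x_1\cdots\breve x_n}(a\bs^{(n)})$ for every $n\geq0$. This is an induction on $n$ whose step reduces to the one-letter peeling $a\bs=\mu_{\breve x_1}(a\bs^{(1)})$: if $x_1=a$ then $\mu_{\breve x_1}=L_a$ and $L_a(a\bs^{(1)})=aL_a(\bs^{(1)})=a\bs$; if $x_1\neq a$ then $\mu_{\breve x_1}=R_{x_1}$ and $R_{x_1}(a\bs^{(1)})=ax_1R_{x_1}(\bs^{(1)})=a\,L_{x_1}(\bs^{(1)})=a\bs$, using the elementary infinite-word identity $L_x(\bz)=xR_x(\bz)$ (the limit form of $L_x(v)x=xR_x(v)$) together with $\bs=L_{x_1}(\bs^{(1)})$ from Theorem~\ref{T:episturmian}.

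For the backward implication, I would note that $a\in\Ult(\Delta)$ means $\breve\Delta$ has infinitely many $L$-spinned letters, so by \cite{jJgP02epis} it directs a unique episturmian word $\bt$, whose first letter is its leftmost $L$-spin, namely $a$. By Theorem~\ref{T:episturmian} and Remark~\ref{R:directive}, $\bt=\mu_{\breve x_1\cdots\breve x_n}(\bt^{(n)})$ with $\bt^{(n)}$ directed by $\TT^n\breve\Delta$ and hence again beginning with $a$. Thus both $a\bs$ and $\bt$ are images under $\mu_{\breve x_1\cdots\breve x_n}$ of words beginning with $a$, so both carry the prefix $\mu_{\breve x_1\cdots\breve x_n}(a)$. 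When $|\Ult(\Delta)|\geq2$ there are infinitely many non-$a$ letters, each index of which strictly increases $|\mu_{\breve x_1\cdots\breve x_n}(a)|$; since the lengths diverge, $a\bs$ and $\bt$ share arbitrarily long prefixes and are equal. When $\Ult(\Delta)=\{a\}$, $\Delta$ is eventually $a^\omega$, say $x_i=a$ for $i>N$, and the peeling identity at $n=N$ gives $a\bs=\mu_{\breve x_1\cdots\breve x_N}(a\bs^{(N)})=\mu_{\breve x_1\cdots\breve x_N}(a^\omega)=\bt$ directly. In every case $a\bs=\bt$ is episturmian, is directed by $\breve\Delta$, and lies in the subshift of $\bs$ by Fact~\ref{F:episturmian}; the asserted uniqueness is exactly the uniqueness of the word directed by $\breve\Delta$.

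For the forward implication, suppose $a\bs$ is episturmian, hence uniformly recurrent, so every prefix of $a\bs$ recurs; as $a\bs$ agrees with $\bs$ beyond its first letter, each prefix $a\,u_{n+1}$ (with $u_{n+1}=Pal(x_1\cdots x_n)$) must occur inside $\bs$, i.e. $a\,u_{n+1}\in F(\bs)$ for all $n$. The crux is then to identify the letters that may immediately precede (equivalently, by reversal and $u_{n+1}=\rev{u_{n+1}}$, follow) $u_{n+1}$ in $\bs$, which I claim are exactly those of $\Alph(x_{n+1}x_{n+2}\cdots)$. One inclusion is easy: $Pal(x_1\cdots x_n x)=(u_{n+1}x)^{(+)}$ has $u_{n+1}x$ as a prefix and, by Theorem~\ref{T:return-words}, equals $r_xu_{n+1}$ for the return $r_x=\mu_{x_1\cdots x_n}(x)$, hence is a factor of $\bs$, giving $u_{n+1}x\in F(\bs)$ for each $x\in\Alph(x_{n+1}\cdots)$. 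Conversely, the returns to $u_{n+1}$ partition its occurrences, and the letter following the occurrence that begins a return $r_x$ is the first letter of $u_{n+1}^{-1}Pal(x_1\cdots x_n x)=x$, so no letter outside $\Alph(x_{n+1}\cdots)$ can follow $u_{n+1}$. Therefore $a\,u_{n+1}\in F(\bs)$ forces $a\in\Alph(x_{n+1}\cdots)$ for all $n$, and since $\bigcap_{n}\Alph(x_{n+1}\cdots)=\Ult(\Delta)$, we get $a\in\Ult(\Delta)$.

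The step I expect to be most delicate is the ``no other letter'' half of this last characterization — establishing that $\Alph(x_{n+1}\cdots)$ exhausts the admissible left-extensions of $u_{n+1}$, not merely produces some of them — because it is exactly this exhaustiveness that excludes the case $a\in\Alph(\Delta)\setminus\Ult(\Delta)$. The remaining points are routine verifications: the infinite-word identity $L_x(\bz)=xR_x(\bz)$, and the divergence of the common-prefix lengths $|\mu_{\breve x_1\cdots\breve x_n}(a)|$ in the aperiodic case.
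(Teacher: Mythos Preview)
The paper does not prove this theorem at all: it simply states it as ``essentially Theorem~3.17 from~\cite{jJgP02epis}, apart from the fact that $a\bs$ is in the subshift of $\bs$, which follows from Fact~\ref{F:episturmian}.'' So there is no paper proof to compare against; you have supplied a complete argument where the paper defers to the literature.

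Your argument is correct. The backward direction via the peeling identity $a\bs=\mu_{\breve x_1\cdots\breve x_n}(a\bs^{(n)})$ is clean, and the case split on $|\Ult(\Delta)|$ to ensure the common prefixes $\mu_{\breve x_1\cdots\breve x_n}(a)$ exhaust $a\bs$ is handled properly. For the forward direction, your identification of the right (hence, by reversal closure and the palindromicity of $u_{n+1}$, left) extensions of $u_{n+1}$ with $\Alph(x_{n+1}x_{n+2}\cdots)$ is exactly right: the inclusion follows from $Pal(x_1\cdots x_n x)=(u_{n+1}x)^{(+)}$ being a factor of $\bs$, and the exhaustiveness follows because every occurrence of $u_{n+1}$ begins some return $r_x=\mu_{x_1\cdots x_n}(x)$ with $x\in\Alph(x_{n+1}\cdots)$, and $r_xu_{n+1}=Pal(x_1\cdots x_n x)$ has $u_{n+1}x$ as a prefix, so the next letter after that occurrence is precisely $x$. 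The step you flagged as delicate is thus fully justified by Theorem~\ref{T:return-words}. One cosmetic point: you could shortcut the backward direction slightly by invoking Theorem~\ref{T:episturmian}$ii)$ directly once you know each $a\bs^{(n)}$ is recurrent (which follows from the forward-direction computation applied to $\bs^{(n)}$, since $a\in\Ult(\TT^n\Delta)$), but your prefix-matching argument avoids this circularity and is perfectly fine.
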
 

\begin{proof}[Proof of Theorem~$\ref{T:Lyndon}$]
Let $\cA = \{a_1, a_2, \ldots, a_m\}$ with $a_1 < a_2 < \cdots < a_m$. By Corollary~\ref{Cor:as&Lyndon-2}, an $\cA$-strict episturmian word $\bt$ is an infinite Lyndon word if and only if the (normalized) directive word $\breve\Delta$ of $\bt$ belongs to $\{a_1, \bar a_2, \ldots, \bar a_m\}^\omega$, i.e., if and only if $\bt = a_1\bs$ where $\bs$ is the unique epistandard word directed by the $L$-spinned version of $\breve\Delta$, by Theorem~\ref{T:3.17}. 
\end{proof}

\begin{note} If $\bs$ is an epistandard word over $\cA$, then $a\bs$ is an infinite Lyndon word for any order such that $a = \min(\cA)$.
\end{note}

Let us point out that  completely different proofs of Corollary~\ref{Cor:as&Lyndon-2} and Theorem~\ref{T:Lyndon}, using a characterization of episturmian words via lexicographic orderings, were given in \cite{aG07orde} by the first author. A refinement of one of the main  results in \cite{aGjJgP06char} is also given in \cite{aG07orde}.

\medskip

To end, let us observe that, contrary to the fact that there exists $|\cA|!$ possible orders of a finite alphabet $\cA$, Theorem~\ref{T:Lyndon} shows that there exist exactly $|\cA|$ infinite Lyndon words in the subshift of a given $\cA$-strict epistandard word $\bs$, when $|\cA| >1$ (since there are no Lyndon words when $|\cA| = 1$). That is, for any order with $\min(\cA) = a$, the subshift of $\bs$ contains a unique infinite Lyndon word beginning with $a$, namely $a\bs$.  

\begin{xmpl}
With $\Delta = (abcd)^\omega$, the spinned versions $(a\bar b \bar c \bar d)^\omega$, $(\bar a b \bar c \bar d)^\omega$, $(\bar a \bar b c \bar d)^\omega$, $(\bar a \bar b \bar c d)^\omega$, $(\bar a \bar b c d)^\omega$, $(\bar a b \bar c d)^\omega$, $(\bar a b c \bar d)^\omega$ and their opposites direct non-quasiperiodic episturmian words in the subshift of the  $4$-bonacci word $\bz$.  Only the first four of these words direct Lyndon episturmian words: $a\bz$, $b\bz$, $c\bz$, $d\bz$, respectively.
\end{xmpl}

\section{Concluding remarks} \label{conclusion}

In \cite{Mon05}, Monteil proved that any Sturmian subshift contains a {\em multi-scale quasiperiodic word}, i.e., an infinite word having infinitely many quasiperiods. A shorter proof of this fact was provided in \cite{fLgR07quas}. This can be easily extended to episturmian words. Certainly, by Fact~\ref{F:subshift}, any episturmian subshift contains at most two epistandard words (one in the aperiodic case and two in the periodic case) and any epistandard word has infinitely many quasiperiods (by Theorem~\ref{T:epistandard-quasiperiods}).

Actually the characterization of quasiperiodic Sturmian words in \cite{fLgR07quas} shows that in any Sturmian subshift there are only two non-quasiperiodic Sturmian words and all other (Sturmian) words in the subshift have infinitely many quasiperiods. It is easy to see that the same result does not hold for episturmian words defined over an alphabet containing more than two letters. For instance, any episturmian word having a spinned directive word in $\{ab\bar c, a\bar b\bar c\}^\omega$ is non-quasiperiodic: all of these non-quasiperiodic episturmian words belong to the subshift of the Tribonacci word $\br$, directed by $(abc)^\omega$. Moreover, one can verify (using Theorem~\ref{T:directSame}) that the quasiperiodic episturmian word $\bt$ directed by $(abc)^n(ab\bar c)(a\bar b \bar c)^\omega$ for some $n\geq 1$ (which is in the subshift of $\br$) has exactly $n+1$ directive words:
\[
 (abc)^iab \bar c(\bar a \bar b c)^{n-i}(a \bar b \bar c)^\omega, \quad 0 \leq i \leq n.
 \]
Hence it is clear from Theorem~\ref{T:episturmianQuasiperiods} that $\bt$ has only finitely many quasiperiods. (See also Examples~\ref{ex:1}--\ref{ex:3}.)

\bigskip

\noindent
\textbf{Acknowledgments:} The authors would like to thank the two anonymous referees for their suggestions to improve the paper. In particular, one referee indicated the simple proof of Theorem~\ref{T:epistandard-quasiperiods} and the other noticed the interest of Fact~\ref{F:from_referee}. Many thanks also to D.~Krieger who suggested to the third author the notion of ultimate quasiperiodicity and observed the fact that all Sturmian words are ultimately quasiperiodic. The interest of this notion becomes evident when one referee underlined that the term \textit{quasi-factor} introduced in \cite{aG07orde} was not completely satisfactory.

\footnotesize
\bibliographystyle{plain}
\bibliography{GLR}

\begin{thebibliography}{10}

\bibitem{AS2003}
J.-P. Allouche and J.~Shallit.
\newblock {\em Automatic sequences: Theory, Applications, Generalizations}.
\newblock Cambridge University Press, 2003.

\bibitem{aAmC01stri}
A.~Apostolico and M.~Crochemore.
\newblock String pattern matching for a deluge survival kit.
\newblock In J.~Abello, P.M. Pardalos, and M.G.C. Resende, editors, {\em
  Handbook of Massive Data Sets, Massive Comput.}, volume~4. Kluwer Academic
  Publishers, 2001.

\bibitem{aAaE93effi}
A.~Apostolico and A.~Ehrenfeucht.
\newblock Efficient detection of quasiperiodicities in strings.
\newblock {\em Theoret. Comput. Sci.}, 119:247--265, 1993.

\bibitem{aAmFcI91opti}
A.~Apostolico, M.~Farach, and C.~S. Iliopoulos.
\newblock Optimal superprimitivity testing for strings.
\newblock {\em Inform. Process. Lett.}, 39(1):17--20, 1991.

\bibitem{pAgR91repr}
P.~Arnoux and G.~Rauzy.
\newblock Repr\'esentation g\'eom\'etrique de suites de complexit\'es $2n+1$.
\newblock {\em Bull. Soc. Math. France}, 119:199--215, 1991.

\bibitem{jB07stur}
J.~Berstel.
\newblock Sturmian and {e}pisturmian {w}ords (a survey of some recent results).
\newblock In {\em Proceedings of CAI 2007}, volume 4728 of {\em Lecture Notes
  in Computer Science}, pages 23--47. Springer-Verlag, 2007.

\bibitem{vBcHlZ06init}
V.~Berth\'e, C.~Holton, and L.~Q. Zamboni.
\newblock Initial powers of {S}turmian sequences.
\newblock {\em Acta Arith.}, 122:315--347, 2006.

\bibitem{jpBfL93quel}
J.P. Borel and F.~Laubie.
\newblock Quelques mots sur la droite projective r\'eelle.
\newblock {\em Journal de Th\'eorie des Nombres de Bordeaux}, 5:23--51, 1993.

\bibitem{eCgH73sequ}
E.~M. Coven and G.A. Hedlund.
\newblock Sequences with minimal block growth.
\newblock {\em Math. Systems Theory}, 7:138--153, 1973.

\bibitem{aD81acom}
A.~de~Luca.
\newblock A combinatorial property of the {F}ibonacci words.
\newblock {\em Inform. Process. Lett.}, 12(4):193--195, 1981.

\bibitem{aD97stur}
A.~de~Luca.
\newblock Sturmian words: structure, combinatorics and their arithmetics.
\newblock {\em Theoret. Comput. Sci.}, 183:45--82, 1997.

\bibitem{xDjJgP01epis}
X.~Droubay, J.~Justin, and G.~Pirillo.
\newblock Episturmian words and some constructions of de {L}uca and {R}auzy.
\newblock {\em Theoret. Comput. Sci.}, 255(1-2):539--553, 2001.

\bibitem{fD98acha}
F.~Durand.
\newblock A characterization of substitutive sequences using return words.
\newblock {\em Discrete Math.}, 179:89--101, 1998.

\bibitem{sF99comp}
S~Ferenczi.
\newblock Complexity of sequences and dynamical systems.
\newblock {\em Discrete Math.}, 206:145--154, 1999.

\bibitem{aG07orde}
A.~Glen.
\newblock Order and quasiperiodicity in episturmian words.
\newblock In {\em Proceedings of the $6$th International Conference on Words,
  Marseille, France, September 17-21}, pages 144--158, 2007.

\bibitem{aG06acha}
A.~Glen.
\newblock A characterization of fine words over a finite alphabet.
\newblock {\em Theoret. Comput. Sci.}, 391:51--60, 2008.

\bibitem{aGjJ07epis}
A.~Glen and J.~Justin.
\newblock Episturmian words: a survey, preprint, 2007.

\bibitem{aGjJgP06char}
A.~Glen, J.~Justin, and G.~Pirillo.
\newblock Characterizations of finite and infinite episturmian words via
  lexicographic orderings.
\newblock {\em European J. Combin.}, 29:45--58, 2008.

\bibitem{aGfLgR08dire}
A.~Glen, F.~Lev\'e, and G.~Richomme.
\newblock Directive words of episturmian words: equivalence and normalization,
  preprint, 2008.

\bibitem{cHlZ99desc}
C.~Holton and L.Q. Zamboni.
\newblock Descendants of primitive substitutions.
\newblock {\em Theory Comput. Syst.}, 32:133--157, 1999.

\bibitem{IM1999}
C.S. Iliopoulos and L.~Mouchard.
\newblock Quasiperiodicity and string covering.
\newblock {\em Theoret. Comput. Sci.}, 218(1):205--216, 1999.

\bibitem{jJ05epis}
J.~Justin.
\newblock Episturmian morphisms and a {Galois} theorem on continued fractions.
\newblock {\em Theoret. Inform. Appl.}, 39(1):207--215, 2005.

\bibitem{jJgP02epis}
J.~Justin and G.~Pirillo.
\newblock Episturmian words and episturmian morphisms.
\newblock {\em Theoret. Comput. Sci.}, 276(1-2):281--313, 2002.

\bibitem{jJgP02onac}
J.~Justin and G.~Pirillo.
\newblock On a characteristic property of {A}rnoux-{R}auzy sequences.
\newblock {\em Theoret. Inform. Appl.}, 36(4):385--388, 2003.

\bibitem{jJgP04epis}
J.~Justin and G.~Pirillo.
\newblock Episturmian words: shifts, morphisms and numeration systems.
\newblock {\em Internat. J. Found. Comput. Sci.}, 15(2):329--348, 2004.

\bibitem{jJlV00retu}
J.~Justin and L.~Vuillon.
\newblock Return words in {S}turmian and episturmian words.
\newblock {\em Theoret. Inform. Appl.}, 34:343--356, 2000.

\bibitem{fLgR04quas}
F.~Lev\'e and G.~Richomme.
\newblock Quasiperiodic infinite words: some answers.
\newblock {\em Bull. Eur. Assoc. Theor. Comput. Sci.}, 84:128--138, 2004.

\bibitem{fLgR07quasB}
F.~Lev\'e and G.~Richomme.
\newblock Quasiperiodic episturmian words.
\newblock In {\em Proceedings of the $6$th International Conference on Words,
  Marseille, France, September 17-21}, pages 201--211, 2007.

\bibitem{fLgR07quas}
F.~Lev\'e and G.~Richomme.
\newblock Quasiperiodic {S}turmian words and morphisms.
\newblock {\em Theoret. Comput. Sci.}, 372:15--25, 2007.

\bibitem{mL83comb}
M.~Lothaire.
\newblock {\em Combinatorics on Words}, volume~17 of {\em Encyclopedia of
  Mathematics and its Applications}.
\newblock Addison-Wesley, 1983.

\bibitem{mL02alge}
M.~Lothaire.
\newblock {\em Algebraic Combinatorics on Words}, volume~90 of {\em
  Encyclopedia of Mathematics and its Applications}.
\newblock Cambridge University Press, 2002.

\bibitem{Mar2002}
S.~Marcus.
\newblock Bridging two hierarchies of infinite words.
\newblock {\em J. UCS}, 8:292--296, 2002.

\bibitem{sM04quas}
S.~Marcus.
\newblock Quasiperiodic infinite words.
\newblock {\em Bull. Eur. Assoc. Theor. Comput. Sci.}, 82:170--174, 2004.

\bibitem{Mel2000}
G.~Melan\c{c}on.
\newblock Lyndon factorization of {S}turmian words.
\newblock {\em Discrete Math.}, 210:137--149, 2000.

\bibitem{Mon05}
T.~Monteil.
\newblock {\em Illumination dans les billards polygonaux et dynamique
  symbolique}.
\newblock PhD thesis, Universit\'e de la M\'editerran\'ee, Facult\'e des
  Sciences de Luminy, December 2005.

\bibitem{MH1940}
M.~Morse and G.A. Hedlund.
\newblock Symbolic {D}ynamics {II}. {S}turmian trajectories.
\newblock {\em Amer. J. Math.}, 61:1--42, 1940.

\bibitem{PV2006}
G.~Paquin and L.~Vuillon.
\newblock A characterization of balanced episturmian sequences.
\newblock {\em Electron. J. Combin.}, 14, 2007.
\newblock \#R33, pp.~12.

\bibitem{nP02subs}
N.~Pytheas~Fogg.
\newblock {\em Substitutions in Dynamics, Arithmetics and Combinatorics},
  volume 1794 of {\em Lecture Notes in Mathematics}.
\newblock Springer, 2002.

\bibitem{gR82nomb}
G.~Rauzy.
\newblock Nombres alg\'ebriques et substitutions.
\newblock {\em Bull. Soc. Math. France}, 110:147--178, 1982.

\bibitem{gR85mots}
G.~Rauzy.
\newblock Mots infinis en arithm\'etique.
\newblock In M.~Nivat and D.~Perrin, editors, {\em Automata on Infinite words},
  volume 192 of {\em Lecture Notes in Computer Science}, pages 165--171.
  Springer-Verlag, Berlin, 1985.

\bibitem{gR03conj}
G.~Richomme.
\newblock Conjugacy and episturmian morphisms.
\newblock {\em Theoret. Comput. Sci.}, 302(1-3):1--34, 2003.

\bibitem{gR03lynd}
G.~Richomme.
\newblock Lyndon morphisms.
\newblock {\em {B}ull. {B}elg. {M}ath. {S}oc. Simon Stevin}, 10:761--785, 2003.

\bibitem{gR07conj}
G.~Richomme.
\newblock Conjugacy of morphisms and {L}yndon decomposition of standard
  {S}turmian words.
\newblock {\em Theoret. Comput. Sci.}, 380(3):393--400, 2007.

\bibitem{gR07aloc}
G.~Richomme.
\newblock A local balance property of episturmian words.
\newblock In {\em Proc. DLT '07}, volume 4588 of {\em Lecture Notes in Computer
  Science}, pages 371--381. Springer, Berlin, 2007.

\bibitem{gR07onmo}
G.~Richomme.
\newblock On morphisms preserving infinite {L}yndon words.
\newblock {\em Discrete Math. Theor. Comput. Sci.}, 9:89--108, 2007.

\bibitem{rRlZ00agen}
R.N. Risley and L.Q Zamboni.
\newblock A generalization of {Sturmian} sequences: {combinatorial} structure
  and transcendence.
\newblock {\em Acta Arith.}, 95:167--184, 2000.

\bibitem{See1991}
P.~S\'e\'ebold.
\newblock Fibonacci morphisms and {S}turmian words.
\newblock {\em Theoret. Comput. Sci.}, 88(2):365--384, 1991.

\end{thebibliography}

\end{document}